\newtheorem{theorem}{Theorem}[section]
    \newtheorem{corollary}[theorem]{Corollary}
    \newtheorem{lemma}[theorem]{Lemma}
    \newtheorem{proposition}[theorem]{Proposition}
    \newtheorem{remark}[theorem]{Remark}
\theoremstyle{nonumberplain}
\newtheorem{proof-wo}{Proof}
\newtheorem{proof}{Proof}
\pgfplotsset{width=10cm,compat=1.9}
\newcommand{\zerodisplayskips}{%
  \setlength{\abovedisplayskip}{5pt}%
  \setlength{\belowdisplayskip}{5pt}%
  \setlength{\abovedisplayshortskip}{5pt}%
  \setlength{\belowdisplayshortskip}{5pt}}
\appto{\normalsize}{\zerodisplayskips}
\appto{\small}{\zerodisplayskips}
\appto{\footnotesize}{\zerodisplayskips}
\setlist[enumerate,1]{label={(\roman*)}}
\newcommand{\bigslant}[2]{{\raisebox{.2em}{$#1$}\left/\raisebox{-.2em}{$#2$}\right.}}
\newcommand{\eus}{\EuScript}
\newcommand{\llb}{\llbracket}
\newcommand{\rrb}{\rrbracket}
\let\div\undefined
\DeclareMathOperator{\div}{div\,}
\DeclareMathOperator{\supp}{supp\,}
\title{On homogeneous Sobolev and Besov spaces on the whole and the half space.\thanks{MSC 2020: 35J05, 42B37, 46B70, 46E35}\thanks{Key words: Homogeneous Sobolev spaces, Homogeneous Besov spaces, Homogeneous tempered distributions, Traces, Interpolation of non-complete spaces, Half-space, Dirichlet and Neumann Laplacians.}}
\author{Anatole \textsc{Gaudin}\thanks{Aix-Marseille Université, CNRS, I2M,  
Marseille, France - \textbf{email:} anatole.gaudin@univ-amu.fr}}
\date{}
\begin{document}

\maketitle

\begin{abstract} In this paper, we propose an elementary construction of homogeneous Sobolev spaces of fractional order on $\mathbb{R}^n$ and $\mathbb{R}^n_+$ in the scope of the treatment of non-linear partial differential equations. This construction extends the construction of homogeneous Besov spaces on $\eus{S}'_h(\mathbb{R}^n)$ started by Bahouri, Chemin and Danchin on $\mathbb{R}^n$. We will also extend the treatment done by Danchin and Mucha on $\mathbb{R}^n_+$, and the construction of homogeneous Sobolev spaces of integer orders started by Danchin, Hieber, Mucha and Tolksdorf on $\mathbb{R}^n$ and $\mathbb{R}^n_+$.
 
Properties of real and complex interpolation, duality, and density are discussed. Trace results are also reviewed. Our approach relies mostly on interpolation theory and yields simpler proofs of some already known results in the case of Besov spaces.

The lack of completeness for our function spaces with high regularities will lead to the consideration of the intersection with a complete space to enforce the behavior of low frequencies. From this point one performs decoupled estimates, in order to obtain results similar to the one obtained in the case of low regularities.

As standard and simple applications, we treat the problems of Dirichlet and Neumann Laplacians in these homogeneous function spaces.
\end{abstract}

\begin{center}
    {\footnotesize \textsc{THIS MANUSCRIPT HAS BEEN ACCEPTED FOR PUBLICATION IN}

    \textit{\textbf{Tunisian Journal of Mathematics}}\textbf{, Vol. 6 (2024), No. 2, 343–404, DOI:} \href{https://doi.org/10.2140/tunis.2024.6.343}{0.2140/tunis.2024.6.343}}
\end{center}

\addtocontents{toc}{\protect\thispagestyle{empty}}
\tableofcontents
\renewcommand{\arraystretch}{1.5}


~


\section{Introduction}

\subsection{Motivations and interests}

We want to give an appropriate construction of homogeneous Sobolev spaces as subspaces of tempered distributions instead of a quotient space of distributions by polynomials. This construction is motivated by the fact that we would like to: give meaning to (para)product laws, be stable under the action of a global diffeomorphism, or even to look at boundary values, and thus verify the precise meaning of traces, when these spaces are restricted to a domain. This could be somewhat difficult if we work with tempered distributions up to polynomials. Indeed, it is not clear that one can perform previous operations in a way that does not depend on a choice of a representative $u+P\in\eus{S}'(\mathbb{R}^n)$ of $[u]\in \bigslant{\eus{S}'(\mathbb{R}^n)}{\mathbb{C}[x]}$. This is inconvenient when it comes to study non-linear partial differential equations, or partial differential equations on a domain with boundary conditions (moreover, applying an extension operator to a polynomial does not give back a polynomial). However, the interested reader could consult, for instance \cite[Chapter~6,~Section~6.3]{BerghLofstrom1976}, \cite[Chapter~5]{bookTriebel1983}, or \cite[Chapter~2,~Section~2.4]{bookSawano2018} for such a construction on the whole space.

Bourdaud and Triebel have highlighted such problems in the choice of such constructions, see their respective work \cite{Bourdaud1988,Bourdaud2013} and \cite[Chapter~2,~Section~2.4]{Triebel2015}.

In the case of the common realization in the quotient structure, for $[u],[v]\in\bigslant{\eus{S}'(\mathbb{R}^n)}{\mathbb{C}[x]}$, and $u+P, v+Q, u+\Tilde{P}, v+\Tilde{Q} \in\eus{S}'(\mathbb{R}^n)$ two representatives of $[u]$ and $[v]$, we have
\begin{align*}
(u+P) (v+Q) - (u+\Tilde{P}) (v+\Tilde{Q}) =& (P- \Tilde{P})v + (Q-\Tilde{Q})u+ P Q - \Tilde{P}\Tilde{Q}.
\end{align*}
Therefore, under the existence of meaningful bilinear (para)product estimates, although $ P Q - \Tilde{P}\Tilde{Q}$ is a polynomial, this is however not the case for $(P- \Tilde{P})v + (Q-\Tilde{Q})u$ so that the product \textbf{depends on the choice of representatives!}

Another possibility to build homogeneous function spaces would be to naively complete the Schwartz class with respect to the homogeneous norm we want to consider. This construction has the disadvantage of producing elements that may no longer be distributions. For example, one can check that
\begin{align*}
\mathrm{C}^\infty_c(\mathbb{R}^n)\not\subset \dot{\mathrm{H}}^{-\frac{n}{2}}(\mathbb{R}^n).
\end{align*}
This prevents us, \textit{a priori}, from identifying elements of $\dot{\mathrm{H}}^{\frac{n}{2}}(\mathbb{R}^n)$ (as a completion) as distributions. This phenomenon is known as \textit{infrared divergence} and relates to convergence problems for the sum of low frequencies in the sense of Fourier.

Thus, for a consistent realization of homogeneous function spaces, we can only choose two out of the following three properties:
\begin{enumerate}[label=\textit{(\roman*)}]
\item functions spaces whose elements are distributions, in a reasonable sense;
\item well-defined product laws;
\item all spaces are complete.
\end{enumerate}

The idea of Bahouri, Chemin and Danchin in \cite[Chapter~2]{bookBahouriCheminDanchin} was to accept the loss of point \textit{(iii)} and to introduce a subspace of $\eus{S}'(\mathbb{R}^n)$ such that we get rid of polynomials, see \cite[Examples,~p.23]{bookBahouriCheminDanchin}. The aforementioned subspace of $\eus{S}'(\mathbb{R}^n)$ is
\begin{align}
    \eus{S}'_h(\mathbb{R}^n) &:= \left\{ u\in \eus{S}'(\mathbb{R}^n)\,\Big{|}\,\forall \Theta \in \mathrm{C}_c^\infty(\mathbb{R}^n),\,  \left\lVert \Theta(\lambda \mathfrak{D}) u \right\rVert_{\mathrm{L}^\infty(\mathbb{R}^n)} \xrightarrow[\lambda\rightarrow+\infty]{} 0\right\}\text{,}
\end{align}
where for $\lambda>0$, $\Theta(\lambda \mathfrak{D})u := \eus{F}^{-1}{\Theta}(\lambda\cdot)\eus{F}u$, $\eus{F}$ being the Fourier transform.

The condition of uniform convergence for low frequencies in the above definition ensures that for $u\in\eus{S}'_h(\mathbb{R}^n)$, the series
\begin{align*}
    \sum_{j\leqslant 0} \dot{\Delta}_j u
\end{align*}
converges in $\mathrm{L}^\infty(\mathbb{R}^n)$, and then, by \cite[Proposition~2.14]{bookBahouriCheminDanchin}, the following equality holds in $\eus{S}'(\mathbb{R}^n)$
\begin{align*}
    u = \sum_{j \in\mathbb{Z}} \dot{\Delta}_j u \text{, }
\end{align*}
where $(\dot{\Delta}_j)_{j\in\mathbb{Z}}$ is the homogeneous Littlewood-Paley decomposition on $\mathbb{R}^n$. With $\eus{S}'_h(\mathbb{R}^n)$ as an ambient space, Bahouri, Chemin and Danchin gave a construction of homogeneous Besov spaces $\dot{\mathrm{B}}^{s}_{p,q}(\mathbb{R}^n)$ which are complete whenever $(s,p,q)\in\mathbb{R}\times(1,+\infty)\times[1,+\infty]$ satisfies
\begin{align*}\tag{$\mathcal{C}_{s,p,q}$}\label{eq:cdtnComplet}
    \left[ s<\frac{n}{p} \right]\text{ or }\left[q=1\text{ and } s\leqslant\frac{n}{p} \right]\text{. }
\end{align*}
Later, this has also led Danchin and Mucha to consider homogeneous Besov spaces on $\mathbb{R}^n_+$ and on exterior domains, see \cite{DanchinMucha2009,DanchinMucha2015}, and Danchin, Hieber, Mucha and Tolksdorf \cite{DanchinHieberMuchaTolk2020} to consider homogeneous Sobolev spaces $\dot{\mathrm{H}}^{m,p}$ on $\mathbb{R}^n$ and $\mathbb{R}^n_+$, for $m\in\mathbb{N}$, $p\in(1,+\infty)$. Each iteration led to various important applications in fluid dynamics, such as Navier-Stokes equations with variable density in \cite{DanchinMucha2009,DanchinMucha2015}, or free boundary problems as in \cite{DanchinHieberMuchaTolk2020}. This highlights the needs of stability under global diffeomorphism, and (para)product laws that do not rely on a choice of a representative up to a polynomial.

The main purpose of this paper is TO NOT use the canonical embedding of $\eus{S}'_h(\mathbb{R}^n)$ in tempered distributions modulo polynomials $\bigslant{\eus{S}'(\mathbb{R}^n)}{\mathbb{C}[x]}$. We want to preserve consistency in the realization of our scales of homogeneous function spaces : hence, to have a unique common representative for each element of our function spaces, independent of the chosen regularity and integrability parameters. This is not possible in the quotient structure involving polynomials, for more details see \cite[Chapter~2,~Section~2.4.3]{bookSawano2018}.

We mention that several attempts to define (homogeneous) Besov spaces have been made over the last years. See for instance the work by Auscher and Amenta \cite{AmentaAuscher2018} for complete operator-adapted realizations over Tent and $Z$-spaces on $\mathbb{R}^n$, and the work of Iwabuchi, Matsuyama and Taniguchi \cite{IwabuchiMatsuyamaTaniguchi2019} for an operator-adapted-construction based on the spectral theory of Schr\"{o}dinger operators on a general openset $\Omega\subset \mathbb{R}^n$. See also the references therein for more historical background about Besov spaces and their operator-adapted counterparts. However, such kind of construction is either constrained to the pure linear theory, or is not compatible with the standard distribution theory.

We want to summarize, complete and extend the given construction of homogeneous Besov spaces in \cite[Chapter~2]{bookBahouriCheminDanchin} and the one of homogeneous Sobolev spaces started in \cite[Chapter~3]{DanchinHieberMuchaTolk2020}.
We are going to discuss in Section \ref{Sect:HomogeneousFunctionspaces} their construction and usual and expected properties, and especially their behavior through complex and real interpolation. The whole space case is treated first, then the case of the half-space will follow.

To be clearer, firstly, in the case of the whole space $\mathbb{R}^n$, we want to check that the real interpolation identity like
\begin{align}
    (\dot{\mathrm{H}}^{s_0,p}(\mathbb{R}^n),\dot{\mathrm{H}}^{s_1,p}(\mathbb{R}^n))_{\theta,q}= \dot{\mathrm{B}}^{(1-\theta)s_0+ \theta s_1}_{p,q}(\mathbb{R}^n)
\end{align}
still makes sense for our $\eus{S}'_h$-realization of homogeneous function spaces, for $s_0,s_1\in\mathbb{R}$, $\theta\in(0,1)$, $p\in(1,+\infty)$, $q\in[1,+\infty]$. This is done in Theorem \ref{thm:InterpHomSpacesRn}. This is the \textbf{first key result} of this paper. We will also check that expected duality results still hold in this framework.

Secondly, we show via some extension-restriction operators, few duality arguments, and interpolation theory, that we still have:
\begin{itemize}
    \item \textbf{Expected density results:} Proposition \ref{prop:densityCcinftyHsp0}, Lemma \ref{lem:densityCcinftyBesov0s+} and Corollaries \ref{cor:densityCcinftyIntersecHsp0} and \ref{cor:Bspq=Bspq0Rn+}.\\
    For $p\in(1,+\infty)$, $q\in[1,+\infty)$, $s>-1+\frac{1}{p}$, when \eqref{eq:cdtnComplet} is satisfied, 
\begin{align}\label{eq:IntroHsp0Bspq0Rn+}
    \dot{\mathrm{H}}^{s,p}_0(\mathbb{R}^n_+)= \overline{\mathrm{C}_c^\infty(\mathbb{R}^n_+)}^{\lVert \cdot \rVert_{\dot{\mathrm{H}}^{s,p}(\mathbb{R}^n)}}\text{,}\quad\text{and}\quad \dot{\mathrm{B}}^{s}_{p,q,0}(\mathbb{R}^n_+)= \overline{\mathrm{C}_c^\infty(\mathbb{R}^n_+)}^{\lVert \cdot \rVert_{\dot{\mathrm{B}}^{s}_{p,q}(\mathbb{R}^n)}}\text{ ;}
\end{align}
We need to make clear now that this is not a definition but a \textbf{consequence} of the definition written at the beginning of Section \ref{sec:FunctionSpacesRn+}.
    \item  \textbf{Expected duality results:} Propositions \ref{prop:DualitySobolevDomain} and \ref{prop:dualityBesovRn+}.\\
    For all $p\in(1,+\infty)$, $q\in(1,+\infty]$, $s>-1+\tfrac{1}{p}$, when \eqref{eq:cdtnComplet} is satisfied, it holds
\begin{align}
    (\dot{\mathrm{H}}^{s,p}(\mathbb{R}^n_+))' = &\dot{\mathrm{H}}^{-s,p'}_0(\mathbb{R}^n_+)\text{, }\, (\dot{\mathrm{B}}^{-s}_{p',q'}(\mathbb{R}^n_+))'=\dot{\mathrm{B}}^{s}_{p,q,0}(\mathbb{R}^n_+) \text{, }\\
    (\dot{\mathrm{H}}^{s,p}_0(\mathbb{R}^n_+))' = &\dot{\mathrm{H}}^{-s,p'}(\mathbb{R}^n_+)\text{, }\,(\dot{\mathrm{B}}^{-s}_{p',q',0}(\mathbb{R}^n_+))'=\dot{\mathrm{B}}^{s}_{p,q}(\mathbb{R}^n_+)\text{. }
\end{align}
    \item \textbf{Expected interpolation results:} Propositions \ref{prop:InterpHomBesSpacesRn+} and \ref{prop:InterpHomSpacesRn+}.\\
    If $(\dot{\mathfrak{h}},\dot{\mathfrak{b}})\in\{(\dot{\mathrm{H}},\dot{\mathrm{B}}), (\dot{\mathrm{H}}_0,\dot{\mathrm{B}}_{\cdot,\cdot,0})\}$, with $(p_0,q_0),(p_1,q_1),(p,q)\in(1,+\infty)\times[1,+\infty]$, $\theta\in(0,1)$, $s_j,s>-1+1/p_j$, $j\in\{0,1\}$, with $s>-1+1/p$, where $s_0,s_1,s$ are three real numbers, so that one can set
\begin{align*}
    \left(s,\frac{1}{p_\theta},\frac{1}{q_\theta}\right):= (1-\theta)\left(s_0,\frac{1}{p_0},\frac{1}{q_0}\right)+ \theta\left(s_1,\frac{1}{p_1},\frac{1}{q_1}\right)\text{, }
\end{align*}
 such that \eqref{eq:cdtnComplet} is satisfied. Then, one has
\begin{align}
    [\dot{\mathfrak{h}}^{s_0,p_0}(\mathbb{R}^n_+),\dot{\mathfrak{h}}^{s_1,p_1}(\mathbb{R}^n_+)]_\theta=\dot{\mathfrak{h}}^{s,p_\theta}(\mathbb{R}^n_+)\text{, }&\qquad (\dot{\mathfrak{b}}^{s_0}_{p,q_0}(\mathbb{R}^n_+),\dot{\mathfrak{b}}^{s_1}_{p,q_1}(\mathbb{R}^n_+))_{\theta,q} = \dot{\mathfrak{b}}^{s}_{p,q}(\mathbb{R}^n_+)\text{, }\\
    (\dot{\mathfrak{h}}^{s_0,p}(\mathbb{R}^n_+),\dot{\mathfrak{h}}^{s_1,p}(\mathbb{R}^n_+))_{\theta,q}= \dot{\mathfrak{b}}^{s}_{p,q}(\mathbb{R}^n_+)\text{, }&\qquad [\dot{\mathfrak{b}}^{s_0}_{p_0,q_0}(\mathbb{R}^n_+),\dot{\mathfrak{b}}^{s_1}_{p_1,q_1}(\mathbb{R}^n_+)]_{\theta} = \dot{\mathfrak{b}}^{s}_{p_\theta,q_\theta}(\mathbb{R}^n_+)\text{. }
\end{align}
\end{itemize}

Note that Proposition \ref{prop:SobolevMultiplier}, telling that $\mathbbm{1}_{\mathbb{R}^n_+}$ yields a bounded multiplication operator on $\dot{\mathrm{H}}^{s,p}(\mathbb{R}^n)$ for all $p\in(1,+\infty)$, $s\in(-1+\frac{1}{p},\frac{1}{p})$, is the \textbf{second key point} of this paper in order to obtain the expected results.
In particular, this will imply the following equalities of homogeneous Sobolev and Besov spaces with equivalent norms
\begin{align}
    \dot{\mathrm{H}}^{s,p}(\mathbb{R}^n_+) = \dot{\mathrm{H}}^{s,p}_0(\mathbb{R}^n_+)\text{, }\, \dot{\mathrm{B}}^{s}_{p,q}(\mathbb{R}^n_+)=\dot{\mathrm{B}}^{s}_{p,q,0}(\mathbb{R}^n_+) \text{. }
\end{align}

Some already existing density and boundedness results in Besov spaces presented here are commonly known, but redone here differently giving some minor improvements with regard to \cite[Chapter~3]{DanchinHieberMuchaTolk2020}, allowing to deal in several cases with $s>-1+\frac{1}{p}$ or $q=+\infty$. Some other results, despite being well known in the construction of usual Sobolev and Besov spaces, are quite new due to the ambient framework. This leads to some new proofs in a different spirit than the ones already available in the literature.

Due to the lack of completeness for homogeneous Sobolev (and Besov) spaces with high regularity exponents, one will need to consider intersection spaces $\dot{\mathrm{H}}^{s_0,p_0}\cap\dot{\mathrm{H}}^{s_1,p_1}$, with $\dot{\mathrm{H}}^{s_0,p_0}$ known to be complete (\textit{i.e.}, $s_0<n/p_0$). Therefore, one will have to check the boundedness of operators with decoupled estimates.

In Section \ref{Sec:TracesofFunctions}, we will review the meaning of traces at the boundary. As an application, in Section~\ref{Sec:DirNeuHalfspace}, we treat the well-posedness of Neumann and Dirichlet Laplacians on the half-space with fine enough behavior of solutions. The “fine enough behavior” have to be understood in the sense that the decay to $0$ at infinity is given in a very precise sense. 

\subsection{Notation, definition, and usual concepts}

Throughout this paper the dimension will be $n\geqslant 1$, and $\mathbb{N}$ will be the set of non-negative integers. For $a,b\in\mathbb{R}$ with $a\leqslant b$, we write $\llb a,b\rrb:=[a,b]\cap\mathbb{Z}$.

For $x\in\mathbb{R}^n$, the (open) ball centered in $x$ of radius $r>0$ is given by
\begin{align*}
    B(x,r):=\{\,y\in\mathbb{R}^n\,|\,\lvert x-y\rvert<r\,\}.
\end{align*}

For two real numbers $A,B\in\mathbb{R}$, $A\lesssim_{a,b,c} B$ means that there exists a constant $C>0$ depending on ${a,b,c}$ such that $A\leqslant C B$. When both $A\lesssim_{a,b,c} B$ and $B \lesssim_{a,b,c} A$ are true, we simply write $A\sim_{a,b,c} B$. When the number of indices is overloaded, we allow ourselves to write $A\lesssim_{a,b,c}^{d,e,f} B$ instead of $A\lesssim_{a,b,c,d,e,f} B$.

\subsubsection*{Spaces of measurable or smooth functions.}

Denote by $\eus{S}(\mathbb{R}^n,\mathbb{C})$ the space of complex valued Schwartz functions, and $\eus{S}'(\mathbb{R}^n,\mathbb{C})$ its dual called the space of tempered distributions. The Fourier transform on $\eus{S}'(\mathbb{R}^n,\mathbb{C})$ is written $\eus{F}$, and is pointwise defined for any $f\in\mathrm{L}^1(\mathbb{R}^n,\mathbb{C})$ by
\begin{align*}
  \eus{F}f(\xi) :=\int_{\mathbb{R}^n} f(x)\,e^{-ix\cdot\xi}\,\mathrm{d}x\text{, } \xi\in\mathbb{R}^n\text{. }
\end{align*}
Additionally, for $p\in[1+\infty]$, we write $p'=\tfrac{p}{p-1}$ its \textit{\textbf{H\"{o}lder conjugate}}.

For any $m\in\mathbb{N}$, the map $\nabla^m\,:\,\eus{S}'(\mathbb{R}^n,\mathbb{C})\longrightarrow \eus{S}'(\mathbb{R}^n,\mathbb{C}^{n^m})$ is defined as $\nabla^m u := (\partial^\alpha u)_{|\alpha|=m}$.
We denote by $(e^{-t(-\Delta)^\frac{1}{2}})_{t\geqslant0}$ the Poisson semigroup on $\mathbb{R}^n$. We also introduce operators $\nabla'$ and $\Delta'$ which are respectively the gradient and the Laplacian on $\mathbb{R}^{n-1}$ identified with the $n-1$ first variables of $\mathbb{R}^n$, \textit{i.e.} $\nabla'=(\partial_{x_1}, \ldots, \partial_{x_{n-1}})$ and $\Delta' = \partial_{x_1}^2 + \ldots + \partial_{x_{n-1}}^2$.

When $\Omega$ is an open set of $\mathbb{R}^n$, $\mathrm{C}_c^\infty(\Omega,\mathbb{C})$ is the set of smooth compactly supported functions in $\Omega$, and $\eus{D}'(\Omega,\mathbb{C})$ is its topological dual. For $p\in[1,+\infty)$, $\mathrm{L}^p(\Omega,\mathbb{C})$ is the normed vector space of complex valued (Lebesgue-) measurable functions whose $p$-th power is integrable with respect to the Lebesgue measure, $\eus{S}(\overline{\Omega},\mathbb{C})$ (\textit{resp.} $\mathrm{C}_c^\infty(\overline{\Omega},\mathbb{C})$) stands for functions which are restrictions on $\Omega$ of elements of $\eus{S}(\mathbb{R}^n,\mathbb{C})$ (\textit{resp.} $\mathrm{C}_c^\infty(\mathbb{R}^n,\mathbb{C})$). Unless the contrary is explicitly stated, we will always identify $\mathrm{L}^p(\Omega,\mathbb{C})$ (resp. $\mathrm{C}_c^\infty(\Omega,\mathbb{C})$) as the subspace of functions in $\mathrm{L}^p(\mathbb{R}^n,\mathbb{C})$ (resp. $\mathrm{C}_c^\infty(\mathbb{R}^n,\mathbb{C})$) supported in $\overline{\Omega}$ through the extension by $0$ outside $\Omega$. $\mathrm{L}^\infty(\Omega,\mathbb{C})$ stands for the space of essentially bounded (Lebesgue-) measurable functions.

For $s\in\mathbb{R}$, $p\in[1,+\infty)$, $\ell^p_s(\mathbb{Z},\mathbb{C})$, stands for the normed vector space of $p$-summable sequences of complex numbers with respect to the counting measure $2^{ksp}\mathrm{d}k$;  $\ell^\infty_s(\mathbb{Z},\mathbb{C})$ stands for sequences $(x_k)_{k\in\mathbb{Z}}$ such that $(2^{ks}x_k)_{k\in\mathbb{Z}}$  is bounded.
More generally, when $X$ is a Banach space, for $p\in[1,+\infty]$, one may also consider $\mathrm{L}^p(\Omega,X)$ which stands for the space of (Bochner-)measurable functions $u\,:\,\Omega\longrightarrow X$, such that $t\mapsto\lVert u(t)\rVert_X \in \mathrm{L}^p(\Omega,\mathbb{R})$, similarly one may consider $\ell^p_s(\mathbb{Z},X)$. Finally, $\mathrm{C}^0(\Omega,X)$ stands for the space of continuous functions on $\Omega\subset \mathbb{R}^n$ with values in $X$. The subspace  $\mathrm{C}^0_b(\mathbb{R},X)$ is made of uniformly bounded continuous functions and  $\mathrm{C}^0_0(\mathbb{R},X)$ is the set of continuous functions that vanish at infinity. For $\mathcal{C}\in\{\mathrm{C}^0,\mathrm{C}^0_b,\mathrm{C}^0_0\}$, we set $\mathcal{C}(\overline{\Omega}, X)$ to be the set of continuous functions on $\overline{\Omega}$ which are restrictions of elements that belongs to $\mathcal{C}(\mathbb{R}^n, X)$.

\subsubsection*{Interpolation of normed vector spaces}

Let $(X,\left\lVert\cdot\right\rVert_X)$ and $(Y,\left\lVert\cdot\right\rVert_Y)$ be two normed vector spaces. We write $X\hookrightarrow Y$ to say that $X$ embeds continuously into $Y$. Now let us recall briefly basics of interpolation theory. If there exists a Hausdorff topological vector space $Z$, such that $X,Y\subset Z$, then $X\cap Y$ and $X+Y$ are normed vector spaces with their canonical norms, and one can define the $K$-functional of $z\in X+Y$, for any $t>0$ by
\begin{align*}
    K(t,z,X,Y) := \underset{\substack{(x,y)\in X\times Y,\\ z=x+y}}{\inf}\left({\left\lVert{x}\right\rVert_{X}+t\left\lVert{y}\right\rVert_{Y}}\right)\text{. }
\end{align*}
This allows us to construct, for any $\theta\in(0,1)$, $q\in[1,+\infty]$, the real interpolation spaces between $X$ and $Y$ with indexes $\theta,q$ as
\begin{align*}
    (X,Y)_{\theta,q} := \left\{\, x\in X+Y\,\Big{|}\,t\longmapsto t^{-\theta}K(t,x,X,Y)\in\mathrm{L}^q_\ast(\mathbb{R}_+)\,\right\}\text{, }
\end{align*}
where $\mathrm{L}^q_\ast(\mathbb{R}_+):=\mathrm{L}^q((0,+\infty),\mathrm{d}t/t)$. The interested reader could check \cite[Chapter~1]{bookLunardiInterpTheory}, \cite[Chapter~3]{BerghLofstrom1976} for more information about real interpolation and its applications.

If moreover we assume that $X$ and $Y$ are complex Banach spaces, one can consider $\mathrm{F}(X,Y)$ the set of all continuous functions $f:\overline{S}\longmapsto X+Y$, $S$ being the strip of complex numbers whose real part is between $0$ and $1$, with $f$ holomorphic in $S$, and such that
\begin{align*}
    t\longmapsto f(it)\in \mathrm{C}^0_b(\mathbb{R},X) \quad\text{ and }\quad t\longmapsto f(1+it)\in \mathrm{C}^0_b(\mathbb{R},Y)\text{. }
\end{align*}
We can endow the space $\mathrm{F}(X,Y)$ with the norm
\begin{align*}
    \left\lVert{f}\right\rVert_{\mathrm{F}(X,Y)}:=\max\left(\underset{t\in\mathbb{R}}{\mathrm{sup}} \left\lVert {f(it)}\right\rVert_{X},\underset{t\in\mathbb{R}}{\mathrm{sup}} \left\lVert {f(1+it)}\right\rVert_{Y}\right)\text{, }
\end{align*}
which makes $\mathrm{F}(X,Y)$ a Banach space since it is a closed subspace of $\mathrm{C}^0(\overline{S},X+Y)$.
Hence, for $\theta\in(0,1)$, the normed vector space given by
\begin{align*}
    [X,Y]_{\theta} &:= \left\{\,f(\theta)\,\big{|}\,f\in \mathrm{F}(X,Y)\,\right\} 
    \text{, }\\
    \left\lVert{x}\right\rVert_{[X,Y]_{\theta}} &:= \underset{\substack{f\in \mathrm{F}(X,Y),\\ f(\theta)=x}}{\inf} \left\lVert{f}\right\rVert_{\mathrm{F}(X,Y)}\text{, }
\end{align*}
is a Banach space called the complex interpolation space between $X$ and $Y$ associated with $\theta$. Again, the interested reader could check \cite[Chapter~2]{bookLunardiInterpTheory}, \cite[Chapter~4]{BerghLofstrom1976} for more information about complex interpolation and its applications.


\section{Homogeneous function spaces on the whole space} \label{Sect:HomogeneousFunctionspaces}

All the function spaces considered here are scalar complex valued. To alleviate the notations during this whole section, we will write $\mathrm{L}^p(\Omega)$ instead of $\mathrm{L}^p(\Omega,\mathbb{C})$, and similarly for any other function spaces: we drop the arrival space $\mathbb{C}$.

\subsection{Definition, usual properties}

To deal with Besov spaces on the whole space, we need to introduce the Littlewood-Paley decomposition given by $\phi\in \mathrm{C}_c^\infty(\mathbb{R}^n)$, radial, real-valued, non-negative,
such that
\begin{itemize}[label={$\bullet$}]
    \item $\supp \phi \subset B(0,4/3)$;
    \item ${\phi}_{|_{B(0,3/4)}}=1$;
\end{itemize}
so we define the following functions for any $j\in\mathbb{Z}$ for all $\xi\in\mathbb{R}^n$,
\begin{align*}
    \phi_j(\xi):=\phi(2^{-j}\xi)\text{, }\qquad \psi_j(\xi):= \phi_{j}(\xi/2)-\phi_{j}(\xi)\text{,}
\end{align*}
and the family $(\psi_j)_{j\in\mathbb{Z}}$ has the following properties
\begin{itemize}[label={$\bullet$}]
    \item $\mathrm{supp}(\psi_j)\subset \{\,\xi\in\mathbb{R}^n\,|\,3\cdot 2^{j-2}\leqslant\left\lvert{\xi}\right\rvert \leqslant 2^{j+3}/3\,\}$;
    \item $\forall\xi\in\mathbb{R}^n\setminus\{0\}$, $\sum\limits_{j=-M}^N{\psi_j}(\xi)\xrightarrow[N,M\rightarrow+\infty]{} 1$.
\end{itemize}
Such a family $(\phi,(\psi_j)_{j\in\mathbb{Z}})$ is called a Littlewood-Paley family. Now, we consider the two following families of operators associated with their Fourier multipliers:
\begin{itemize}[label={$\bullet$}]
    \item The \textit{\textbf{homogeneous}} family of Littlewood-Paley dyadic decomposition operators $(\dot{\Delta}_j)_{j\in\mathbb{Z}}$, where
    \begin{align*}
        \dot{\Delta}_j:= \eus{F}^{-1}\psi_j\eus{F},
    \end{align*}
    \item The \textit{\textbf{inhomogeneous}} family of Littlewood-Paley dyadic decomposition operators $({\Delta}_k)_{k\in\mathbb{Z}}$, where
    \begin{align*}
       {\Delta}_{-1}:= \eus{F}^{-1}\phi\eus{F}\text{, }
    \end{align*}
    $\Delta_k:=\dot{\Delta}_k$ for any $k\geqslant 0$, and $\Delta_k:=0$ for any $k\leqslant-2$.
    \item The $j$-th frequency cut-off operators given for all $j\in\mathbb{Z}$ by
    \begin{align*}
       \dot{S}_j:= \eus{F}^{-1}\phi_j\eus{F} \text{. }
    \end{align*}
\end{itemize}
One may notice, as a direct application of Young's inequality for the convolution, that they are all uniformly bounded families of operators on $\mathrm{L}^p(\mathbb{R}^n)$, $p\in[1,+\infty]$.

Both family of operators lead for $s\in\mathbb{R}$, $p,q\in[1,+\infty]$, $u\in \eus{S}'(\mathbb{R}^n)$ to the following quantities,
\begin{align*}
    \left\lVert u \right\rVert_{\mathrm{B}^{s}_{p,q}(\mathbb{R}^n)}= \left\lVert(2^{ks}\left\lVert {\Delta}_k u \right\rVert_{\mathrm{L}^{p}(\mathbb{R}^n)})_{k\in\mathbb{Z}}\right\rVert_{\ell^{q}(\mathbb{Z})}\text{ and }
    \left\lVert u \right\rVert_{\dot{\mathrm{B}}^{s}_{p,q}(\mathbb{R}^n)} = \left\lVert(2^{js}\left\lVert \dot{\Delta}_j u \right\rVert_{\mathrm{L}^{p}(\mathbb{R}^n)})_{j\in\mathbb{Z}}\right\rVert_{\ell^{q}(\mathbb{Z})}\text{, }
\end{align*}
respectively named the inhomogeneous and homogeneous Besov norms, but the homogeneous norm is not really a norm since $\left\lVert u \right\rVert_{\dot{\mathrm{B}}^{s}_{p,q}(\mathbb{R}^n)}=0$ does not imply that $u=0$. Thus, following \cite[Chapter~2]{bookBahouriCheminDanchin} and \cite[Chapter~3]{DanchinHieberMuchaTolk2020}, we introduce a subspace of tempered distributions such that $\left\lVert \cdot \right\rVert_{\dot{\mathrm{B}}^{s}_{p,q}(\mathbb{R}^n)}$ is point-separating, say
\begin{align*}
   \eus{S}'_h(\mathbb{R}^n) &:= \left\{ u\in \eus{S}'(\mathbb{R}^n)\,\Big{|}\,\forall \Theta \in \mathrm{C}_c^\infty(\mathbb{R}^n),\,  \left\lVert \Theta(\lambda \mathfrak{D}) u \right\rVert_{\mathrm{L}^\infty(\mathbb{R}^n)} \xrightarrow[\lambda\rightarrow+\infty]{} 0\right\}\text{,}
\end{align*}
where for $\lambda>0$, $\Theta(\lambda \mathfrak{D})u = \eus{F}^{-1}{\Theta}(\lambda\cdot)\eus{F}u$. Notice that $\eus{S}'_h(\mathbb{R}^n)$ does not contain any nonzero polynomials, and for any $p\in[1,+\infty)$, $\mathrm{L}^p(\mathbb{R}^n)\subset\eus{S}'_h(\mathbb{R}^n)$.

One can also define the following quantities called the inhomogeneous and homogeneous Sobolev spaces' potential norms
\begin{align*}
    \left\lVert {u} \right\rVert_{\mathrm{\mathrm{H}}^{s,p}(\mathbb{R}^n)}:= \left\lVert {(\mathrm{I}-\Delta)^\frac{s}{2} u} \right\rVert_{\mathrm{L}^{p}(\mathbb{R}^n)}\text{ and } \left\lVert {u} \right\rVert_{\dot{\mathrm{H}}^{s,p}(\mathbb{R}^n)}:= \Big\lVert \sum_{j\in\mathbb{Z}} (-\Delta)^\frac{s}{2}\dot{\Delta}_{j} u  \Big\rVert_{{\mathrm{L}}^{p}(\mathbb{R}^n)}\text{, }
\end{align*}
where $(-\Delta)^\frac{s}{2}$ is understood on $u\in \eus{S}'_h(\mathbb{R}^n)$ by the action on its dyadic decomposition, \textit{i.e.}
\begin{align*}
    (-\Delta)^\frac{s}{2}\dot{\Delta}_j u:= \eus{F}^{-1}(|\cdot|^s\eus{F}\dot{\Delta}_j u)\text{,}
\end{align*}
which gives, \textit{a priori}, a family of $\mathrm{C}^\infty$ functions with at most polynomial growth. Thanks to \cite[Lemma~3.3,~Definition~3.4]{DanchinHieberMuchaTolk2020},
\begin{align*}
    \sum_{j\in\mathbb{Z}} (-\Delta)^\frac{s}{2}\dot{\Delta}_{j} u \in\eus{S}'_h(\mathbb{R}^n)
\end{align*}
holds for all $u\in\eus{S}'_h(\mathbb{R}^n)$, whenever $s\in [0,+\infty)$.

When $u\in\eus{S}'_h(\mathbb{R}^n)$ and $\sum_{j\in\mathbb{Z}} (-\Delta)^\frac{s}{2}\dot{\Delta}_{j} u \in\eus{S}'_h(\mathbb{R}^n)$, for $s\in\mathbb{R}$, one will simply write without distinction,
\begin{align*}
    (-\Delta)^\frac{s}{2}u = \sum_{j\in\mathbb{Z}} (-\Delta)^\frac{s}{2}\dot{\Delta}_{j} u \in\eus{S}'_h(\mathbb{R}^n) \text{, }
\end{align*}
which is somewhat consistent in this case with the fact that $(-\Delta)^\frac{s}{2}\dot{\Delta}_{j} u=\dot{\Delta}_{j} (-\Delta)^\frac{s}{2} u$, $j\in\mathbb{Z}$.

Hence, for any  $p,q\in[1,+\infty]$, $s\in\mathbb{R}$, we define
\begin{itemize}[label={$\bullet$}]
    \item the inhomogeneous and homogeneous Sobolev (Bessel and Riesz potential) spaces,\\
    \resizebox{0.90\textwidth}{!}{$
        \mathrm{\mathrm{H}}^{s,p}(\mathbb{R}^n)=\left\{\, u\in\eus{S}'(\mathbb{R}^n) \,\big{|}\, \left\lVert {u} \right\rVert_{\mathrm{\mathrm{H}}^{s,p}(\mathbb{R}^n)}<+\infty \,\right\}\text{, }\dot{\mathrm{H}}^{s,p}(\mathbb{R}^n)=\left\{\, u\in\eus{S}'_h(\mathbb{R}^n) \,\big{|}\, \left\lVert {u} \right\rVert_{\dot{\mathrm{H}}^{s,p}(\mathbb{R}^n)}<+\infty \,\right\}\text{ ; }$}
    \item and the inhomogeneous and homogeneous Besov spaces,\\
    \resizebox{0.90\textwidth}{!}{$\mathrm{B}^{s}_{p,q}(\mathbb{R}^n)=\left\{\, u\in\eus{S}'(\mathbb{R}^n) \,\big{|}\, \left\lVert {u} \right\rVert_{\mathrm{B}^{s}_{p,q}(\mathbb{R}^n)}<+\infty \,\right\}\text{, }\dot{\mathrm{B}}^{s}_{p,q}(\mathbb{R}^n)=\left\{\, u\in\eus{S}'_h(\mathbb{R}^n) \,\big{|}\, \left\lVert {u} \right\rVert_{\dot{\mathrm{B}}^{s}_{p,q}(\mathbb{R}^n)}<+\infty \,\right\}\text{, }
    $}
\end{itemize}
which are all normed vector spaces. We also introduce the following closures
\begin{align*}
    \mathcal{B}^{s}_{p,\infty}(\mathbb{R}^n) = \overline{\eus{S}(\mathbb{R}^n)}^{\left\lVert {\cdot} \right\rVert_{{\mathrm{B}}^{s}_{p,\infty}(\mathbb{R}^n)}} \text{ and } \dot{\mathcal{B}}^{s}_{p,\infty}(\mathbb{R}^n) = \overline{\eus{S}_0(\mathbb{R}^n)}^{\left\lVert {\cdot} \right\rVert_{\dot{\mathrm{B}}^{s}_{p,\infty}(\mathbb{R}^n)}} \text{. }
\end{align*}

Here $\eus{S}_0(\mathbb{R}^n)$ is defined as
\begin{align*}
    \eus{S}_0(\mathbb{R}^n):=\left\{\, u\in \eus{S}(\mathbb{R}^n)\,\left|\, 0\notin \supp\left(\eus{F}f\right) \,\right.\right\}\text{.}
\end{align*}

The treatment of homogeneous Besov spaces $\dot{\mathrm{B}}^{s}_{p,q}(\mathbb{R}^n)$, $s\in\mathbb{R}$, $p,q\in[1,+\infty]$, defined on $\eus{S}'_h(\mathbb{R}^n)$ has been done in an extensive manner in \cite[Chapter~2]{bookBahouriCheminDanchin}. However, the corresponding construction for homogeneous Sobolev spaces $\dot{\mathrm{H}}^{s,p}(\mathbb{R}^n)$, $s\in\mathbb{R}$, $p\in(1,+\infty)$ has only been done in the case $(p,s)\in(\{2\},\mathbb{R})\cup ((1,+\infty),\mathbb{N})$. See \cite[Chapter~1]{bookBahouriCheminDanchin} for the case $p=2$, \cite[Chapter~3]{DanchinHieberMuchaTolk2020} for the case $s\in\mathbb{N}$.

The inhomogeneous spaces  $\mathrm{L}^p(\mathbb{R}^n)$, ${\mathrm{H}}^{s,p}(\mathbb{R}^n)$, and $\mathrm{B}^{s}_{p,q}(\mathbb{R}^n)$ are all complete for all $p,q\in [1,+\infty]$, $s\in\mathbb{R}$, but in this setting homogeneous function spaces are no longer always complete (see \cite[Proposition~1.34,~Remark~2.26]{bookBahouriCheminDanchin}).

For homogeneous Besov spaces, we have the following properties:
\begin{proposition}\label{prop:PropertiesHomBesovSpacesRn} Let $p,q\in[1,+\infty]$, $s\in\mathbb{R}$. The following assertions hold
\begin{enumerate}
    \item if $(s,p,q)$ satisfies the condition 
    \begin{align*}\tag{$\mathcal{C}_{s,p,q}$}\label{AssumptionCompletenessExponents}
    \left[ s<\frac{n}{p} \right]\text{ or }\left[q=1\text{ and } s\leqslant\frac{n}{p} \right]
\end{align*}
    then $\dot{\mathrm{B}}^{s}_{p,q}(\mathbb{R}^n)$ is a complete normed vector space,
    \item for all $m\in\mathbb{N}$, all $u\in\eus{S}'_h(\mathbb{R}^n)$,
    \begin{align}
    \sum_{j=1}^n \lVert \partial_{x_j}^m u\rVert_{\dot{\mathrm{B}}^{s}_{p,q}(\mathbb{R}^n)} \sim_{s,m,p,n} \lVert \nabla^m u\rVert_{\dot{\mathrm{B}}^{s}_{p,q}(\mathbb{R}^n)} &\sim_{s,m,p,n} \lVert u\rVert_{\dot{\mathrm{B}}^{s+m}_{p,q}(\mathbb{R}^n)}\label{eq:equivNormsBesRn}\text{, }
\end{align}
\item if $p,q<+\infty$, the space $\eus{S}_0(\mathbb{R}^n)$ is dense in $\dot{\mathrm{B}}^{s}_{p,q}(\mathbb{R}^n)$,
\item if $1/q=1/p-s/n\in(0,1)$, we have dense Sobolev embeddings,
\begin{align*}
    \dot{\mathrm{B}}^{s}_{p,r}(\mathbb{R}^n)&\hookrightarrow \mathrm{L}^q(\mathbb{R}^n),\quad (p,q<+\infty,\, r\in[1,q], \,s<n/p)\\
    \dot{\mathrm{B}}^{n/p}_{p,1}(\mathbb{R}^n)&\hookrightarrow \mathrm{C}^0_0(\mathbb{R}^n),\quad ( p<+\infty)
\end{align*}
\item when $s>0$, one has ${\mathrm{B}}^{s}_{p,q}(\mathbb{R}^n)=\mathrm{L}^p(\mathbb{R}^n)\cap\dot{\mathrm{B}}^{s}_{p,q}(\mathbb{R}^n)$ with equivalence of norms and a continuous embedding
\begin{align*}
    \dot{\mathrm{B}}^{-s}_{p,q}(\mathbb{R}^n) \hookrightarrow {\mathrm{B}}^{-s}_{p,q}(\mathbb{R}^n).
\end{align*}
\end{enumerate}
\end{proposition}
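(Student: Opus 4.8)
The five assertions are of varying difficulty; I would organize the proof so that the standard Littlewood–Paley machinery does most of the work and the new phenomena (non-completeness, the ambient space $\eus{S}'_h$) are isolated.

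For assertion (1), completeness, the plan is to take a Cauchy sequence $(u_k)$ in $\dot{\mathrm{B}}^{s}_{p,q}(\mathbb{R}^n)$. For each fixed $j$, $(\dot{\Delta}_j u_k)_k$ is Cauchy in $\mathrm{L}^p$ (since $2^{js}\lVert\dot{\Delta}_j(u_k-u_\ell)\rVert_{\mathrm{L}^p}\le\lVert u_k-u_\ell\rVert_{\dot{\mathrm{B}}^s_{p,q}}$), hence converges to some $v_j\in\mathrm{L}^p$ with Fourier support in the annulus of $\psi_j$. The condition \eqref{AssumptionCompletenessExponents} is exactly what guarantees (via Bernstein's inequality and the Sobolev-type embedding on each dyadic block) that $\sum_{j\le 0} v_j$ converges in $\mathrm{L}^\infty$ — this is the infrared-convergence point: for $s<n/p$ one bounds $\lVert v_j\rVert_{\mathrm{L}^\infty}\lesssim 2^{jn/p}\lVert v_j\rVert_{\mathrm{L}^p}\lesssim 2^{j(n/p-s)}(2^{js}\lVert v_j\rVert_{\mathrm{L}^p})$ and sums the geometric factor against the $\ell^q$ (or $\ell^1$) sequence; the endpoint $s=n/p,\ q=1$ is the borderline case where the geometric factor is $1$ and one uses summability directly. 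So $u:=\sum_{j\in\mathbb{Z}}v_j$ converges in $\eus{S}'(\mathbb{R}^n)$, lies in $\eus{S}'_h(\mathbb{R}^n)$ (the $\mathrm{L}^\infty$-decay of low-frequency truncations is built in), satisfies $\dot{\Delta}_j u=v_j$, and a Fatou argument on the $\ell^q(\mathbb{Z})$ norm gives $u_k\to u$ in $\dot{\mathrm{B}}^s_{p,q}$. I expect this to be the main obstacle, precisely because it is where the geometry of \eqref{AssumptionCompletenessExponents} enters and where one must be careful that the limit is a genuine tempered distribution in $\eus{S}'_h$, not merely a formal series.

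Assertions (2), (4), (5) are then comparatively routine. For (2): the multiplier $\xi\mapsto(i\xi_j)^m|\xi|^{-?}$ issue is avoided by working block-by-block — on $\supp\psi_j$ the symbols $|\xi|^m$, $2^{jm}\widetilde\psi(2^{-j}\xi)$, $(i\xi_j)^m$ differ by $\mathrm{L}^1$-normalized-dilates of fixed Schwartz multipliers, so Young's inequality gives $\lVert\dot\Delta_j\partial_{x_j}^m u\rVert_{\mathrm{L}^p}\sim 2^{jm}\lVert\dot\Delta_j u\rVert_{\mathrm{L}^p}$ with $j$-independent constants, and one takes $\ell^q_s$ norms; the equivalence with $\lVert\nabla^m u\rVert_{\dot{\mathrm{B}}^s_{p,q}}$ is elementary since there are finitely many components. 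For (4), the Sobolev embeddings: decompose $u=\sum_j\dot\Delta_j u$, apply Bernstein $\lVert\dot\Delta_j u\rVert_{\mathrm{L}^q}\lesssim 2^{j(n/p-n/q)}\lVert\dot\Delta_j u\rVert_{\mathrm{L}^p}=2^{js}\lVert\dot\Delta_j u\rVert_{\mathrm{L}^p}$, and sum — for the $\mathrm{L}^q$ target one needs the sharper estimate summing in $\ell^r$ with $r\le q$ (a real-interpolation / Bernstein argument, essentially \cite[Chapter~2]{bookBahouriCheminDanchin}); for the $\dot{\mathrm{B}}^{n/p}_{p,1}\hookrightarrow\mathrm{C}^0_0$ case one notes $\sum_j\lVert\dot\Delta_j u\rVert_{\mathrm{L}^\infty}\lesssim\sum_j 2^{jn/p}\lVert\dot\Delta_j u\rVert_{\mathrm{L}^p}=\lVert u\rVert_{\dot{\mathrm{B}}^{n/p}_{p,1}}<\infty$ gives uniform convergence of a series of continuous functions vanishing at infinity (each $\dot\Delta_j u\in\mathrm{C}^0_0$ since $\widehat{\dot\Delta_j u}\in\mathrm{L}^1$ and $\dot\Delta_j u\in\mathrm{L}^p$), and density follows from (3). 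For (5), $s>0$: the embedding $\dot{\mathrm{B}}^{-s}_{p,q}\hookrightarrow\mathrm{B}^{-s}_{p,q}$ amounts to showing the inhomogeneous low-frequency block $\Delta_{-1}u=\dot S_0 u$ is controlled — write $\dot S_0 u=\sum_{j\le 0}\dot\Delta_j u$ (up to finitely many overlapping terms) and bound $\lVert\dot S_0 u\rVert_{\mathrm{L}^p}\le\sum_{j\le0}\lVert\dot\Delta_j u\rVert_{\mathrm{L}^p}=\sum_{j\le0}2^{js}(2^{-js}\lVert\dot\Delta_j u\rVert_{\mathrm{L}^p})\lesssim\lVert u\rVert_{\dot{\mathrm{B}}^{-s}_{p,q}}$ by Hölder in $\ell$ (geometric summability of $2^{js}$ for $j\le0$, $s>0$), while the high-frequency blocks are identical for both norms; the equality $\mathrm{B}^s_{p,q}=\mathrm{L}^p\cap\dot{\mathrm{B}}^s_{p,q}$ for $s>0$ is the dual statement — $\lVert u\rVert_{\mathrm{B}^s_{p,q}}\sim\lVert\Delta_{-1}u\rVert_{\mathrm{L}^p}+\lVert(2^{js}\lVert\dot\Delta_j u\rVert_{\mathrm{L}^p})_{j\ge0}\rVert_{\ell^q}$ and one checks $\lVert\Delta_{-1}u\rVert_{\mathrm{L}^p}\sim\lVert\dot S_0 u\rVert_{\mathrm{L}^p}$ is comparable to $\lVert u\rVert_{\mathrm{L}^p}$ modulo the high-frequency $\dot{\mathrm{B}}^s_{p,q}$-part when $s>0$, using again geometric summability.

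Finally, assertion (3), density of $\eus{S}_0(\mathbb{R}^n)$ when $p,q<\infty$: given $u\in\dot{\mathrm{B}}^s_{p,q}$, first truncate frequencies, $u_{N,M}:=\sum_{-M\le j\le N}\dot\Delta_j u\to u$ in $\dot{\mathrm{B}}^s_{p,q}$ (this is where $q<\infty$ is used — dominated convergence on the $\ell^q(\mathbb{Z})$ sum), noting $u_{N,M}$ has compactly supported Fourier transform away from $0$; then each $u_{N,M}\in\mathrm{L}^p$ ($p<\infty$) with spectrum in a fixed annulus, so it can be approximated in $\mathrm{L}^p$ by Schwartz functions with spectrum in a slightly larger annulus avoiding $0$ (convolve with an approximate identity whose Fourier transform is $1$ on that annulus, or mollify and re-project), and on such a band the $\dot{\mathrm{B}}^s_{p,q}$-norm is equivalent to the $\mathrm{L}^p$-norm, giving convergence in $\dot{\mathrm{B}}^s_{p,q}$. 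All of (3)–(5) are stated in \cite[Chapter~2]{bookBahouriCheminDanchin} (and (1) is \cite[Proposition~2.25]{bookBahouriCheminDanchin}), so the proof can in large part be a guided reference with the key estimates reproduced for completeness.
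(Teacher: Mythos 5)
Your proposal is correct and follows the standard Littlewood--Paley arguments; the paper itself offers no proof of this proposition but simply cites \cite[Theorem~2.25, Lemma~2.1, Propositions~2.27~\&~2.39, Theorems~2.40~\&~2.41]{bookBahouriCheminDanchin} and \cite[Theorem~6.3.2]{BerghLofstrom1976}, and your sketches reconstruct exactly the proofs found in those references (completeness via blockwise $\mathrm{L}^p$-limits plus infrared convergence under \eqref{AssumptionCompletenessExponents}, density via frequency truncation and spatial cutoff, embeddings via Bernstein, and the low-frequency geometric summation for $s>0$). One small slip in assertion \textit{(ii)}: the single-coordinate equivalence $\lVert\dot\Delta_j\partial_{x_k}^m u\rVert_{\mathrm{L}^p}\sim 2^{jm}\lVert\dot\Delta_j u\rVert_{\mathrm{L}^p}$ is false as stated (the symbol $(i\xi_k)^m$ vanishes on part of the annulus), and only the sum over the $n$ coordinate directions is comparable to $2^{jm}\lVert\dot\Delta_j u\rVert_{\mathrm{L}^p}$, via the smooth homogeneous multipliers $|\xi|^{2m}\overline{(i\xi_k)^m}/\sum_{l}\xi_l^{2m}$; this is what the statement actually requires, so it is a notational rather than a substantive gap.
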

see \cite[Theorem~2.25, Lemma~2.1, Propositions~2.27~\&~2.39, Theorems~2.40~\&~2.41]{bookBahouriCheminDanchin} and \cite[Theorem~6.3.2]{BerghLofstrom1976} for more details.

In the case of $\eus{S}'_h$-realizations of homogeneous Sobolev spaces, only few properties are explicitly stated in the literature. We repair this injustice here, for which usual proofs can be adapted (almost) straightforwardly and are well known, and therefore omitted here. Several references and comments are given after the next statement.

\begin{proposition}\label{prop:PropertiesHomSobolevSpacesRn} Let $p\in(1,+\infty)$, $s\in\mathbb{R}$. The following assertions hold:
\begin{enumerate}
    \item if $s\in[0,n/p)$ and $1/q=1/p-s/n\in(0,1)$, we have the standard Sobolev embeddings,
\begin{align*}
    \dot{\mathrm{H}}^{s,p}(\mathbb{R}^n)&\hookrightarrow \mathrm{L}^q(\mathbb{R}^n),\\
    {\mathrm{L}}^{p}(\mathbb{R}^n)&\hookrightarrow \dot{\mathrm{H}}^{-s,q}(\mathbb{R}^n),
\end{align*}
    \item if $(s,p)$ is such that it satisfies
    \begin{align*}\tag{$\mathcal{C}_{s,p}$}\label{AssumptionCompletenessExponentsSobolev}
     s<\frac{n}{p} 
    \end{align*}
    then $\dot{\mathrm{H}}^{s,p}(\mathbb{R}^n)$ is a complete normed vector space and
    \begin{align*}
        (-\Delta)^\frac{s}{2}\,:\, \dot{\mathrm{H}}^{s,p}(\mathbb{R}^n) \longrightarrow {\mathrm{L}}^{p}(\mathbb{R}^n)
    \end{align*}
    is a bijective isometry of Banach spaces,
\item for all $m\in\mathbb{N}$, all $u\in\eus{S}'_h(\mathbb{R}^n)$,
    \begin{align}
    \sum_{j=1}^n \lVert \partial_{x_j}^m u\rVert_{\dot{\mathrm{H}}^{s,p}(\mathbb{R}^n)} \sim_{s,m,p,n} \lVert \nabla^m u\rVert_{\dot{\mathrm{H}}^{s,p}(\mathbb{R}^n)} &\sim_{s,m,p,n} \lVert u\rVert_{\dot{\mathrm{H}}^{s+m,p}(\mathbb{R}^n)}\label{eq:equivNormsSobRn}\text{, }
    \end{align}
\item the space $\eus{S}_0(\mathbb{R}^n)$ is dense in $\dot{\mathrm{H}}^{s,p}(\mathbb{R}^n)$,
\item for all $u\in\eus{S}'_h(\mathbb{R}^n)$, one has the equivalence of norms
\begin{align*}
    \left\lVert{u}\right\rVert_{\dot{\mathrm{H}}^{s,p}(\mathbb{R}^n)}\sim_{s,p,n}\left\lVert{u}\right\rVert_{\dot{\mathrm{F}}^{s}_{p,2}(\mathbb{R}^n)}:= \left\lVert(2^{js} \dot{\Delta}_j u )_{j\in\mathbb{Z}}\right\rVert_{\mathrm{L}^{p}(\mathbb{R}^n,\ell^{2}(\mathbb{Z}))}\text{,}
\end{align*}
\item if $s\geqslant0$, one has ${\mathrm{H}}^{s,p}(\mathbb{R}^n)=\mathrm{L}^p(\mathbb{R}^n)\cap\dot{\mathrm{H}}^{s,p}(\mathbb{R}^n)$ with equivalence of norms and a continuous embedding
\begin{align*}
    \dot{\mathrm{H}}^{-s,p}(\mathbb{R}^n) \hookrightarrow {\mathrm{H}}^{-s,p}(\mathbb{R}^n),
\end{align*}
\item if for some $\theta\in(0,1)$, one has $(s,1/p) = (1-\theta)(s_0,1/p_0)+\theta(s_1,1/p_1)$ for $j\in\{0,1\}$, $s_j\in\mathbb{R}$, $p_j\in(1,+\infty)$, then for all $u\in\eus{S}'_h(\mathbb{R}^n)$, we have
\begin{align*}
    \lVert u\rVert_{\dot{\mathrm{H}}^{s,p}(\mathbb{R}^n)} \lesssim_{p_0,p_1,n}^{s_0,s_1} \lVert u\rVert_{\dot{\mathrm{H}}^{s_0,p_0}(\mathbb{R}^n)}^{1-\theta} \lVert u\rVert_{\dot{\mathrm{H}}^{s_1,p_1}(\mathbb{R}^n)}^{\theta} \text{.}
\end{align*}
\end{enumerate}
\end{proposition}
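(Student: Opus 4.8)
The backbone of the argument is a standard pair of facts which transfer verbatim to the $\eus{S}'_h(\mathbb{R}^n)$-setting: the homogeneous Littlewood--Paley theorem $\lVert f\rVert_{\mathrm{L}^p(\mathbb{R}^n)}\sim_{p,n}\lVert(\dot{\Delta}_j f)_{j\in\mathbb{Z}}\rVert_{\mathrm{L}^p(\mathbb{R}^n,\ell^2(\mathbb{Z}))}$ for $p\in(1,+\infty)$, together with its vector-valued multiplier companion (a family of operators $(m(2^{-j}\mathfrak{D}))_{j\in\mathbb{Z}}$ whose symbols $m(2^{-j}\cdot)$ obey uniform Mikhlin bounds acts boundedly on $\mathrm{L}^p(\mathbb{R}^n,\ell^2(\mathbb{Z}))$), and the Bernstein inequalities for functions with Fourier support in a fixed ball or annulus. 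What has to be watched at each step is that every series one writes down converges in the intended topology --- in $\mathrm{L}^p(\mathbb{R}^n)$, or in $\mathrm{L}^\infty(\mathbb{R}^n)$ for the low-frequency part --- and that its sum stays in $\eus{S}'_h(\mathbb{R}^n)$; this is exactly where the threshold $s<n/p$ intervenes.

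I would prove \emph{(v)} first, as it is the bridge between the two definitions of $\dot{\mathrm{H}}^{s,p}$. Since $\psi_j=\psi_0(2^{-j}\cdot)$, one has $(-\Delta)^\frac{s}{2}\dot{\Delta}_j u = 2^{js}\,g_s(2^{-j}\mathfrak{D})\widetilde{\dot{\Delta}}_j u$, where $g_s:=\lvert\cdot\rvert^s\psi_0\in\mathrm{C}_c^\infty(\mathbb{R}^n\setminus\{0\})$ and $\widetilde{\dot{\Delta}}_j$ is a slightly fattened block satisfying $\dot{\Delta}_j=\dot{\Delta}_j\widetilde{\dot{\Delta}}_j$; dually, $2^{js}\dot{\Delta}_j u = r_s(2^{-j}\mathfrak{D})(-\Delta)^\frac{s}{2}\dot{\Delta}_j u$ with $r_s$ smooth, compactly supported, equal to $\lvert\cdot\rvert^{-s}$ on $\supp\psi_0$. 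Both $(g_s(2^{-j}\cdot))_{j}$ and $(r_s(2^{-j}\cdot))_{j}$ are uniform Mikhlin families. Setting $w:=\sum_j(-\Delta)^\frac{s}{2}\dot{\Delta}_j u$ and using that $\dot{\Delta}_k w = 2^{ks}g_s(2^{-k}\mathfrak{D})\widetilde{\dot{\Delta}}_k u$, the Littlewood--Paley theorem gives $\lVert w\rVert_{\mathrm{L}^p(\mathbb{R}^n)}\sim\lVert(\dot{\Delta}_k w)_k\rVert_{\mathrm{L}^p(\mathbb{R}^n,\ell^2(\mathbb{Z}))}$, and the two multiplier families transfer this back and forth with $\lVert(2^{ks}\dot{\Delta}_k u)_k\rVert_{\mathrm{L}^p(\mathbb{R}^n,\ell^2(\mathbb{Z}))}$, up to the harmless index shifts coming from $\widetilde{\dot{\Delta}}$. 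This is the asserted equivalence $\lVert u\rVert_{\dot{\mathrm{H}}^{s,p}(\mathbb{R}^n)}\sim_{s,p,n}\lVert u\rVert_{\dot{\mathrm{F}}^{s}_{p,2}(\mathbb{R}^n)}$.

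Everything else then follows from \emph{(v)} and the same toolbox. For \emph{(iii)}: each $\partial^\alpha u$ with $\lvert\alpha\rvert=m$ equals $m_\alpha(\mathfrak{D})(-\Delta)^\frac{m}{2}u$ with $m_\alpha(\xi)=(i\xi)^\alpha/\lvert\xi\rvert^m$ a Mikhlin symbol, while $\lvert\xi\rvert^m=\sum_{\lvert\alpha\rvert=m}n_\alpha(\xi)(i\xi)^\alpha$ with $n_\alpha(\xi)=\lvert\xi\rvert^m\,\overline{(i\xi)^\alpha}\big/\sum_{\lvert\beta\rvert=m}\lvert\xi^\beta\rvert^2$ also Mikhlin (the denominator is homogeneous of degree $2m$ and positive off the origin); the passage between $\partial_{x_\ell}^m$ and $\nabla^m$ uses the weighted arithmetic--geometric inequality $\lvert\xi^\alpha\rvert\lesssim\sum_\ell\lvert\xi_\ell\rvert^m$ valid on an annulus, and $\lVert(-\Delta)^\frac{m}{2}u\rVert_{\dot{\mathrm{H}}^{s,p}(\mathbb{R}^n)}=\lVert u\rVert_{\dot{\mathrm{H}}^{s+m,p}(\mathbb{R}^n)}$ is tautological from the definitions together with the commutation $(-\Delta)^\frac{s}{2}\dot{\Delta}_j=\dot{\Delta}_j(-\Delta)^\frac{s}{2}$. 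For \emph{(ii)}: $(-\Delta)^\frac{s}{2}\colon\dot{\mathrm{H}}^{s,p}(\mathbb{R}^n)\to\mathrm{L}^p(\mathbb{R}^n)$ is by construction an isometry, in particular injective; when $s<n/p$ one produces a preimage of any $f\in\mathrm{L}^p(\mathbb{R}^n)$ as $\sum_j(-\Delta)^{-\frac{s}{2}}\dot{\Delta}_j f$, whose low-frequency tail converges in $\mathrm{L}^\infty(\mathbb{R}^n)$ because $\lVert(-\Delta)^{-\frac{s}{2}}\dot{\Delta}_j f\rVert_{\mathrm{L}^\infty(\mathbb{R}^n)}\lesssim 2^{j(\frac{n}{p}-s)}\lVert f\rVert_{\mathrm{L}^p(\mathbb{R}^n)}$ by Bernstein, so the sum lies in $\eus{S}'_h(\mathbb{R}^n)$; hence $(-\Delta)^\frac{s}{2}$ is a bijective isometry onto the complete space $\mathrm{L}^p(\mathbb{R}^n)$, and $\dot{\mathrm{H}}^{s,p}(\mathbb{R}^n)$ is complete. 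Item \emph{(i)} is the Hardy--Littlewood--Sobolev inequality for $(-\Delta)^{-\frac{s}{2}}$ --- equivalently the Sobolev embedding $\dot{\mathrm{F}}^{s}_{p,2}\hookrightarrow\mathrm{L}^q=\dot{\mathrm{F}}^{0}_{q,2}$ --- read through the realisation, and its dual form. For \emph{(vi)}, writing $(1+\lvert\xi\rvert^2)^{\frac{s}{2}}=a(\xi)+b(\xi)\lvert\xi\rvert^{s}$ with $a\in\mathrm{C}_c^\infty(\mathbb{R}^n)$ and $b$ a Mikhlin symbol (for $s\geqslant0$), and likewise for the exponent $-\frac{s}{2}$, one matches $\lVert\cdot\rVert_{\mathrm{H}^{s,p}(\mathbb{R}^n)}$ with $\lVert\cdot\rVert_{\mathrm{L}^p(\mathbb{R}^n)}+\lVert\cdot\rVert_{\dot{\mathrm{H}}^{s,p}(\mathbb{R}^n)}$, respectively obtains $\lVert\cdot\rVert_{\mathrm{H}^{-s,p}(\mathbb{R}^n)}\lesssim\lVert\cdot\rVert_{\dot{\mathrm{H}}^{-s,p}(\mathbb{R}^n)}$. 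Finally, for \emph{(vii)}, by \emph{(v)} it reduces to the pointwise Hölder inequality $\sum_j 2^{2js}\lvert\dot{\Delta}_j u\rvert^2\leqslant\big(\sum_j 2^{2js_0}\lvert\dot{\Delta}_j u\rvert^2\big)^{1-\theta}\big(\sum_j 2^{2js_1}\lvert\dot{\Delta}_j u\rvert^2\big)^{\theta}$, followed by Hölder in $x$ with exponents $\tfrac{p_0}{(1-\theta)p}$ and $\tfrac{p_1}{\theta p}$.

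I would keep \emph{(iv)} for last, as the fiddliest point. The tails $u-\sum_{\lvert j\rvert\leqslant N}\dot{\Delta}_j u$ tend to $0$ in $\dot{\mathrm{H}}^{s,p}(\mathbb{R}^n)$ by dominated convergence in $\mathrm{L}^p(\mathbb{R}^n,\ell^2(\mathbb{Z}))$ --- here $p<+\infty$ is essential --- so frequency-truncated elements are dense, and any such $v$ has spectrum in a fixed annulus, whence $\lVert v\rVert_{\dot{\mathrm{H}}^{s,p}(\mathbb{R}^n)}\sim\lVert v\rVert_{\mathrm{L}^p(\mathbb{R}^n)}$ by Bernstein. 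The only real obstruction is that $v$ need not be Schwartz and that a naive truncation $\chi(\cdot/R)v$ destroys the spectral gap at the origin; the remedy is to compose the truncation with a smooth frequency projection $\widetilde{\dot{\Delta}}$ equal to $1$ on $\supp\widehat{v}$, so that $\widetilde{\dot{\Delta}}\big(\chi(\cdot/R)v\big)\in\eus{S}_0(\mathbb{R}^n)$ and converges to $v$ in $\mathrm{L}^p(\mathbb{R}^n)$, hence in $\dot{\mathrm{H}}^{s,p}(\mathbb{R}^n)$. The genuine difficulty of the whole proposition is not any individual computation but the uniform bookkeeping keeping all these operations inside $\eus{S}'_h(\mathbb{R}^n)$ and under control of the homogeneous norms --- which is precisely why the author, having the classical proofs for (inhomogeneous) Sobolev and Triebel--Lizorkin spaces in mind, declares these adaptations routine and omits them.
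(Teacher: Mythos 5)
Your proposal is correct and follows essentially the route the paper intends: the paper omits the proof altogether, referring to the literature and the remark that follows the statement, and that remark prescribes exactly your strategy — the Littlewood–Paley/Mikhlin characterization \emph{(v)} as the bridge, the lifting property for \emph{(iii)}, Hardy–Littlewood–Sobolev for \emph{(i)}, \emph{(ii)} via the isometry $(-\Delta)^{\frac{s}{2}}$ onto $\mathrm{L}^p$, the splitting of $(1+\lvert\xi\rvert^2)^{\frac{s}{2}}$ for \emph{(vi)}, H\"older twice for \emph{(vii)}, and the truncate-in-frequency/cut-off-in-space/reproject scheme for \emph{(iv)} (which is also the mechanism of the paper's Lemma \ref{lem:IntersecHomHsp}). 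The only point to tighten is the $\eus{S}'$-convergence of the high-frequency part of $\sum_j(-\Delta)^{-\frac{s}{2}}\dot{\Delta}_j f$ when $s\leqslant 0$ (it holds by rapid decay of the pairings against Schwartz functions, not in $\mathrm{L}^p$), consistent with your own warning that every series must be checked in the intended topology.
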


\begin{remark} One may check first point \textit{(iii)} as a direct consequence of \cite[Lemma~2.6]{DanchinHieberMuchaTolk2020}. One may deduce the point \textit{(i)} from \cite[Theorem~1.2.3]{bookGrafakos2014Modern} and a density argument achieved manually (at this stage homogeneous Sobolev spaces are not known to be complete or not, only Lebesgue spaces are). The point \textit{(ii)} is then a direct consequence of the point \textit{(i)}. The point \textit{(iv)} follows directly from the proof of \cite[Proposition~3.7]{DanchinHieberMuchaTolk2020} for the case $s=0$.

The point \textit{(v)} is a very well known result, based on extensive use of Khintchine's inequality ($\mathrm{L}^p(\mathbb{R}^n)$ square estimates) and the H\"{o}rmander-Mikhlin Fourier multiplier theorem, see for instance \cite[Remark~3,~p.25]{bookTriebel1992} and \cite[Proposition~6.1.2]{bookGrafakos2014Classical} for the case of $\eus{S}'(\mathbb{R}^n)$ when $s=0$. One may adapt the proof, taking care of possible convergence issues (no density argument is \textit{a priori} allowed).

See \cite[Theorem~6.3.2]{BerghLofstrom1976} for the point \textit{(vi)}. The point \textit{(vii)} is just a direct consequence of point \textit{(v)}, applying H\"{o}lder's inequality twice.

All the details can be found in the dissertation of the author \cite[Chapter~2,~Section~2.1]{GaudinThesis2023}.
\end{remark}

\begin{remark} One can make the comparison with homogeneous function spaces defined by tempered distributions quotiented by polynomials:

The point \textit{(i)} of Proposition \ref{prop:PropertiesHomBesovSpacesRn}, the point \textit{(ii)} of Proposition \ref{prop:PropertiesHomSobolevSpacesRn} and \cite[Theorems~2.31~\&~2.32]{bookSawano2018} tell us that all realizations of homogeneous Sobolev and Besov spaces can be isometrically \textbf{identified} whenever $s<n/p$.
\end{remark}

\begin{lemma}\label{lem:IntersecHomHsp}Let $p_j\in(1,+\infty)$, $s_j \in\mathbb{R}$, for $j\in\{0,1\}$. If $(\mathcal{C}_{s_0,p_0})$ is satisfied, then the intersection space $\dot{\mathrm{H}}^{s_0,p_0}(\mathbb{R}^n)\cap \dot{\mathrm{H}}^{s_1,p_1}(\mathbb{R}^n)$ is a Banach space for which $\eus{S}_0(\mathbb{R}^n)$ is dense in it.
\end{lemma}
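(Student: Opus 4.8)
The plan is to split the statement into its two halves: completeness of $X:=\dot{\mathrm{H}}^{s_0,p_0}(\mathbb{R}^n)\cap\dot{\mathrm{H}}^{s_1,p_1}(\mathbb{R}^n)$, obtained by realizing $X$ as a closed subspace of a genuine Banach space, and density of $\eus{S}_0(\mathbb{R}^n)$, obtained by a frequency-truncation argument. The key point to keep in mind throughout is that $\dot{\mathrm{H}}^{s_1,p_1}(\mathbb{R}^n)$ itself need not be complete, nor is density in each factor separately enough for density in the intersection; both difficulties are resolved by exploiting that $(-\Delta)^{s_1/2}$ is an isometry onto a subspace of $\mathrm{L}^{p_1}(\mathbb{R}^n)$ and that it commutes with the Littlewood--Paley projectors.

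For completeness, I would introduce the linear map $\Phi:X\longrightarrow\dot{\mathrm{H}}^{s_0,p_0}(\mathbb{R}^n)\times\mathrm{L}^{p_1}(\mathbb{R}^n)$, $\Phi(u):=\big(u,(-\Delta)^{s_1/2}u\big)$, which is an isometry for the canonical (max) norms since $\lVert(-\Delta)^{s_1/2}u\rVert_{\mathrm{L}^{p_1}(\mathbb{R}^n)}=\lVert u\rVert_{\dot{\mathrm{H}}^{s_1,p_1}(\mathbb{R}^n)}$ by definition. Since $\dot{\mathrm{H}}^{s_0,p_0}(\mathbb{R}^n)$ is complete by Proposition~\ref{prop:PropertiesHomSobolevSpacesRn}~(ii) (this is where $(\mathcal{C}_{s_0,p_0})$ enters) and $\mathrm{L}^{p_1}(\mathbb{R}^n)$ is complete, it suffices to show $\Phi(X)$ is closed. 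Given $(v,g)\in\overline{\Phi(X)}$, pick $(u_k)\subset X$ with $u_k\to v$ in $\dot{\mathrm{H}}^{s_0,p_0}(\mathbb{R}^n)$ and $(-\Delta)^{s_1/2}u_k\to g$ in $\mathrm{L}^{p_1}(\mathbb{R}^n)$; the goal is $v\in X$ and $g=(-\Delta)^{s_1/2}v$. For fixed $j\in\mathbb{Z}$: on the one hand $\dot{\Delta}_j:\dot{\mathrm{H}}^{s_0,p_0}(\mathbb{R}^n)\to\mathrm{L}^{p_0}(\mathbb{R}^n)$ is bounded (from Proposition~\ref{prop:PropertiesHomSobolevSpacesRn}~(v), as $\lVert\dot{\Delta}_j u\rVert_{\mathrm{L}^{p_0}(\mathbb{R}^n)}\le 2^{-js_0}\lVert u\rVert_{\dot{\mathrm{F}}^{s_0}_{p_0,2}(\mathbb{R}^n)}$), so $\dot{\Delta}_j u_k\to\dot{\Delta}_j v$ in $\mathrm{L}^{p_0}(\mathbb{R}^n)$, hence in $\eus{S}'(\mathbb{R}^n)$; applying the operator $(-\Delta)^{s_1/2}\dot{\Delta}_j$, which is convolution against the Schwartz function $\eus{F}^{-1}(|\cdot|^{s_1}\psi_j)$ (the symbol $|\cdot|^{s_1}\psi_j$ being in $\mathrm{C}_c^\infty(\mathbb{R}^n)$), yields $(-\Delta)^{s_1/2}\dot{\Delta}_j u_k\to(-\Delta)^{s_1/2}\dot{\Delta}_j v$ in $\eus{S}'(\mathbb{R}^n)$. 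On the other hand $(-\Delta)^{s_1/2}\dot{\Delta}_j u_k=\dot{\Delta}_j(-\Delta)^{s_1/2}u_k\to\dot{\Delta}_j g$ in $\mathrm{L}^{p_1}(\mathbb{R}^n)$, hence in $\eus{S}'(\mathbb{R}^n)$. Uniqueness of limits in $\eus{S}'(\mathbb{R}^n)$ gives $(-\Delta)^{s_1/2}\dot{\Delta}_j v=\dot{\Delta}_j g$ for every $j$. Summing over $j$, and using $g\in\mathrm{L}^{p_1}(\mathbb{R}^n)\subset\eus{S}'_h(\mathbb{R}^n)$ together with $g=\sum_j\dot{\Delta}_j g$ in $\eus{S}'(\mathbb{R}^n)$ (by \cite[Proposition~2.14]{bookBahouriCheminDanchin}), one gets $(-\Delta)^{s_1/2}v=\sum_j(-\Delta)^{s_1/2}\dot{\Delta}_j v=g\in\mathrm{L}^{p_1}(\mathbb{R}^n)$; hence $v\in\dot{\mathrm{H}}^{s_1,p_1}(\mathbb{R}^n)$ and, with $v\in\dot{\mathrm{H}}^{s_0,p_0}(\mathbb{R}^n)$, $v\in X$ with $\Phi(v)=(v,g)$. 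So $\Phi(X)$ is closed and $X$ is a Banach space.

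For density of $\eus{S}_0(\mathbb{R}^n)$, I would first record the elementary \emph{band-limited estimate}: for $R>1$, $s\in\mathbb{R}$, $p\in(1,+\infty)$, if $\supp\eus{F}f\subset\{\,R^{-1}\le|\xi|\le R\,\}$ and $f\in\mathrm{L}^p(\mathbb{R}^n)$, then only finitely many $\dot{\Delta}_j f$ are nonzero and each $(-\Delta)^{s/2}\dot{\Delta}_j$ is convolution against an $\mathrm{L}^1$ function of norm $\lesssim_s 2^{js}$, whence $f\in\dot{\mathrm{H}}^{s,p}(\mathbb{R}^n)$ with $\lVert f\rVert_{\dot{\mathrm{H}}^{s,p}(\mathbb{R}^n)}\lesssim_{s,p,R}\lVert f\rVert_{\mathrm{L}^p(\mathbb{R}^n)}$. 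Now fix $u\in X$ and set $u_N:=\dot{S}_Nu-\dot{S}_{-N}u$, whose Fourier transform is $(\phi_{N}-\phi_{-N})\eus{F}u=\big(\sum_{-N\le j<N}\psi_j\big)\eus{F}u$: this is a finite sum of Littlewood--Paley blocks of $u$, band-limited to the annulus $\{\,2^{-N}\tfrac34\le|\xi|\le2^{N}\tfrac43\,\}$; each $\dot{\Delta}_j u\in\mathrm{L}^{p_i}(\mathbb{R}^n)\cap\dot{\mathrm{H}}^{s_i,p_i}(\mathbb{R}^n)$ for $i\in\{0,1\}$, so $u_N\in\mathrm{L}^{p_0}(\mathbb{R}^n)\cap\mathrm{L}^{p_1}(\mathbb{R}^n)\cap\dot{\mathrm{H}}^{s_0,p_0}(\mathbb{R}^n)\cap\dot{\mathrm{H}}^{s_1,p_1}(\mathbb{R}^n)$. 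By Proposition~\ref{prop:PropertiesHomSobolevSpacesRn}~(v), $\lVert u-u_N\rVert_{\dot{\mathrm{H}}^{s_i,p_i}(\mathbb{R}^n)}\sim\big\lVert(2^{js_i}\dot{\Delta}_j(u-u_N))_{j}\big\rVert_{\mathrm{L}^{p_i}(\mathbb{R}^n,\ell^2(\mathbb{Z}))}\to0$ as $N\to+\infty$ for $i\in\{0,1\}$, by dominated convergence (the tail over $|j|\gtrsim N$ of an element of $\mathrm{L}^{p_i}(\mathbb{R}^n,\ell^2(\mathbb{Z}))$).

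It remains to approximate each band-limited $u_N$ by elements of $\eus{S}_0(\mathbb{R}^n)$ simultaneously in both $\dot{\mathrm{H}}^{s_0,p_0}$ and $\dot{\mathrm{H}}^{s_1,p_1}$ norms. Pick $\chi:=\phi_{N+1}-\phi_{-N-1}\in\mathrm{C}_c^\infty(\mathbb{R}^n)$, which equals $1$ on $\supp\eus{F}u_N$ and is supported in an annulus away from the origin, so that $\chi(\mathfrak{D})u_N=u_N$. Since $u_N\in\mathrm{C}^\infty(\mathbb{R}^n)\cap\mathrm{L}^{p_0}(\mathbb{R}^n)\cap\mathrm{L}^{p_1}(\mathbb{R}^n)$, a spatial truncation $w:=u_N\,\zeta(\cdot/R)$ with $\zeta\in\mathrm{C}_c^\infty(\mathbb{R}^n)$, $\zeta\equiv1$ near $0$, lies in $\mathrm{C}_c^\infty(\mathbb{R}^n)$ and satisfies $\lVert w-u_N\rVert_{\mathrm{L}^{p_i}(\mathbb{R}^n)}\to0$ as $R\to+\infty$ for $i\in\{0,1\}$. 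Then $v:=\chi(\mathfrak{D})w\in\eus{S}_0(\mathbb{R}^n)$ (inverse Fourier transform of a $\mathrm{C}_c^\infty$ function vanishing near $0$), $v-u_N=\chi(\mathfrak{D})(w-u_N)$ is band-limited to $\supp\chi$, and by Young's inequality $\lVert v-u_N\rVert_{\mathrm{L}^{p_i}(\mathbb{R}^n)}\lesssim_\chi\lVert w-u_N\rVert_{\mathrm{L}^{p_i}(\mathbb{R}^n)}$, so the band-limited estimate gives $\lVert v-u_N\rVert_{\dot{\mathrm{H}}^{s_i,p_i}(\mathbb{R}^n)}\lesssim_{s_i,p_i,N}\lVert w-u_N\rVert_{\mathrm{L}^{p_i}(\mathbb{R}^n)}$, which is arbitrarily small for $R$ large. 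Combined with $u_N\to u$ in both norms, this shows $\eus{S}_0(\mathbb{R}^n)$ is dense in $X$. The step requiring the most care is the identification $g=(-\Delta)^{s_1/2}v$ of the two limits in the completeness part — done by testing against each $\dot{\Delta}_j$ and using that $\mathrm{L}^{p_1}$-functions admit their homogeneous Littlewood--Paley decomposition — and, for density, the observation that one cannot simply invoke Proposition~\ref{prop:PropertiesHomSobolevSpacesRn}~(iv) in each factor, which forces the reduction to a fixed frequency annulus where all homogeneous norms are comparable to the $\mathrm{L}^p$ norm.
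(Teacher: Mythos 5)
Your proof is correct, and its overall skeleton matches the paper's: completeness is anchored on the complete factor $\dot{\mathrm{H}}^{s_0,p_0}(\mathbb{R}^n)$, and density is obtained by frequency truncation followed by spatial truncation and a filtering step that removes the spurious frequencies created by the spatial cut-off. The execution differs in two places, both legitimate. For completeness, the paper lets the Cauchy sequence converge in $\dot{\mathrm{H}}^{s_0,p_0}(\mathbb{R}^n)$ and then passes to the limit in the $\dot{\mathrm{F}}^{s_1}_{p_1,2}$ norm by Fatou's lemma, whereas you realize the intersection as a closed subspace of $\dot{\mathrm{H}}^{s_0,p_0}(\mathbb{R}^n)\times\mathrm{L}^{p_1}(\mathbb{R}^n)$ and identify the two limits block by block in $\eus{S}'(\mathbb{R}^n)$; your version is slightly longer but has the merit of explicitly verifying that the series defining $(-\Delta)^{s_1/2}v$ converges in $\eus{S}'(\mathbb{R}^n)$ to an $\mathrm{L}^{p_1}$ function, which is what the definition of $\dot{\mathrm{H}}^{s_1,p_1}(\mathbb{R}^n)$ actually requires. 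For density, the paper applies $(\mathrm{I}-\dot{S}_{-M})$ after the spatial cut-off and estimates the error through inhomogeneous Sobolev norms via \cite[Theorem~6.3.2]{BerghLofstrom1976} and dominated convergence; you instead re-project with a compactly supported annular multiplier $\chi(\mathfrak{D})$ that fixes $u_N$, and control the error with an elementary band-limited estimate $\lVert f\rVert_{\dot{\mathrm{H}}^{s,p}}\lesssim_{s,p,R}\lVert f\rVert_{\mathrm{L}^{p}}$ on a fixed annulus. Your route is more self-contained (it avoids the external norm-equivalence theorem), at the cost of proving the band-limited lemma; both yield the same conclusion.
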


\begin{proof}The completeness is straightforward : this follows from the completeness of $\dot{\mathrm{H}}^{s_0,p_0}(\mathbb{R}^n)$, so that a Cauchy sequence of $\dot{\mathrm{H}}^{s_0,p_0}(\mathbb{R}^n)\cap \dot{\mathrm{H}}^{s_1,p_1}(\mathbb{R}^n)$ admits a limit in $\dot{\mathrm{H}}^{s_0,p_0}(\mathbb{R}^n)\subset\eus{S}'_h(\mathbb{R}^n)$. From this point, the equivalence of norms given by point \textit{(v)} of Proposition \ref{prop:PropertiesHomSobolevSpacesRn} allows passing to the limit in the norms for free using the Fatou Lemma.

Concerning the claim about density, we follow the proof of \cite[Proposition~2.27]{bookBahouriCheminDanchin} with minor modifications, in order to adapt it to our setting.

For $u\in \dot{\mathrm{H}}^{s_0,p_0}(\mathbb{R}^n)\cap \dot{\mathrm{H}}^{s_1,p_1}(\mathbb{R}^n)$, and fixed $\varepsilon>0$, for $k\in\{ 0,1\}$ there exists $N\in\mathbb{N}$ such that for all $\tilde{N}\geqslant N$
\begin{align*}
    \lVert u - u_{\tilde{N}}\rVert_{\dot{\mathrm{H}}^{s_k,p_k}(\mathbb{R}^n)} < \varepsilon\text{. }
\end{align*}
Here, for any $K\in\mathbb{N}$,
\begin{align*}
    u_K:= \sum_{ \lvert j \rvert \leqslant K} \dot{\Delta}_j u\text{. }
\end{align*}
For $M\in \llb \tilde{N}+1,+\infty\llb$, $R>0$, provided $\Theta\in \mathrm{C}_c^\infty(\mathbb{R}^n)$, real valued, supported in $B(0,2)$, such that $\Theta_{|_{B(0,1)}}=1$, and $\Theta_R:=\Theta(\cdot /R)$, we introduce
\begin{align*}
    u_{\tilde{N},M}^R:= (\mathrm{I}-\dot{S}_{-M})[\Theta_{R}u_{\tilde{N}}]\text{.}
\end{align*}
Since $\dot{\Delta}_{k} u_{\tilde{N}} = 0$, $k\leqslant -M-1$, we have $\dot{S}_{-M}u_{\tilde{N}}=0$, then
\begin{align*}
    u_{\tilde{N},M}^R - u_{\tilde{N}} = (\mathrm{I}-\dot{S}_{-M})[(\Theta_{R}-1)u_{\tilde{N}}]\text{.}
\end{align*}
If one sets $m_k:= \max(0,\lfloor s_k \rfloor +2)$, since $0\notin \supp \eus{F}(u_{\tilde{N},M}^R - u_{\tilde{N}})$ by construction, we apply \cite[Theorem~6.3.2]{BerghLofstrom1976} and decreasing embedding of inhomogeneous Sobolev spaces to deduce
\begin{align*}
    \lVert u_{\tilde{N},M}^R - u_{\tilde{N}}\rVert_{\dot{\mathrm{H}}^{s_k,p_k}(\mathbb{R}^n)} &\lesssim_{M,s_k,p_k} \lVert u_{\tilde{N},M}^R - u_{\tilde{N}}\rVert_{{\mathrm{H}}^{s_k,p_k}(\mathbb{R}^n)}\\
    &\lesssim_{M,s_k,p_k} \lVert (\mathrm{I}-\dot{S}_{-M})[(\Theta_{R}-1)u_{\tilde{N}}]\rVert_{{\mathrm{H}}^{m_k,p_k}(\mathbb{R}^n)}\\
    &\lesssim_{M,s_k,p_k} \lVert [(\Theta_{R}-1)u_{\tilde{N}}]\rVert_{{\mathrm{H}}^{m_k,p_k}(\mathbb{R}^n)}\text{. }
\end{align*}
Since one may check that $u_{\tilde{N}}\in {\mathrm{H}}^{m_k,p_k}(\mathbb{R}^n)$ for $k\in\{0,1\}$, by dominated convergence theorem it follows that
\begin{align*}
    \lVert u_{\tilde{N},M}^R - u_{\tilde{N}}\rVert_{\dot{\mathrm{H}}^{s_k,p_k}(\mathbb{R}^n)} \xrightarrow[R\rightarrow +\infty]{} 0 \text{. }
\end{align*}
Thus, for $R>0$ big enough, we have for $k\in\{0,1\}$
\begin{align*}
    \lVert u - u_{\tilde{N},M}^R\rVert_{\dot{\mathrm{H}}^{s_k,p_k}(\mathbb{R}^n)} < 2\varepsilon\text{. }
\end{align*}
The proof ends here since $u_{\tilde{N},M}^R \in\eus{S}_0(\mathbb{R}^n)$.
\end{proof}

\subsection{Interpolation, duality and the fundamental Sobolev multiplier result}

We recall the usual interpolation properties for inhomogeneous function spaces,
\begin{align*}
    [\mathrm{H}^{s_0,p_0}(\mathbb{R}^n),\mathrm{H}^{s_1,p_1}(\mathbb{R}^n)]_\theta=\mathrm{H}^{s,p_\theta}(\mathbb{R}^n)\text{, }&\qquad (\mathrm{B}^{s_0}_{p,q_0}(\mathbb{R}^n),\mathrm{B}^{s_1}_{p,q_1}(\mathbb{R}^n))_{\theta,q} = \mathrm{B}^{s}_{p,q}(\mathbb{R}^n)\text{, }\\
    (\mathrm{H}^{s_0,p}(\mathbb{R}^n),\mathrm{H}^{s_1,p}(\mathbb{R}^n))_{\theta,q}= \mathrm{B}^{s}_{p,q}(\mathbb{R}^n)\text{, }&\qquad [\mathrm{B}^{s_0}_{p_0,q_0}(\mathbb{R}^n),\mathrm{B}^{s_1}_{p_1,q_1}(\mathbb{R}^n)]_{\theta} = \mathrm{B}^{s}_{p_\theta,q_\theta}(\mathbb{R}^n)\text{, }
\end{align*}
whenever $(p_0,q_0),(p_1,q_1),(p,q)\in[1,+\infty]^2$($p\neq 1,+\infty$, when dealing with Sobolev (Riesz potential) spaces), $\theta\in(0,1)$, $s_0\neq s_1$ two real numbers, such that
\begin{align*}
    \left(s,\frac{1}{p_\theta},\frac{1}{q_\theta}\right):= (1-\theta)\left(s_0,\frac{1}{p_0},\frac{1}{q_0}\right)+ \theta\left(s_1,\frac{1}{p_1},\frac{1}{q_1}\right)\text{, }
\end{align*}
see \cite[Theorem~6.4.5]{BerghLofstrom1976}. A similar statement is available for our homogeneous function spaces.

\begin{theorem}\label{thm:InterpHomSpacesRn}Let $(p_0,p_1,p,q,q_0,q_1)\in(1,+\infty)^3\times[1,+\infty]^3$, $s_0,s_1\in\mathbb{R}$, such that $s_0\neq s_1$, and for $\theta\in(0,1)$, let
\begin{align*}
    \left(s,\frac{1}{p_\theta},\frac{1}{q_\theta}\right):= (1-\theta)\left(s_0,\frac{1}{p_0},\frac{1}{q_0}\right)+ \theta\left(s_1,\frac{1}{p_1},\frac{1}{q_1}\right)\text{. }
\end{align*}
Assuming $(\mathcal{C}_{s_0,p})$ (resp. $(\mathcal{C}_{s_0,p,q_0})$), we get the following 
\begin{align}
    (\dot{\mathrm{H}}^{s_0,p}(\mathbb{R}^n),\dot{\mathrm{H}}^{s_1,p}(\mathbb{R}^n))_{\theta,q}=(\dot{\mathrm{B}}^{s_0}_{p,q_0}(\mathbb{R}^n),\dot{\mathrm{B}}^{s_1}_{p,q_1}(\mathbb{R}^n))_{\theta,q}=\dot{\mathrm{B}}^{s}_{p,q}(\mathbb{R}^n)\text{.}\label{eq:realInterpHomBspqRn}
\end{align}
If moreover $(\mathcal{C}_{s_0,p_0})$ and $(\mathcal{C}_{s_1,p_1})$ are true then also is $(\mathcal{C}_{s,p_\theta})$ and
\begin{align}
    [\dot{\mathrm{H}}^{s_0,p_0}(\mathbb{R}^n),\dot{\mathrm{H}}^{s_1,p_1}(\mathbb{R}^n)]_{\theta} = \dot{\mathrm{H}}^{s,p_\theta}(\mathbb{R}^n) \text{,}\label{eq:complexInterpHomHspRn}
\end{align}
and similarly if $(\mathcal{C}_{s_0,p_0,q_0})$ and $(\mathcal{C}_{s_1,p_1,q_1})$ are satisfied then $(\mathcal{C}_{s,p_\theta,q_\theta})$ is also satisfied and when $q_\theta < +\infty$,
\begin{align}
    [\dot{\mathrm{B}}^{s_0}_{p_0,q_0}(\mathbb{R}^n),\dot{\mathrm{B}}^{s_1}_{p_1,q_1}(\mathbb{R}^n)]_{\theta} = \dot{\mathrm{B}}^{s}_{p_\theta,q_\theta}(\mathbb{R}^n)\text{.}\label{eq:complexInterpHomBspqRn}
\end{align}
\end{theorem}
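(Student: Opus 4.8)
The strategy is to transfer everything to the sequence space side via the homogeneous Littlewood--Paley decomposition, where the Sobolev and Besov norms become, respectively, weighted $\mathrm{L}^p(\ell^2)$ and $\mathrm{L}^p(\ell^q)$ norms (using point \textit{(v)} of Proposition \ref{prop:PropertiesHomSobolevSpacesRn} for the Sobolev identification). The classical machinery of retractions and co-retractions (Bergh--L\"ofstr\"om, Chapter 6) says that if one can build a bounded linear map $J\colon \dot{\mathrm H}^{s,p}\to \ell^p_s(\mathbb Z;\mathrm L^p)$ (or into $\mathrm L^p(\ell^2)$, resp.\ $\mathrm L^p(\ell^q)$ for Besov) together with a bounded left inverse $P$, and these are simultaneously bounded for the two endpoints, then the interpolation space of the function spaces is the image under $P$ of the interpolation space of the sequence spaces, the latter being computed by the well-known formulas for vector-valued $\ell^q$ and $\mathrm L^p$ spaces. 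The only genuinely new issue, compared with the inhomogeneous theory of \cite[Theorem~6.4.5]{BerghLofstrom1976}, is the lack of completeness: the retraction/co-retraction theorem as usually stated needs Banach spaces, so one must be careful to only ever apply it (for complex interpolation) under the completeness hypotheses $(\mathcal C_{s_0,p_0})$, $(\mathcal C_{s_1,p_1})$, and to handle the real-interpolation statement \eqref{eq:realInterpHomBspqRn} with a direct $K$-functional argument that does not presuppose completeness of the ambient pair.

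First I would prove \eqref{eq:realInterpHomBspqRn}. The pair $(\dot{\mathrm H}^{s_0,p},\dot{\mathrm H}^{s_1,p})$ sits inside $\eus S'_h(\mathbb R^n)$, so the $K$-functional is well-defined. Using the equivalence $\dot{\mathrm H}^{s,p}\sim\dot{\mathrm F}^s_{p,2}$ and $\dot{\mathrm B}^s_{p,q}=\dot{\mathrm B}^s_{p,q}$, the co-retraction $u\mapsto(\dot\Delta_j u)_j$ and the retraction $(f_j)_j\mapsto\sum_j\dot{\tilde\Delta}_j f_j$ (with an enlarged multiplier $\tilde\psi_j$ equal to $1$ on $\supp\psi_j$) are bounded on all the spaces in play; the key point is that $P J=\mathrm{Id}$ on $\eus S'_h$ so these are honest retraction pairs. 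Then the identity reduces to the classical computations $\big(\ell^p_{s_0}(\mathrm L^p),\ell^p_{s_1}(\mathrm L^p)\big)_{\theta,q}=\ell^q_{s}(\mathrm L^p)$ after an easy reindexing, respectively $\big(\mathrm L^p(\ell^2_{s_0}),\mathrm L^p(\ell^2_{s_1})\big)_{\theta,q}=\mathrm L^p(\ell^q_s)=\ell^q_s(\mathrm L^p)$ by Fubini in the discrete variable; these are in \cite[Section~5.6, Theorem~6.4.5]{BerghLofstrom1976}. The completeness hypothesis $(\mathcal C_{s_0,p})$ (resp.\ $(\mathcal C_{s_0,p,q_0})$) is used only to guarantee that one endpoint is complete, which is all one needs to know the resulting real interpolation space is complete and equals $\dot{\mathrm B}^s_{p,q}$ as a genuine subspace of $\eus S'_h$ rather than an abstract completion; I would spell out that the limiting elements remain in $\eus S'_h$ exactly as in the completeness proof cited for Proposition \ref{prop:PropertiesHomBesovSpacesRn}.

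For the complex interpolation identities \eqref{eq:complexInterpHomHspRn} and \eqref{eq:complexInterpHomBspqRn}, I would first observe that $(\mathcal C_{s_0,p_0})$ and $(\mathcal C_{s_1,p_1})$ imply $(\mathcal C_{s,p_\theta})$ since the condition $s<n/p$ is preserved under the convex combination $(s,1/p)\mapsto(1-\theta)(s_0,1/p_0)+\theta(s_1,1/p_1)$ — this is just the observation that $\{(s,1/p):s<n/p\}$ is an open half-plane, hence convex; the same convexity argument handles the $q=1$ boundary case for $(\mathcal C_{s,p_\theta,q_\theta})$ (noting $q_\theta=1$ forces $q_0=q_1=1$). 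Now all endpoint spaces are Banach, so the retraction theorem \cite[Theorem~6.4.2]{BerghLofstrom1976} applies verbatim, and the problem is reduced to the complex interpolation of the sequence spaces: $[\ell^{p_0}_{s_0}(\mathrm L^{p_0}),\ell^{p_1}_{s_1}(\mathrm L^{p_1})]_\theta=\ell^{p_\theta}_{s}(\mathrm L^{p_\theta})$ for the Besov case (where the mixed $\mathrm L^{p_\theta}$ is handled by the Stein--Weiss / Calder\'on formula for complex interpolation of $\mathrm L^p$ and of $\ell^q$ spaces with change of exponent, \cite[Theorems~5.1.1, 5.1.2]{BerghLofstrom1976}), and $[\mathrm L^{p_0}(\ell^2_{s_0}),\mathrm L^{p_1}(\ell^2_{s_1})]_\theta=\mathrm L^{p_\theta}(\ell^2_{s})$ for the Sobolev case. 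The restriction $q_\theta<+\infty$ in \eqref{eq:complexInterpHomBspqRn} enters precisely because complex interpolation of $\ell^q$-valued spaces requires at least one finite inner exponent to identify the result with $\ell^{q_\theta}$ rather than a closed subspace; I would flag this explicitly. The main obstacle, as noted, is not any single estimate but the bookkeeping needed to make the abstract retraction arguments legitimate in the incomplete setting — in particular verifying that $PJ=\mathrm{Id}$ as maps on $\eus S'_h(\mathbb R^n)$ and that no step secretly passes to a completion outside $\eus S'_h$; once $(\mathcal C)$ is assumed this is safe, and it is exactly the reason the hypotheses are phrased the way they are.
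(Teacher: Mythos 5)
Your overall strategy — transfer to sequence spaces via the homogeneous Littlewood--Paley decomposition, the classical interpolation formulas there, and the retraction theorem \cite[Theorem~6.4.2]{BerghLofstrom1976} for the complex identities once all endpoints are complete — is the same as the paper's, and your treatment of \eqref{eq:complexInterpHomHspRn}--\eqref{eq:complexInterpHomBspqRn} (convexity of the completeness region, then the retraction theorem) matches the paper's Step 4. However, there is a genuine gap in your argument for \eqref{eq:realInterpHomBspqRn}: you assert that the retraction $P\colon (f_j)_j\mapsto \sum_j \dot{\tilde\Delta}_j f_j$ is ``bounded on all the spaces in play''. It is not. On the high-regularity endpoint, say $P\colon \mathrm{L}^p(\mathbb{R}^n,\ell^2_{s_1}(\mathbb{Z}))\to \dot{\mathrm{H}}^{s_1,p}(\mathbb{R}^n)$ with $s_1\geqslant n/p$ — which is exactly the interesting case, since only $(\mathcal{C}_{s_0,p})$ is assumed — the series defining $P$ applied to a \emph{general} element of the sequence space need not converge in $\eus{S}'(\mathbb{R}^n)$, and when it does the limit need not belong to $\eus{S}'_h(\mathbb{R}^n)$: this is precisely the infrared divergence that motivates the whole $\eus{S}'_h$ framework. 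Consequently the reverse $K$-functional estimate, i.e.\ the embedding $\dot{\mathrm{B}}^{s}_{p,q}(\mathbb{R}^n)\hookrightarrow(\dot{\mathrm{H}}^{s_0,p}(\mathbb{R}^n),\dot{\mathrm{H}}^{s_1,p}(\mathbb{R}^n))_{\theta,q}$, does not follow from a naive retraction argument, and your proposal as written does not supply the missing device.

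The paper's fix, which you would need to reproduce, is this: given $(\dot{\Delta}_j u)_{j\in\mathbb{Z}}=A+B$ on the sequence side, apply the retraction $\tilde{\Sigma}$ only to $A$ (where it is bounded into $\dot{\mathrm{H}}^{s_0,p}(\mathbb{R}^n)\subset\eus{S}'_h(\mathbb{R}^n)$ thanks to $(\mathcal{C}_{s_0,p})$), and \emph{define} $\tilde{\Sigma}B:=u-\tilde{\Sigma}A$, which is automatically in $\eus{S}'_h(\mathbb{R}^n)$ as a difference of two such elements; the bound $\lVert \tilde{\Sigma}B\rVert_{\dot{\mathrm{F}}^{s_1}_{p,2}(\mathbb{R}^n)}\lesssim\lVert B\rVert_{\mathrm{L}^p(\ell^2_{s_1})}$ is then a pure Fourier-multiplier estimate \cite[Proposition~6.1.4]{bookGrafakos2014Classical} that never requires summing the low frequencies of an arbitrary sequence. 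Two further points to repair: the chain $(\mathrm{L}^p(\ell^2_{s_0}),\mathrm{L}^p(\ell^2_{s_1}))_{\theta,q}=\mathrm{L}^p(\ell^q_s)=\ell^q_s(\mathrm{L}^p)$ ``by Fubini'' is incorrect (real interpolation does not commute with $\mathrm{L}^p$ this way, and $\mathrm{L}^p(\ell^q_s)\neq\ell^q_s(\mathrm{L}^p)$ unless $p=q$); the correct identity $(\mathrm{L}^p(\ell^2_{s_0}),\mathrm{L}^p(\ell^2_{s_1}))_{\theta,q}=\ell^q_s(\mathbb{Z},\mathrm{L}^p(\mathbb{R}^n))$ comes directly from \cite[Theorems~5.6.1~\&~3.5.3]{BerghLofstrom1976} and \cite[Section~1.18.4]{bookTriebel1978}. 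And for the Besov version of \eqref{eq:realInterpHomBspqRn} one still needs the boundedness of $\tilde{\Sigma}\colon\ell^{q_0}_{s_0}(\mathbb{Z},\mathrm{L}^p)\to\dot{\mathrm{B}}^{s_0}_{p,q_0}(\mathbb{R}^n)$ including the endpoint $(s_0,q_0)=(n/p,1)$, which is not covered by the Sobolev case and requires the separate interpolation-plus-truncation argument of the paper's Step 2.
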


\begin{proof} \textbf{Step 1:} Let us deal with the real interpolation identity \eqref{eq:realInterpHomBspqRn}. Let us consider first the case of Sobolev spaces, with $u\in \dot{\mathrm{H}}^{s_0,p}(\mathbb{R}^n)+\dot{\mathrm{H}}^{s_1,p}(\mathbb{R}^n)$. For $(a,b)\in \dot{\mathrm{H}}^{s_0,p}(\mathbb{R}^n)\times\dot{\mathrm{H}}^{s_1,p}(\mathbb{R}^n)$, such that $u=a+b$, by point \textit{(v)} in Proposition \ref{prop:PropertiesHomSobolevSpacesRn} we have
\begin{align*}
    (\dot{\Delta}_j u)_{j\in\mathbb{Z}} = (\dot{\Delta}_j a)_{j\in\mathbb{Z}} + (\dot{\Delta}_j b)_{j\in\mathbb{Z}} \in \mathrm{L}^p(\mathbb{R}^n,\ell^2_{s_0}(\mathbb{Z})) + \mathrm{L}^p(\mathbb{R}^n,\ell^2_{s_1}(\mathbb{Z}))\text{.}
\end{align*}
Therefore, by the definition of the $K$-functional and point \textit{(v)} in Proposition \ref{prop:PropertiesHomSobolevSpacesRn}, for $t>0$,
\begin{align*}
    K(t,(\dot{\Delta}_j u)_{j\in\mathbb{Z}},\mathrm{L}^p(\mathbb{R}^n,\ell^2_{s_0}(\mathbb{Z})),\mathrm{L}^p(\mathbb{R}^n,\ell^2_{s_1}(\mathbb{Z})))&\leqslant \lVert a \lVert_{\dot{\mathrm{F}}^{s_0}_{p,2}(\mathbb{R}^n)} + t \lVert b \lVert_{\dot{\mathrm{F}}^{s_1}_{p,2}(\mathbb{R}^n)}\\
    &\lesssim_{p,s_0,s_1,n} \lVert a \lVert_{\dot{\mathrm{H}}^{s_0,p}(\mathbb{R}^n)} + t \lVert b \lVert_{\dot{\mathrm{H}}^{s_1,p}(\mathbb{R}^n)}\text{.}
\end{align*}
We then take the infimum on an all such pairs $(a,b)$,
\begin{align}\label{eq:KfuncLpl2sControledByHsp}
    K(t,(\dot{\Delta}_j u)_{j\in\mathbb{Z}},\mathrm{L}^p(\mathbb{R}^n,\ell^2_{s_0}(\mathbb{Z})),\mathrm{L}^p(\mathbb{R}^n,\ell^2_{s_1}(\mathbb{Z})))\lesssim_{p,s_0,s_1,n} K(t,u,\dot{\mathrm{H}}^{s_0,p}(\mathbb{R}^n),\dot{\mathrm{H}}^{s_1,p}(\mathbb{R}^n))\text{.}
\end{align}
Now, we want to prove the reverse estimate. Since $(\dot{\Delta}_j u)_{j\in\mathbb{Z}} \in \mathrm{L}^p(\mathbb{R}^n,\ell^2_{s_0}(\mathbb{Z})) + \mathrm{L}^p(\mathbb{R}^n,\ell^2_{s_1}(\mathbb{Z}))$, let $(A,B)\in \mathrm{L}^p(\mathbb{R}^n,\ell^2_{s_0}(\mathbb{Z}))\times \mathrm{L}^p(\mathbb{R}^n,\ell^2_{s_1}(\mathbb{Z}))$ such that
\begin{align}\label{eq:Lpl2decompofDeltaju}
    (\dot{\Delta}_j u)_{j\in\mathbb{Z}} = A+B \text{.}
\end{align}
For $(w_j)_{j\in\mathbb{Z}}\subset{}\eus{S}'(\mathbb{R}^n)$, say, for simplicity, with finite support in the discrete variable, we define the map
\begin{align}\label{eq:retractionMapsLp(l2s)intoHsp}
    \tilde{\Sigma}((w_j)_{j\in\mathbb{Z}}) := \sum_{j=-\infty}^{+\infty} \dot{\Delta}_j [w_{j-1}+w_{j}+w_{j+1}] \text{,}
\end{align}
and it satisfies for $v\in \eus{S}'_h(\mathbb{R}^n)$
\begin{align*}
    \tilde{\Sigma}((\dot{\Delta}_j v)_{j\in\mathbb{Z}})=v\text{.}
\end{align*}
By point \textit{(v)} in Proposition \ref{prop:PropertiesHomSobolevSpacesRn} and \cite[Proposition~6.1.4]{bookGrafakos2014Classical}, one can check that
\begin{align}\label{eq:ReconstrucOpBnddssProp1}
    \tilde{\Sigma}\,:\,\mathrm{L}^p(\mathbb{R}^n,\ell^2_{s_0}(\mathbb{Z}))\longrightarrow \dot{\mathrm{H}}^{s_0,p}(\mathbb{R}^n)
\end{align}
is well-defined and bounded since $(\mathcal{C}_{s_0,p})$ is satisfied. Now, we apply $\tilde{\Sigma}$ to \eqref{eq:Lpl2decompofDeltaju} to deduce from $\tilde{\Sigma} (\dot{\Delta}_j u)_{j\in\mathbb{Z}} = u \in\eus{S}'_h(\mathbb{R}^n)$, and $\tilde{\Sigma} A \in \dot{\mathrm{H}}^{s_0,p}(\mathbb{R}^n)\subset \eus{S}'_h(\mathbb{R}^n)$, that
\begin{align*}
    \tilde{\Sigma} B = u - \tilde{\Sigma} A \in  \eus{S}'_h(\mathbb{R}^n)\text{.}
\end{align*}
By mean of \cite[Proposition~6.1.4]{bookGrafakos2014Classical}, we obtain
\begin{align}\label{eq:ReconstrucOpBnddssProp2}
    \lVert \tilde{\Sigma}  B \lVert_{\dot{\mathrm{F}}^{s_1}_{p,2}(\mathbb{R}^n)}=\lVert (\dot{\Delta}_j \tilde{\Sigma} B )_{j\in\mathbb{Z}}\rVert_{\mathrm{L}^p(\mathbb{R}^n,\ell^2_{s_1}(\mathbb{Z}))} \lesssim_{p,s_1,n}  \lVert B \rVert_{\mathrm{L}^p(\mathbb{R}^n,\ell^2_{s_1}(\mathbb{Z}))}\text{.}
\end{align}
Hence, by point \textit{(v)} in Proposition \ref{prop:PropertiesHomSobolevSpacesRn}, $\tilde{\Sigma} B$ is an element of $\dot{\mathrm{H}}^{s_1,p}(\mathbb{R}^n)$. Therefore, by the definition of the $K$-functional, the boundedness properties of $\tilde{\Sigma}$, point \textit{(v)} in Proposition \ref{prop:PropertiesHomSobolevSpacesRn}, for $t>0$,
\begin{align*}
    K(t,u,\dot{\mathrm{H}}^{s_0,p}(\mathbb{R}^n),\dot{\mathrm{H}}^{s_1,p}(\mathbb{R}^n)) &\leqslant \lVert \tilde{\Sigma} A \lVert_{\dot{\mathrm{H}}^{s_0,p}(\mathbb{R}^n)} + t \lVert \tilde{\Sigma}  B \lVert_{\dot{\mathrm{H}}^{s_1,p}(\mathbb{R}^n)}\\
    &\lesssim_{p,s_0,s_1,n} \lVert A \rVert_{\mathrm{L}^p(\mathbb{R}^n,\ell^2_{s_0}(\mathbb{Z}))} + t \lVert B \rVert_{\mathrm{L}^p(\mathbb{R}^n,\ell^2_{s_1}(\mathbb{Z}))}\text{.}
\end{align*}
Thus, let us take the infimum on all such pairs $(A,B)$, and invoke \eqref{eq:KfuncLpl2sControledByHsp} to obtain for all $t>0$, and all $u\in \dot{\mathrm{H}}^{s_0,p}(\mathbb{R}^n)+\dot{\mathrm{H}}^{s_1,p}(\mathbb{R}^n)$,
\begin{align}\label{eq:KfuncLpl2sEquivHsp}
    K(t,(\dot{\Delta}_j u)_{j\in\mathbb{Z}},\mathrm{L}^p(\mathbb{R}^n,\ell^2_{s_0}(\mathbb{Z})),\mathrm{L}^p(\mathbb{R}^n,\ell^2_{s_1}(\mathbb{Z})))\sim_{p,s_0,s_1,n} K(t,u,\dot{\mathrm{H}}^{s_0,p}(\mathbb{R}^n),\dot{\mathrm{H}}^{s_1,p}(\mathbb{R}^n))\text{.}
\end{align}
We recall that \cite[Theorems~5.6.1~\&~3.5.3]{BerghLofstrom1976} and \cite[Theorem,~Section~1.18.4]{bookTriebel1978} give, all together, the well known real interpolation identity
\begin{align}\label{eq:realInterpLp(l2s)intolqs(Lp)}
    (\mathrm{L}^p(\mathbb{R}^n,\ell^2_{s_0}(\mathbb{Z})),\mathrm{L}^p(\mathbb{R}^n,\ell^2_{s_1}(\mathbb{Z})))_{\theta,q} = \ell^q_{s}(\mathbb{Z},\mathrm{L}^p(\mathbb{R}^n))\text{.}
\end{align}
Thus, up to multiply the estimate \eqref{eq:KfuncLpl2sEquivHsp} by $t^{-\theta}$ and taking its $\mathrm{L}^q_{\ast}$-norm, it can be turned into
\begin{align*}
    \lVert u\rVert_{\dot{\mathrm{B}}^{s}_{p,q}(\mathbb{R}^n)} = \lVert (\dot{\Delta}_j u)_{j\in\mathbb{Z}}\rVert_{\ell^q_{s}(\mathbb{Z},\mathrm{L}^p(\mathbb{R}^n))} &\sim_{p,s_0,s_1,\theta,n} \lVert (\dot{\Delta}_j u)_{j\in\mathbb{Z}}\rVert_{(\mathrm{L}^p(\mathbb{R}^n,\ell^2_{s_0}(\mathbb{Z})),\mathrm{L}^p(\mathbb{R}^n,\ell^2_{s_1}(\mathbb{Z})))_{\theta,q}}\\
    &\sim_{p,s_0,s_1,\theta,n} \lVert u\rVert_{(\dot{\mathrm{H}}^{s_0,p}(\mathbb{R}^n),\dot{\mathrm{H}}^{s_1,p}(\mathbb{R}^n))_{\theta,q}}\text{.}
\end{align*}
Therefore \eqref{eq:realInterpHomBspqRn} is proved.

\textbf{Step 2:} For $p\in(1,+\infty)$, $q\in[1,+\infty]$ and $s\in\mathbb{R}$ such that \eqref{AssumptionCompletenessExponents} is satisfied, for $\tilde{\Sigma}$ introduced in \eqref{eq:retractionMapsLp(l2s)intoHsp}, we want to show the boundedness of
\begin{align}\label{eq:VectorVallqs(Lp)LittlewoodPaleyIntoBesov}
    \tilde{\Sigma}\,:\,\ell^q_{s}(\mathbb{Z},\mathrm{L}^p(\mathbb{R}^n))\longrightarrow \dot{\mathrm{B}}^{s}_{p,q}(\mathbb{R}^n)\text{.}
\end{align}

The idea is to consider instead the operator $(\dot{\Delta}_j\Tilde{\Sigma})_{j\in\mathbb{Z}}$ and to show that it is a bounded operator, seen as
\begin{align}\label{eq:VectorVallqs(Lp)LittlewoodPaley}
    (\dot{\Delta}_j\Tilde{\Sigma})_{j\in\mathbb{Z}}\,:\, \ell^q_{s}(\mathbb{Z},\mathrm{L}^p(\mathbb{R}^n)) \longrightarrow \ell^q_{s}(\mathbb{Z},\mathrm{L}^p(\mathbb{R}^n))\text{, } p\in(1,+\infty)\text{, } q\in[1,+\infty]\text{, } s\in\mathbb{R}.
\end{align}

In fact, as for \eqref{eq:ReconstrucOpBnddssProp1} and \eqref{eq:ReconstrucOpBnddssProp2}, by \cite[Proposition~6.1.4]{bookGrafakos2014Classical}, for given $p\in(1,+\infty)$ and $s,s_0,s_1\in\mathbb{R}$ with $s_0<s<s_1$, one has the following boundedness property:
\begin{align*}
    (\dot{\Delta}_j\Tilde{\Sigma})_{j\in\mathbb{Z}}\,:\, \mathrm{L}^p(\mathbb{R}^n,\ell^2_{s_k}(\mathbb{Z})) \longrightarrow \mathrm{L}^p(\mathbb{R}^n,\ell^2_{s_k}(\mathbb{Z}))\text{, } \quad k\in\{0,1\}.
\end{align*}
Therefore, \eqref{eq:VectorVallqs(Lp)LittlewoodPaley} follows from real interpolation, thanks to the previous identity \eqref{eq:realInterpLp(l2s)intolqs(Lp)}.

Now, in order to obtain \eqref{eq:VectorVallqs(Lp)LittlewoodPaleyIntoBesov} when \eqref{AssumptionCompletenessExponents} is satisfied, it suffices to consider an element $(u_j)_{j\in\mathbb{Z}}$ in the space $\ell^q_{s}(\mathbb{Z},\mathrm{L}^p(\mathbb{R}^n))$, then to approximate it by truncation with respect to the discrete variable, so that
\begin{align*}
    \left(\Tilde{\Sigma}(u_j)_{j\in\llb -N,N\rrb}\right)_{N\in\mathbb{N}}\subset \mathrm{L}^p(\mathbb{R}^n)\cap \dot{\mathrm{B}}^{s}_{p,q}(\mathbb{R}^n).
\end{align*}

Then, the map $\tilde{\Sigma}$ extends uniquely to a bounded map with values in $\dot{\mathrm{B}}^{s}_{p,q}(\mathbb{R}^n)$ whenever \eqref{AssumptionCompletenessExponents} is satisfied and $q<+\infty$.

For the case $q=+\infty$ when \eqref{AssumptionCompletenessExponents} is satisfied, \textit{i.e.} when $s<n/p$ is satisfied, the result follows in fact directly from \textbf{Step 1}.

In fact, the above manual real interpolation procedure was mainly performed to reach the endpoint couple $(\dot{\mathrm{B}}^{n/p}_{p,1}(\mathbb{R}^n),\ell^1_{n/p}(\mathbb{Z},\mathrm{L}^p(\mathbb{R}^n)))$.

\textbf{Step 3:} For the real interpolation identity \eqref{eq:realInterpHomBspqRn} in the case of Besov spaces, by the previous \textbf{Step 2}, the proof presented in \textbf{Step 1} is still valid if we replace $(\dot{\mathrm{H}}^{s_0,p},\dot{\mathrm{H}}^{s_1,p})$ and the condition $(\mathcal{C}_{s_0,p})$ by $(\dot{\mathrm{B}}^{s_0}_{p,q_0},\dot{\mathrm{B}}^{s_1}_{p,q_1})$ with the condition $(\mathcal{C}_{s_0,p,q_0})$.

\textbf{Step 4:} As in the proof of \cite[Theorem~6.4.5]{BerghLofstrom1976}, being aware of \cite[Definition~6.4.1]{BerghLofstrom1976}, we can claim, thanks to previous steps, that
\begin{itemize}
    \item thanks to its definition, for all $s\in\mathbb{R}$, $p\in(1,+\infty)$, $q\in[1,+\infty]$, when \eqref{AssumptionCompletenessExponents} is satisfied, $\dot{\mathrm{B}}^{s}_{p,q}(\mathbb{R}^n)$ is a retraction of $\ell^q_s(\mathbb{Z},\mathrm{L}^p(\mathbb{R}^n))$ on $\eus{S}'_h(\mathbb{R}^n)$ through the homogeneous Littlewood-Paley decomposition $(\dot{\Delta}_{j})_{j\in\mathbb{Z}}$, and projection map $\tilde{\Sigma}$;
    \item similarly, due to point \textit{(v)} in Proposition \ref{prop:PropertiesHomSobolevSpacesRn}, for all $s\in\mathbb{R}$, $p\in(1,+\infty)$, when $(\mathcal{C}_{s,p})$ is satisfied $\dot{\mathrm{H}}^{s,p}(\mathbb{R}^n)$ is a retraction of $\mathrm{L}^p(\mathbb{R}^n,\ell^2_s(\mathbb{Z}))$ on $\eus{S}'_h(\mathbb{R}^n)$ through the homogeneous Littlewood-Paley decomposition $(\dot{\Delta}_{j})_{j\in\mathbb{Z}}$, and projection map $\tilde{\Sigma}$. 
\end{itemize}
Thus, one may apply \cite[Theorem~6.4.2]{BerghLofstrom1976}, with \cite[Theorem~5.6.3]{BerghLofstrom1976} for complex interpolation of Besov spaces and \cite[Theorem,~Section~1.18.4]{bookTriebel1978} for complex interpolation of Sobolev spaces, to obtain respectively \eqref{eq:complexInterpHomBspqRn} and \eqref{eq:complexInterpHomHspRn}.

The completeness assumption is necessary in the case of complex interpolation, since one can not provide in general an appropriate sense of holomorphic functions (then of the definition of complex interpolation spaces) in non-complete normed vector spaces.
\end{proof}

\begin{proposition}\label{prop:dualityRieszpotential} For any $s\in\mathbb{R}$, $p\in(1,+\infty)$,
\begin{align*}
\left\{\begin{array}{cl}
 \dot{\mathrm{H}}^{s,p}\times \dot{\mathrm{H}}^{-s,p'} &\longrightarrow \mathbb{C}\\
(u, v) &\longmapsto \sum\limits_{\left|j-j'\right| \leq 1} \left\langle\dot{\Delta}_{j} u, \dot{\Delta}_{j'} v\right\rangle_{\mathbb{R}^n}
\end{array}\right.
\end{align*}
defines a continuous bilinear functional on $\dot{\mathrm{H}}^{s,p}(\mathbb{R}^n)\times \dot{\mathrm{H}}^{-s,p'}(\mathbb{R}^n)$. Denote by $\mathcal{V}^{-s,p'}$ the set of functions $v\in\eus{S}(\mathbb{R}^n)\cap\dot{\mathrm{H}}^{-s,p'}(\mathbb{R}^n)$ such that $\left\lVert{v}\right\rVert_{\dot{\mathrm{H}}^{-s,p'}(\mathbb{R}^n)}\leqslant 1$. If $u\in\eus{S}'_h(\mathbb{R}^n)$, then we have
\begin{align*}
    \left\lVert{u}\right\rVert_{\dot{\mathrm{H}}^{s,p}(\mathbb{R}^n)} = \sup\limits_{\substack{v\in \mathcal{V}^{-s,p'}}} \big\lvert\big\langle u,  v\big\rangle_{\mathbb{R}^n}\big\rvert\text{. }
\end{align*}
Moreover, if $(\mathcal{C}_{s,p})$ is satisfied, $\dot{\mathrm{H}}^{s,p}(\mathbb{R}^n)$ is reflexive and we have
\begin{align}\label{eq:dualityRieszPotential}
    (\dot{\mathrm{H}}^{-s,p'}(\mathbb{R}^n))' = \dot{\mathrm{H}}^{s,p}(\mathbb{R}^n)\text{. }
\end{align}
\end{proposition}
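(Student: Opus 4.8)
The plan is to prove the three assertions in order: continuity of the bilinear form, the dual norm formula, and then, under $(\mathcal{C}_{s,p})$, the identification of the dual together with reflexivity. For the first one, I would begin by noting that the stated support properties of the family $(\psi_j)_{j\in\mathbb{Z}}$ give $\supp\psi_j\cap\supp\psi_{j'}=\emptyset$ whenever $|j-j'|\geqslant 2$, so $\langle\dot{\Delta}_j u,\dot{\Delta}_{j'}v\rangle_{\mathbb{R}^n}=0$ unless $|j-j'|\leqslant 1$ and the displayed sum is really the full double sum. I would then bound it pointwise by the Cauchy--Schwarz inequality in the dyadic index, with weights $2^{js}$ and $2^{-js}$:
\begin{align*}
\sum_{|j-j'|\leqslant 1}|\dot{\Delta}_j u(x)|\,|\dot{\Delta}_{j'} v(x)|\lesssim_s\Big(\sum_{j\in\mathbb{Z}}2^{2js}|\dot{\Delta}_j u(x)|^2\Big)^{1/2}\Big(\sum_{j\in\mathbb{Z}}2^{-2js}|\dot{\Delta}_j v(x)|^2\Big)^{1/2},
\end{align*}
integrate in $x$, apply H\"{o}lder's inequality with exponents $p,p'$, and invoke point \textit{(v)} of Proposition~\ref{prop:PropertiesHomSobolevSpacesRn} to replace the two resulting square-function norms by $\lVert u\rVert_{\dot{\mathrm{H}}^{s,p}(\mathbb{R}^n)}$ and $\lVert v\rVert_{\dot{\mathrm{H}}^{-s,p'}(\mathbb{R}^n)}$. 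Separately, I would record that for $u\in\eus{S}'_h(\mathbb{R}^n)$ and $v\in\eus{S}(\mathbb{R}^n)$ this bilinear form equals the genuine $\eus{S}'$-$\eus{S}$ duality pairing: since $u=\sum_{j}\dot{\Delta}_j u$ in $\eus{S}'(\mathbb{R}^n)$ and the operator $\dot{\Delta}_{j-1}+\dot{\Delta}_j+\dot{\Delta}_{j+1}$ acts as the identity on the Fourier support of $\dot{\Delta}_j$, one gets $\langle u,v\rangle_{\mathbb{R}^n}=\sum_j\langle\dot{\Delta}_j u,(\dot{\Delta}_{j-1}+\dot{\Delta}_j+\dot{\Delta}_{j+1})v\rangle_{\mathbb{R}^n}=\sum_{|j-j'|\leqslant1}\langle\dot{\Delta}_j u,\dot{\Delta}_{j'}v\rangle_{\mathbb{R}^n}$.

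For the norm formula, the starting point is that, by definition of the norm and because $\mathrm{L}^p(\mathbb{R}^n)\subset\eus{S}'_h(\mathbb{R}^n)$, one has $\lVert u\rVert_{\dot{\mathrm{H}}^{s,p}(\mathbb{R}^n)}=\lVert (-\Delta)^{s/2}u\rVert_{\mathrm{L}^p(\mathbb{R}^n)}$. Using duality of $\mathrm{L}^p(\mathbb{R}^n)$ against $\mathrm{L}^{p'}(\mathbb{R}^n)$ together with the density of $\eus{S}_0(\mathbb{R}^n)$ in $\mathrm{L}^{p'}(\mathbb{R}^n)=\dot{\mathrm{H}}^{0,p'}(\mathbb{R}^n)$ (point \textit{(iv)} of Proposition~\ref{prop:PropertiesHomSobolevSpacesRn} with $s=0$), this equals the supremum of $|\langle(-\Delta)^{s/2}u,g\rangle_{\mathbb{R}^n}|$ over $g\in\eus{S}_0(\mathbb{R}^n)$ with $\lVert g\rVert_{\mathrm{L}^{p'}}\leqslant 1$. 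For such $g$ the function $v:=(-\Delta)^{s/2}g$ again lies in $\eus{S}_0(\mathbb{R}^n)$, satisfies $\lVert v\rVert_{\dot{\mathrm{H}}^{-s,p'}(\mathbb{R}^n)}=\lVert g\rVert_{\mathrm{L}^{p'}(\mathbb{R}^n)}$, and, by self-adjointness of the real even multiplier $(-\Delta)^{s/2}$ (legitimate block by block, where the symbol $|\xi|^s$ is smooth) and summation via $u=\sum_j\dot{\Delta}_j u$ in $\eus{S}'(\mathbb{R}^n)$, $\langle(-\Delta)^{s/2}u,g\rangle_{\mathbb{R}^n}=\langle u,v\rangle_{\mathbb{R}^n}$. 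As $g$ runs over the unit ball of $\mathrm{L}^{p'}$ inside $\eus{S}_0(\mathbb{R}^n)$, $v$ runs over $\{w\in\eus{S}_0(\mathbb{R}^n):\lVert w\rVert_{\dot{\mathrm{H}}^{-s,p'}}\leqslant1\}\subseteq\mathcal{V}^{-s,p'}$; this yields $\lVert u\rVert_{\dot{\mathrm{H}}^{s,p}}\leqslant\sup_{v\in\mathcal{V}^{-s,p'}}|\langle u,v\rangle_{\mathbb{R}^n}|$, while the reverse inequality comes from the first assertion (continuity of $v\mapsto\langle u,v\rangle_{\mathbb{R}^n}$ on $\dot{\mathrm{H}}^{-s,p'}(\mathbb{R}^n)$) together with the density of $\eus{S}_0(\mathbb{R}^n)$, hence of $\mathcal{V}^{-s,p'}$, in $\dot{\mathrm{H}}^{-s,p'}(\mathbb{R}^n)$. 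The degenerate case $\lVert u\rVert_{\dot{\mathrm{H}}^{s,p}}=+\infty$ is covered by running the same computation on the truncations $\sum_{|j|\leqslant N}\dot{\Delta}_j u$ and invoking Fatou's lemma.

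For the dual space under $(\mathcal{C}_{s,p})$, the first assertion makes $u\longmapsto\langle u,\cdot\rangle_{\mathbb{R}^n}$ a bounded map $\dot{\mathrm{H}}^{s,p}(\mathbb{R}^n)\to(\dot{\mathrm{H}}^{-s,p'}(\mathbb{R}^n))'$, and the norm formula (combined once more with the density of $\mathcal{V}^{-s,p'}$ in the unit ball of $\dot{\mathrm{H}}^{-s,p'}(\mathbb{R}^n)$) upgrades it to an isometry; note this part does not use any completeness. For surjectivity, given $L\in(\dot{\mathrm{H}}^{-s,p'}(\mathbb{R}^n))'$, the functional $g\longmapsto L((-\Delta)^{s/2}g)$ on $\eus{S}_0(\mathbb{R}^n)$ is bounded for the $\mathrm{L}^{p'}$-norm (since $\lVert(-\Delta)^{s/2}g\rVert_{\dot{\mathrm{H}}^{-s,p'}}=\lVert g\rVert_{\mathrm{L}^{p'}}$), hence is represented by some $f\in\mathrm{L}^p(\mathbb{R}^n)$ with $\lVert f\rVert_{\mathrm{L}^p}\leqslant\lVert L\rVert$. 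Point \textit{(ii)} of Proposition~\ref{prop:PropertiesHomSobolevSpacesRn} — which is exactly where $(\mathcal{C}_{s,p})$ enters — then furnishes $u\in\dot{\mathrm{H}}^{s,p}(\mathbb{R}^n)$ with $(-\Delta)^{s/2}u=f$; retracing the computation of the norm formula shows $\langle u,v\rangle_{\mathbb{R}^n}=L(v)$ for $v\in\eus{S}_0(\mathbb{R}^n)$, and then for all $v\in\dot{\mathrm{H}}^{-s,p'}(\mathbb{R}^n)$ by density. Thus $(\dot{\mathrm{H}}^{-s,p'}(\mathbb{R}^n))'=\dot{\mathrm{H}}^{s,p}(\mathbb{R}^n)$, and reflexivity of $\dot{\mathrm{H}}^{s,p}(\mathbb{R}^n)$ is immediate: by point \textit{(ii)} it is isometrically isomorphic, via $(-\Delta)^{s/2}$, to $\mathrm{L}^p(\mathbb{R}^n)$, which is reflexive for $p\in(1,+\infty)$. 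The genuinely delicate point is this surjectivity step: ``inverting'' $(-\Delta)^{s/2}$ while staying inside $\eus{S}'_h(\mathbb{R}^n)$ is an infrared issue — for $s>0$ the formal object $(-\Delta)^{-s/2}f$ need not be a homogeneous tempered distribution at all — and it is resolved precisely by the bijectivity in point \textit{(ii)}, which rests on $(\mathcal{C}_{s,p})$; without that hypothesis the image of $(-\Delta)^{s/2}$ is only a (possibly proper) dense subspace of $\mathrm{L}^p(\mathbb{R}^n)$ and the argument breaks down. The remaining technicalities are the routine bookkeeping needed to match the abstract bilinear form with honest duality pairings on the dense class $\eus{S}_0(\mathbb{R}^n)$, for which the fattened Littlewood--Paley projectors and the identity $u=\sum_j\dot{\Delta}_j u$ in $\eus{S}'(\mathbb{R}^n)$ are the essential tools.
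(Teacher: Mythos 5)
Your proposal is correct and follows essentially the same route as the paper's proof: the square-function characterization plus H\"older for the continuity of the block-diagonal pairing, $\mathrm{L}^p$--$\mathrm{L}^{p'}$ duality with density of $\eus{S}_0(\mathbb{R}^n)$ and the substitution $v=(-\Delta)^{s/2}g$ for the norm formula, and the isometry $(-\Delta)^{s/2}\,:\,\dot{\mathrm{H}}^{s,p}(\mathbb{R}^n)\to\mathrm{L}^p(\mathbb{R}^n)$ under $(\mathcal{C}_{s,p})$ for surjectivity and reflexivity. Your added care with the case $\lVert u\rVert_{\dot{\mathrm{H}}^{s,p}(\mathbb{R}^n)}=+\infty$ and with identifying the sum over $|j-j'|\leqslant 1$ with the honest duality pairing only makes explicit what the paper leaves implicit.
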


\begin{proof} For simplicity, we will first work with the norm provided point \textit{(v)} in Proposition \ref{prop:PropertiesHomSobolevSpacesRn}, by equivalence of norms, the result will remain true. Let $(u, v)\in\dot{\mathrm{H}}^{s,p}(\mathbb{R}^n)\times \dot{\mathrm{H}}^{-s,p'}(\mathbb{R}^n)$, the $\mathrm{L}^{p}(\ell^2)$-$\mathrm{L}^{p'}(\ell^2)$ H\"{o}lder's inequality gives,
\begin{align*}
    \big\lvert\big\langle u,  v\big\rangle_{\mathbb{R}^n}\big\rvert &\leqslant \left\lVert{u}\right\rVert_{\dot{\mathrm{F}}^{s}_{p,2}(\mathbb{R}^n)}\left\lVert \left\lVert(2^{-js} [\dot{\Delta}_{j-1}+\dot{\Delta}_{j}+\dot{\Delta}_{j+1}] v )_{j\in\mathbb{Z}}\right\rVert_{\ell^{2}(\mathbb{Z})}\right\rVert_{\mathrm{L}^{p'}(\mathbb{R}^n)}\\
    &\leqslant (2^{|s|+1}+1) \left\lVert{u}\right\rVert_{\dot{\mathrm{F}}^{s}_{p,2}(\mathbb{R}^n)}\left\lVert{v}\right\rVert_{\dot{\mathrm{F}}^{-s}_{p',2}(\mathbb{R}^n)}\text{.}
\end{align*}
Now, we know that it is a well-defined quantity, we can compute
\begin{align*}
    \big\langle u,  v\big\rangle_{\mathbb{R}^n} &= \sum_{|j-j'| \leq 1} \left\langle\dot{\Delta}_{j} u, \dot{\Delta}_{j'} v\right\rangle_{\mathbb{R}^n}\\
    &= \sum_{|j-j'| \leqslant 1} \left\langle (-\Delta)^\frac{s}{2}\dot{\Delta}_{j} u, (-\Delta)^{-\frac{s}{2}}\dot{\Delta}_{j'} v\right\rangle_{\mathbb{R}^n}\\
    &= \left\langle (-\Delta)^\frac{s}{2} u, (-\Delta)^{-\frac{s}{2}} v\right\rangle_{\mathbb{R}^n}\text{.}
\end{align*}
Hence, H\"{o}lder's inequality gives
\begin{align*}
    \big\lvert\big\langle u,  v\big\rangle_{\mathbb{R}^n}\big\rvert \leqslant \lVert{u}\rVert_{\dot{\mathrm{H}}^{s,p}(\mathbb{R}^n)} \lVert{v}\rVert_{\dot{\mathrm{H}}^{-s,p'}(\mathbb{R}^n)}\text{, }
\end{align*}
which can be turned effortless into
\begin{align*}
     \sup\limits_{\substack{v\in \mathcal{V}^{-s,p'}}} \big\lvert\big\langle u,  v\big\rangle_{\mathbb{R}^n}\big\rvert \leqslant \left\lVert{u}\right\rVert_{\dot{\mathrm{H}}^{s,p}(\mathbb{R}^n)}\text{. }
\end{align*}
This also proves the continuous embedding $\dot{\mathrm{H}}^{s,p}(\mathbb{R}^n)\hookrightarrow (\dot{\mathrm{H}}^{-s,p'}(\mathbb{R}^n))'$. For the reverse inequality, but not the reverse embedding, from $\mathrm{L}^p-\mathrm{L}^{p'}$ duality, by density of $\eus{S}_0(\mathbb{R}^n)$, we have
\begin{align*}
    \left\lVert{u}\right\rVert_{\dot{\mathrm{H}}^{s,p}(\mathbb{R}^n)} &= \sup\limits_{\substack{v\in \eus{S}_0(\mathbb{R}^n)\text{,}\\ \lVert v\rVert_{\mathrm{L}^{p'}}\leqslant 1}} \big\lvert\big\langle (-\Delta)^\frac{s}{2} u,  v\big\rangle_{\mathbb{R}^n}\big\rvert = \sup\limits_{\substack{w\in \eus{S}_0(\mathbb{R}^n)\text{,}\\ \lVert w\rVert_{\dot{\mathrm{H}}^{-s,p'}}\leqslant 1}} \big\lvert\big\langle u,  w\big\rangle_{\mathbb{R}^n}\big\rvert \leqslant \sup\limits_{\substack{v\in \mathcal{V}^{-s,p'}}} \big\lvert\big\langle u,  v\big\rangle_{\mathbb{R}^n}\big\rvert \text{. }
\end{align*}
In particular, the embedding $\dot{\mathrm{H}}^{s,p}(\mathbb{R}^n)\hookrightarrow (\dot{\mathrm{H}}^{-s,p'}(\mathbb{R}^n))'$ always holds and is isometric.

Now, assume that $(\mathcal{C}_{s,p})$ holds. We recall that point \textit{(ii)} of Proposition \ref{prop:PropertiesHomSobolevSpacesRn} yields the reflexivity of $\dot{\mathrm{H}}^{s,p}(\mathbb{R}^n)$. Let $\tilde{U}\in (\dot{\mathrm{H}}^{-s,p'}(\mathbb{R}^n))'$, we have
\begin{align*}
    \big\lvert\big\langle \tilde{U}, (-\Delta)^{\frac{s}{2}}  v\big\rangle\big\rvert \leqslant \lVert{\tilde{U}}\rVert_{(\dot{\mathrm{H}}^{-s,p'}(\mathbb{R}^n))'} \lVert{v}\rVert_{{\mathrm{L}}^{p'}(\mathbb{R}^n)}, \quad v\in\eus{S}_0(\mathbb{R}^n)\text{. }
\end{align*}
Since the space $\eus{S}_0(\mathbb{R}^n)$ is dense in ${\mathrm{L}}^{p'}(\mathbb{R}^n)$, we deduce there exists a unique function $w\in\mathrm{L}^p(\mathbb{R}^n)$ such that,
\begin{align*}
    \big\langle \tilde{U},  v\big\rangle = \big\langle w, (-\Delta)^{-\frac{s}{2}}  v\big\rangle_{\mathbb{R}^n}, \quad v\in\eus{S}(\mathbb{R}^n)\text{. }
\end{align*}
Thus $u:=(-\Delta)^{-\frac{s}{2}}w\in\dot{\mathrm{H}}^{s,p}(\mathbb{R}^n)$ by point \textit{(ii)} in Proposition \ref{prop:PropertiesHomSobolevSpacesRn}, and yields that the canonical embedding $\dot{\mathrm{H}}^{s,p}(\mathbb{R}^n)\hookrightarrow (\dot{\mathrm{H}}^{-s,p'}(\mathbb{R}^n))'$ is surjective.
\end{proof}

\begin{proposition}\label{prop:dualityHomBesov} For any $s\in\mathbb{R}$, $p\in(1,+\infty)$, $q\in[1,+\infty]$,
\begin{align*}
\left\{\begin{array}{cl}
 \dot{\mathrm{B}}^{s}_{p,q}\times \dot{\mathrm{B}}^{-s}_{p',q'} &\longrightarrow \mathbb{C}\\
(u, v) &\longmapsto \sum\limits_{\left|j-j'\right| \leq 1} \left\langle\dot{\Delta}_{j} u, \dot{\Delta}_{j'} v\right\rangle_{\mathbb{R}^n}
\end{array}\right.
\end{align*}
defines a continuous bilinear functional on $\dot{\mathrm{B}}^{s}_{p,q}(\mathbb{R}^n)\times \dot{\mathrm{B}}^{-s}_{p',q'}(\mathbb{R}^n)$. Denote by $\mathcal{Q}^{-s}_{p',q'}$ the set of functions $v\in\eus{S}(\mathbb{R}^n)\cap\dot{\mathrm{B}}^{-s}_{p',q'}(\mathbb{R}^n)$ such that $\left\lVert{v}\right\rVert_{\dot{\mathrm{B}}^{-s}_{p',q'}(\mathbb{R}^n)}\leqslant 1$. If $u\in\eus{S}'_h(\mathbb{R}^n)$, then we have
\begin{align*}
    \left\lVert{u}\right\rVert_{\dot{\mathrm{B}}^{s}_{p,q}(\mathbb{R}^n)} \lesssim_{p,s,n} \sup\limits_{\substack{v\in \mathcal{Q}^{-s}_{p',q'}}} \big\lvert\big\langle u,  v\big\rangle_{\mathbb{R}^n}\big\rvert\text{. }
\end{align*}
Moreover, if $-n/p'<s<n/p$ is satisfied and  $q\in(1,+\infty]$ then
\begin{align}
    (\dot{\mathrm{B}}^{-s}_{p',q'}(\mathbb{R}^n))' = \dot{\mathrm{B}}^{s}_{p,q}(\mathbb{R}^n)\text{ and } (\dot{\mathcal{B}}^{-s}_{p',\infty}(\mathbb{R}^n))' =  \dot{\mathrm{B}}^{s}_{p,1}(\mathbb{R}^n)\text{. }
\end{align}
The space $\dot{\mathrm{B}}^{s}_{p,q}(\mathbb{R}^n)$ is reflexive whenever both \eqref{AssumptionCompletenessExponents} and  $q\neq 1,+\infty$ are satisfied.
\end{proposition}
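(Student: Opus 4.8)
The plan is to mimic the structure of the proof of Proposition \ref{prop:dualityRieszpotential}, with the isometry $(-\Delta)^{s/2}\colon\dot{\mathrm{H}}^{s,p}(\mathbb{R}^n)\to\mathrm{L}^p(\mathbb{R}^n)$ replaced by the isometric embedding $v\mapsto(\dot\Delta_{j}v)_{j}\colon\dot{\mathrm{B}}^{s}_{p,q}(\mathbb{R}^n)\to\ell^q_s(\mathbb{Z},\mathrm{L}^p(\mathbb{R}^n))$, and to exploit the retract structure recorded in the proof of Theorem \ref{thm:InterpHomSpacesRn}. \textbf{Step 1 (continuity).} First I would observe that $\langle\dot\Delta_j u,\dot\Delta_{j'}v\rangle_{\mathbb{R}^n}=0$ as soon as $|j-j'|\geqslant 2$, by disjointness of the Fourier supports, so the series in the statement is the diagonal band $|j-j'|\leqslant1$; bounding it termwise by Hölder in $\mathrm{L}^p$–$\mathrm{L}^{p'}$ and then summing in $j$ against the weights $2^{js}$, $2^{-js}$ by Hölder in $\ell^q$–$\ell^{q'}$ gives $|\langle u,v\rangle_{\mathbb{R}^n}|\lesssim_s\lVert u\rVert_{\dot{\mathrm{B}}^{s}_{p,q}(\mathbb{R}^n)}\lVert v\rVert_{\dot{\mathrm{B}}^{-s}_{p',q'}(\mathbb{R}^n)}$ for every $q\in[1,+\infty]$, hence the continuity of the form and the continuous embedding $\dot{\mathrm{B}}^{s}_{p,q}(\mathbb{R}^n)\hookrightarrow(\dot{\mathrm{B}}^{-s}_{p',q'}(\mathbb{R}^n))'$. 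I would also record that for $v\in\eus{S}(\mathbb{R}^n)$, in particular for $v\in\eus{S}_0(\mathbb{R}^n)$, the form coincides with the genuine bracket $\langle u,v\rangle$ (expand $v=\sum_{j'}\dot\Delta_{j'}v$ and reuse the spectral-support argument). \textbf{Step 2 (norming inequality).} Writing $\lVert u\rVert_{\dot{\mathrm{B}}^{s}_{p,q}(\mathbb{R}^n)}=\lVert(\dot\Delta_j u)_{j\in\mathbb{Z}}\rVert_{\ell^q_s(\mathbb{Z},\mathrm{L}^p(\mathbb{R}^n))}$, this norm is attained up to $\varepsilon$ by testing against a \emph{finitely supported} sequence $(g_j)_{j}$ with $\lVert(g_j)_j\rVert_{\ell^{q'}_{-s}(\mathbb{Z},\mathrm{L}^{p'}(\mathbb{R}^n))}\leqslant1$ and, after a routine $\mathrm{L}^{p'}$-approximation of its finitely many nonzero entries, with $g_j\in\eus{S}(\mathbb{R}^n)$. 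Then $v:=\sum_j\dot\Delta_j g_j$ is a finite sum of band-limited Schwartz functions with $0\notin\supp\eus{F}v$, hence $v\in\eus{S}_0(\mathbb{R}^n)$, one has $\langle u,v\rangle_{\mathbb{R}^n}=\sum_j\langle\dot\Delta_j u,g_j\rangle_{\mathbb{R}^n}$, and, by uniform $\mathrm{L}^{p'}$-boundedness of the $\dot\Delta_k$, $\lVert v\rVert_{\dot{\mathrm{B}}^{-s}_{p',q'}(\mathbb{R}^n)}\lesssim_s1$; normalising $v$ and taking the supremum yields $\lVert u\rVert_{\dot{\mathrm{B}}^{s}_{p,q}(\mathbb{R}^n)}\lesssim_{p,s,n}\sup_{v\in\mathcal{Q}^{-s}_{p',q'}}|\langle u,v\rangle_{\mathbb{R}^n}|$.

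\textbf{Step 3 (the duality identity).} Assume $-n/p'<s<n/p$ and $q\in(1,+\infty]$; then $q'\in[1,+\infty)$, both $\dot{\mathrm{B}}^{s}_{p,q}(\mathbb{R}^n)$ and $\dot{\mathrm{B}}^{-s}_{p',q'}(\mathbb{R}^n)$ are complete by Proposition \ref{prop:PropertiesHomBesovSpacesRn}, and $\eus{S}_0(\mathbb{R}^n)$ is dense in $\dot{\mathrm{B}}^{-s}_{p',q'}(\mathbb{R}^n)$. By Steps 1 and 2 the embedding $\dot{\mathrm{B}}^{s}_{p,q}(\mathbb{R}^n)\hookrightarrow(\dot{\mathrm{B}}^{-s}_{p',q'}(\mathbb{R}^n))'$ is a topological isomorphism onto its image; it remains to prove surjectivity. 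Given $\Phi$ in the dual, I would view $\iota\colon v\mapsto(\dot\Delta_{j}v)_j$ as an isometric embedding $\dot{\mathrm{B}}^{-s}_{p',q'}(\mathbb{R}^n)\hookrightarrow\ell^{q'}_{-s}(\mathbb{Z},\mathrm{L}^{p'}(\mathbb{R}^n))$ with closed range, extend $\Phi\circ\iota^{-1}$ by Hahn–Banach to $\tilde\Phi\in(\ell^{q'}_{-s}(\mathbb{Z},\mathrm{L}^{p'}(\mathbb{R}^n)))'=\ell^{q}_{s}(\mathbb{Z},\mathrm{L}^{p}(\mathbb{R}^n))$ (valid since $q'<+\infty$ and $p'\in(1,+\infty)$), and write $\tilde\Phi$ as $(w_j)_j\mapsto\sum_j\langle w_j,h_j\rangle_{\mathbb{R}^n}$ for a unique $(h_j)_j\in\ell^q_s(\mathbb{Z},\mathrm{L}^p(\mathbb{R}^n))$, so that $\Phi(v)=\sum_j\langle\dot\Delta_j v,h_j\rangle_{\mathbb{R}^n}$ for all $v$. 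I would then set $u:=\sum_j\dot\Delta_j h_j$, which converges in $\eus{S}'(\mathbb{R}^n)$ and belongs to $\dot{\mathrm{B}}^{s}_{p,q}(\mathbb{R}^n)$ by a Littlewood–Paley reconstruction estimate analogous to the boundedness of $\tilde\Sigma$ established in the proof of Theorem \ref{thm:InterpHomSpacesRn} (here $s<n/p$ is used, and for $q=+\infty$ one closes with completeness of $\dot{\mathrm{B}}^{s}_{p,\infty}(\mathbb{R}^n)$ and a Fatou argument). For $v\in\eus{S}_0(\mathbb{R}^n)$ one then has $\langle u,v\rangle_{\mathbb{R}^n}=\sum_j\langle\dot\Delta_j h_j,v\rangle_{\mathbb{R}^n}=\sum_j\langle\dot\Delta_j v,h_j\rangle_{\mathbb{R}^n}=\Phi(v)$, and since $\langle u,\cdot\rangle_{\mathbb{R}^n}$ and $\Phi$ are continuous on $\dot{\mathrm{B}}^{-s}_{p',q'}(\mathbb{R}^n)$ and agree on the dense subspace $\eus{S}_0(\mathbb{R}^n)$, they coincide; this gives $\dot{\mathrm{B}}^{s}_{p,q}(\mathbb{R}^n)=(\dot{\mathrm{B}}^{-s}_{p',q'}(\mathbb{R}^n))'$.

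\textbf{Step 4 ($\dot{\mathcal{B}}$-endpoint and reflexivity).} For $(\dot{\mathcal{B}}^{-s}_{p',\infty}(\mathbb{R}^n))'=\dot{\mathrm{B}}^{s}_{p,1}(\mathbb{R}^n)$ I would run the same argument, replacing $\ell^{\infty}_{-s}(\mathbb{Z},\mathrm{L}^{p'})$ by its subspace of sequences $(g_{j'})_{j'}$ with $2^{-j's}\lVert g_{j'}\rVert_{\mathrm{L}^{p'}(\mathbb{R}^n)}\to0$ as $|j'|\to\infty$ — this contains $\iota(\eus{S}_0(\mathbb{R}^n))$, hence the image of $\dot{\mathcal{B}}^{-s}_{p',\infty}(\mathbb{R}^n)$ — using that its dual is $\ell^1_s(\mathbb{Z},\mathrm{L}^p(\mathbb{R}^n))$ and that $\eus{S}_0(\mathbb{R}^n)$ is dense in $\dot{\mathcal{B}}^{-s}_{p',\infty}(\mathbb{R}^n)$ by definition. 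Finally, for reflexivity I would avoid duality altogether: when \eqref{AssumptionCompletenessExponents} holds and $q\neq1,+\infty$, the space $\dot{\mathrm{B}}^{s}_{p,q}(\mathbb{R}^n)$ is complete and, by Steps 2 and 4 of the proof of Theorem \ref{thm:InterpHomSpacesRn}, a retract of $\ell^q_s(\mathbb{Z},\mathrm{L}^p(\mathbb{R}^n))$ through $(\dot\Delta_j)_j$ and $\tilde\Sigma$; since $\ell^q_s(\mathbb{Z},\mathrm{L}^p(\mathbb{R}^n))$ is reflexive for $p,q\in(1,+\infty)$ and a complemented subspace of a reflexive Banach space is reflexive, $\dot{\mathrm{B}}^{s}_{p,q}(\mathbb{R}^n)$ is reflexive.

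I expect the main obstacle to be the surjectivity in Step 3: turning the Hahn–Banach extension $\tilde\Phi$, an abstract sequential functional, into a bona fide element $u=\sum_j\dot\Delta_j h_j$ of $\eus{S}'_h(\mathbb{R}^n)$, controlling its convergence in $\eus{S}'(\mathbb{R}^n)$ and its homogeneous Besov norm — this is precisely where completeness, i.e. \eqref{AssumptionCompletenessExponents}, is essential — and then checking that the two a priori different pairings genuinely agree after restriction to $\eus{S}_0(\mathbb{R}^n)$; the $q=+\infty$ endpoint, handled via $\dot{\mathcal{B}}^{-s}_{p',\infty}(\mathbb{R}^n)$ and a $c_0$-type predual, requires some additional care.
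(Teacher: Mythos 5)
Your proof is correct, but it follows a genuinely different route from the paper's. The paper disposes of the first two assertions by citing \cite[Proposition~2.29]{bookBahouriCheminDanchin}, and then obtains the duality identities and reflexivity abstractly, by combining the duality theorem for real interpolation \cite[Theorem~3.7.1]{BerghLofstrom1976} with the identity $\dot{\mathrm{B}}^{s}_{p,q}(\mathbb{R}^n)=(\dot{\mathrm{H}}^{s_0,p}(\mathbb{R}^n),\dot{\mathrm{H}}^{s_1,p}(\mathbb{R}^n))_{\theta,q}$ of Theorem \ref{thm:InterpHomSpacesRn} and the Sobolev duality $(\dot{\mathrm{H}}^{-s,p'}(\mathbb{R}^n))'=\dot{\mathrm{H}}^{s,p}(\mathbb{R}^n)$ of Proposition \ref{prop:dualityRieszpotential}. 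You instead run a self-contained Littlewood--Paley argument: the isometric embedding $v\mapsto(\dot{\Delta}_j v)_j$ into $\ell^{q'}_{-s}(\mathbb{Z},\mathrm{L}^{p'}(\mathbb{R}^n))$, Hahn--Banach on the closed range, the explicit dual of the vector-valued sequence space, and reconstruction of the representing element $u=\sum_j\dot{\Delta}_j h_j$ via the same mechanism as $\tilde{\Sigma}$. What your approach buys is transparency about exactly where each hypothesis enters (completeness of $\dot{\mathrm{B}}^{-s}_{p',q'}$, i.e.\ $s>-n/p'$, for the closed range; $s<n/p$ for the convergence of $\sum_j\dot{\Delta}_j h_j$ in $\eus{S}'_h$; $q'<+\infty$ for the dual identification and the density of $\eus{S}_0$), together with a cleaner reflexivity argument (complemented subspace of the reflexive $\ell^q_s(\mathbb{Z},\mathrm{L}^p)$) than double duality; what it costs is rederiving material that the interpolation route gets for free, and having to handle the $\dot{\mathcal{B}}^{-s}_{p',\infty}$ endpoint by hand through a $c_0$-type predual rather than through the remark following \cite[Theorem~3.7.1]{BerghLofstrom1976}.

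Two small points worth making explicit when writing this up. In Step 2, the identity $\langle u,v\rangle_{\mathbb{R}^n}=\sum_k\langle\dot{\Delta}_k u,g_k\rangle_{\mathbb{R}^n}$ for $v=\sum_k\dot{\Delta}_k g_k$ does hold exactly (because $\dot{\Delta}_k(\dot{\Delta}_{k-1}+\dot{\Delta}_k+\dot{\Delta}_{k+1})=\dot{\Delta}_k$), but this deserves a line of justification, as does the case $q=+\infty$ of the norming sequence, where one should take a single near-supremal index rather than appeal to $\ell^\infty$--$\ell^1$ duality. In Step 3, note that the element you reconstruct should be checked to lie in $\eus{S}'_h(\mathbb{R}^n)$ (low frequencies converge in $\mathrm{L}^\infty$ by Bernstein, using $s<n/p$), not merely in $\eus{S}'(\mathbb{R}^n)$, since that is how membership in $\dot{\mathrm{B}}^{s}_{p,q}(\mathbb{R}^n)$ is defined in this paper.
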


\begin{proof}The first part of the claim is just \cite[Proposition~2.29]{bookBahouriCheminDanchin}. The claimed part about reflexivity and duality follows directly from the application of \cite[Theorem~3.7.1]{BerghLofstrom1976} and of Theorem \ref{thm:InterpHomSpacesRn} and Proposition \ref{prop:dualityRieszpotential}.
\end{proof}

We also have a Sobolev-Besov multiplier result, which is useful for the construction of homogeneous Sobolev and Besov space on domains.
The first presentation of the next result in the setting of inhomogeneous function spaces is due to Strichartz \cite[Chapter~II,~Corollary~3.7]{Strichartz1967}, one may also check \cite[Proposition~3.5]{JerisonKenig1995}. We are going to use it to prove a straightforward generalization. The next result was also known but only stated for homogeneous Besov spaces up to now, see e.g. \cite[Appendix]{DanchinMucha2009}.

\begin{proposition}\label{prop:SobolevMultiplier} For all $p\in(1,+\infty)$, $q\in[1,+\infty]$, for all $s\in (-1+\frac{1}{p},\frac{1}{p})$, for all $u\in\dot{\mathrm{H}}^{s,p}(\mathbb{R}^n)$ (resp. $\dot{\mathrm{B}}^{s}_{p,q}(\mathbb{R}^{n})$),
\begin{align*}
    \lVert \mathbbm{1}_{\mathbb{R}^n_+} u \rVert_{\dot{\mathrm{H}}^{s,p}(\mathbb{R}^{n})} \lesssim_{s,p,n} \lVert u \rVert_{\dot{\mathrm{H}}^{s,p}(\mathbb{R}^{n})} \text{ }\text{ (resp. }  \lVert \mathbbm{1}_{\mathbb{R}^n_+} u \rVert_{\dot{\mathrm{B}}^{s}_{p,q}(\mathbb{R}^{n})} \lesssim_{s,p,n} \lVert u \rVert_{\dot{\mathrm{B}}^{s}_{p,q}(\mathbb{R}^{n})} \text{ ). }
\end{align*}
The same results still hold with $({\mathrm{H}},{\mathrm{B}})$ instead of $(\dot{\mathrm{H}},\dot{\mathrm{B}})$.
\end{proposition}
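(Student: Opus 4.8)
The plan is to bootstrap the homogeneous estimate from the inhomogeneous one. The inhomogeneous statement (for $\mathrm{H}^{s,p}$, $0\leqslant s<1/p$) is exactly Strichartz \cite[Chapter~II,~Corollary~3.7]{Strichartz1967}, and the key device to pass to $\dot{\mathrm{H}}^{s,p}$ is a scaling argument: if $\delta_\lambda f:=f(\lambda\,\cdot)$ for $\lambda>0$, then $\mathbbm{1}_{\mathbb{R}^n_+}$ commutes with $\delta_\lambda$ since $\mathbbm{1}_{\mathbb{R}^n_+}$ is dilation invariant. Fix first $s\in(0,1/p)$ and $u\in\eus{S}_0(\mathbb{R}^n)$, so that $u$ and $\mathbbm{1}_{\mathbb{R}^n_+}u$ both lie in $\mathrm{L}^p(\mathbb{R}^n)\subset\eus{S}'_h(\mathbb{R}^n)$ and $\lVert u\rVert_{\mathrm{H}^{s,p}(\mathbb{R}^n)},\lVert u\rVert_{\dot{\mathrm{H}}^{s,p}(\mathbb{R}^n)}<+\infty$. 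Strichartz's theorem gives $\mathbbm{1}_{\mathbb{R}^n_+}\delta_\lambda u\in\mathrm{H}^{s,p}(\mathbb{R}^n)$, hence $\mathbbm{1}_{\mathbb{R}^n_+}\delta_\lambda u\in\dot{\mathrm{H}}^{s,p}(\mathbb{R}^n)$ by point \textit{(vi)} of Proposition~\ref{prop:PropertiesHomSobolevSpacesRn} ($\mathrm{H}^{s,p}=\mathrm{L}^p\cap\dot{\mathrm{H}}^{s,p}$ with equivalent norms for $s\geqslant0$). Using the homogeneity of the norms, $\lVert\delta_\lambda f\rVert_{\dot{\mathrm{H}}^{s,p}(\mathbb{R}^n)}=\lambda^{s-n/p}\lVert f\rVert_{\dot{\mathrm{H}}^{s,p}(\mathbb{R}^n)}$ and $\lVert\delta_\lambda f\rVert_{\mathrm{L}^p(\mathbb{R}^n)}=\lambda^{-n/p}\lVert f\rVert_{\mathrm{L}^p(\mathbb{R}^n)}$, we get
\begin{align*}
\lVert \mathbbm{1}_{\mathbb{R}^n_+}u\rVert_{\dot{\mathrm{H}}^{s,p}(\mathbb{R}^n)}
&= \lambda^{\frac{n}{p}-s}\,\lVert \mathbbm{1}_{\mathbb{R}^n_+}\delta_\lambda u\rVert_{\dot{\mathrm{H}}^{s,p}(\mathbb{R}^n)}
\lesssim \lambda^{\frac{n}{p}-s}\,\lVert \mathbbm{1}_{\mathbb{R}^n_+}\delta_\lambda u\rVert_{{\mathrm{H}}^{s,p}(\mathbb{R}^n)}\\
&\lesssim_{s,p,n} \lambda^{\frac{n}{p}-s}\,\lVert \delta_\lambda u\rVert_{{\mathrm{H}}^{s,p}(\mathbb{R}^n)}
\lesssim_{s,p,n} \lambda^{-s}\lVert u\rVert_{\mathrm{L}^p(\mathbb{R}^n)} + \lVert u\rVert_{\dot{\mathrm{H}}^{s,p}(\mathbb{R}^n)}\text{.}
\end{align*}
Letting $\lambda\to+\infty$ (possible precisely because $s>0$) kills the $\mathrm{L}^p$ term and yields $\lVert\mathbbm{1}_{\mathbb{R}^n_+}u\rVert_{\dot{\mathrm{H}}^{s,p}(\mathbb{R}^n)}\lesssim_{s,p,n}\lVert u\rVert_{\dot{\mathrm{H}}^{s,p}(\mathbb{R}^n)}$ for all $u\in\eus{S}_0(\mathbb{R}^n)$.

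\textbf{Extension, duality, interpolation.} Since $0<s<1/p<n/p$, the space $\dot{\mathrm{H}}^{s,p}(\mathbb{R}^n)$ is complete and $\eus{S}_0(\mathbb{R}^n)$ is dense in it (points \textit{(ii)} and \textit{(iv)} of Proposition~\ref{prop:PropertiesHomSobolevSpacesRn}), so the previous estimate extends $u\mapsto\mathbbm{1}_{\mathbb{R}^n_+}u$ to a bounded operator on $\dot{\mathrm{H}}^{s,p}(\mathbb{R}^n)$; composing the Sobolev embedding $\dot{\mathrm{H}}^{s,p}(\mathbb{R}^n)\hookrightarrow\mathrm{L}^q(\mathbb{R}^n)$ ($1/q=1/p-s/n$, point \textit{(i)}) with the boundedness of $\mathbbm{1}_{\mathbb{R}^n_+}$ on $\mathrm{L}^q$ shows this extension agrees a.e. with the genuine pointwise product. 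For $s=0$ the statement is trivial, and for $-1+\tfrac1p<s<0$ one has $0<-s<1/p'$, so Proposition~\ref{prop:dualityRieszpotential} identifies $\dot{\mathrm{H}}^{s,p}(\mathbb{R}^n)$ isometrically with $(\dot{\mathrm{H}}^{-s,p'}(\mathbb{R}^n))'$ (condition $(\mathcal{C}_{s,p})$ holds as $s<0$); since $\mathbbm{1}_{\mathbb{R}^n_+}$ is self-adjoint for the pairing and bounded on $\dot{\mathrm{H}}^{-s,p'}(\mathbb{R}^n)$ by the previous step, transposition gives its boundedness on $\dot{\mathrm{H}}^{s,p}(\mathbb{R}^n)$ — one checks $\langle\mathbbm{1}_{\mathbb{R}^n_+}u,v\rangle_{\mathbb{R}^n}=\langle u,\mathbbm{1}_{\mathbb{R}^n_+}v\rangle_{\mathbb{R}^n}$ for $u\in\eus{S}_0(\mathbb{R}^n)$ and $v\in\eus{S}(\mathbb{R}^n)\cap\dot{\mathrm{H}}^{-s,p'}(\mathbb{R}^n)$, then passes to the limit. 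Thus $\mathbbm{1}_{\mathbb{R}^n_+}$ is bounded on $\dot{\mathrm{H}}^{s,p}(\mathbb{R}^n)$ for every $s\in(-1+\tfrac1p,\tfrac1p)$. The Besov case follows without any further density discussion: for $s\in(-1+\tfrac1p,\tfrac1p)$ and any $q\in[1,+\infty]$, choose $-1+\tfrac1p<s_0<s<s_1<\tfrac1p$; since $(\mathcal{C}_{s_0,p})$ holds, Theorem~\ref{thm:InterpHomSpacesRn} gives $(\dot{\mathrm{H}}^{s_0,p}(\mathbb{R}^n),\dot{\mathrm{H}}^{s_1,p}(\mathbb{R}^n))_{\theta,q}=\dot{\mathrm{B}}^{s}_{p,q}(\mathbb{R}^n)$, and boundedness on both endpoints transfers to the interpolation space. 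Finally, the inhomogeneous statements are recovered from $\mathrm{H}^{s,p}=\mathrm{L}^p\cap\dot{\mathrm{H}}^{s,p}$ and $\mathrm{B}^{s}_{p,q}=\mathrm{L}^p\cap\dot{\mathrm{B}}^{s}_{p,q}$ for $s\geqslant0$ (point \textit{(vi)} of Proposition~\ref{prop:PropertiesHomSobolevSpacesRn}, point \textit{(v)} of Proposition~\ref{prop:PropertiesHomBesovSpacesRn}) together with the triviality of $\mathbbm{1}_{\mathbb{R}^n_+}$ on $\mathrm{L}^p$, and from duality (or directly from \cite{Strichartz1967}) for $s<0$.

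\textbf{Main obstacle.} The computations above are light; the actual care goes into the core step, precisely because the homogeneous scales are not complete in general and neither pointwise products nor density arguments may be used cavalierly. The scaling trick is exactly what circumvents this: it reduces the homogeneous bound to Strichartz's inhomogeneous one while all quantities stay manifestly finite (working on $\eus{S}_0(\mathbb{R}^n)$, where $u$, $\mathbbm{1}_{\mathbb{R}^n_+}u$ and all relevant norms are finite), and the vanishing of the $\mathrm{L}^p$ remainder at rate $\lambda^{-s}$ is what forces $s>0$ here and, after transposition, $s<1/p$ — so that the admissible range $(-1+\tfrac1p,\tfrac1p)$ is inherited from the inhomogeneous result and its dual. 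The one spot needing vigilance is identifying the bounded extension with the true multiplication operator, which is handled through the Sobolev embedding into $\mathrm{L}^q$; everything else is bookkeeping with Proposition~\ref{prop:PropertiesHomSobolevSpacesRn}, Proposition~\ref{prop:dualityRieszpotential} and Theorem~\ref{thm:InterpHomSpacesRn}.
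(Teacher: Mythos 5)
Your proposal is correct and follows essentially the same route as the paper: start from Strichartz's inhomogeneous bound, use the equivalence $\mathrm{H}^{s,p}=\mathrm{L}^p\cap\dot{\mathrm{H}}^{s,p}$ together with the dilation $u_\lambda=u(\lambda\cdot)$ and $\lambda\to+\infty$ to kill the $\mathrm{L}^p$ term, then pass to $s<0$ by duality and to Besov spaces by real interpolation. Your write-up is a bit more careful than the paper's on two points it leaves implicit — working on $\eus{S}_0(\mathbb{R}^n)$ so all quantities are finite, and identifying the density-extended operator with genuine pointwise multiplication via the Sobolev embedding — but the argument is the same.
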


\begin{proof}We start from the result stated in the inhomogeneous case \cite[Chapter~II,~Corollary~3.7]{Strichartz1967}, which states the following: for all $p\in(1,+\infty)$, for all $s\in [0,\frac{1}{p})$, for all $u\in{\mathrm{H}}^{s,p}(\mathbb{R}^n)$
\begin{align*}
    \lVert \mathbbm{1}_{\mathbb{R}^n_+} u \rVert_{{\mathrm{H}}^{s,p}(\mathbb{R}^n)} \lesssim_{s,p,n} \lVert u \rVert_{{\mathrm{H}}^{s,p}(\mathbb{R}^n)} \text{. }
\end{align*}
If $s=0$, there is nothing to achieve since ${\mathrm{H}}^{0,p}(\mathbb{R}^n)=\dot{\mathrm{H}}^{0,p}(\mathbb{R}^n)=\mathrm{L}^p(\mathbb{R}^n)$ with equality of norms. Now for $s>0$, by the equivalence of norms, we obtain
\begin{align*}
    \lVert \mathbbm{1}_{\mathbb{R}^n_+} u \rVert_{\mathrm{L}^p(\mathbb{R}^n)} + \lVert \mathbbm{1}_{\mathbb{R}^n_+} u \rVert_{\dot{\mathrm{H}}^{s,p}(\mathbb{R}^n)} \lesssim_{s,p,n} \lVert u \rVert_{\mathrm{L}^p(\mathbb{R}^n)}  +  \lVert u \rVert_{\dot{\mathrm{H}}^{s,p}(\mathbb{R}^n)} \text{. }
\end{align*}
Plugging $u_\lambda:=u(\lambda\cdot)$ in the above inequality, provided $\lambda$ is a positive real number, since one has $\mathbbm{1}_{\mathbb{R}^n_+}(\lambda\cdot ) u_\lambda = \mathbbm{1}_{\mathbb{R}^n_+} u_\lambda$, we obtain that
\begin{align*}
    \lambda^{-\frac{n}{p}}\lVert \mathbbm{1}_{\mathbb{R}^n_+} u \rVert_{\mathrm{L}^p(\mathbb{R}^n)} + \lambda^{s-\frac{n}{p}}\lVert \mathbbm{1}_{\mathbb{R}^n_+} u \rVert_{\dot{\mathrm{H}}^{s,p}(\mathbb{R}^n)} \lesssim_{s,p,n} \lambda^{-\frac{n}{p}}\lVert u \rVert_{\mathrm{L}^p(\mathbb{R}^n)}  +  \lambda^{s-\frac{n}{p}}\lVert u \rVert_{\dot{\mathrm{H}}^{s,p}(\mathbb{R}^n)} \text{. }
\end{align*}
Thus one may divide by $\lambda^{s-\frac{n}{p}}$, and then as $\lambda$ tends to infinity, we deduce
\begin{align*}
     \lVert \mathbbm{1}_{\mathbb{R}^n_+} u \rVert_{\dot{\mathrm{H}}^{s,p}(\mathbb{R}^n)} \lesssim_{s,p,n}   \lVert u \rVert_{\dot{\mathrm{H}}^{s,p}(\mathbb{R}^n)} \text{. }
\end{align*}
Therefore, the result follows by density argument.

The result for $s\in(-1+\frac{1}{p},0)$ is a consequence of duality and density using the duality bracket defined on $\eus{S}_0(\mathbb{R}^n)\times \eus{S}_0(\mathbb{R}^n)$.

The Besov space case follows by real interpolation.
\end{proof}


\section{Function spaces on the upper half-space}\label{sec:FunctionSpacesRn+}

From now on, until the end of the paper, we assume that the dimension $n$ satisfies $n \geqslant2$.

Let $s\in\mathbb{R}$, $p\in(1,+\infty)$, $q\in[1,+\infty]$. Then for any $\mathrm{X}\in\{ \mathrm{B}^{s}_{p,q}, \dot{\mathrm{B}}^{s}_{p,q}, \mathrm{\mathrm{H}}^{s,p}, \dot{\mathrm{H}}^{s,p}\}$, we define
\begin{align*}
    \mathrm{X}(\mathbb{R}^n_+):= \mathrm{X}(\mathbb{R}^n)_{|_{\mathbb{R}^n_+}}\text{, }
\end{align*}
with the quotient norm $\lVert u \rVert_{\mathrm{X}(\mathbb{R}^n_+)}:= \inf\limits_{\substack{\Tilde{u}\in \mathrm{X}(\mathbb{R}^n),\\ \tilde{u}_{|_{\mathbb{R}^n_+}}=u\, .}} \lVert \Tilde{u} \rVert_{\mathrm{X}(\mathbb{R}^n)}$. A direct consequence of the definition of those spaces is the density of $\eus{S}_0(\overline{\mathbb{R}^n_+})\subset\eus{S}(\overline{\mathbb{R}^n_+})$ in each of them. The completeness and reflexivity is also carried over when their counterpart on $\mathbb{R}^n$ are respectively complete and reflexive. We also define
\begin{align*}
    \mathrm{X}_0(\mathbb{R}^n_+):= \left\{\,u\in \mathrm{X}(\mathbb{R}^n) \,\Big{|}\, \supp u \subset \overline{\mathbb{R}^n_+} \right\}\text{, }
\end{align*}
with natural induced norm $\lVert  u \rVert_{\mathrm{X}_0(\mathbb{R}^n_+)}:= \lVert  u \rVert_{\mathrm{X}(\mathbb{R}^n)}$. We always have the canonical continuous injection,
\begin{align*}
    \mathrm{X}_0(\mathbb{R}^n_+)\hookrightarrow \mathrm{X}(\mathbb{R}^n_+) \text{. }
\end{align*}
Since there is a natural embedding $\eus{S}'(\mathbb{R}^n)\hookrightarrow \eus{D}'(\mathbb{R}^n_+)$, we also have the inclusion
\begin{align*}
    \mathrm{X}(\mathbb{R}^n_+) \subset \eus{D}'(\mathbb{R}^n_+)\text{.}
\end{align*}

If $\mathrm{X}$ and $\mathrm{Y}$ are different function spaces
\begin{itemize}
    \item  if one has continuous embedding
\begin{align*}
    \mathrm{Y}(\mathbb{R}^n)\hookrightarrow \mathrm{X}(\mathbb{R}^n) \text{. }
\end{align*}
a direct consequence of the definition is 
\begin{align*}
    \mathrm{Y}(\mathbb{R}^n_+)\hookrightarrow \mathrm{X}(\mathbb{R}^n_+) \text{, }
\end{align*}
and similarly with $\mathrm{X}_0$ and $\mathrm{Y}_0$.
    
    \item We write $[\mathrm{X}\cap \mathrm{Y}](\mathbb{R}^n_+)$ the restriction of $\mathrm{X}(\mathbb{R}^n)\cap \mathrm{Y}(\mathbb{R}^n)$ to $\mathbb{R}^n_+$, in general there is nothing to ensure more than 
    \begin{align*}
        [\mathrm{X}\cap \mathrm{Y}](\mathbb{R}^n_+)\hookrightarrow \mathrm{X}(\mathbb{R}^n_+)\cap \mathrm{Y}(\mathbb{R}^n_+) \text{. }
    \end{align*}
\end{itemize}

Results corresponding to those obtained for the whole space $\mathbb{R}^n$ in the previous section are usually carried over by the existence of an appropriate
extension operator
\begin{align*}
    \mathcal{E}\,:\, \eus{S}'(\mathbb{R}^n_+)\longrightarrow \eus{S}'(\mathbb{R}^n)\text{, }
\end{align*}
bounded from $\mathrm{X}(\mathbb{R}^n_+)$ to $\mathrm{X}(\mathbb{R}^n)$.

\subsection{Quick overview of inhomogeneous function spaces on \texorpdfstring{$\mathbb{R}^n_+$}{Rn+}}\label{subsec:InhomSpacesRn+} For inhomogeneous spaces on special Lipschitz domains (in particular on $\mathbb{R}^n_+$), an approach was done by Stein in \cite[Chapter~VI]{Stein1970}, for Sobolev spaces with non-negative index, and Besov spaces of positive index of regularity (this follows by real interpolation). A full and definitive result for the inhomogeneous case on Lipschitz domains, and even in a more general case (allowing $p,q$ to be less than $1$ considering the whole Besov and Triebel-Lizorkin scales), was given by Rychkov in \cite{Rychkov1999} where the extension operator is known to be universal and to cover even negative regularity index. 

The extension operator provided by Rychkov can be used to prove, thanks to \cite[Theorem~6.4.2]{BerghLofstrom1976}, if $(\mathfrak{h},\mathfrak{b})\in\{(\mathrm{H},\mathrm{B}), (\mathrm{H}_0,\mathrm{B}_{\cdot,\cdot,0})\}$,
\begin{align}
    [\mathfrak{h}^{s_0,p_0}(\mathbb{R}^n_+),\mathfrak{h}^{s_1,p_1}(\mathbb{R}^n_+)]_\theta=\mathfrak{h}^{s,p_\theta}(\mathbb{R}^n_+)\text{, }&\qquad (\mathfrak{b}^{s_0}_{p,q_0}(\mathbb{R}^n_+),\mathfrak{b}^{s_1}_{p,q_1}(\mathbb{R}^n_+))_{\theta,q} = \mathfrak{b}^{s}_{p,q}(\mathbb{R}^n_+)\text{, }\\
    (\mathfrak{h}^{s_0,p}(\mathbb{R}^n_+),\mathfrak{h}^{s_1,p}(\mathbb{R}^n_+))_{\theta,q}= \mathfrak{b}^{s}_{p,q}(\mathbb{R}^n_+)\text{, }&\qquad [\mathfrak{b}^{s_0}_{p_0,q_0}(\mathbb{R}^n_+),\mathfrak{b}^{s_1}_{p_1,q_1}(\mathbb{R}^n_+)]_{\theta} = \mathfrak{b}^{s}_{p_\theta,q_\theta}(\mathbb{R}^n_+)\text{, }
\end{align}
whenever $(p_0,q_0),(p_1,q_1),(p,q)\in[1,+\infty]^2$($p\neq 1,+\infty$, when dealing with Sobolev (Bessel potential) spaces), $\theta\in(0,1)$, $s_0\neq s_1$ two real numbers, such that
\begin{align*}
    \left(s,\frac{1}{p_\theta},\frac{1}{q_\theta}\right):= (1-\theta)\left(s_0,\frac{1}{p_0},\frac{1}{q_0}\right)+ \theta\left(s_1,\frac{1}{p_1},\frac{1}{q_1}\right)\text{,}
\end{align*}
with $q_\theta<+\infty$.

A nice property is that the description of the boundary yields the following density results, for all $p\in(1,+\infty)$, $q\in[1,+\infty)$, $s\in\mathbb{R}$,
\begin{align}
    \mathrm{\mathrm{H}}^{s,p}_0(\mathbb{R}^n_+)= \overline{\mathrm{C}_c^\infty(\mathbb{R}^n_+)}^{\lVert \cdot \rVert_{\mathrm{\mathrm{H}}^{s,p}(\mathbb{R}^n)}}\text{,}\quad\text{and}\quad \mathrm{B}^{s}_{p,q,0}(\mathbb{R}^n_+)= \overline{\mathrm{C}_c^\infty(\mathbb{R}^n_+)}^{\lVert \cdot \rVert_{\mathrm{B}^{s}_{p,q}(\mathbb{R}^n)}}\text{. }
\end{align}
One may check \cite[Section~2]{JerisonKenig1995} for the treatment of Sobolev spaces case, the Besov spaces case follows by interpolation argument, see \cite[Theorem~3.4.2]{BerghLofstrom1976}. As a direct consequence, one has from \cite[Proposition~2.9]{JerisonKenig1995} and \cite[Theorem~3.7.1]{BerghLofstrom1976}, that for all $s\in\mathbb{R}$, $p\in(1,+\infty)$, $q\in[1,+\infty)$,
\begin{align}
    (\mathrm{\mathrm{H}}^{s,p}(\mathbb{R}^n_+))' = &\mathrm{H}^{-s,p'}_0(\mathbb{R}^n_+)\text{, }\, (\mathrm{B}^{s}_{p,q}(\mathbb{R}^n_+))'=\mathrm{B}^{-s}_{p',q',0}(\mathbb{R}^n_+) \text{, }\\
    &(\mathrm{B}^{s}_{p,q,0}(\mathbb{R}^n_+))'=\mathrm{B}^{-s}_{p',q'}(\mathbb{R}^n_+)\text{. }
\end{align}

And finally, thanks to the inhomogeneous version of Proposition \ref{prop:SobolevMultiplier}, we also have a particular case of equality of Sobolev spaces, with equivalent norms, for all $p\in(1,+\infty)$, $q\in[1,+\infty]$, $s\in(-1+\frac{1}{p},\frac{1}{p})$,
\begin{align}
    \mathrm{\mathrm{H}}^{s,p}(\mathbb{R}^n_+) = \mathrm{\mathrm{H}}^{s,p}_0(\mathbb{R}^n_+)\text{, }\, \mathrm{B}^{s}_{p,q}(\mathbb{R}^n_+)=\mathrm{B}^{s}_{p,q,0}(\mathbb{R}^n_+) \text{. }
\end{align}

The interested reader may also find an explicit and way more general (and still valid, for the most part of it, in the case of the half-space) treatment for bounded Lipschitz domains in \cite{KaltonMayborodaMitrea2007}, where the Triebel-Lizorkin scale, including Hardy spaces, and other endpoint function spaces are also treated. 

All the results presented above will be used without being mentioned and are assumed to be well known to the reader.

\subsection{Homogeneous function spaces on \texorpdfstring{$\mathbb{R}^n_+$}{Rn+}}

One may expect to recover similar results for the scale of homogeneous Sobolev and Besov as the one mentioned in the subsection \ref{subsec:InhomSpacesRn+}. However, due to the setting involving the use of $\eus{S}'_h(\mathbb{R}^n)$, we have a lack of completeness so that one can no longer use complex interpolation theory and density argument on the whole scale to provide boundedness of linear operators. A first approach we could have in mind is that one would expect Rychkov's extension operator to preserve $\eus{S}'_h$, say $\mathcal{E}(\eus{S}'_h(\mathbb{R}^n_+))\subset \eus{S}'_h(\mathbb{R}^n)$ with \textbf{\textit{homogeneous}} estimates, which is not known yet.

However, if we consider a more naive extension operator like by reflection around the boundary, as in \cite[Chapter~3]{DanchinHieberMuchaTolk2020}, a certain amount of results remains true, up to consider index $s>-1+\tfrac{1}{p}$, provided $p\in(1,+\infty)$. This is what we are going to achieve here: this subsection is devoted to proofs of usual results on homogeneous Sobolev and Besov spaces on $\mathbb{R}^n_+$. 

This subsection contains 3 subparts: the first one is about extension-restriction and density results for our homogeneous Sobolev spaces, from which for the second, we are going to build corresponding ones for Besov spaces, via some ersatz of real interpolation procedure. Both will be used to build the third subpart, which concerns effective interpolation results for our homogeneous Sobolev and Besov spaces. 

\subsubsection{Homogeneous Sobolev spaces}

We start proving the boundedness of extension operators defined by higher order reflection principle but for homogeneous Sobolev spaces with fractional index of regularity. This is done as in \cite[Lemma~3.15, Proposition~3.19]{DanchinHieberMuchaTolk2020}, where it was achieved only for homogeneous Besov spaces.

\begin{proposition}\label{prop:ExtOpHomSobSpaces} For $m\in\mathbb{N}$, there exists a linear extension operator $\mathrm{E}$, depending on $m$, such that for all $p\in(1,+\infty)$, $ -1+\tfrac{1}{p}< s < m+ 1+\tfrac{1}{p} $, so that if either,
\begin{itemize}
    \item $s \geqslant 0$ and $u\in {\mathrm{H}}^{s,p}(\mathbb{R}^n_+)$ ;
    \item $s\in (-1+\tfrac{1}{p},\tfrac{1}{p})$ and $u\in \dot{\mathrm{H}}^{s,p}(\mathbb{R}^n_+)$ ;
\end{itemize}
we have
\begin{align*}
    {\mathrm{E}u}_{|_{\mathbb{R}^n_+}} = u\text{, }
\end{align*}
with the estimate
\begin{align*}
    \left\lVert \mathrm{E}u \right\rVert_{\dot{\mathrm{H}}^{s,p}(\mathbb{R}^n)}\lesssim_{p,s,n,m} \left\lVert u \right\rVert_{\dot{\mathrm{H}}^{s,p}(\mathbb{R}^n_+)}\text{. }
\end{align*}
In particular, $\mathrm{E}\,:\, \dot{\mathrm{H}}^{s,p}(\mathbb{R}^n_+)\longrightarrow\dot{\mathrm{H}}^{s,p}(\mathbb{R}^n)$ extends uniquely to a bounded operator whenever $(\mathcal{C}_{s,p})$ is satisfied.
\end{proposition}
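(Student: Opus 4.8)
The plan is to build the extension operator $\mathrm{E}$ explicitly by the higher-order reflection principle, following \cite[Lemma~3.15,~Proposition~3.19]{DanchinHieberMuchaTolk2020}, and then to prove the homogeneous estimate by combining the known inhomogeneous estimate with a dilation/scaling argument, in the same spirit as the proof of Proposition~\ref{prop:SobolevMultiplier}. Concretely, for $u$ smooth enough on $\mathbb{R}^n_+$, set
\begin{align*}
    \mathrm{E}u(x',x_n):=\begin{cases} u(x',x_n), & x_n>0,\\[2pt] \displaystyle\sum_{k=1}^{m+2} \alpha_k\, u(x',-\lambda_k x_n), & x_n<0,\end{cases}
\end{align*}
where the $\lambda_k>0$ are distinct and the coefficients $\alpha_k$ are chosen (by solving a Vandermonde system) so that $\partial_{x_n}^j \mathrm{E}u$ is continuous across $\{x_n=0\}$ for $j=0,\dots,m+1$. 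This guarantees ${\mathrm{E}u}_{|_{\mathbb{R}^n_+}}=u$ and, by the standard argument for reflection operators, $\mathrm{E}$ is bounded ${\mathrm{H}}^{\sigma,p}(\mathbb{R}^n_+)\to{\mathrm{H}}^{\sigma,p}(\mathbb{R}^n)$ for integers $0\leqslant \sigma\leqslant m+1$ (by differentiating explicitly and using the matching conditions), and then for all real $0\leqslant s<m+1+\tfrac1p$ by real/complex interpolation (here $+\tfrac1p$ enters because reflection of $x_n\mapsto x_n^{k}$-type singularities costs an extra $\tfrac1p$, exactly as in the cited reference); one also treats negative $s\in(-1+\tfrac1p,0)$ by duality, observing that $\mathrm{E}$ is, up to harmless constants, formally self-adjoint against a companion restriction-type operator, so boundedness on ${\mathrm{H}}^{-s,p'}_0$ transfers to the claimed range.

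Next I would pass from the inhomogeneous to the homogeneous estimate. For $s\geqslant0$ one has, by point \textit{(vi)} of Proposition~\ref{prop:PropertiesHomSobolevSpacesRn} and its half-space analogue, ${\mathrm{H}}^{s,p}={\mathrm{L}}^p\cap\dot{\mathrm{H}}^{s,p}$ with equivalent norms, so the inhomogeneous bound reads
\begin{align*}
    \lVert \mathrm{E}u\rVert_{\mathrm{L}^p(\mathbb{R}^n)}+\lVert \mathrm{E}u\rVert_{\dot{\mathrm{H}}^{s,p}(\mathbb{R}^n)}\lesssim_{p,s,n,m} \lVert u\rVert_{\mathrm{L}^p(\mathbb{R}^n_+)}+\lVert u\rVert_{\dot{\mathrm{H}}^{s,p}(\mathbb{R}^n_+)}.
\end{align*}
Since $\mathrm{E}$ commutes with the dilations $u\mapsto u(\lambda\,\cdot)$ (because the reflection profile is built from dilations in $x_n$ and the underlying operation is dilation-covariant — one checks $\mathrm{E}(u(\lambda\,\cdot))=(\mathrm{E}u)(\lambda\,\cdot)$ directly from the formula), one plugs $u_\lambda:=u(\lambda\,\cdot)$, uses the scaling $\lVert v(\lambda\,\cdot)\rVert_{\dot{\mathrm{H}}^{\sigma,p}}=\lambda^{\sigma-n/p}\lVert v\rVert_{\dot{\mathrm{H}}^{\sigma,p}}$ on both sides, divides by $\lambda^{s-n/p}$, and lets $\lambda\to+\infty$ to kill the $\mathrm{L}^p$ terms (whose scaling exponent $-n/p$ is strictly smaller than $s-n/p$ when $s>0$); this yields the homogeneous estimate for $s>0$. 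For $s\in(-1+\tfrac1p,0)$ one argues by duality against $\dot{\mathrm{H}}^{-s,p'}_0(\mathbb{R}^n_+)$ using the duality brackets of Proposition~\ref{prop:dualityRieszpotential} and density of $\eus{S}_0$, exactly as at the end of the proof of Proposition~\ref{prop:SobolevMultiplier}; the case $s=0$ is trivial since $\dot{\mathrm{H}}^{0,p}=\mathrm{L}^p$ and $\mathrm{E}$ is obviously $\mathrm{L}^p$-bounded.

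Finally, the last sentence of the statement: when $(\mathcal{C}_{s,p})$ holds, $\dot{\mathrm{H}}^{s,p}(\mathbb{R}^n)$ is complete (point \textit{(ii)} of Proposition~\ref{prop:PropertiesHomSobolevSpacesRn}) and hence so is $\dot{\mathrm{H}}^{s,p}(\mathbb{R}^n_+)$; since $\eus{S}_0(\overline{\mathbb{R}^n_+})$ (or at least a dense subspace of sufficiently regular functions for which the explicit formula makes literal sense) is dense in $\dot{\mathrm{H}}^{s,p}(\mathbb{R}^n_+)$ and $\mathrm{E}$ is bounded on it by the above, it extends uniquely by continuity to a bounded operator $\dot{\mathrm{H}}^{s,p}(\mathbb{R}^n_+)\to\dot{\mathrm{H}}^{s,p}(\mathbb{R}^n)$, and the restriction identity persists in the limit because restriction to $\mathbb{R}^n_+$ is continuous into $\eus{D}'(\mathbb{R}^n_+)$. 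I expect the main obstacle to be bookkeeping the precise endpoint range $-1+\tfrac1p<s<m+1+\tfrac1p$: verifying that the reflection coefficients can indeed be chosen to match $m+1$ derivatives (the Vandermonde nonsingularity), and tracking carefully why one loses/gains exactly $\tfrac1p$ at each end — i.e. that $\mathrm{E}$ is bounded on the half-open range up to but not including $m+1+\tfrac1p$ — which requires the sharp Sobolev-trace/interpolation endpoint information and is where the cited arguments of Danchin–Hieber–Mucha–Tolksdorf need to be imported with care (along with checking the dilation-covariance and the duality pairing at the negative end).
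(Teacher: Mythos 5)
Your construction of $\mathrm{E}$ is the same higher-order reflection as in the paper, and your scaling idea is sound in spirit, but the two steps that carry all the weight are not actually established and one of them is circular. First, the inhomogeneous bound $\mathrm{E}\colon \mathrm{H}^{s,p}(\mathbb{R}^n_+)\to\mathrm{H}^{s,p}(\mathbb{R}^n)$ for $s\in(m+1,m+1+\tfrac1p)$ does \emph{not} follow from real/complex interpolation between the integer orders $0,\dots,m+1$: interpolation only yields $[0,m+1]$, and the extra $\tfrac1p$ at the top is exactly the nontrivial content. It comes from the Strichartz multiplier result (Proposition~\ref{prop:SobolevMultiplier}) applied after differentiating $\ell$ times so as to land in the window $(-1+\tfrac1p,\tfrac1p)$, together with the commutation identities $\partial_{x_k}^\ell\mathrm{E}u=\mathrm{E}\partial_{x_k}^\ell u$ (tangential) and $\partial_{x_n}^\ell\mathrm{E}u=\mathrm{E}^{(\ell)}\partial_{x_n}^\ell u$ (normal) and the norm equivalence $\lVert v\rVert_{\dot{\mathrm{H}}^{s,p}}\sim\sum_k\lVert\partial_{x_k}^\ell v\rVert_{\dot{\mathrm{H}}^{s-\ell,p}}$. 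This is precisely the paper's argument; your proposal defers it to ``the cited reference'' without carrying it out, and it also leaves the exceptional values $s-\tfrac1p\in\mathbb{N}$ untouched (the paper needs a separate complex-interpolation identity for $[\mathrm{L}^{p_0}(\mathbb{R}^n_+),\dot{\mathrm{H}}^{1,p_1}(\mathbb{R}^n_+)]_\theta$, where $n\geqslant 2$ is essential, to reach $s-\ell=\tfrac1p$).

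Second, your passage from the inhomogeneous to the homogeneous estimate by dilation silently uses $\lVert u\rVert_{\mathrm{H}^{s,p}(\mathbb{R}^n_+)}\lesssim\lVert u\rVert_{\mathrm{L}^p(\mathbb{R}^n_+)}+\lVert u\rVert_{\dot{\mathrm{H}}^{s,p}(\mathbb{R}^n_+)}$, i.e.\ the nontrivial inclusion $\mathrm{L}^p(\mathbb{R}^n_+)\cap\dot{\mathrm{H}}^{s,p}(\mathbb{R}^n_+)\hookrightarrow[\mathrm{L}^p\cap\dot{\mathrm{H}}^{s,p}](\mathbb{R}^n_+)$. In this paper that identification is Proposition~\ref{prop:IntersecHomHspRn+}, which is \emph{proved from} Proposition~\ref{prop:ExtOpHomSobSpaces}, so your route is circular. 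Even the refined version of the scaling argument (working with a fixed extension $U$ of $u$) only bounds $\lVert\mathrm{E}u\rVert_{\dot{\mathrm{H}}^{s,p}(\mathbb{R}^n)}$ by the infimum over extensions lying in $\mathrm{H}^{s,p}(\mathbb{R}^n)$, not over all extensions in $\dot{\mathrm{H}}^{s,p}(\mathbb{R}^n)$ as the quotient norm $\lVert u\rVert_{\dot{\mathrm{H}}^{s,p}(\mathbb{R}^n_+)}$ requires; closing that gap is again the decoupling issue the derivative-reduction argument is designed to solve. A similar caution applies to your treatment of $s\in(-1+\tfrac1p,0)$ by duality: the half-space duality $(\dot{\mathrm{H}}^{s,p}(\mathbb{R}^n_+))'=\dot{\mathrm{H}}^{-s,p'}_0(\mathbb{R}^n_+)$ is Proposition~\ref{prop:DualitySobolevDomain}, established later and downstream of this result; the paper instead gets the whole window $s\in(-1+\tfrac1p,\tfrac1p)$ in one stroke from the formula $\mathrm{E}u=\mathbbm{1}_{\mathbb{R}^n_+}u+\sum_j\alpha_j[\mathbbm{1}_{\mathbb{R}^n_+}u](\cdot\,',-\tfrac{\cdot_n}{j+1})$ and Proposition~\ref{prop:SobolevMultiplier}, with no duality needed.
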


\begin{proof} As in \cite[Lemma~3.15]{DanchinHieberMuchaTolk2020}, let  us  introduce the higher order reflection operator $\mathrm{E}$, defined for all measurable function $u\,:\,\mathbb{R}^n_+\longrightarrow \mathbb{C}$ by
\begin{align*}
    \mathrm{E}u  (x)\,:=\,\left\{\begin{array}{lc}
            u(x) &\text{, if } x\in \mathbb{R}^n_+  \text{, }\\
            \sum_{j=0}^m \alpha_j u(x', -\tfrac{x_n}{j+1}) &\text{, if } x\in \mathbb{R}^n\setminus\mathbb{R}^n_+  \text{. }
        \end{array}\right.
\end{align*}
where, as in \cite[Lemma~3.15]{DanchinHieberMuchaTolk2020},  $x=(x_1,\ldots,x_{n-1},x_n)=(x',x_n)\in\mathbb{R}^{n-1}\times\mathbb{R}$, and $(\alpha_j)_{j\in\llb0,m\rrb}$ is such that $\mathrm{E}$ maps $\mathrm{C}^m$-functions on $\mathbb{R}^n_+$ to $\mathrm{C}^m$-functions on $\mathbb{R}^n$. This is indeed true since $\alpha_j$, $j\in\llb 0,m\rrb$, is chosen so that it satisfies for all $\kappa \in\llb0,m\rrb$,
\begin{align*}
    \sum_{j=0}^{m} \left(\frac{-1}{j+1}\right)^\kappa\alpha_j = 1.
\end{align*}
By construction, the operator $\mathrm{E}$ also maps boundedly $\mathrm{H}^{k,p}(\mathbb{R}^n_+)$ to $\mathrm{H}^{k,p}(\mathbb{R}^n)$ for all $k\in\llb 0, m+1\rrb$. The boundedness of the operator $\mathrm{E}$ from $\mathrm{H}^{s,p}(\mathbb{R}^n_+)$ to $\mathrm{H}^{s,p}(\mathbb{R}^n)$ for all $s\in[0,m+1]$ follows from complex interpolation.

Notice also that Proposition \ref{prop:SobolevMultiplier} and the formulation, given for $x\in\mathbb{R}^{n}$,
\begin{align*}
    \mathrm{E}u  (x) = [\mathbbm{1}_{\mathbb{R}^n_+}u](x) + \sum_{j=0}^m \alpha_j [\mathbbm{1}_{\mathbb{R}^n_+}u](x', -\tfrac{x_n}{j+1})
\end{align*}
implies that $\mathrm{E}\,:\, \dot{\mathrm{H}}^{s,p}(\mathbb{R}^n_+)\longrightarrow \dot{\mathrm{H}}^{s,p}(\mathbb{R}^n)$ is bounded for all $s\in(-1+\tfrac{1}{p},\tfrac{1}{p})$.

Now, for $p\in(1,+\infty)$, $s\in[ 0, m+1+\tfrac{1}{p})$, $s-\frac{1}{p}\notin \mathbb{N}$, $u\in \mathrm{H}^{s,p}(\mathbb{R}^n_+)$, and $\mathrm{E}\,:\, {\mathrm{H}}^{s,p}(\mathbb{R}^n_+)\longrightarrow \dot{\mathrm{H}}^{s,p}(\mathbb{R}^n)$, we can choose $\ell\in\mathbb{N}$ such that $s-\ell\in (-1+\tfrac{1}{p},\tfrac{1}{p})$, so that
\begin{align*}
    \partial_{x_k}^\ell\mathrm{E}u &= \mathrm{E}[\partial_{x_\ell}^\ell u] \text{, provided } k\in\llb 1,n-1\rrb \text{, }\\ \partial_{x_n}^\ell \mathrm{E}u &=  \mathrm{E}^{(\ell)} \partial_{x_n}^\ell u= \sum_{j=0}^m \alpha_j \left(\tfrac{-1}{j+1}\right)^\ell \partial_{x_n}^\ell u(x', -\tfrac{x_n}{j+1})\text{. }
\end{align*}
For the same reasons as in the beginning of the present proof, $\mathrm{E}^{(\ell)}$ maps $\mathrm{H}^{s,p}(\mathbb{R}^n_+)$ to $\mathrm{H}^{s,p}(\mathbb{R}^n)$ for all $s\in[0,m-\ell+1]$, and $\dot{\mathrm{H}}^{s,p}(\mathbb{R}^n_+)$ to $\dot{\mathrm{H}}^{s,p}(\mathbb{R}^n)$ for $s\in(-1+1/p,1/p)$, thanks to Proposition~\ref{prop:SobolevMultiplier}.

From the fact that $\partial_{x_j}^\ell u \in \dot{\mathrm{H}}^{s-\ell,p}(\mathbb{R}^n_{+})$, we deduce
\begin{align} \label{ineq:boundednessPartialkE}
    \left\lVert \mathrm{E} u\right\rVert_{\dot{\mathrm{H}}^{s,p}(\mathbb{R}^n)} \sim_{\ell,p,n} \sum_{j=1}^{n-1}\lVert \partial_{x_j}^\ell \mathrm{E} u\rVert_{\dot{\mathrm{H}}^{s-\ell,p}(\mathbb{R}^n)} + \lVert \mathrm{E}^{(\ell)}\partial_{x_n}^\ell u\rVert_{\dot{\mathrm{H}}^{s-\ell,p}(\mathbb{R}^n)} \lesssim_{s,\ell,p,n,m} \sum_{j=1}^{n}\lVert \partial_{x_j}^\ell u\rVert_{\dot{\mathrm{H}}^{s-\ell,p}(\mathbb{R}^n)}\text{. }
\end{align}
To be more synthetic, we have obtained
\begin{align*}
    \lVert \mathrm{E}u \rVert_{\dot{\mathrm{H}}^{s,p}(\mathbb{R}^n)}\lesssim_{p,k,n,m} \lVert u \rVert_{\dot{\mathrm{H}}^{s,p}(\mathbb{R}^n_+)}\text{, }
\end{align*}
so that $\mathrm{E}\,:\,\dot{\mathrm{H}}^{s,p}(\mathbb{R}^n_+)\longrightarrow \dot{\mathrm{H}}^{s,p}(\mathbb{R}^n)$ is bounded on the subspace ${\mathrm{H}}^{s,p}(\mathbb{R}^n_+)$, in particular it extends uniquely to a bounded linear operator on the whole $\dot{\mathrm{H}}^{s,p}(\mathbb{R}^n_+)$ when it is complete, i.e. $s<\frac{n}{p}$, this follows from the fact that $\eus{S}(\overline{\mathbb{R}^n_+})\subset{\mathrm{H}}^{s,p}(\mathbb{R}^n_+)$ is dense in $\dot{\mathrm{H}}^{s,p}(\mathbb{R}^n_+)$.

It remains to cover cases when $s-\frac{1}{p}\in \llb 0,m\rrb$. To do so, we want to reproduce the above procedure, proving first that $\mathrm{E}$ (\textit{resp.} $\mathrm{E}^{(\ell)}$, $\ell\in\llb 1,m\rrb$) is bounded from $\dot{\mathrm{H}}^{\tfrac{1}{p},p}(\mathbb{R}^n_+)$ to $\dot{\mathrm{H}}^{\tfrac{1}{p},p}(\mathbb{R}^n)$, via some complex interpolation scheme.

Now let  $p_0,p_1\in(1,+\infty)$, $p_1<n$, $\theta\in(0,1)$. Notice that, here, $n\geqslant 2$ is a really important assumption, otherwise there wouldn't any such $p_1$. Consider $u\in [\mathrm{L}^{p_0}(\mathbb{R}^n_+),\dot{\mathrm{H}}^{1,p_1}(\mathbb{R}^n_+)]_{\theta}$. Let $f\in F(\mathrm{L}^{p_0}(\mathbb{R}^n_+),\dot{\mathrm{H}}^{1,p_1}(\mathbb{R}^n_+))$, such that $f(\theta)=u$, it follows from the previous considerations that $\mathrm{E}f\in F(\mathrm{L}^{p_0}(\mathbb{R}^n),\dot{\mathrm{H}}^{1,p_1}(\mathbb{R}^n))$. Thus, from Theorem \ref{thm:InterpHomSpacesRn}, one has
\begin{align*}
    \mathrm{E}f(\theta)\in \dot{\mathrm{H}}^{\theta,p}(\mathbb{R}^n)\text{, where } \left(\theta,\frac{1}{p}\right):= (1-\theta)\left(0,\frac{1}{p_0}\right)+ \theta\left(1,\frac{1}{p_1}\right)\text{. }
\end{align*}
So $u= \mathrm{E}f(\theta)_{|_{\mathbb{R}^n_+}}\in \dot{\mathrm{H}}^{\theta,p}(\mathbb{R}^n_+)$ with the norm estimate
\begin{align*}
    \lVert u \rVert_{\dot{\mathrm{H}}^{\theta,p}(\mathbb{R}^n_+)}\lesssim_{m_1,p,n} \lVert u \rVert_{[\mathrm{L}^{p_0}(\mathbb{R}^n_+),\dot{\mathrm{H}}^{1,p_1}(\mathbb{R}^n_+)]_{\theta}}
\end{align*}
which is a direct consequence of the definition of restriction space, the equivalence of the complex interpolation norm \eqref{eq:complexInterpHomHspRn} from Theorem \ref{thm:InterpHomSpacesRn}, the definition of the complex interpolation norm, and then of the boundedness of $\mathrm{E}$ from  $\mathrm{L}^{p_0}(\mathbb{R}^n)$ to $\mathrm{L}^{p_0}(\mathbb{R}^n_+)$ and from $\dot{\mathrm{H}}^{1,p_1}(\mathbb{R}^n)$ to $\dot{\mathrm{H}}^{1,p_1}(\mathbb{R}^n_+)$. Now, if $u\in \dot{\mathrm{H}}^{\theta,p}(\mathbb{R}^n_+)$, by definition of restriction spaces there exists $U\in \dot{\mathrm{H}}^{\theta,p}(\mathbb{R}^n)$, such that
\begin{align*}
    U_{|_{\mathbb{R}^n_+}}=u,\quad \text{and}\quad\frac{1}{2} \lVert U \rVert_{\dot{\mathrm{H}}^{\theta,p}(\mathbb{R}^n)}\leqslant \lVert u \rVert_{\dot{\mathrm{H}}^{\theta,p}(\mathbb{R}^n_+)}\leqslant \lVert U \rVert_{\dot{\mathrm{H}}^{\theta,p}(\mathbb{R}^n)}\text{. }
\end{align*}
By Theorem \ref{thm:InterpHomSpacesRn}, there exists $f \in F(\mathrm{L}^{p_0}(\mathbb{R}^n),\dot{\mathrm{H}}^{1,p_1}(\mathbb{R}^n))$ such that $f(\theta)=U$, we deduce
$f(\cdot)_{|_{\mathbb{R}^n_+}}\in F(\mathrm{L}^{p_0}(\mathbb{R}^n_+),\dot{\mathrm{H}}^{1,p_1}(\mathbb{R}^n_+))$, so $u=f(\theta)_{|_{\mathbb{R}^n_+}}\in [\mathrm{L}^{p_0}(\mathbb{R}^n_+),\dot{\mathrm{H}}^{1,p_1}(\mathbb{R}^n_+)]_{\theta}$ with the following estimate which is a direct consequence of the definition of function spaces by restriction, and complex interpolation spaces,
\begin{align*}
    \lVert u \rVert_{[\mathrm{L}^{p_0}(\mathbb{R}^n_+),\dot{\mathrm{H}}^{1,p_1}(\mathbb{R}^n_+)]_{\theta}} \lesssim \lVert u \rVert_{\dot{\mathrm{H}}^{\theta,p}(\mathbb{R}^n_+)}.
\end{align*}
Hence, homogeneous (Riesz potential) Sobolev spaces on the half-space are still a complex interpolation scale provided that $p\in(1,+\infty)$, $s\in[0,1]$, $(\mathcal{C}_{s,p})$ being satisfied, so the boundedness of $\mathrm{E}\,:\, \dot{\mathrm{H}}^{\theta,p}(\mathbb{R}^n_+)\rightarrow \dot{\mathrm{H}}^{\theta,p}(\mathbb{R}^n)$ follows by interpolation.

In particular, $\mathrm{E}\,:\, \dot{\mathrm{H}}^{s,p}(\mathbb{R}^n_+) \longrightarrow \dot{\mathrm{H}}^{s,p}(\mathbb{R}^n)$ is bounded for all $s\in(-1+\tfrac{1}{p},\tfrac{1}{p}]$. Hence, the result has been proved for $s-\tfrac{1}{p}=0$. The same result is obtained for $\mathrm{E}^{(\ell)}$, provided $\ell \in\llb 1,m\rrb$.

Now, let $p\in (1,+\infty)$, $s-\frac{1}{p}\in \llb 1,m\rrb$, for $u\in{\mathrm{H}}^{s,p}(\mathbb{R}^n_+)$, we have $\mathrm{E}u\in \mathrm{H}^{s,p}(\mathbb{R}^n)$, $\nabla^\ell \mathrm{E}u \in \dot{\mathrm{H}}^{s-\ell,p}(\mathbb{R}^n)$, $s-\ell = \tfrac{1}{p}$, so that, similarly as in \eqref{ineq:boundednessPartialkE},
\begin{align*}
    \lVert \mathrm{E}u \rVert_{\dot{\mathrm{H}}^{s,p}(\mathbb{R}^n)} \lesssim_{s,p,n,\ell} \lVert u \rVert_{\dot{\mathrm{H}}^{s,p}(\mathbb{R}^n_+)}\text{. }
\end{align*}
Therefore, we have obtained the desired estimate and can conclude about the boundedness of $\mathrm{E}$ via density argument whenever $(\mathcal{C}_{s,p})$ is satisfied. 
\end{proof}

In the proof of Proposition \ref{prop:ExtOpHomSobSpaces}, we used boundedness of derivatives, \textit{i.e.}, for all $p\in(1,+\infty)$, $s\in\mathbb{R}$, $u\in \dot{\mathrm{H}}^{s,p}(\mathbb{R}^n_+)$, $m\in\mathbb{N}$,
\begin{equation}\label{eq:normcontrolHomogeneousSobolev}
    \lVert \nabla^m u\rVert_{\dot{\mathrm{H}}^{s-m,p}(\mathbb{R}^n_+)} \lesssim_{p,s,n,m} \lVert u\rVert_{\dot{\mathrm{H}}^{s,p}(\mathbb{R}^n_+)} \text{. }
\end{equation}
The estimate above is a direct consequence of definition of function spaces by restriction and can be turned into an equivalence under some additional assumptions.

\begin{proposition}\label{prop:EqNormNablakHsp} Let $p\in(1,+\infty)$, $k\in\llb 1,+\infty\llb$, $s>k-1+\tfrac{1}{p} $, for all $u\in {\mathrm{H}}^{s,p}(\mathbb{R}^n_+)$,
\begin{align*}
    \sum_{j=1}^{n} \lVert \partial_{x_j}^k u \rVert_{\dot{\mathrm{H}}^{s-k,p}(\mathbb{R}^n_+)} \sim_{s,k,p,n}\lVert \nabla^k u \rVert_{\dot{\mathrm{H}}^{s-k,p}(\mathbb{R}^n_+)} \sim_{s,k,p,n} \lVert u \rVert_{\dot{\mathrm{H}}^{s,p}(\mathbb{R}^n_+)}\text{. }
\end{align*}
In particular, $\lVert \nabla^k \cdot \rVert_{\dot{\mathrm{H}}^{s-k,p}(\mathbb{R}^n_+)}$ and $\sum_{j=1}^{n} \lVert \partial_{x_j}^k \cdot \rVert_{\dot{\mathrm{H}}^{s-k,p}(\mathbb{R}^n_+)}$ provide equivalent norms on $\dot{\mathrm{H}}^{s,p}(\mathbb{R}^n_+)$, whenever $(\mathcal{C}_{s-k,p})$ is satisfied.
\end{proposition}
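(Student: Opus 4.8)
My plan is to prove the two genuinely nontrivial estimates and then read off the three-fold equivalence by combining them with \eqref{eq:normcontrolHomogeneousSobolev}. The two easy bounds come for free: since each $\partial_{x_j}^k u$, $j\in\llb 1,n\rrb$, is a component of the tuple $\nabla^k u$, one has $\sum_{j=1}^n \lVert \partial_{x_j}^k u \rVert_{\dot{\mathrm{H}}^{s-k,p}(\mathbb{R}^n_+)} \lesssim_{n,k} \lVert \nabla^k u \rVert_{\dot{\mathrm{H}}^{s-k,p}(\mathbb{R}^n_+)}$, while $\lVert \nabla^k u \rVert_{\dot{\mathrm{H}}^{s-k,p}(\mathbb{R}^n_+)} \lesssim_{s,k,p,n} \lVert u \rVert_{\dot{\mathrm{H}}^{s,p}(\mathbb{R}^n_+)}$ is precisely \eqref{eq:normcontrolHomogeneousSobolev} (itself a consequence of the definition of the restriction norm and point \textit{(iii)} of Proposition \ref{prop:PropertiesHomSobolevSpacesRn} on $\mathbb{R}^n$). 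So everything reduces to the reverse estimate
\begin{align*}
    \lVert u \rVert_{\dot{\mathrm{H}}^{s,p}(\mathbb{R}^n_+)} \lesssim_{s,k,p,n} \sum_{j=1}^n \lVert \partial_{x_j}^k u \rVert_{\dot{\mathrm{H}}^{s-k,p}(\mathbb{R}^n_+)}\text{, }\qquad u\in \mathrm{H}^{s,p}(\mathbb{R}^n_+)\text{, }
\end{align*}
which I would prove by re-running the computation \eqref{ineq:boundednessPartialkE} from the proof of Proposition \ref{prop:ExtOpHomSobSpaces}, now with the \emph{fixed} order $k$ in place of the freely chosen $\ell$ there.

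In detail, I would fix an integer $m$ with $m\geqslant k$ and $m+1+\tfrac1p>s$, take $\mathrm{E}$ to be the corresponding higher-order reflection operator of Proposition \ref{prop:ExtOpHomSobSpaces}, and let $\mathrm{E}^{(k)}$ be the operator obtained from $\mathrm{E}$ by replacing each coefficient $\alpha_j$ with $\alpha_j(-1/(j+1))^k$. Since $s>k-1+\tfrac1p\geqslant\tfrac1p>0$ we have $u\in\mathrm{L}^p(\mathbb{R}^n_+)$, hence $\mathrm{E}u\in\mathrm{L}^p(\mathbb{R}^n)\subset\eus{S}'_h(\mathbb{R}^n)$ with ${\mathrm{E}u}_{|_{\mathbb{R}^n_+}}=u$. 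Exactly as in the proof of Proposition \ref{prop:ExtOpHomSobSpaces} — the identities hold on the dense subspace $\eus{S}(\overline{\mathbb{R}^n_+})$ by direct computation, using that $\mathrm{E}u$ is then a $\mathrm{C}^m$-extension and $k\leqslant m$, and then one passes to the limit in $\eus{D}'(\mathbb{R}^n)$ — one gets $\partial_{x_j}^k\mathrm{E}u=\mathrm{E}[\partial_{x_j}^k u]$ for $j\in\llb1,n-1\rrb$ and $\partial_{x_n}^k\mathrm{E}u=\mathrm{E}^{(k)}[\partial_{x_n}^k u]$. Then I would apply point \textit{(iii)} of Proposition \ref{prop:PropertiesHomSobolevSpacesRn} on $\mathbb{R}^n$ with exponents $(s-k,k)$ to $\mathrm{E}u$, followed by the boundedness of $\mathrm{E}$ and $\mathrm{E}^{(k)}$ from $\dot{\mathrm{H}}^{s-k,p}(\mathbb{R}^n_+)$ to $\dot{\mathrm{H}}^{s-k,p}(\mathbb{R}^n)$ — valid since $s-k\in(-1+\tfrac1p,m-k+1+\tfrac1p)$, a range covered by Proposition \ref{prop:ExtOpHomSobSpaces} (and, on the subrange $s-k\in(-1+\tfrac1p,\tfrac1p)$, by Proposition \ref{prop:SobolevMultiplier}), noting that $\partial_{x_j}^k u\in\mathrm{H}^{s-k,p}(\mathbb{R}^n_+)$ when $s-k\geqslant0$ and $\partial_{x_j}^k u\in\dot{\mathrm{H}}^{s-k,p}(\mathbb{R}^n_+)$ in every case — to obtain
\begin{align*}
    \lVert u \rVert_{\dot{\mathrm{H}}^{s,p}(\mathbb{R}^n_+)} \leqslant \lVert \mathrm{E}u \rVert_{\dot{\mathrm{H}}^{s,p}(\mathbb{R}^n)} \sim_{s,k,p,n} \sum_{j=1}^n \lVert \partial_{x_j}^k \mathrm{E}u \rVert_{\dot{\mathrm{H}}^{s-k,p}(\mathbb{R}^n)} \lesssim_{s,k,p,n,m} \sum_{j=1}^n \lVert \partial_{x_j}^k u \rVert_{\dot{\mathrm{H}}^{s-k,p}(\mathbb{R}^n_+)}\text{, }
\end{align*}
the first inequality being the definition of the restriction norm, and $\mathrm{E}u\in\dot{\mathrm{H}}^{s,p}(\mathbb{R}^n)$ being a byproduct of the chain. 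This establishes the equivalence on $\mathrm{H}^{s,p}(\mathbb{R}^n_+)$.

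For the final claim: when $(\mathcal{C}_{s-k,p})$ holds, $\dot{\mathrm{H}}^{s-k,p}(\mathbb{R}^n_+)$ is complete, so both $\lVert\nabla^k\cdot\rVert_{\dot{\mathrm{H}}^{s-k,p}(\mathbb{R}^n_+)}$ and $\sum_{j=1}^n\lVert\partial_{x_j}^k\cdot\rVert_{\dot{\mathrm{H}}^{s-k,p}(\mathbb{R}^n_+)}$ are point-separating on $\dot{\mathrm{H}}^{s,p}(\mathbb{R}^n_+)$, and their equivalence with $\lVert\cdot\rVert_{\dot{\mathrm{H}}^{s,p}(\mathbb{R}^n_+)}$ — already known on the dense subspace $\eus{S}_0(\overline{\mathbb{R}^n_+})\subset\mathrm{H}^{s,p}(\mathbb{R}^n_+)$ — extends to the whole space by the continuity of $\partial_{x_j}^k\,:\,\dot{\mathrm{H}}^{s,p}(\mathbb{R}^n_+)\longrightarrow\dot{\mathrm{H}}^{s-k,p}(\mathbb{R}^n_+)$ recorded in \eqref{eq:normcontrolHomogeneousSobolev}. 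The one delicate point I anticipate is needing the commutation identities together with the boundedness of $\mathrm{E}$, $\mathrm{E}^{(k)}$ on the \emph{homogeneous} spaces over the full admissible range $s-k>-1+\tfrac1p$ rather than only on $(-1+\tfrac1p,\tfrac1p)$ — which is exactly why $m$ must be taken large and why one has to quote the finer parts of the proof of Proposition \ref{prop:ExtOpHomSobSpaces}, namely its complex-interpolation bootstrap across the thresholds $s-k-\tfrac1p\in\mathbb{N}$ and its derivative-reduction step; everything else is routine bookkeeping.
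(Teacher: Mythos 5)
Your proposal is correct and follows essentially the same route as the paper's proof: reduce to the reverse inequality, extend by the higher-order reflection operator $\mathrm{E}$ of Proposition \ref{prop:ExtOpHomSobSpaces}, use the commutation identities $\partial_{x_j}^k\mathrm{E}=\mathrm{E}\partial_{x_j}^k$ (with $\mathrm{E}^{(k)}$ in the normal direction), apply point \textit{(iii)} of Proposition \ref{prop:PropertiesHomSobolevSpacesRn} on $\mathbb{R}^n$ together with the boundedness of $\mathrm{E}$, $\mathrm{E}^{(k)}$ at regularity $s-k>-1+\tfrac1p$, and conclude on $\dot{\mathrm{H}}^{s,p}(\mathbb{R}^n_+)$ by density via \eqref{eq:normcontrolHomogeneousSobolev} when $(\mathcal{C}_{s-k,p})$ holds. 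The only difference is cosmetic: the paper writes out $k=1$ and declares the higher-order case similar, whereas you carry out the general $k$ exactly as in \eqref{ineq:boundednessPartialkE}.
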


\begin{proof}Let us  prove it for $k=1$, the higher order case can be achieved similarly. Consider $p\in (1,+\infty)$, $s>\tfrac{1}{p} $, for $u\in {\mathrm{H}}^{s,p}(\mathbb{R}^n_+)$, we have $\mathrm{E}u\in \dot{\mathrm{H}}^{s,p}(\mathbb{R}^n)$, where $\mathrm{E}$ is an extension operator provided by Proposition \ref{prop:ExtOpHomSobSpaces} (for some big enough $m\geqslant 1$), $\nabla\mathrm{E}u\in\dot{\mathrm{H}}^{s-1,p}(\mathbb{R}^n)$, with $s-1> -1+\tfrac{1}{p}$. We can write on $\overline{\mathbb{R}^n_+}^c$
\begin{align*}
    \partial_{x_\ell}\mathrm{E}u = \mathrm{E}[\partial_{x_\ell} u] \text{, provided } \ell\in\llb 1,n-1\rrb \text{, and } \partial_{x_n} \mathrm{E}u = \sum_{j=0}^m \alpha_j \left(\tfrac{-1}{j+1}\right) \partial_{x_n} u(x', -\tfrac{x_n}{j+1})\text{. }
\end{align*}
Hence, we can use definition of restriction space, apply point \textit{(iii)} in Proposition \ref{prop:PropertiesHomSobolevSpacesRn}, and the boundedness of $\mathrm{E}$, since $m$ is large enough, to obtain,
\begin{align*}
    \lVert u \rVert_{\dot{\mathrm{H}}^{s,p}(\mathbb{R}^n_+)} \leqslant \lVert \mathrm{E}u \rVert_{\dot{\mathrm{H}}^{s,p}(\mathbb{R}^n)} \lesssim_{s,p,n} \lVert \nabla \mathrm{E}u \rVert_{\dot{\mathrm{H}}^{s-1,p}(\mathbb{R}^n)} \lesssim_{s,p,n,m} \lVert \nabla u \rVert_{\dot{\mathrm{H}}^{s-1,p}(\mathbb{R}^n_+)}\text{. }
\end{align*}
Therefore by \eqref{eq:normcontrolHomogeneousSobolev}, the equivalence of norms on $\dot{\mathrm{H}}^{s,p}(\mathbb{R}^n_+)$ holds by density when $(\mathcal{C}_{s-k,p})$ is true.
\end{proof}

The next proposition is about identifying intersection of homogeneous Sobolev spaces on $\mathbb{R}^n_+$, and give a dense subspace. As we can see later, this will help for real interpolation.

\begin{proposition}\label{prop:IntersecHomHspRn+} Let $p_j\in(1,+\infty)$, $s_j> -1+ \tfrac{1}{p_j}$, $j\in\{0,1\}$, if $(\mathcal{C}_{s_0,p_0})$ is satisfied then the following equality of vector spaces holds with equivalence of norms 
\begin{align*}
    \dot{\mathrm{H}}^{s_0,p_0}(\mathbb{R}^n_+)\cap \dot{\mathrm{H}}^{s_1,p_1}(\mathbb{R}^n_+) = [\dot{\mathrm{H}}^{s_0,p_0}\cap \dot{\mathrm{H}}^{s_1,p_1}](\mathbb{R}^n_+)\text{. }
\end{align*}
In particular, $\dot{\mathrm{H}}^{s_0,p_0}(\mathbb{R}^n_+)\cap \dot{\mathrm{H}}^{s_1,p_1}(\mathbb{R}^n_+)$ is a Banach space which admits $\eus{S}_0(\overline{\mathbb{R}^n_+})$ as a dense subspace.
\end{proposition}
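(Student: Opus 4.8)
The plan is to upgrade the always-available continuous embedding $[\dot{\mathrm{H}}^{s_0,p_0}\cap \dot{\mathrm{H}}^{s_1,p_1}](\mathbb{R}^n_+)\hookrightarrow \dot{\mathrm{H}}^{s_0,p_0}(\mathbb{R}^n_+)\cap \dot{\mathrm{H}}^{s_1,p_1}(\mathbb{R}^n_+)$ to an isomorphism by producing, for each $u$ in the right-hand side, \emph{one} common extension to $\mathbb{R}^n$ lying in both scales, with \emph{decoupled} norm control. First I would fix $m\in\mathbb{N}$ large enough that $s_j<m+1+\tfrac1{p_j}$ for $j\in\{0,1\}$ (and $m$ exceeds the order of derivative used below), and take $\mathrm{E}=\mathrm{E}_m$ to be the higher order reflection operator of Proposition \ref{prop:ExtOpHomSobSpaces}. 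Since $(\mathcal{C}_{s_0,p_0})$ is assumed, that proposition already provides $\mathrm{E}u\in \dot{\mathrm{H}}^{s_0,p_0}(\mathbb{R}^n)\subset\eus{S}'_h(\mathbb{R}^n)$ with $(\mathrm{E}u)_{|_{\mathbb{R}^n_+}}=u$ and $\lVert \mathrm{E}u\rVert_{\dot{\mathrm{H}}^{s_0,p_0}(\mathbb{R}^n)}\lesssim \lVert u\rVert_{\dot{\mathrm{H}}^{s_0,p_0}(\mathbb{R}^n_+)}$. The crucial gain is that $\mathrm{E}u$ is thereby already a well-defined tempered distribution, so the only thing left to check is that \emph{this same} distribution has finite $\dot{\mathrm{H}}^{s_1,p_1}(\mathbb{R}^n)$-norm, controlled by $\lVert u\rVert_{\dot{\mathrm{H}}^{s_1,p_1}(\mathbb{R}^n_+)}$.

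For that remaining estimate I would rerun the derivative reduction from the proof of Proposition \ref{prop:ExtOpHomSobSpaces}, now in the $s_1$-scale and decoupled from $s_0$. Choose $\ell\in\llb 0,m\rrb$ with $s_1-\ell\in(-1+\tfrac1{p_1},\tfrac1{p_1}]$ (take $\ell=0$ if $s_1$ already lies there); note $s_1-\ell\leqslant \tfrac1{p_1}<\tfrac{n}{p_1}$ because $n\geqslant2$, so $\dot{\mathrm{H}}^{s_1-\ell,p_1}$ is complete and, by Proposition \ref{prop:SobolevMultiplier} together with the endpoint part of Proposition \ref{prop:ExtOpHomSobSpaces}, the operators $\mathrm{E}$ and $\mathrm{E}^{(\ell)}$ are bounded from $\dot{\mathrm{H}}^{s_1-\ell,p_1}(\mathbb{R}^n_+)$ to $\dot{\mathrm{H}}^{s_1-\ell,p_1}(\mathbb{R}^n)$. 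Exactly as in the proof of Proposition \ref{prop:ExtOpHomSobSpaces} one has $\partial_{x_j}^{\ell}\mathrm{E}u=\mathrm{E}[\partial_{x_j}^{\ell}u]$ for $j\in\llb1,n-1\rrb$ and $\partial_{x_n}^{\ell}\mathrm{E}u=\mathrm{E}^{(\ell)}[\partial_{x_n}^{\ell}u]$ as distributions on $\mathbb{R}^n$ — the hypothesis $s_j>-1+\tfrac1{p_j}$ ruling out nonzero elements of the relevant homogeneous Sobolev spaces supported on $\{x_n=0\}$, so that no boundary term is created — while $\partial_{x_j}^{\ell}u\in\dot{\mathrm{H}}^{s_1-\ell,p_1}(\mathbb{R}^n_+)$ with norm $\lesssim\lVert u\rVert_{\dot{\mathrm{H}}^{s_1,p_1}(\mathbb{R}^n_+)}$ by \eqref{eq:normcontrolHomogeneousSobolev}. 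Since $\mathrm{E}u\in\eus{S}'_h(\mathbb{R}^n)$, point \textit{(iii)} of Proposition \ref{prop:PropertiesHomSobolevSpacesRn} gives $\lVert \mathrm{E}u\rVert_{\dot{\mathrm{H}}^{s_1,p_1}(\mathbb{R}^n)}\sim_{s_1,p_1,n,\ell}\sum_{j=1}^{n}\lVert \partial_{x_j}^{\ell}\mathrm{E}u\rVert_{\dot{\mathrm{H}}^{s_1-\ell,p_1}(\mathbb{R}^n)}$, and chaining the previous bounds produces $\lVert \mathrm{E}u\rVert_{\dot{\mathrm{H}}^{s_1,p_1}(\mathbb{R}^n)}\lesssim\lVert u\rVert_{\dot{\mathrm{H}}^{s_1,p_1}(\mathbb{R}^n_+)}$, hence $\mathrm{E}u\in\dot{\mathrm{H}}^{s_1,p_1}(\mathbb{R}^n)$. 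Thus $\mathrm{E}u$ is the sought common extension, and the missing embedding together with the missing norm inequality follow, yielding the claimed equality of vector spaces with equivalence of norms.

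Finally, Lemma \ref{lem:IntersecHomHsp} says that $\dot{\mathrm{H}}^{s_0,p_0}(\mathbb{R}^n)\cap \dot{\mathrm{H}}^{s_1,p_1}(\mathbb{R}^n)$ is a Banach space in which $\eus{S}_0(\mathbb{R}^n)$ is dense; since completeness is inherited by restriction spaces and the restriction map is a continuous surjection onto $[\dot{\mathrm{H}}^{s_0,p_0}\cap \dot{\mathrm{H}}^{s_1,p_1}](\mathbb{R}^n_+)=\dot{\mathrm{H}}^{s_0,p_0}(\mathbb{R}^n_+)\cap \dot{\mathrm{H}}^{s_1,p_1}(\mathbb{R}^n_+)$, this intersection is Banach and $\eus{S}_0(\overline{\mathbb{R}^n_+})$, being the image of $\eus{S}_0(\mathbb{R}^n)$, is dense in it. I expect the one real obstacle to be precisely the construction of a single extension serving both scales while $\dot{\mathrm{H}}^{s_1,p_1}(\mathbb{R}^n)$ may fail to be complete (so neither complex interpolation nor a direct density-of-smooth-functions argument is available for the intersection from the outset); the device above — first pinning $\mathrm{E}u$ down as an element of $\eus{S}'_h(\mathbb{R}^n)$ through the complete scale $\dot{\mathrm{H}}^{s_0,p_0}$, then establishing a purely a priori bound for $\lVert \mathrm{E}u\rVert_{\dot{\mathrm{H}}^{s_1,p_1}(\mathbb{R}^n)}$ after lowering the regularity of the second scale below $1/p_1$, where Proposition \ref{prop:SobolevMultiplier} applies unconditionally — is exactly what circumvents it.
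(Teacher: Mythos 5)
Your proposal is correct and follows essentially the same route as the paper: the forward embedding is definitional, and the reverse one is obtained by showing that the single reflection extension $\mathrm{E}u$ — already pinned down in $\eus{S}'_h(\mathbb{R}^n)$ through the complete $\dot{\mathrm{H}}^{s_0,p_0}$ scale — satisfies a decoupled $\dot{\mathrm{H}}^{s_1,p_1}$ bound after lowering the regularity by $\ell$ derivatives into the range where Proposition \ref{prop:SobolevMultiplier} applies, then invoking the norm equivalence of point \textit{(iii)} of Proposition \ref{prop:PropertiesHomSobolevSpacesRn}. The completeness and density claims are likewise derived, as in the paper, from Lemma \ref{lem:IntersecHomHsp} and the restriction-space structure.
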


\begin{proof} Let $p\in(1,+\infty)$, $s_0,s_1\in\mathbb{R}$, such that $(\mathcal{C}_{s_0,p_0})$. By definition of restriction spaces and Lemma \ref{lem:IntersecHomHsp}, $[\dot{\mathrm{H}}^{s_0,p_0}\cap \dot{\mathrm{H}}^{s_1,p_1}](\mathbb{R}^n_+)$ is complete and admits  $\eus{S}_0(\overline{\mathbb{R}^n_+})$ as a dense subspace. The following continuous embedding also holds by definition,
\begin{align*}
    [\dot{\mathrm{H}}^{s_0,p_0}\cap \dot{\mathrm{H}}^{s_1,p_1}](\mathbb{R}^n_+)\hookrightarrow\dot{\mathrm{H}}^{s_0,p_0}(\mathbb{R}^n_+)\cap \dot{\mathrm{H}}^{s_1,p_1}(\mathbb{R}^n_+)\text{. }
\end{align*}
Hence, it suffices to prove the reverse one. To do so, let  us choose $\ell\in\mathbb{N}$ such that $(\mathcal{C}_{s_1-\ell,p_1})$ is satisfied, and $s_1-\ell> -1+ \tfrac{1}{p_1}$, then choosing $\mathrm{E}$ from Proposition \ref{prop:ExtOpHomSobSpaces} with $m+1+\tfrac{1}{p_j}> s_j$, $j\in\{0,1\}$ ($m$ big enough), for all $j\in\llb 1,n\rrb$, and all $u\in \dot{\mathrm{H}}^{s_0,p_0}(\mathbb{R}^n_+)\cap \dot{\mathrm{H}}^{s_1,p_1}(\mathbb{R}^n_+)$, $\mathrm{E}u$ makes sense in $\dot{\mathrm{H}}^{s_0,p_0}(\mathbb{R}^n)$, then in $\eus{S}'_h(\mathbb{R}^n)$, and one may use an estimate similar to \eqref{ineq:boundednessPartialkE}, to deduce
\begin{align*}
    \sum_{k=1}^{n}\lVert \partial_{x_k}^\ell \mathrm{E} u \rVert_{\dot{\mathrm{H}}^{s_1-\ell,p_1}(\mathbb{R}^n)} = \sum_{k=1}^{n-1} \lVert  \mathrm{E} \partial_{x_k}^\ell u \rVert_{\dot{\mathrm{H}}^{s_1-\ell,p_1}(\mathbb{R}^n)} + \lVert  \mathrm{E}^{(\ell)} \partial_{x_n}^\ell u \rVert_{\dot{\mathrm{H}}^{s_1-\ell,p_1}(\mathbb{R}^n)} \lesssim_{s_1,m,\ell}^{p_1,n}  \lVert  u \rVert_{\dot{\mathrm{H}}^{s_1,p_1}(\mathbb{R}^n_+)}\text{. }
\end{align*}
The above operator $\mathrm{E}^{(\ell)}$ is given via the identity $\partial_{x_n}^\ell \mathrm{E}  =\mathrm{E}^{(\ell)} \partial_{x_n}^\ell$. Hence, it follows that for all $u\in \dot{\mathrm{H}}^{s_0,p_0}(\mathbb{R}^n_+)\cap \dot{\mathrm{H}}^{s_1,p_1}(\mathbb{R}^n_+)$,
\begin{align*}
    \lVert \mathrm{E} u \rVert_{\dot{\mathrm{H}}^{s_0,p_0}(\mathbb{R}^n)} + \sum_{k=1}^{n}\lVert \partial_{x_k}^\ell \mathrm{E} u \rVert_{\dot{\mathrm{H}}^{s_1-\ell,p_1}(\mathbb{R}^n)} \lesssim_{s_0,s_1,m,\ell}^{p_0,p_1,n}   \lVert  u \rVert_{\dot{\mathrm{H}}^{s_0,p_0}(\mathbb{R}^n_+)} + \lVert  u \rVert_{\dot{\mathrm{H}}^{s_1,p_1}(\mathbb{R}^n_+)}\text{. }
\end{align*}
In particular, since $\mathrm{E}u \in \eus{S}_h'(\mathbb{R}^n)$, and by uniqueness of representation of $\partial_{x_j}^\ell \mathrm{E} u$ in $\eus{S}'(\mathbb{R}^n)$, we deduce from point \textit{(iii)} in Proposition \ref{prop:PropertiesHomSobolevSpacesRn} that $\mathrm{E}u\in \dot{\mathrm{H}}^{s_0,p_0}(\mathbb{R}^n)\cap \dot{\mathrm{H}}^{s_1,p_1}(\mathbb{R}^n)$.

Thus $u\in [\dot{\mathrm{H}}^{s_0,p_0}\cap \dot{\mathrm{H}}^{s_1,p_1}](\mathbb{R}^n_+)$, and by definition of restriction spaces,
\begin{align*}
    \lVert u \rVert_{[\dot{\mathrm{H}}^{s_0,p_0}\cap \dot{\mathrm{H}}^{s_1,p_1}](\mathbb{R}^n_+)} \leqslant \lVert \mathrm{E} u \rVert_{\dot{\mathrm{H}}^{s_0,p_0}(\mathbb{R}^n)} + \lVert \mathrm{E} u \rVert_{\dot{\mathrm{H}}^{s_1,p_1}(\mathbb{R}^n)} \lesssim_{s_0,s_1,m,\ell}^{p_0,p_1,n}   \lVert  u \rVert_{\dot{\mathrm{H}}^{s_0,p_0}(\mathbb{R}^n_+)} + \lVert  u \rVert_{\dot{\mathrm{H}}^{s_1,p_1}(\mathbb{R}^n_+)}\text{. }
\end{align*}
This proves the claim.
\end{proof}

So one can deduce the following corollary, which allows separate homogeneous estimates for intersection of homogeneous Sobolev spaces on $\mathbb{R}^n_+$. Since the estimates below are decoupled, it provides an ersatz of extension-restriction operators for homogeneous Sobolev spaces of higher order, thanks to the taken intersection yielding a complete space. For instance, this will be of use to circumvent the lack of completeness when we will want to (real-)interpolate between a “higher” order homogeneous Sobolev space, and one that is known to be complete.
\begin{corollary}\label{cor:ExtOpIntersecHomHspRn+}Let $p_j\in(1,+\infty)$, $s_j> -1+\tfrac{1}{p_j}$, $j\in\{0,1\}$, such that $(\mathcal{C}_{s_0,p_0})$ is satisfied, consider $m\in\mathbb{N}$ such that $ s_j<m+1+\frac{1}{p_j}$, and the extension operator $\mathrm{E}$ given by Proposition \ref{prop:ExtOpHomSobSpaces}.

Then for all $u\in \dot{\mathrm{H}}^{s_0,p_0}(\mathbb{R}^n_+)\cap \dot{\mathrm{H}}^{s_1,p_1}(\mathbb{R}^n_+)$, we have  $\mathrm{E}u\in \dot{\mathrm{H}}^{s_j,p_j}(\mathbb{R}^n)$, $j\in\{0,1\}$, with the estimate
\begin{align*}
    \lVert \mathrm{E} u \rVert_{\dot{\mathrm{H}}^{s_j,p_j}(\mathbb{R}^n)} \lesssim_{s_j,p_j,m,n}   \lVert  u \rVert_{\dot{\mathrm{H}}^{s_j,p_j}(\mathbb{R}^n_+)}\text{. }
\end{align*}
\end{corollary}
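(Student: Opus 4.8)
The plan is to treat $j=0$ and $j=1$ asymmetrically, exploiting that $\dot{\mathrm{H}}^{s_0,p_0}(\mathbb{R}^n_+)$ is complete while $\dot{\mathrm{H}}^{s_1,p_1}(\mathbb{R}^n_+)$ need not be. For $j=0$ there is nothing new: since $-1+\tfrac1{p_0}<s_0<m+1+\tfrac1{p_0}$ and $(\mathcal{C}_{s_0,p_0})$ holds, Proposition \ref{prop:ExtOpHomSobSpaces} already furnishes a bounded operator $\mathrm{E}\,:\,\dot{\mathrm{H}}^{s_0,p_0}(\mathbb{R}^n_+)\to\dot{\mathrm{H}}^{s_0,p_0}(\mathbb{R}^n)$; precomposing with the continuous inclusion $\dot{\mathrm{H}}^{s_0,p_0}(\mathbb{R}^n_+)\cap\dot{\mathrm{H}}^{s_1,p_1}(\mathbb{R}^n_+)\hookrightarrow\dot{\mathrm{H}}^{s_0,p_0}(\mathbb{R}^n_+)$ yields the estimate for $j=0$ and, along the way, makes sense of $\mathrm{E}u\in\dot{\mathrm{H}}^{s_0,p_0}(\mathbb{R}^n)\subset\eus{S}'_h(\mathbb{R}^n)$ for every $u$ in the intersection.

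For $j=1$ I would establish the estimate first on a dense subspace and then pass to the limit. By Proposition \ref{prop:IntersecHomHspRn+} the intersection is a Banach space containing $\eus{S}_0(\overline{\mathbb{R}^n_+})$ densely. If $u\in\eus{S}_0(\overline{\mathbb{R}^n_+})$ then $u\in\mathrm{H}^{s_1,p_1}(\mathbb{R}^n_+)$ when $s_1\geqslant0$ and $u\in\dot{\mathrm{H}}^{s_1,p_1}(\mathbb{R}^n_+)$ with $s_1\in(-1+\tfrac1{p_1},\tfrac1{p_1})$ when $s_1<0$, so, because $-1+\tfrac1{p_1}<s_1<m+1+\tfrac1{p_1}$, the (pre-density) estimate of Proposition \ref{prop:ExtOpHomSobSpaces} applies verbatim and gives $\lVert\mathrm{E}u\rVert_{\dot{\mathrm{H}}^{s_1,p_1}(\mathbb{R}^n)}\lesssim_{s_1,p_1,m,n}\lVert u\rVert_{\dot{\mathrm{H}}^{s_1,p_1}(\mathbb{R}^n_+)}$ there. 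Now let $u$ be arbitrary in the intersection and pick $(u_k)_{k}\subset\eus{S}_0(\overline{\mathbb{R}^n_+})$ with $u_k\to u$ in the intersection norm. By the $j=0$ step $\mathrm{E}u_k\to\mathrm{E}u$ in $\dot{\mathrm{H}}^{s_0,p_0}(\mathbb{R}^n)$, hence in $\eus{S}'_h(\mathbb{R}^n)$, so each block $\dot{\Delta}_j\mathrm{E}u_k$ converges in $\mathrm{L}^{p_0}(\mathbb{R}^n)$ — and thus, along a subsequence, pointwise a.e. — to $\dot{\Delta}_j\mathrm{E}u$. Using point \textit{(v)} of Proposition \ref{prop:PropertiesHomSobolevSpacesRn} and the Fatou lemma in the $\ell^2(\mathbb{Z})$ and $\mathrm{L}^{p_1}(\mathbb{R}^n)$ variables — the same device as in Lemma \ref{lem:IntersecHomHsp} and in the proof of Proposition \ref{prop:IntersecHomHspRn+} — one gets
\begin{align*}
\lVert\mathrm{E}u\rVert_{\dot{\mathrm{H}}^{s_1,p_1}(\mathbb{R}^n)}&\sim_{s_1,p_1,n}\lVert(2^{js_1}\dot{\Delta}_j\mathrm{E}u)_{j\in\mathbb{Z}}\rVert_{\mathrm{L}^{p_1}(\mathbb{R}^n,\ell^2(\mathbb{Z}))}\\
&\leqslant\liminf_{k\to+\infty}\lVert(2^{js_1}\dot{\Delta}_j\mathrm{E}u_k)_{j\in\mathbb{Z}}\rVert_{\mathrm{L}^{p_1}(\mathbb{R}^n,\ell^2(\mathbb{Z}))}\lesssim_{s_1,p_1,m,n}\liminf_{k\to+\infty}\lVert u_k\rVert_{\dot{\mathrm{H}}^{s_1,p_1}(\mathbb{R}^n_+)}=\lVert u\rVert_{\dot{\mathrm{H}}^{s_1,p_1}(\mathbb{R}^n_+)},
\end{align*}
which is the desired bound, and in particular $\mathrm{E}u\in\dot{\mathrm{H}}^{s_1,p_1}(\mathbb{R}^n)$.

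The delicate point — the main obstacle — is exactly the non-completeness of $\dot{\mathrm{H}}^{s_1,p_1}$: one cannot upgrade the $j=1$ estimate from $\eus{S}_0(\overline{\mathbb{R}^n_+})$ to the whole intersection by the routine ``bounded operator on a dense subspace'' argument, since the target $\dot{\mathrm{H}}^{s_1,p_1}(\mathbb{R}^n)$ may fail to be complete. The way around it is precisely to carry the convergence in the complete space $\dot{\mathrm{H}}^{s_0,p_0}(\mathbb{R}^n)$ — which governs convergence in $\eus{S}'_h(\mathbb{R}^n)$ — and then recover the $\dot{\mathrm{H}}^{s_1,p_1}$-bound by lower semicontinuity (Fatou) of the homogeneous norm; this is why the intersection, rather than $\dot{\mathrm{H}}^{s_1,p_1}(\mathbb{R}^n_+)$ alone, is the correct domain for these decoupled estimates. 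An essentially equivalent route is to read the two bounds off directly from the proof of Proposition \ref{prop:IntersecHomHspRn+}: choose $\ell\in\mathbb{N}$ with $s_1-\ell\in(-1+\tfrac1{p_1},\tfrac n{p_1})$ (possible since $n\geqslant2$; when $s_1-\tfrac1{p_1}$ is an integer one takes $\ell=s_1-\tfrac1{p_1}\in\llb0,m\rrb$ and invokes the endpoint case $s-\tfrac1p=0$ of Proposition \ref{prop:ExtOpHomSobSpaces}), then combine the identities $\partial_{x_k}^\ell\mathrm{E}=\mathrm{E}\partial_{x_k}^\ell$ ($k<n$), $\partial_{x_n}^\ell\mathrm{E}=\mathrm{E}^{(\ell)}\partial_{x_n}^\ell$, the boundedness of $\mathrm{E},\mathrm{E}^{(\ell)}$ on the complete space $\dot{\mathrm{H}}^{s_1-\ell,p_1}$ (Propositions \ref{prop:ExtOpHomSobSpaces} and \ref{prop:SobolevMultiplier}), the estimate \eqref{eq:normcontrolHomogeneousSobolev}, and the equivalence $\lVert v\rVert_{\dot{\mathrm{H}}^{s_1,p_1}(\mathbb{R}^n)}\sim\lVert\nabla^\ell v\rVert_{\dot{\mathrm{H}}^{s_1-\ell,p_1}(\mathbb{R}^n)}$ from point \textit{(iii)} of Proposition \ref{prop:PropertiesHomSobolevSpacesRn} (legitimate since $\mathrm{E}u\in\eus{S}'_h(\mathbb{R}^n)$ by the $j=0$ step).
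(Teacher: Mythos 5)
Your proposal is correct. The paper itself derives this corollary by simply reading the decoupled estimates off the proof of Proposition \ref{prop:IntersecHomHspRn+}, i.e.\ exactly your ``essentially equivalent route'': pick $\ell$ with $s_1-\ell>-1+\tfrac1{p_1}$ and $(\mathcal{C}_{s_1-\ell,p_1})$ satisfied, use $\partial_{x_k}^\ell\mathrm{E}=\mathrm{E}\partial_{x_k}^\ell$ and $\partial_{x_n}^\ell\mathrm{E}=\mathrm{E}^{(\ell)}\partial_{x_n}^\ell$ together with the boundedness of $\mathrm{E},\mathrm{E}^{(\ell)}$ on the complete lower-order space, and upgrade via point \textit{(iii)} of Proposition \ref{prop:PropertiesHomSobolevSpacesRn}, which is legitimate because the $j=0$ step already places $\mathrm{E}u$ in $\eus{S}'_h(\mathbb{R}^n)$. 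Your primary argument (pre-density estimate on $\eus{S}_0(\overline{\mathbb{R}^n_+})$, convergence driven by the complete space $\dot{\mathrm{H}}^{s_0,p_0}(\mathbb{R}^n)$, then Fatou for the $\dot{\mathrm{F}}^{s_1}_{p_1,2}$-norm) is a genuinely different and equally valid route; it is the same lower-semicontinuity device the paper uses in Lemma \ref{lem:IntersecHomHsp}, and it correctly identifies non-completeness of the target as the obstruction to the naive density argument. Its only cost is that the density of $\eus{S}_0(\overline{\mathbb{R}^n_+})$ in the intersection, which you import from Proposition \ref{prop:IntersecHomHspRn+}, is itself established there by running the derivative-identity computation — so logically your Fatou proof is sound but not independent of the paper's route; what it buys is a template that transfers verbatim to other operators for which one has a pre-density estimate plus boundedness on one complete space of the pair.
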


Corollary \ref{cor:ExtOpIntersecHomHspRn+} and the proof of Proposition \ref{prop:EqNormNablakHsp} lead to

\begin{corollary}\label{cor:EqNormNablakmHspHaq} Let $p_j\in(1,+\infty)$, $m_j\in\llb 1,+\infty\llb$, $s_1>m_1-1+\tfrac{1}{p_1} $, $j\in\{ 0,1\}$, such that $(\mathcal{C}_{s_0,p_0})$ is satisfied. Then for all $u\in \dot{\mathrm{H}}^{s_0,p_0}(\mathbb{R}^n_+)\cap \dot{\mathrm{H}}^{s_1,p_1} (\mathbb{R}^n_+)$,
\begin{align*}
    \sum_{k=1}^{n} \lVert \partial_{x_k}^{m_1} u \rVert_{\dot{\mathrm{H}}^{s_1-m_1,p_1}(\mathbb{R}^n_+)} \sim_{s_1,m_1,p_1,n} \lVert \nabla^{m_1} u \rVert_{\dot{\mathrm{H}}^{s_1-m_1,p_1}(\mathbb{R}^n_+)} \sim_{s_1,m_1,p_j,n} \lVert u \rVert_{\dot{\mathrm{H}}^{s_1,p_1}(\mathbb{R}^n_+)}\,\text{. }
\end{align*}
\end{corollary}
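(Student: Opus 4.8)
The plan is to re-run the ``extend, differentiate, reconstruct on the whole space, restrict'' scheme from the proof of Proposition~\ref{prop:EqNormNablakHsp}, the only change being that the completeness hypothesis $(\mathcal{C}_{s_1-m_1,p_1})$ used there (precisely the one we now wish to drop) is replaced by completeness of the intersection space $\dot{\mathrm{H}}^{s_0,p_0}(\mathbb{R}^n_+)\cap\dot{\mathrm{H}}^{s_1,p_1}(\mathbb{R}^n_+)$, which $(\mathcal{C}_{s_0,p_0})$ grants through Proposition~\ref{prop:IntersecHomHspRn+}. The only genuinely new inequality is $\lVert u\rVert_{\dot{\mathrm{H}}^{s_1,p_1}(\mathbb{R}^n_+)}\lesssim\sum_{k=1}^{n}\lVert\partial_{x_k}^{m_1}u\rVert_{\dot{\mathrm{H}}^{s_1-m_1,p_1}(\mathbb{R}^n_+)}$: the trivial bounds $\lVert\partial_{x_k}^{m_1}u\rVert_{\dot{\mathrm{H}}^{s_1-m_1,p_1}(\mathbb{R}^n_+)}\leqslant\lVert\nabla^{m_1}u\rVert_{\dot{\mathrm{H}}^{s_1-m_1,p_1}(\mathbb{R}^n_+)}\lesssim\lVert u\rVert_{\dot{\mathrm{H}}^{s_1,p_1}(\mathbb{R}^n_+)}$ follow from the definition of $\nabla^{m_1}$ and from \eqref{eq:normcontrolHomogeneousSobolev}, and the reverse control $\lVert\nabla^{m_1}u\rVert_{\dot{\mathrm{H}}^{s_1-m_1,p_1}(\mathbb{R}^n_+)}\lesssim\sum_k\lVert\partial_{x_k}^{m_1}u\rVert_{\dot{\mathrm{H}}^{s_1-m_1,p_1}(\mathbb{R}^n_+)}$ is obtained by the same extension argument applied to the full gradient of the extension (this is essentially the content of Corollary~\ref{cor:ExtOpIntersecHomHspRn+}). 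As in the proof of Proposition~\ref{prop:EqNormNablakHsp}, it is enough to treat $m_1=1$ and iterate.

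First I would establish the new inequality on the dense subspace $\eus{S}_0(\overline{\mathbb{R}^n_+})$. Fix $m\in\mathbb{N}$ with $m\geqslant m_1$ and $s_j<m+1+\tfrac{1}{p_j}$ for $j\in\{0,1\}$, and let $\mathrm{E}$ be the associated extension operator of Proposition~\ref{prop:ExtOpHomSobSpaces}, together with the operators $\mathrm{E}^{(\ell)}$, $\ell\in\llb 1,m\rrb$, defined by $\partial_{x_n}^\ell\mathrm{E}=\mathrm{E}^{(\ell)}\partial_{x_n}^\ell$. For $u\in\eus{S}_0(\overline{\mathbb{R}^n_+})$ one has $\mathrm{E}u\in\mathrm{L}^{p_1}(\mathbb{R}^n)\subset\eus{S}'_h(\mathbb{R}^n)$, so point~\textit{(iii)} of Proposition~\ref{prop:PropertiesHomSobolevSpacesRn}, used with regularity index $s_1-m_1$ and order $m_1$, gives $\lVert u\rVert_{\dot{\mathrm{H}}^{s_1,p_1}(\mathbb{R}^n_+)}\leqslant\lVert\mathrm{E}u\rVert_{\dot{\mathrm{H}}^{s_1,p_1}(\mathbb{R}^n)}\sim_{s_1,m_1,p_1,n}\sum_{k=1}^{n}\lVert\partial_{x_k}^{m_1}\mathrm{E}u\rVert_{\dot{\mathrm{H}}^{s_1-m_1,p_1}(\mathbb{R}^n)}$, the first step being the definition of the restriction norm. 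Since $m\geqslant m_1$, the function $\mathrm{E}u$ is $\mathrm{C}^{m_1}$ across $\partial\mathbb{R}^n_+$, so no singular boundary term appears and $\partial_{x_k}^{m_1}\mathrm{E}u=\mathrm{E}[\partial_{x_k}^{m_1}u]$ for $k\in\llb 1,n-1\rrb$ and $\partial_{x_n}^{m_1}\mathrm{E}u=\mathrm{E}^{(m_1)}[\partial_{x_n}^{m_1}u]$; as $\mathrm{E}$ and $\mathrm{E}^{(m_1)}$ are, by Proposition~\ref{prop:ExtOpHomSobSpaces}, bounded extension operators from $\dot{\mathrm{H}}^{s_1-m_1,p_1}(\mathbb{R}^n_+)$ into $\dot{\mathrm{H}}^{s_1-m_1,p_1}(\mathbb{R}^n)$ on the subspace $\mathrm{H}^{s_1-m_1,p_1}(\mathbb{R}^n_+)$ when $s_1-m_1\geqslant 0$ (and outright when $-1+\tfrac{1}{p_1}<s_1-m_1<\tfrac{1}{p_1}$), and since the derivatives $\partial_{x_k}^{m_1}u$ of a function in $\eus{S}_0(\overline{\mathbb{R}^n_+})$ lie in that subspace, I would bound the right-hand side by $\sum_{k=1}^{n}\lVert\partial_{x_k}^{m_1}u\rVert_{\dot{\mathrm{H}}^{s_1-m_1,p_1}(\mathbb{R}^n_+)}$, which closes the estimate on $\eus{S}_0(\overline{\mathbb{R}^n_+})$ (the same computation run with $\nabla^{m_1}\mathrm{E}u$ in place of the pure partials recovers the full-gradient equivalence).

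Finally I would pass to the whole intersection: by Proposition~\ref{prop:IntersecHomHspRn+}, $(\mathcal{C}_{s_0,p_0})$ makes $\dot{\mathrm{H}}^{s_0,p_0}(\mathbb{R}^n_+)\cap\dot{\mathrm{H}}^{s_1,p_1}(\mathbb{R}^n_+)$ a Banach space in which $\eus{S}_0(\overline{\mathbb{R}^n_+})$ is dense, and since $\partial_{x_k}^{m_1}$ maps $\dot{\mathrm{H}}^{s_1,p_1}(\mathbb{R}^n_+)$ continuously into $\dot{\mathrm{H}}^{s_1-m_1,p_1}(\mathbb{R}^n_+)$ by \eqref{eq:normcontrolHomogeneousSobolev}, both sides of the inequality vary continuously along an approximating sequence, so it survives in the limit. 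The main obstacle is exactly this last step: because $\dot{\mathrm{H}}^{s_1-m_1,p_1}(\mathbb{R}^n_+)$ itself may fail to be complete, one cannot argue by ``extending a bounded operator by density'' in the naive sense, but must instead pass to the limit a numerical inequality between two quantities that are each continuous on the complete intersection space — which is precisely why intersecting with a complete space, i.e.\ imposing $(\mathcal{C}_{s_0,p_0})$, is written into the statement. (One should also keep in mind that the implicit constants depend on the chosen order $m$ of $\mathrm{E}$, hence on $s_0,p_0$ as well.)
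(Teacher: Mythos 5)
Your proposal is correct and is exactly the argument the paper intends: the corollary is stated as a direct combination of Corollary~\ref{cor:ExtOpIntersecHomHspRn+} (decoupled homogeneous estimates for $\mathrm{E}$ on the intersection space) with the computation in the proof of Proposition~\ref{prop:EqNormNablakHsp}, and you fill in precisely those steps — the commutation identities $\partial_{x_k}^{m_1}\mathrm{E}=\mathrm{E}\partial_{x_k}^{m_1}$, $\partial_{x_n}^{m_1}\mathrm{E}=\mathrm{E}^{(m_1)}\partial_{x_n}^{m_1}$, the whole-space equivalence from point \textit{(iii)} of Proposition~\ref{prop:PropertiesHomSobolevSpacesRn}, and the passage to the limit via density of $\eus{S}_0(\overline{\mathbb{R}^n_+})$ in the complete intersection space from Proposition~\ref{prop:IntersecHomHspRn+}. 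Your closing remark about passing a numerical inequality to the limit (rather than extending an operator into a possibly non-complete target) is the right way to phrase the density step.
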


Now, one may also be interested into Sobolev spaces with $0$-boundary condition, we introduce a projection operator that allows to deal with the interpolation property, and to recover, later on, some appropriate density results.

\begin{lemma}\label{lemma:ProjHsp0}Let $p\in(1,+\infty)$, $s\in\mathbb{R}$, $m\in\mathbb{N}$, such that $-1+\tfrac{1}{p}<s<m+1+\tfrac{1}{p}$, then there exists a bounded projection operator $\mathcal{P}_0$, depending on $m$, such that it maps $\mathrm{H}^{s,p}(\mathbb{R}^n)$ to $\mathrm{H}^{s,p}_0(\mathbb{R}^n_+)$, which also satisfies that :

If either one of the following conditions holds
\begin{itemize}
    \item $s\geqslant 0$ and $u\in {\mathrm{H}}^{s,p}(\mathbb{R}^n)$;
    \item $s\in (-1+\tfrac{1}{p},\tfrac{1}{p})$ and $u\in \dot{\mathrm{H}}^{s,p}(\mathbb{R}^n)$;
\end{itemize}
then we have the estimate
\begin{align*}
    \lVert \mathcal{P}_0 u \rVert_{\dot{\mathrm{H}}^{s,p}(\mathbb{R}^n)} \lesssim_{s,m,p,n}   \lVert  u \rVert_{\dot{\mathrm{H}}^{s,p}(\mathbb{R}^n)}\text{. }
\end{align*}
In particular, $\mathcal{P}_0$ extends uniquely to a bounded projection from $\dot{\mathrm{H}}^{s,p}(\mathbb{R}^n)$ to $\dot{\mathrm{H}}^{s,p}_0(\mathbb{R}^n_+)$ whenever $(\mathcal{C}_{s,p})$ is satisfied.
\end{lemma}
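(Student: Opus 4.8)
The plan is to realize $\mathcal{P}_0$ as $\mathrm{I}-\mathrm{E}^{-}\mathrm{R}^{-}$, where $\mathrm{R}^{-}$ is the restriction operator to the open lower half-space $\mathbb{R}^{n}_{-}:=\{x\in\mathbb{R}^{n}\,:\,x_{n}<0\}$, and $\mathrm{E}^{-}$ is the order-$m$ higher order reflection extension operator of Proposition \ref{prop:ExtOpHomSobSpaces}, built for $\mathbb{R}^{n}_{-}$ instead of $\mathbb{R}^{n}_{+}$; explicitly, $\mathrm{E}^{-}v(x)=v(x)$ for $x_{n}<0$ and $\mathrm{E}^{-}v(x)=\sum_{j=0}^{m}\alpha_{j}v(x',-\tfrac{x_{n}}{j+1})$ for $x_{n}>0$, with the same coefficients $(\alpha_{j})_{j\in\llb 0,m\rrb}$. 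Conjugating by the reflection $x_{n}\mapsto -x_{n}$ shows that Propositions \ref{prop:ExtOpHomSobSpaces} and \ref{prop:SobolevMultiplier} hold verbatim with $\mathbb{R}^{n}_{-}$ in place of $\mathbb{R}^{n}_{+}$: for $-1+\tfrac1p<s<m+1+\tfrac1p$ the operator $\mathrm{E}^{-}$ maps $\mathrm{H}^{s,p}(\mathbb{R}^{n}_{-})$ boundedly into $\mathrm{H}^{s,p}(\mathbb{R}^{n})$ when $s\geqslant 0$, satisfies $\lVert\mathrm{E}^{-}v\rVert_{\dot{\mathrm{H}}^{s,p}(\mathbb{R}^{n})}\lesssim_{s,m,p,n}\lVert v\rVert_{\dot{\mathrm{H}}^{s,p}(\mathbb{R}^{n}_{-})}$ on the two subspaces singled out there, and extends to a bounded map $\dot{\mathrm{H}}^{s,p}(\mathbb{R}^{n}_{-})\to\dot{\mathrm{H}}^{s,p}(\mathbb{R}^{n})$ once $(\mathcal{C}_{s,p})$ holds.

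First I would check the algebraic properties at the level of distributions. Since $(\mathrm{E}^{-}v)|_{\mathbb{R}^{n}_{-}}=v$ for every $v$, we get $(\mathcal{P}_{0}u)|_{\mathbb{R}^{n}_{-}}=u|_{\mathbb{R}^{n}_{-}}-(\mathrm{E}^{-}\mathrm{R}^{-}u)|_{\mathbb{R}^{n}_{-}}=0$, so $\supp(\mathcal{P}_{0}u)\subset\overline{\mathbb{R}^{n}_{+}}$; conversely, if $\supp u\subset\overline{\mathbb{R}^{n}_{+}}$ then $\mathrm{R}^{-}u=0$ and hence $\mathcal{P}_{0}u=u$. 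Thus $\mathcal{P}_{0}$ restricts to the identity on the space of elements supported in $\overline{\mathbb{R}^{n}_{+}}$, whence $\mathcal{P}_{0}^{2}=\mathcal{P}_{0}$, and at the inhomogeneous level $\mathcal{P}_{0}$ has range exactly $\mathrm{H}^{s,p}_{0}(\mathbb{R}^{n}_{+})$.

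The boundedness estimates then drop out by composition. By the very definition of the quotient norm, $\mathrm{R}^{-}$ has norm at most $1$ from $\mathrm{H}^{s,p}(\mathbb{R}^{n})$ to $\mathrm{H}^{s,p}(\mathbb{R}^{n}_{-})$ and from $\dot{\mathrm{H}}^{s,p}(\mathbb{R}^{n})$ to $\dot{\mathrm{H}}^{s,p}(\mathbb{R}^{n}_{-})$; combined with the bounds for $\mathrm{E}^{-}$ recalled above — and, for $-1+\tfrac1p<s<0$, with the inhomogeneous case of Proposition \ref{prop:SobolevMultiplier}, together with $\mathrm{H}^{s,p}=\mathrm{L}^{p}\cap\dot{\mathrm{H}}^{s,p}$ for $s\geqslant 0$ from point \textit{(vi)} of Proposition \ref{prop:PropertiesHomSobolevSpacesRn} — this yields $\lVert\mathcal{P}_{0}u\rVert_{\mathrm{H}^{s,p}(\mathbb{R}^{n})}\lesssim_{s,m,p,n}\lVert u\rVert_{\mathrm{H}^{s,p}(\mathbb{R}^{n})}$ as well as the homogeneous estimate $\lVert\mathcal{P}_{0}u\rVert_{\dot{\mathrm{H}}^{s,p}(\mathbb{R}^{n})}\lesssim_{s,m,p,n}\lVert u\rVert_{\dot{\mathrm{H}}^{s,p}(\mathbb{R}^{n})}$ on the two indicated subspaces. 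Finally, since $\eus{S}_{0}(\mathbb{R}^{n})\subset\mathrm{H}^{s,p}(\mathbb{R}^{n})$ is dense in $\dot{\mathrm{H}}^{s,p}(\mathbb{R}^{n})$ by point \textit{(iv)} of Proposition \ref{prop:PropertiesHomSobolevSpacesRn}, the homogeneous estimate lets $\mathcal{P}_{0}$ extend uniquely to a bounded operator on $\dot{\mathrm{H}}^{s,p}(\mathbb{R}^{n})$ whenever $(\mathcal{C}_{s,p})$ holds, and passing to the limit preserves both $\mathcal{P}_{0}^{2}=\mathcal{P}_{0}$ and the inclusion of its range in the closed subspace $\dot{\mathrm{H}}^{s,p}_{0}(\mathbb{R}^{n}_{+})$.

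The only genuine work is concentrated in the transferred Proposition \ref{prop:ExtOpHomSobSpaces}, namely the homogeneous boundedness of $\mathrm{E}^{-}$ (and of the associated $(\mathrm{E}^{-})^{(\ell)}$ defined through $\partial_{x_{n}}^{\ell}\mathrm{E}^{-}=(\mathrm{E}^{-})^{(\ell)}\partial_{x_{n}}^{\ell}$) at the exceptional exponents $s-\tfrac1p\in\mathbb{N}$, which in Proposition \ref{prop:ExtOpHomSobSpaces} required the complex-interpolation argument between $\mathrm{L}^{p_{0}}$ and $\dot{\mathrm{H}}^{1,p_{1}}$ and Theorem \ref{thm:InterpHomSpacesRn}. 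Since that argument is symmetric in the two half-spaces, nothing new has to be done; alternatively, one may reprove the homogeneous estimate for $\mathcal{P}_{0}$ directly by the same commutation-with-derivatives scheme, using $\partial_{x_{k}}^{\ell}\mathcal{P}_{0}=\mathcal{P}_{0}\partial_{x_{k}}^{\ell}$ for $k\in\llb 1,n-1\rrb$, $\partial_{x_{n}}^{\ell}\mathcal{P}_{0}=\mathcal{P}_{0}^{(\ell)}\partial_{x_{n}}^{\ell}$ with $\mathcal{P}_{0}^{(\ell)}:=\mathrm{I}-(\mathrm{E}^{-})^{(\ell)}\mathrm{R}^{-}$, and point \textit{(iii)} of Proposition \ref{prop:PropertiesHomSobolevSpacesRn} to move between the $\dot{\mathrm{H}}^{s,p}$-norm and $\sum_{k}\lVert\partial_{x_{k}}^{\ell}\cdot\rVert_{\dot{\mathrm{H}}^{s-\ell,p}}$ with $s-\ell\in(-1+\tfrac1p,\tfrac1p)$.
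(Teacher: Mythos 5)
Your proposal is correct and follows the same route as the paper: the paper also defines $\mathcal{P}_0 u := u - \mathrm{E}^{-}[\mathbbm{1}_{\mathbb{R}^n_-}u]$ with $\mathrm{E}^{-}$ the reflected higher-order extension operator, obtains the projection and support properties directly from this formula, and derives all boundedness claims from Propositions \ref{prop:SobolevMultiplier} and \ref{prop:ExtOpHomSobSpaces}. You merely spell out the composition $\mathrm{E}^{-}\mathrm{R}^{-}$ and the density step in more detail than the paper does.
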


\begin{proof}Let $p\in(1,+\infty)$, $s> -1+\tfrac{1}{p}$, $m\in\mathbb{N}$, such that $s<m+1+\tfrac{1}{p}$. Then we consider the operator $\mathrm{E}$ given by Proposition \ref{prop:ExtOpHomSobSpaces}, but we modify it into an operator $\mathrm{E}^{-}$, for any measurable function $u\,:\,\mathbb{R}^n_{-}\longrightarrow \mathbb{C}$, we set for almost every $x\in\mathbb{R}^n$
\begin{align*}
    \mathrm{E}^{-}u  (x)\,:=\,\left\{\begin{array}{lc}
            u(x) &\text{, if } x\in \mathbb{R}^n_-  \text{, }\\
            \sum_{j=0}^m \alpha_j u(x', -\tfrac{x_n}{j+1}) &\text{, if } x\in \mathbb{R}^n\setminus\mathbb{R}^n_-  \text{. }
        \end{array}\right.
\end{align*}
Hence, for any measurable function $u\,:\,\mathbb{R}^n\longrightarrow \mathbb{C}$, we set for almost every $x\in\mathbb{R}^n$,
\begin{align*}
    \mathcal{P}_0u:= u - \mathrm{E}^{-}[\mathbbm{1}_{\mathbb{R}^n_-}u]\text{.}
\end{align*}
The fact that $\mathcal{P}_0^2=\mathcal{P}_0$ is clear by definition, and we have $\mathcal{P}_0 \mathrm{H}^{s,p}(\mathbb{R}^n) \subset \mathrm{H}^{s,p}_0(\mathbb{R}^n_+)$, and that ${\mathcal{P}_0}_{|_{\mathrm{H}^{s,p}_0(\mathbb{R}^n_+)}}=\mathrm{I}$.  The boundedness properties, as claimed, follow from Proposition~\ref{prop:SobolevMultiplier} and Proposition~\ref{prop:ExtOpHomSobSpaces}.
\end{proof}

As well as the extension operator given by higher order reflection principle, the projection operator on homogeneous Sobolev spaces including the “$0$-boundary condition” satisfies homogeneous estimates on intersection spaces. The proof is a direct consequence of Proposition \ref{prop:IntersecHomHspRn+} and its formula introduced in the proof of Lemma \ref{lemma:ProjHsp0}.

\begin{corollary}\label{cor:ProjIntersecHomHsp0}Let $p_j\in(1,+\infty)$, $s_j> -1+\tfrac{1}{p_j}$, $j\in\{0,1\}$, $m\in\mathbb{N}$, such that $(\mathcal{C}_{s_0,p_0})$ is satisfied and $ s_j < m+1+\tfrac{1}{p_j}$, and consider the projection operator $\mathcal{P}_0$ given by Lemma \ref{lemma:ProjHsp0}.

Then for all $u\in \dot{\mathrm{H}}^{s_0,p_0}(\mathbb{R}^n)\cap \dot{\mathrm{H}}^{s_1,p_1}(\mathbb{R}^n)$, we have  $\mathcal{P}_0u\in \dot{\mathrm{H}}^{s_j,p_j}_0(\mathbb{R}^n_+)$, $j\in\{0,1\}$, with the estimate
\begin{align*}
    \lVert \mathcal{P}_0 u \rVert_{\dot{\mathrm{H}}^{s_j,p_j}(\mathbb{R}^n)} \lesssim_{s_j,m,p,n}   \lVert  u \rVert_{\dot{\mathrm{H}}^{s_j,p_j}(\mathbb{R}^n)}\text{. }
\end{align*}
\end{corollary}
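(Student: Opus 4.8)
The plan is to read off from the proof of Lemma~\ref{lemma:ProjHsp0} that $\mathcal{P}_0 = \mathrm{I} - \mathrm{E}^{-}\circ r_{-}$, where $r_{-}\colon u\mapsto u_{|_{\mathbb{R}^n_-}}$ is the restriction to $\mathbb{R}^n_-$ and $\mathrm{E}^{-}$ is the higher order reflection operator across $\{x_n=0\}$ (built with the same coefficients $(\alpha_j)_{j\in\llb0,m\rrb}$ as the operator $\mathrm{E}$ of Proposition~\ref{prop:ExtOpHomSobSpaces}, with the roles of $\mathbb{R}^n_+$ and $\mathbb{R}^n_-$ swapped). From there I would simply bound each of the two pieces $\mathrm{I}$ and $\mathrm{E}^{-}\circ r_{-}$ on the intersection space with decoupled homogeneous estimates, and conclude by the triangle inequality.

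First, by the reflection symmetry $x_n\mapsto-x_n$, Corollary~\ref{cor:ExtOpIntersecHomHspRn+} — together with the underlying boundedness of $\mathrm{E}$ from Proposition~\ref{prop:ExtOpHomSobSpaces} — holds verbatim with $(\mathrm{E},\mathbb{R}^n_+)$ replaced by $(\mathrm{E}^{-},\mathbb{R}^n_-)$: under the present assumptions on $(s_j,p_j)$ and $m$, and since $(\mathcal{C}_{s_0,p_0})$ holds, one has for every $v\in\dot{\mathrm{H}}^{s_0,p_0}(\mathbb{R}^n_-)\cap\dot{\mathrm{H}}^{s_1,p_1}(\mathbb{R}^n_-)$ that $\mathrm{E}^{-}v\in\dot{\mathrm{H}}^{s_j,p_j}(\mathbb{R}^n)$ with $\lVert \mathrm{E}^{-}v\rVert_{\dot{\mathrm{H}}^{s_j,p_j}(\mathbb{R}^n)}\lesssim_{s_j,p_j,m,n}\lVert v\rVert_{\dot{\mathrm{H}}^{s_j,p_j}(\mathbb{R}^n_-)}$, $j\in\{0,1\}$. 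Next, the restriction $r_{-}\colon\dot{\mathrm{H}}^{s,p}(\mathbb{R}^n)\to\dot{\mathrm{H}}^{s,p}(\mathbb{R}^n_-)$ is a contraction by the very definition of the quotient norm, so for $u\in\dot{\mathrm{H}}^{s_0,p_0}(\mathbb{R}^n)\cap\dot{\mathrm{H}}^{s_1,p_1}(\mathbb{R}^n)$ we get $r_{-}u\in\dot{\mathrm{H}}^{s_0,p_0}(\mathbb{R}^n_-)\cap\dot{\mathrm{H}}^{s_1,p_1}(\mathbb{R}^n_-)$ with $\lVert r_{-}u\rVert_{\dot{\mathrm{H}}^{s_j,p_j}(\mathbb{R}^n_-)}\leqslant\lVert u\rVert_{\dot{\mathrm{H}}^{s_j,p_j}(\mathbb{R}^n)}$. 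Composing, $\mathrm{E}^{-}r_{-}u\in\dot{\mathrm{H}}^{s_0,p_0}(\mathbb{R}^n)\cap\dot{\mathrm{H}}^{s_1,p_1}(\mathbb{R}^n)\subset\eus{S}'_h(\mathbb{R}^n)$ with $\lVert\mathrm{E}^{-}r_{-}u\rVert_{\dot{\mathrm{H}}^{s_j,p_j}(\mathbb{R}^n)}\lesssim_{s_j,p_j,m,n}\lVert u\rVert_{\dot{\mathrm{H}}^{s_j,p_j}(\mathbb{R}^n)}$, hence the same bound for $u-\mathrm{E}^{-}r_{-}u$.

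It then remains to identify $u-\mathrm{E}^{-}r_{-}u$ with $\mathcal{P}_0 u$ and to check the support condition. The maps $u\mapsto u-\mathrm{E}^{-}r_{-}u$ and $\mathcal{P}_0$ are both bounded on $\dot{\mathrm{H}}^{s_0,p_0}(\mathbb{R}^n)$ (for $\mathcal{P}_0$ this is the last assertion of Lemma~\ref{lemma:ProjHsp0}, using $(\mathcal{C}_{s_0,p_0})$), and they coincide on the dense subspace $\mathrm{H}^{s_0,p_0}(\mathbb{R}^n)\supset\eus{S}_0(\mathbb{R}^n)$, where both equal $u-\mathrm{E}^{-}[\mathbbm{1}_{\mathbb{R}^n_-}u]$, the formula from the proof of Lemma~\ref{lemma:ProjHsp0}; hence they agree on all of $\dot{\mathrm{H}}^{s_0,p_0}(\mathbb{R}^n)$, in particular on the intersection. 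Finally, since $\mathrm{E}^{-}$ is an extension operator, $r_{-}(\mathrm{E}^{-}w)=w$, so $r_{-}(\mathcal{P}_0 u)=r_{-}u-r_{-}u=0$, i.e. $\mathcal{P}_0 u\in\dot{\mathrm{H}}^{s_j,p_j}_0(\mathbb{R}^n_+)$. There is no serious obstacle: the whole content is carried by the reflected version of Corollary~\ref{cor:ExtOpIntersecHomHspRn+} (equivalently, by Proposition~\ref{prop:IntersecHomHspRn+} applied on $\mathbb{R}^n_-$), and the only point requiring a little care is the distributional identification of $\mathcal{P}_0$ with $\mathrm{I}-\mathrm{E}^{-}r_{-}$ on the — possibly non complete — space $\dot{\mathrm{H}}^{s_1,p_1}(\mathbb{R}^n)$, which is precisely why one argues inside $\dot{\mathrm{H}}^{s_0,p_0}(\mathbb{R}^n)$, where completeness and density of $\eus{S}_0(\mathbb{R}^n)$ are at hand.
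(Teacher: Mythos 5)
Your proof is correct and follows essentially the same route the paper intends: the paper's own justification is the one-line remark that the corollary is "a direct consequence of Proposition \ref{prop:IntersecHomHspRn+} and its formula introduced in the proof of Lemma \ref{lemma:ProjHsp0}", i.e. writing $\mathcal{P}_0=\mathrm{I}-\mathrm{E}^{-}[\mathbbm{1}_{\mathbb{R}^n_-}\cdot]$ and invoking the reflected decoupled estimates for the higher-order reflection operator, exactly as you do. Your explicit treatment of the identification of $\mathrm{I}-\mathrm{E}^{-}r_{-}$ with $\mathcal{P}_0$ via density in the complete space $\dot{\mathrm{H}}^{s_0,p_0}(\mathbb{R}^n)$, and of the support condition via $r_{-}(\mathcal{P}_0u)=0$, is a sound (and welcome) elaboration of details the paper leaves implicit.
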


We still have Sobolev embeddings by definition of function spaces by restriction.

\begin{proposition}\label{prop:SobolevEmbeddingsRn+} Let $p,q\in(1,+\infty)$, $s\in[0,n)$, such that
\begin{align*}
    \frac{1}{q}=\frac{1}{p}-\frac{s}{n}\text{.}
\end{align*}
We have the estimates,
\begin{align}
    \lVert  u \rVert_{\mathrm{L}^q(\mathbb{R}^n_+)} \lesssim_{n,s,p,q} \lVert  u \rVert_{\dot{\mathrm{H}}^{s,p}(\mathbb{R}^n_+)}\text{, } &\forall u\in \dot{\mathrm{H}}^{s,p}(\mathbb{R}^n_+)\text{, }\label{ineq:SobEmbeddingRn+1}\\
    \lVert  u \rVert_{\dot{\mathrm{H}}^{-s,q}_0(\mathbb{R}^n_+)} \lesssim_{n,s,p,q} \lVert  u \rVert_{\mathrm{L}^p(\mathbb{R}^n_+)}\text{, } &\forall u\in \mathrm{L}^p(\mathbb{R}^n_+)\label{ineq:SobEmbeddingRn+2}\text{, }
\end{align}
for which each underlying embedding is dense.
\end{proposition}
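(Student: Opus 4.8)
Both inequalities will come directly from the whole-space Sobolev embeddings of Proposition~\ref{prop:PropertiesHomSobolevSpacesRn}(i), read through the definitions of $\dot{\mathrm{H}}^{s,p}(\mathbb{R}^n_+)$ as a restriction space and of $\dot{\mathrm{H}}^{-s,q}_0(\mathbb{R}^n_+)$ as a support space; the two density claims are then handled separately. First note that the hypothesis $\tfrac1q=\tfrac1p-\tfrac sn\in(0,1)$ forces $0\le s<\tfrac np$, and gives $\tfrac1{q'}=\tfrac1{p'}+\tfrac sn$, hence $\tfrac n{q'}=\tfrac n{p'}+s>s$; in other words $(\mathcal{C}_{s,q'})$ is automatically satisfied, a fact used below. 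For \eqref{ineq:SobEmbeddingRn+1}, given $u\in\dot{\mathrm{H}}^{s,p}(\mathbb{R}^n_+)$ and $\varepsilon>0$ I would pick $\widetilde u\in\dot{\mathrm{H}}^{s,p}(\mathbb{R}^n)$ with $\widetilde u_{|_{\mathbb{R}^n_+}}=u$ and $\lVert\widetilde u\rVert_{\dot{\mathrm{H}}^{s,p}(\mathbb{R}^n)}\le(1+\varepsilon)\lVert u\rVert_{\dot{\mathrm{H}}^{s,p}(\mathbb{R}^n_+)}$, then bound $\lVert u\rVert_{\mathrm{L}^q(\mathbb{R}^n_+)}\le\lVert\widetilde u\rVert_{\mathrm{L}^q(\mathbb{R}^n)}\lesssim\lVert\widetilde u\rVert_{\dot{\mathrm{H}}^{s,p}(\mathbb{R}^n)}$ by Proposition~\ref{prop:PropertiesHomSobolevSpacesRn}(i), and let $\varepsilon\to0$. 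For \eqref{ineq:SobEmbeddingRn+2}, given $u\in\mathrm{L}^p(\mathbb{R}^n_+)$ I regard it as an element of $\mathrm{L}^p(\mathbb{R}^n)$ supported in $\overline{\mathbb{R}^n_+}$; Proposition~\ref{prop:PropertiesHomSobolevSpacesRn}(i) gives $u\in\dot{\mathrm{H}}^{-s,q}(\mathbb{R}^n)$ with $\lVert u\rVert_{\dot{\mathrm{H}}^{-s,q}(\mathbb{R}^n)}\lesssim\lVert u\rVert_{\mathrm{L}^p(\mathbb{R}^n)}$, and the support condition promotes this to $u\in\dot{\mathrm{H}}^{-s,q}_0(\mathbb{R}^n_+)$ with the same norm. (When $s=0$ one has $q=p$ and both statements are trivial.)

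\textbf{Density of \eqref{ineq:SobEmbeddingRn+1}.} I would exhibit $\eus{S}_0(\overline{\mathbb{R}^n_+})$ as a common dense subspace. It is contained in $\dot{\mathrm{H}}^{s,p}(\mathbb{R}^n_+)$ and is dense there by the very definition of the restriction space; it is dense in $\mathrm{L}^q(\mathbb{R}^n_+)$ because $\eus{S}_0(\mathbb{R}^n)$ is dense in $\mathrm{L}^q(\mathbb{R}^n)$ for every $q\in(1,\infty)$ (for instance because $\varphi-\dot S_{-N}\varphi\in\eus{S}_0(\mathbb{R}^n)$ converges to $\varphi$ in $\mathrm{L}^q(\mathbb{R}^n)$ for every $\varphi\in\mathrm{C}_c^\infty(\mathbb{R}^n)$, or because $\eus{S}_0(\mathbb{R}^n)$ is dense in $\dot{\mathrm{B}}^0_{q,1}(\mathbb{R}^n)\hookrightarrow\mathrm{L}^q(\mathbb{R}^n)$ densely by Proposition~\ref{prop:PropertiesHomBesovSpacesRn}(iii)--(iv)) and restriction to $\mathbb{R}^n_+$ is onto $\mathrm{L}^q(\mathbb{R}^n_+)$. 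Hence the image of $\dot{\mathrm{H}}^{s,p}(\mathbb{R}^n_+)$ is dense in $\mathrm{L}^q(\mathbb{R}^n_+)$.

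\textbf{Density of \eqref{ineq:SobEmbeddingRn+2}.} This is the delicate part and I would argue by Hahn--Banach. Since $s\ge0$, $(\mathcal{C}_{-s,q})$ holds, so $\dot{\mathrm{H}}^{-s,q}(\mathbb{R}^n)$ is a Banach space and $\dot{\mathrm{H}}^{-s,q}_0(\mathbb{R}^n_+)$ is a closed subspace of it; moreover $(\dot{\mathrm{H}}^{-s,q}(\mathbb{R}^n))'=\dot{\mathrm{H}}^{s,q'}(\mathbb{R}^n)$ by Proposition~\ref{prop:dualityRieszpotential}, as $(\mathcal{C}_{s,q'})$ holds. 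Let $\Lambda$ be a bounded functional on $\dot{\mathrm{H}}^{-s,q}_0(\mathbb{R}^n_+)$ vanishing on $\mathrm{L}^p(\mathbb{R}^n_+)$; extend it to $\dot{\mathrm{H}}^{-s,q}(\mathbb{R}^n)$ and represent it by some $g\in\dot{\mathrm{H}}^{s,q'}(\mathbb{R}^n)$. As $0\le s<\tfrac n{q'}$, Proposition~\ref{prop:PropertiesHomSobolevSpacesRn}(i) gives $g\in\mathrm{L}^{p'}(\mathbb{R}^n)$, and since for $\varphi\in\mathrm{C}_c^\infty(\mathbb{R}^n_+)\subset\mathrm{L}^p(\mathbb{R}^n_+)$ the bracket of Proposition~\ref{prop:dualityRieszpotential} is just $\int_{\mathbb{R}^n}\varphi\,g$, the vanishing of $\Lambda$ there forces $g=0$ a.e.\ on $\mathbb{R}^n_+$, i.e.\ $\supp g\subset\overline{\mathbb{R}^n_-}$. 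It remains to see that then $\langle u,g\rangle=0$ for every $u\in\dot{\mathrm{H}}^{-s,q}_0(\mathbb{R}^n_+)$: for $\tau>0$ the translate $u(\,\cdot\,-\tau e_n)$ is supported in $\{x_n\ge\tau\}$, which is separated from $\supp g\subset\{x_n\le0\}$ by the slab $\{0<x_n<\tau\}$, so inserting a smooth cutoff $\chi=\chi(x_n)$ equal to $1$ near $\{x_n\le0\}$ and to $0$ near $\{x_n\ge\tau\}$ and using the compatibility of the duality bracket with multiplication by such a cutoff yields $\langle u(\,\cdot\,-\tau e_n),g\rangle=0$; letting $\tau\to0$ and using that translations are isometries of $\dot{\mathrm{H}}^{-s,q}(\mathbb{R}^n)$ acting continuously on the dense subspace $\eus{S}_0(\mathbb{R}^n)$ gives $\langle u,g\rangle=0$. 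Thus $\Lambda\equiv0$, and by Hahn--Banach $\mathrm{L}^p(\mathbb{R}^n_+)$ is dense in $\dot{\mathrm{H}}^{-s,q}_0(\mathbb{R}^n_+)$.

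\textbf{Main obstacle.} It is the density of \eqref{ineq:SobEmbeddingRn+2}: one \emph{cannot} run a common-dense-subspace argument with $\mathrm{C}_c^\infty(\mathbb{R}^n_+)$, since when $s>\tfrac1{q'}$ the space $\dot{\mathrm{H}}^{-s,q}_0(\mathbb{R}^n_+)$ contains distributions concentrated on the boundary hyperplane, which do not belong to $\overline{\mathrm{C}_c^\infty(\mathbb{R}^n_+)}$, so the larger space $\mathrm{L}^p(\mathbb{R}^n_+)$ is genuinely needed together with the whole-space duality. The one slightly technical point there is the vanishing of the \emph{a priori} nonlocal Littlewood--Paley bracket on two elements whose supports are separated by a slab; this is elementary once one notes that the bracket is compatible with multiplication by a smooth transverse cutoff and that $g$, being of nonnegative Sobolev regularity, is an honest $\mathrm{L}^{p'}$ function, hence carries no mass on the boundary.
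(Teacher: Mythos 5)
Your proofs of the two inequalities and of the density of the first embedding are correct and essentially identical to the paper's: extension/restriction combined with Proposition~\ref{prop:PropertiesHomSobolevSpacesRn}~\textit{(i)}, and the common dense subspace $\eus{S}_0(\overline{\mathbb{R}^n_+})$. For the density of \eqref{ineq:SobEmbeddingRn+2} you take a genuinely different route from the paper (which dualizes the chain $\mathrm{L}^{p}(\mathbb{R}^n_+)\hookrightarrow\dot{\mathrm{H}}^{-s,q}_0(\mathbb{R}^n_+)\hookrightarrow\mathrm{H}^{-s,q}_0(\mathbb{R}^n_+)$ and concludes by reflexivity), namely Hahn--Banach plus the representation of the annihilating functional by some $g\in\dot{\mathrm{H}}^{s,q'}(\mathbb{R}^n)\cap\mathrm{L}^{p'}(\mathbb{R}^n)$ supported in $\overline{\mathbb{R}^n_-}$. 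That part is fine, but the argument has a gap at precisely the step you call ``slightly technical''.

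The identity $\langle v,\chi g\rangle=\langle \chi v,g\rangle$ for the Littlewood--Paley bracket is not a free ``compatibility with a cutoff'': it presupposes at the very least that multiplication by $\chi$ --- a smooth function interpolating between $1$ and $0$ across a slab, hence a smoothed indicator of a half-space --- acts boundedly on $\dot{\mathrm{H}}^{s,q'}(\mathbb{R}^n)$, and for the homogeneous norm this is not available once $s\geqslant 1/q'$ (Proposition~\ref{prop:SobolevMultiplier} is restricted to $s<1/q'$, and $1/q'$ can lie far below the regularities allowed here: take $n=10$, $p=2$, $s=4$, so $q'=10/9$). The difficulty is real, since each individual term $\int \dot{\Delta}_j v\,\dot{\Delta}_{j'}g$ is nonzero in general even when $\supp v$ and $\supp g$ are disjoint; only the full sum vanishes, and proving that requires approximating one factor \emph{in norm} by test functions supported away from the other. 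The non-circular repair is on the $g$ side: $g\in\dot{\mathrm{H}}^{s,q'}_0(\mathbb{R}^n_-)$ with $0\leqslant s<n/q'$, and Proposition~\ref{prop:densityCcinftyHsp0}~\textit{(i)} in the nonnegative-regularity case (whose proof uses only Lemma~\ref{lemma:ProjHsp0} and inhomogeneous density, not the present proposition) yields $g_k\in\mathrm{C}^\infty_c(\mathbb{R}^n_-)$ with $g_k\to g$ in $\dot{\mathrm{H}}^{s,q'}(\mathbb{R}^n)$; each $\langle u,g_k\rangle$ is then a genuine distributional pairing of disjointly supported objects, hence zero, and one passes to the limit. (Approximating $u$ instead by $\mathrm{C}^\infty_c$ functions of a shifted upper half-space would be circular, because the density of $\mathrm{C}^\infty_c$ in $\dot{\mathrm{H}}^{-s,q}_0$ at negative regularity is itself deduced from \eqref{ineq:SobEmbeddingRn+2} in the paper.) With that substitution your Hahn--Banach scheme closes; as written, the cutoff step is unjustified.
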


\begin{proof} First, let us recall the Hardy-Littlewood-Sobolev inequality from \textit{(i)} in Proposition \ref{prop:PropertiesHomSobolevSpacesRn}, which says that 
\begin{align*}
    \lVert  u \rVert_{\dot{\mathrm{H}}^{-s,q}(\mathbb{R}^n)} \lesssim_{n,s,p,q} \lVert  u \rVert_{\mathrm{L}^p(\mathbb{R}^n)}\text{, } &\forall u\in \mathrm{L}^p(\mathbb{R}^n)\text{. }
\end{align*}
Hence, the embedding \eqref{ineq:SobEmbeddingRn+2} is a direct consequence of plugging $\tilde{u}$ the extension to the whole $\mathbb{R}^n$ of $u\in \mathrm{L}^p(\mathbb{R}^n_+)$.

The embedding \eqref{ineq:SobEmbeddingRn+1} is a direct consequence of \textit{(i)} in Proposition \ref{prop:PropertiesHomSobolevSpacesRn} and function spaces defined by restriction. Indeed, for $u\in \dot{\mathrm{H}}^{s,p}(\mathbb{R}^n_+)$, we have for any extension $U\in\dot{\mathrm{H}}^{s,p}(\mathbb{R}^n)\subset \mathrm{L}^q(\mathbb{R}^n)$ such that $u=U_{|_{\mathbb{R}^n_+}}\in\mathrm{L}^q(\mathbb{R}^n_+)$ the estimate
\begin{align*}
    \lVert  u \rVert_{\mathrm{L}^q(\mathbb{R}^n_+)} \leqslant \lVert  U \rVert_{\mathrm{L}^q(\mathbb{R}^n)} \lesssim_{s,p,q,n} \lVert U \rVert_{\dot{\mathrm{H}}^{s,p}(\mathbb{R}^n)} \text{. }
\end{align*}
Looking at the infimum on all such $U$ gives the result.

The density for the first embedding follows from the fact that $\eus{S}_0(\overline{\mathbb{R}^n_+})\subset \dot{\mathrm{H}}^{s,p}(\mathbb{R}^n_+)$ is dense in $\mathrm{L}^q(\mathbb{R}^n_+)$. The density in the second case, follows from the canonical embedding,
\begin{align*}
    \mathrm{L}^{p}(\mathbb{R}^n_+) \hookrightarrow \dot{\mathrm{H}}^{-s,q}_0(\mathbb{R}^n_+) \hookrightarrow \mathrm{H}^{-s,q}_0(\mathbb{R}^n_+)\text{,}
\end{align*}
which turn, by duality into embeddings,
\begin{align*}
\mathrm{H}^{s,q'}(\mathbb{R}^n_+) \hookrightarrow (\dot{\mathrm{H}}^{-s,q}_0(\mathbb{R}^n_+))' \hookrightarrow \mathrm{L}^{p'}(\mathbb{R}^n_+) \text{.}
\end{align*}
In particular, the following is a dense embedding
\begin{align*}
    (\dot{\mathrm{H}}^{-s,q}_0(\mathbb{R}^n_+))' \hookrightarrow \mathrm{L}^{p'}(\mathbb{R}^n_+)
\end{align*}
Hence, by reflexivity, the one below also is
\begin{align*}
   \mathrm{L}^{p}(\mathbb{R}^n_+) \hookrightarrow \dot{\mathrm{H}}^{-s,q}_0(\mathbb{R}^n_+) \text{.}
\end{align*}
\end{proof}

Now, all the ingredients are there in order to build the main usual density result for our $0$-boundary conditions homogeneous Sobolev spaces.

\begin{proposition}\label{prop:densityCcinftyHsp0} The space $\mathrm{C}_c^\infty(\mathbb{R}^n_+)$ is dense in the spaces
\begin{enumerate}
    \item $\dot{\mathrm{H}}^{s,p}_0(\mathbb{R}^n_+)$, when $p\in(1,+\infty)$, $s\in (-\frac{n}{p'},\frac{n}{p})$,
    \item $\dot{\mathrm{H}}^{s_0,p_0}_0(\mathbb{R}^n_+)\cap\dot{\mathrm{H}}^{s_1,p_1}_0(\mathbb{R}^n_+)$, when $p_j\in(1,+\infty)$, $s_j\geqslant 0$, $j\in\{0,1\}$, such that $(\mathcal{C}_{s_0,p_0})$ is satisfied.
\end{enumerate}
\end{proposition}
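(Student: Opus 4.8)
The plan is to prove both items by the same two-step scheme — reduce the homogeneous density question to a density statement for \emph{inhomogeneous} Sobolev spaces, where all classical tools (translation, mollification, cutoff, pointwise multipliers) are available — and to cover the negative-regularity range of \textit{(i)} afterwards by a duality argument. For \textit{(i)} when $s\in[0,\tfrac np)$, I would first show that $\mathrm{H}^{s,p}_0(\mathbb{R}^n_+):=\{v\in\mathrm{H}^{s,p}(\mathbb{R}^n)\,:\,\supp v\subset\overline{\mathbb{R}^n_+}\}$ is dense in $\dot{\mathrm{H}}^{s,p}_0(\mathbb{R}^n_+)$ for the $\dot{\mathrm{H}}^{s,p}$-norm. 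Since $s\geqslant0$ and $(\mathcal{C}_{s,p})$ holds, Lemma~\ref{lemma:ProjHsp0} provides the bounded projection $\mathcal{P}_0$ on $\dot{\mathrm{H}}^{s,p}(\mathbb{R}^n)$ with range $\dot{\mathrm{H}}^{s,p}_0(\mathbb{R}^n_+)$ (so ${\mathcal{P}_0}_{|_{\dot{\mathrm{H}}^{s,p}_0(\mathbb{R}^n_+)}}=\mathrm{I}$), which on $\eus{S}_0(\mathbb{R}^n)\subset\mathrm{L}^p(\mathbb{R}^n)$ is given by $\mathcal{P}_0\phi=\phi-\mathrm{E}^-[\mathbbm{1}_{\mathbb{R}^n_-}\phi]$; as $\mathrm{E}^-$ is bounded on $\mathrm{L}^p$ we get $\mathcal{P}_0\phi\in\mathrm{L}^p(\mathbb{R}^n)$, whence $\mathcal{P}_0\phi\in\dot{\mathrm{H}}^{s,p}_0(\mathbb{R}^n_+)\cap\mathrm{L}^p(\mathbb{R}^n)=\mathrm{H}^{s,p}_0(\mathbb{R}^n_+)$ by point~\textit{(vi)} of Proposition~\ref{prop:PropertiesHomSobolevSpacesRn}. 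Applying $\mathcal{P}_0$ to an $\eus{S}_0(\mathbb{R}^n)$-approximating sequence of a given $u\in\dot{\mathrm{H}}^{s,p}_0(\mathbb{R}^n_+)$ (point~\textit{(iv)} of Proposition~\ref{prop:PropertiesHomSobolevSpacesRn}) gives the claimed density. Since moreover $\mathrm{C}_c^\infty(\mathbb{R}^n_+)$ is dense in $\mathrm{H}^{s,p}_0(\mathbb{R}^n_+)$ for the $\mathrm{H}^{s,p}$-norm (Subsection~\ref{subsec:InhomSpacesRn+}) and $\lVert\cdot\rVert_{\dot{\mathrm{H}}^{s,p}(\mathbb{R}^n)}\lesssim\lVert\cdot\rVert_{\mathrm{H}^{s,p}(\mathbb{R}^n)}$ for $s\geqslant0$ (again point~\textit{(vi)}), transitivity of density concludes this case.

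For \textit{(i)} when $s\in(-\tfrac n{p'},0)$, I would argue by duality. The inclusion $\overline{\mathrm{C}_c^\infty(\mathbb{R}^n_+)}^{\dot{\mathrm{H}}^{s,p}}\subset\dot{\mathrm{H}}^{s,p}_0(\mathbb{R}^n_+)$ is immediate since supports pass to the limit along $\dot{\mathrm{H}}^{s,p}(\mathbb{R}^n)\hookrightarrow\eus{S}'(\mathbb{R}^n)$. For the reverse inclusion, by the Hahn--Banach theorem and Proposition~\ref{prop:dualityRieszpotential} (valid because $s<\tfrac np$) it suffices to check that every $v\in\dot{\mathrm{H}}^{-s,p'}(\mathbb{R}^n)$ with $\langle\varphi,v\rangle_{\mathbb{R}^n}=0$ for all $\varphi\in\mathrm{C}_c^\infty(\mathbb{R}^n_+)$ annihilates $\dot{\mathrm{H}}^{s,p}_0(\mathbb{R}^n_+)$. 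As $-s\in(0,\tfrac n{p'})$, point~\textit{(i)} of Proposition~\ref{prop:PropertiesHomSobolevSpacesRn} gives $\dot{\mathrm{H}}^{-s,p'}(\mathbb{R}^n)\hookrightarrow\mathrm{L}^q(\mathbb{R}^n)$, so $v$ is a genuine $\mathrm{L}^q$-function; approximating $v$ in $\dot{\mathrm{H}}^{-s,p'}(\mathbb{R}^n)$ by elements of $\eus{S}_0(\mathbb{R}^n)$, for which the bracket of Proposition~\ref{prop:dualityRieszpotential} reduces to the $\eus{S}',\eus{S}$ pairing, one sees that $\langle\varphi,v\rangle_{\mathbb{R}^n}$ is the usual distributional pairing on $\mathrm{C}_c^\infty(\mathbb{R}^n_+)$, so $\supp v\subset\overline{\mathbb{R}^n_-}$, i.e.\ $v\in\dot{\mathrm{H}}^{-s,p'}_0(\mathbb{R}^n_-)$. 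Now $-s\in[0,\tfrac n{p'})$, so by the case already treated (and the reflection $x_n\mapsto-x_n$, an isometry of $\dot{\mathrm{H}}^{-s,p'}(\mathbb{R}^n)$) the space $\mathrm{C}_c^\infty(\mathbb{R}^n_-)$ is dense in $\dot{\mathrm{H}}^{-s,p'}_0(\mathbb{R}^n_-)$; writing $v=\lim_k\psi_k$ in $\dot{\mathrm{H}}^{-s,p'}(\mathbb{R}^n)$ with $\psi_k\in\mathrm{C}_c^\infty(\mathbb{R}^n_-)$, for any $u\in\dot{\mathrm{H}}^{s,p}_0(\mathbb{R}^n_+)$ the supports of $u$ and $\psi_k$ are disjoint, hence $\langle u,\psi_k\rangle_{\mathbb{R}^n}=0$ and $\langle u,v\rangle_{\mathbb{R}^n}=0$, which is what was needed.

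For \textit{(ii)}, I would run the same two-step scheme with $X:=\dot{\mathrm{H}}^{s_0,p_0}_0(\mathbb{R}^n_+)\cap\dot{\mathrm{H}}^{s_1,p_1}_0(\mathbb{R}^n_+)$, a Banach space thanks to $(\mathcal{C}_{s_0,p_0})$ (Fatou argument exactly as in Lemma~\ref{lem:IntersecHomHsp}). Since $s_j\geqslant0>-1+\tfrac1{p_j}$ and $(\mathcal{C}_{s_0,p_0})$ holds, Corollary~\ref{cor:ProjIntersecHomHsp0} shows that $\mathcal{P}_0$ maps $\dot{\mathrm{H}}^{s_0,p_0}(\mathbb{R}^n)\cap\dot{\mathrm{H}}^{s_1,p_1}(\mathbb{R}^n)$ boundedly into $X$ with ${\mathcal{P}_0}_{|_X}=\mathrm{I}$, and on $\eus{S}_0(\mathbb{R}^n)$ one has $\mathcal{P}_0\phi\in\mathrm{L}^{p_0}(\mathbb{R}^n)\cap\mathrm{L}^{p_1}(\mathbb{R}^n)$; combining with the density of $\eus{S}_0(\mathbb{R}^n)$ in $\dot{\mathrm{H}}^{s_0,p_0}(\mathbb{R}^n)\cap\dot{\mathrm{H}}^{s_1,p_1}(\mathbb{R}^n)$ (Lemma~\ref{lem:IntersecHomHsp}) and point~\textit{(vi)} of Proposition~\ref{prop:PropertiesHomSobolevSpacesRn} applied to each index, this yields that $\mathrm{H}^{s_0,p_0}_0(\mathbb{R}^n_+)\cap\mathrm{H}^{s_1,p_1}_0(\mathbb{R}^n_+)$ is dense in $X$. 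It then remains to prove the purely inhomogeneous statement that $\mathrm{C}_c^\infty(\mathbb{R}^n_+)$ is dense in $\mathrm{H}^{s_0,p_0}_0(\mathbb{R}^n_+)\cap\mathrm{H}^{s_1,p_1}_0(\mathbb{R}^n_+)$ for the norm $\lVert\cdot\rVert_{\mathrm{H}^{s_0,p_0}}+\lVert\cdot\rVert_{\mathrm{H}^{s_1,p_1}}$ (which dominates the $X$-norm): this I would do by the classical translation--mollification--truncation argument, translating the support strictly into $\mathbb{R}^n_+$, mollifying below that distance to get a smooth element of both $\mathrm{H}^{s_j,p_j}(\mathbb{R}^n)$, and multiplying by $\chi(\cdot/R)$, using the strong continuity of translations and mollifications on each $\mathrm{H}^{s_j,p_j}(\mathbb{R}^n)$ and the fact that $(\chi(\cdot/R))_{R\geqslant1}$ is a uniformly bounded family of pointwise multipliers on each $\mathrm{H}^{s_j,p_j}(\mathbb{R}^n)$ together with the density of $\mathrm{C}_c^\infty(\mathbb{R}^n)$ there; a diagonalisation finishes the proof.

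The only genuinely delicate point is the identification, in the negative-regularity part of \textit{(i)}, of the duality bracket of Proposition~\ref{prop:dualityRieszpotential} with the distributional pairing on $\mathrm{C}_c^\infty(\mathbb{R}^n_+)$: this is what forces one to observe that, in that parameter range, the representing element $v$ actually lies in an honest $\mathrm{L}^q$-space. A secondary bookkeeping issue is, in \textit{(ii)}, to keep the approximants simultaneously close in both $\mathrm{H}^{s_j,p_j}$-norms while staying compactly supported in the open half-space, which is why the cutoff scale must be chosen after the translation and mollification scales have been fixed.
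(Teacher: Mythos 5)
Your proposal is correct. The nonnegative-regularity part of \textit{(i)} and part \textit{(ii)} follow essentially the paper's own route: approximate in the ambient (intersection of) homogeneous space(s), push the approximants into the inhomogeneous space(s) with support condition via the projection $\mathcal{P}_0$ of Lemma~\ref{lemma:ProjHsp0} (resp.\ Corollary~\ref{cor:ProjIntersecHomHsp0}), and finish with the inhomogeneous density of $\mathrm{C}_c^\infty(\mathbb{R}^n_+)$ together with the norm comparison from point \textit{(vi)} of Proposition~\ref{prop:PropertiesHomSobolevSpacesRn}. You are in fact more explicit than the paper on the last step of \textit{(ii)}: the paper's ``reproduce Step~1'' silently uses density of $\mathrm{C}_c^\infty(\mathbb{R}^n_+)$ in the \emph{intersection} $\mathrm{H}^{s_0,p_0}_0(\mathbb{R}^n_+)\cap\mathrm{H}^{s_1,p_1}_0(\mathbb{R}^n_+)$, which you supply by the translation--mollification--truncation argument; that is a genuine (if classical) gap you fill.

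Where you genuinely diverge is the negative-regularity part of \textit{(i)}. The paper uses the dense Sobolev embedding $\mathrm{L}^q(\mathbb{R}^n_+)\hookrightarrow\dot{\mathrm{H}}^{s,p}_0(\mathbb{R}^n_+)$ of Proposition~\ref{prop:SobolevEmbeddingsRn+} (whose density assertion is itself obtained by duality and reflexivity) and then approximates in $\mathrm{L}^q(\mathbb{R}^n_+)$; you instead run a direct Hahn--Banach annihilator argument in $\dot{\mathrm{H}}^{s,p}(\mathbb{R}^n)$, identify the annihilating functional $v\in\dot{\mathrm{H}}^{-s,p'}(\mathbb{R}^n)$ as an honest $\mathrm{L}^q$ function supported in $\overline{\mathbb{R}^n_-}$, and reduce to the already-proved nonnegative case on the reflected half-space. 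Both arguments rest on the same two pillars (the Hardy--Littlewood--Sobolev embedding, which is exactly what the restriction $s>-n/p'$ buys, and the duality of Proposition~\ref{prop:dualityRieszpotential}), so neither is more general; the paper's is shorter given Proposition~\ref{prop:SobolevEmbeddingsRn+}, while yours is self-contained and makes transparent why the negative range is dual to the positive one. The one point you must (and do) treat with care is the identification of the Littlewood--Paley duality bracket with the distributional pairing when testing $u\in\dot{\mathrm{H}}^{s,p}_0(\mathbb{R}^n_+)$ against $\psi_k\in\mathrm{C}_c^\infty(\mathbb{R}^n_-)$; this identification is legitimate (the paper itself uses it at the end of the proofs of Propositions~\ref{prop:DualitySobolevDomain} and~\ref{prop:dualityBesovRn+}) and your argument goes through.
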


\begin{proof} \textbf{Step 1:} Point \textit{(i)} with $s\geqslant 0$. Let $p\in (1,+\infty)$, and assume that $(\mathcal{C}_{s,p})$ is true, and consider $u\in \dot{\mathrm{H}}^{s,p}_0(\mathbb{R}^n_+)$.

In particular, we have $u\in \dot{\mathrm{H}}^{s,p}(\mathbb{R}^n)$. Hence, there exists $(u_k)_{k\in\mathbb{N}}\subset {\mathrm{H}}^{s,p}(\mathbb{R}^n)$ such that
\begin{align*}
    u_k \xrightarrow[k\rightarrow +\infty]{} u \text{ in } \dot{\mathrm{H}}^{s,p}(\mathbb{R}^n) \text{.}
\end{align*}
Thus, it follows from Lemma \ref{lemma:ProjHsp0}, that $(\mathcal{P}_0 u_k)_{k\in\mathbb{N}} \subset {\mathrm{H}}^{s,p}_0(\mathbb{R}^n_+)\subset \dot{\mathrm{H}}^{s,p}_0(\mathbb{R}^n_+)$ converge to $\mathcal{P}_0 u = u$ in $\dot{\mathrm{H}}^{s,p}(\mathbb{R}^n)$. For $\varepsilon>0$, there exists some $k_0$, such that for all $k\geqslant k_0$, we have
\begin{align*}
    \lVert  u - \mathcal{P}_0 u_k \rVert_{\dot{\mathrm{H}}^{s,p}_0(\mathbb{R}^n_+)} <\varepsilon\text{.}
\end{align*}
Now, we use density of $\mathrm{C}_c^\infty(\mathbb{R}^n_+)$ in ${\mathrm{H}}^{s,p}_0(\mathbb{R}^n_+)$, \cite[Remark~2.7]{JerisonKenig1995}, to assert that there exists $w\in \mathrm{C}_c^\infty(\mathbb{R}^n_+)$ so that,
\begin{align*}
    \lVert  \mathcal{P}_0 u_k - w \rVert_{\dot{\mathrm{H}}^{s,p}_0(\mathbb{R}^n_+)}\leqslant\lVert  \mathcal{P}_0 u_k - w \rVert_{{\mathrm{H}}^{s,p}_0(\mathbb{R}^n_+)} <\varepsilon\text{.}
\end{align*}
This proves the density of  $\mathrm{C}_c^\infty(\mathbb{R}^n_+)$ in the space $\dot{\mathrm{H}}^{s,p}_0(\mathbb{R}^n_+)$, since
\begin{align*}
    \lVert  u - w \rVert_{\dot{\mathrm{H}}^{s,p}_0(\mathbb{R}^n_+)}\leqslant \lVert  u - \mathcal{P}_0 u_k \rVert_{\dot{\mathrm{H}}^{s,p}_0(\mathbb{R}^n_+)} + \lVert  \mathcal{P}_0 u_k - w \rVert_{\dot{\mathrm{H}}^{s,p}_0(\mathbb{R}^n_+)} <2\varepsilon\text{.}
\end{align*}

\textbf{Step 2:} Now for the second part of \textit{(i)}, let us consider $s\in(0,\frac{n}{p'})$. For $u\in \dot{\mathrm{H}}^{-s,p}_0(\mathbb{R}^n_+)$, applying Proposition \ref{prop:SobolevEmbeddingsRn+}, for $\varepsilon >0$ there exists a function $v\in\mathrm{L}^q(\mathbb{R}^n_+)$, (with $\frac{1}{p}=\frac{1}{q}-\frac{s}{n}$) such that,
\begin{align*}
    \lVert  u - v \rVert_{\dot{\mathrm{H}}^{-s,p}_0(\mathbb{R}^n_+)} < \varepsilon\text{.}
\end{align*}
But recalling that $\mathrm{C}_c^\infty(\mathbb{R}^n_+)$ is dense in $\mathrm{L}^q(\mathbb{R}^n_+)$, there exists $w\in \mathrm{C}_c^\infty(\mathbb{R}^n_+)$ such that
\begin{align*}
    \lVert  v - w \rVert_{\dot{\mathrm{H}}^{-s,p}_0(\mathbb{R}^n_+)} \lesssim_{n,s,p,q} \lVert  v - w \rVert_{\mathrm{L}^{q}(\mathbb{R}^n_+)} \lesssim_{n,s,p,q}\varepsilon \text{,}
\end{align*}
so the triangle inequality gives
\begin{align*}
    \lVert  u - w \rVert_{\dot{\mathrm{H}}^{-s,p}_0(\mathbb{R}^n_+)} \lesssim_{n,s,p,q}\varepsilon\text{,}
\end{align*}
which concludes the proof since $w\in \mathrm{C}_c^\infty(\mathbb{R}^n_+)$.

\textbf{Step 3:} For the density in the intersection spaces, it suffices to reproduce the above \textbf{Step 1} by means of Corollary \ref{cor:ProjIntersecHomHsp0}.
\end{proof}

\begin{corollary}\label{cor:DensityHsp0=HspRn+} For all $p\in(1,+\infty)$, $s\in(-1+\frac{1}{p},\frac{1}{p})$,
\begin{align*}
    \dot{\mathrm{H}}^{s,p}_0(\mathbb{R}^n_+)=\dot{\mathrm{H}}^{s,p}(\mathbb{R}^n_+)\text{. }
\end{align*}
In particular, $\mathrm{C}_c^\infty(\mathbb{R}^n_+)$ is dense in $\dot{\mathrm{H}}^{s,p}(\mathbb{R}^n_+)$ for same range of indices.
\end{corollary}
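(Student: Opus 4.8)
The statement is essentially a corollary of Proposition \ref{prop:SobolevMultiplier} (the "second key point" of the paper), so the plan is to extract both inclusions from the boundedness of the multiplier $\mathbbm{1}_{\mathbb{R}^n_+}$ on $\dot{\mathrm{H}}^{s,p}(\mathbb{R}^n)$ and then to transport the density statement of Proposition \ref{prop:densityCcinftyHsp0}. Note first that $(\mathcal{C}_{s,p})$ holds automatically here, since $s < \tfrac1p \leqslant \tfrac np$, so $\dot{\mathrm{H}}^{s,p}(\mathbb{R}^n)$, $\dot{\mathrm{H}}^{s,p}(\mathbb{R}^n_+)$ and $\dot{\mathrm{H}}^{s,p}_0(\mathbb{R}^n_+)$ are all complete. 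One inclusion is free: any $u \in \dot{\mathrm{H}}^{s,p}_0(\mathbb{R}^n_+)$ is an admissible extension of itself, so $\lVert u \rVert_{\dot{\mathrm{H}}^{s,p}(\mathbb{R}^n_+)} \leqslant \lVert u \rVert_{\dot{\mathrm{H}}^{s,p}_0(\mathbb{R}^n_+)}$, which is the canonical embedding $\dot{\mathrm{H}}^{s,p}_0(\mathbb{R}^n_+) \hookrightarrow \dot{\mathrm{H}}^{s,p}(\mathbb{R}^n_+)$ recalled in the preamble of this section.

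For the reverse inclusion I would fix $u \in \dot{\mathrm{H}}^{s,p}(\mathbb{R}^n_+)$ and an extension $\tilde{u} \in \dot{\mathrm{H}}^{s,p}(\mathbb{R}^n)$ with $\lVert \tilde{u}\rVert_{\dot{\mathrm{H}}^{s,p}(\mathbb{R}^n)} \leqslant 2 \lVert u\rVert_{\dot{\mathrm{H}}^{s,p}(\mathbb{R}^n_+)}$. Since $s \in (-1+\tfrac1p, \tfrac1p)$, Proposition \ref{prop:SobolevMultiplier} gives $\mathbbm{1}_{\mathbb{R}^n_+}\tilde{u} \in \dot{\mathrm{H}}^{s,p}(\mathbb{R}^n)$ with $\lVert \mathbbm{1}_{\mathbb{R}^n_+}\tilde{u}\rVert_{\dot{\mathrm{H}}^{s,p}(\mathbb{R}^n)} \lesssim_{s,p,n} \lVert \tilde{u}\rVert_{\dot{\mathrm{H}}^{s,p}(\mathbb{R}^n)}$; this distribution is supported in $\overline{\mathbb{R}^n_+}$, hence lies in $\dot{\mathrm{H}}^{s,p}_0(\mathbb{R}^n_+)$, and it coincides with $\tilde{u}$ on the open set $\mathbb{R}^n_+$, so its restriction to $\mathbb{R}^n_+$ is again $u$. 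Taking the infimum over the extensions $\tilde{u}$ thus exhibits an element of $\dot{\mathrm{H}}^{s,p}_0(\mathbb{R}^n_+)$ restricting to $u$ with norm $\lesssim_{s,p,n} \lVert u\rVert_{\dot{\mathrm{H}}^{s,p}(\mathbb{R}^n_+)}$. Since the restriction map $\dot{\mathrm{H}}^{s,p}_0(\mathbb{R}^n_+) \to \dot{\mathrm{H}}^{s,p}(\mathbb{R}^n_+)$ is injective — which is exactly where $s > -1+\tfrac1p$ is used, ruling out nonzero elements of $\dot{\mathrm{H}}^{s,p}(\mathbb{R}^n)$ supported in the hyperplane $\{x_n=0\}$ — this element is the unique preimage of $u$ and has norm comparable to $\lVert u\rVert_{\dot{\mathrm{H}}^{s,p}(\mathbb{R}^n_+)}$; combined with the free inclusion, this yields $\dot{\mathrm{H}}^{s,p}_0(\mathbb{R}^n_+) = \dot{\mathrm{H}}^{s,p}(\mathbb{R}^n_+)$ with equivalence of norms. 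A self-contained argument for the injectivity can be supplied from the tools at hand: a $v$ supported in $\{x_n=0\}$ satisfies $\mathbbm{1}_{\mathbb{R}^n_+}(\tau_{-h}v) = 0$ and $\mathbbm{1}_{\mathbb{R}^n_-}(\tau_{h}v) = 0$ for $h>0$, because $\tau_{\mp h}v$ is supported in the open half-space $\mathbb{R}^n_\pm$ on which the corresponding indicator vanishes; letting $h \to 0^+$ and using strong continuity of translations on $\dot{\mathrm{H}}^{s,p}(\mathbb{R}^n)$ — they commute with the isometry $(-\Delta)^{s/2}\colon \dot{\mathrm{H}}^{s,p}(\mathbb{R}^n)\to\mathrm{L}^p(\mathbb{R}^n)$ of point \textit{(ii)} of Proposition \ref{prop:PropertiesHomSobolevSpacesRn} — together with boundedness of $\mathbbm{1}_{\mathbb{R}^n_\pm}$, gives $\mathbbm{1}_{\mathbb{R}^n_+}v = \mathbbm{1}_{\mathbb{R}^n_-}v = 0$, and since $\mathbbm{1}_{\mathbb{R}^n_+}+\mathbbm{1}_{\mathbb{R}^n_-} = \mathrm{I}$ on $\dot{\mathrm{H}}^{s,p}(\mathbb{R}^n)$ by density of $\eus{S}_0(\mathbb{R}^n)$ (point \textit{(iv)} of Proposition \ref{prop:PropertiesHomSobolevSpacesRn}), one concludes $v = 0$.

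For the density assertion, I would note that $n \geqslant 2$ and $p>1$ force $-\tfrac{n}{p'} = -n + \tfrac np < -1+\tfrac1p$, so $(-1+\tfrac1p,\tfrac1p) \subset (-\tfrac{n}{p'},\tfrac np)$ and Proposition \ref{prop:densityCcinftyHsp0}\,\textit{(i)} gives density of $\mathrm{C}_c^\infty(\mathbb{R}^n_+)$ in $\dot{\mathrm{H}}^{s,p}_0(\mathbb{R}^n_+)$; by the equality of spaces with equivalent norms just obtained, $\mathrm{C}_c^\infty(\mathbb{R}^n_+)$ is then dense in $\dot{\mathrm{H}}^{s,p}(\mathbb{R}^n_+)$. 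The only genuinely non-routine ingredient is Proposition \ref{prop:SobolevMultiplier}, which is already available; so within this proof the sole point demanding care is the injectivity of the restriction map, i.e. the absence of boundary distributions in $\dot{\mathrm{H}}^{s,p}(\mathbb{R}^n)$ when $s > -1+\tfrac1p$ — and this I expect to be the one mildly delicate step.
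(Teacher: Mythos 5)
Your proof is correct and follows essentially the same route as the paper, which proves the equality by combining the definition of restriction spaces with Proposition \ref{prop:SobolevMultiplier} and derives the density from Proposition \ref{prop:densityCcinftyHsp0}; you merely make explicit the surjectivity and injectivity of the restriction map $\dot{\mathrm{H}}^{s,p}_0(\mathbb{R}^n_+)\to\dot{\mathrm{H}}^{s,p}(\mathbb{R}^n_+)$ that the paper leaves implicit. (One cosmetic remark: with the convention $\tau_t v(x)=v(x',x_n+t)$ used in Lemma \ref{lem:weak*continuityBspinfty}, the supports of $\tau_{\mp h}v$ land in the half-spaces opposite to the ones you name, but exchanging $h$ and $-h$ repairs this and the limiting argument is unaffected.)
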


\begin{proof} This is a direct consequence of the definition of restriction spaces and Proposition \ref{prop:SobolevMultiplier}, the density result follows from Proposition \ref{prop:densityCcinftyHsp0}.
\end{proof}

\begin{proposition}\label{prop:DualitySobolevDomain} Let $p\in(1,+\infty)$, $s\in(-\frac{n}{p'},\frac{n}{p})$, we have
\begin{align*}
    (\dot{\mathrm{H}}^{s,p}(\mathbb{R}^n_+))' = \dot{\mathrm{H}}^{-s,p'}_0(\mathbb{R}^n_+) \text{ and }
    (\dot{\mathrm{H}}^{s,p}_0(\mathbb{R}^n_+))' = \dot{\mathrm{H}}^{-s,p'}(\mathbb{R}^n_+)\text{. }
\end{align*}
\end{proposition}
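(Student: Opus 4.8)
The plan is to deduce both identities from the whole-space duality $(\dot{\mathrm{H}}^{-s,p'}(\mathbb{R}^n))'=\dot{\mathrm{H}}^{s,p}(\mathbb{R}^n)$ of Proposition \ref{prop:dualityRieszpotential} --- which is available because the hypothesis $s\in(-\tfrac{n}{p'},\tfrac{n}{p})$ in particular forces $s<\tfrac{n}{p}$, i.e. $(\mathcal{C}_{s,p})$ --- together with the two soft functional-analytic facts that, for a closed subspace $M$ of a normed space $X$, one has isometrically $(X/M)'=M^\perp$ and $M'=X'/M^\perp$ (both via Hahn--Banach; no completeness of $X$ is needed). The remaining ingredients are the description of restriction spaces as quotients and the density statement of Proposition \ref{prop:densityCcinftyHsp0}.

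\textbf{Restriction spaces as quotients.} First I would record that, for any admissible pair $(\sigma,r)$, the restriction map $u\mapsto u_{|_{\mathbb{R}^n_+}}$ identifies $\dot{\mathrm{H}}^{\sigma,r}(\mathbb{R}^n_+)$ isometrically with $\dot{\mathrm{H}}^{\sigma,r}(\mathbb{R}^n)/\dot{\mathrm{H}}^{\sigma,r}_0(\mathbb{R}^n_-)$: it is surjective by the very definition of the restriction space, its kernel is exactly the set of elements of $\dot{\mathrm{H}}^{\sigma,r}(\mathbb{R}^n)$ vanishing on the open set $\mathbb{R}^n_+$, i.e. supported in $\overline{\mathbb{R}^n_-}$, which is $\dot{\mathrm{H}}^{\sigma,r}_0(\mathbb{R}^n_-)$, and the restriction (quotient) norm is by definition the quotient norm. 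I would also note that $\dot{\mathrm{H}}^{\sigma,r}_0(\mathbb{R}^n_\pm)$ is a \emph{closed} subspace of $\dot{\mathrm{H}}^{\sigma,r}(\mathbb{R}^n)$, since $\dot{\mathrm{H}}^{\sigma,r}(\mathbb{R}^n)\hookrightarrow\eus{S}'(\mathbb{R}^n)\hookrightarrow\eus{D}'(\mathbb{R}^n)$ and the support constraint is stable under $\eus{D}'$-convergence; and that throughout, ``$(\,\cdot\,)'$'' is understood with the pairing of Proposition \ref{prop:dualityRieszpotential}, which extends the $\mathrm{C}_c^\infty$ pairing, so that support characterizations of annihilators are legitimate.

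\textbf{First identity.} Apply $(X/M)'=M^\perp$ with $X=\dot{\mathrm{H}}^{s,p}(\mathbb{R}^n)$ (a Banach space, as $s<\tfrac{n}{p}$), $M=\dot{\mathrm{H}}^{s,p}_0(\mathbb{R}^n_-)$, and $X'=\dot{\mathrm{H}}^{-s,p'}(\mathbb{R}^n)$ from Proposition \ref{prop:dualityRieszpotential}; by the quotient description this gives $(\dot{\mathrm{H}}^{s,p}(\mathbb{R}^n_+))'=M^\perp\subset\dot{\mathrm{H}}^{-s,p'}(\mathbb{R}^n)$ isometrically. Since $\mathrm{C}_c^\infty(\mathbb{R}^n_-)$ is dense in $\dot{\mathrm{H}}^{s,p}_0(\mathbb{R}^n_-)$ by Proposition \ref{prop:densityCcinftyHsp0}(i) (the index range $s\in(-\tfrac{n}{p'},\tfrac{n}{p})$ is exactly the symmetric one making this available for the lower half-space as well), a distribution $v\in\dot{\mathrm{H}}^{-s,p'}(\mathbb{R}^n)$ annihilates $M$ iff it annihilates $\mathrm{C}_c^\infty(\mathbb{R}^n_-)$, i.e. iff $\supp v\subset\overline{\mathbb{R}^n_+}$. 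Hence $M^\perp=\dot{\mathrm{H}}^{-s,p'}_0(\mathbb{R}^n_+)$, which is the first assertion.

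\textbf{Second identity and main difficulty.} Apply $M'=X'/M^\perp$ with the same $X$ and $M=\dot{\mathrm{H}}^{s,p}_0(\mathbb{R}^n_+)$: Proposition \ref{prop:densityCcinftyHsp0}(i) gives $\mathrm{C}_c^\infty(\mathbb{R}^n_+)$ dense in $M$, so $M^\perp=\{v\in\dot{\mathrm{H}}^{-s,p'}(\mathbb{R}^n):\supp v\subset\overline{\mathbb{R}^n_-}\}=\dot{\mathrm{H}}^{-s,p'}_0(\mathbb{R}^n_-)$, whence $(\dot{\mathrm{H}}^{s,p}_0(\mathbb{R}^n_+))'=\dot{\mathrm{H}}^{-s,p'}(\mathbb{R}^n)/\dot{\mathrm{H}}^{-s,p'}_0(\mathbb{R}^n_-)=\dot{\mathrm{H}}^{-s,p'}(\mathbb{R}^n_+)$ by the quotient description from the second step. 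I expect the only delicate points to be bookkeeping ones --- checking that every invocation of Propositions \ref{prop:dualityRieszpotential} and \ref{prop:densityCcinftyHsp0} stays within hypotheses (it is precisely the symmetric window $s\in(-\tfrac{n}{p'},\tfrac{n}{p})$ that simultaneously secures completeness $(\mathcal{C}_{s,p})$ and the density result for \emph{both} half-spaces, and also guarantees $\mathrm{C}_c^\infty\subset\dot{\mathrm{H}}^{\pm s,p^{(\prime)}}$ so the pairing is the classical one), and confirming that the identifications are isometric and not merely algebraic. Everything else is the standard duality of quotients and subspaces.
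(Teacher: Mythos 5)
Your proposal is correct and is essentially the paper's own argument in abstract packaging: the paper's two explicit isometries $\Phi\mapsto\langle\Phi,\tilde{\cdot}\,\rangle_{\mathbb{R}^n}$ and $\Psi\mapsto\mathbbm{1}_{\mathbb{R}^n_+}\Psi$ are precisely the maps realizing your identification $(X/M)'=M^\perp$ with $X=\dot{\mathrm{H}}^{s,p}(\mathbb{R}^n)$ and $M=\dot{\mathrm{H}}^{s,p}_0(\mathbb{R}^n_-)$, and both arguments rest on the same inputs, namely the whole-space duality of Proposition \ref{prop:dualityRieszpotential} and the density statement of Proposition \ref{prop:densityCcinftyHsp0} applied to the complementary half-space to characterize the annihilator by support. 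The only divergence is the second identity, where the paper deduces it from the first by reflexivity and exchange of exponents, whereas you apply $M'=X'/M^\perp$ directly — an equally valid and marginally more self-contained route.
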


\begin{proof}First, consider $s\in(-\frac{n}{p'},\frac{n}{p})$, let  $\Phi\in \dot{\mathrm{H}}^{-s,p'}_0(\mathbb{R}^n_+)\subset \dot{\mathrm{H}}^{-s,p'}(\mathbb{R}^n)$, then using definition of restriction spaces, the following map defines a linear functional on $\dot{\mathrm{H}}^{s,p}(\mathbb{R}^n_+)$,
\begin{align*}
    u\longmapsto \big\langle\Phi,\tilde{u}\big\rangle_{\mathbb{R}^n}\text{, }
\end{align*}
where $\tilde{u}$ is any extension of $u$, and notice that the action of $\Phi$ does not depend on the choice of such extension of $u$. Indeed, if $U\in \dot{\mathrm{H}}^{s,p}(\mathbb{R}^n)$ is another extension of $u$,  we obtain that $w:=U-\tilde{u}\in \dot{\mathrm{H}}^{s,p}_0(\overline{\mathbb{R}^n_+}^c)$. It follows from Proposition \ref{prop:densityCcinftyHsp0} that $w$ is a strong limit in $\dot{\mathrm{H}}^{s,p}_0(\overline{\mathbb{R}^n_+}^c)$ of a sequence of functions $(w_k)_{k\in\mathbb{N}}\subset \mathrm{C}_c^\infty(\overline{\mathbb{R}^n_+}^c)$ so that, passing to the limit, in the duality bracket, we obtain
\begin{align*}
    \big\langle\Phi,U\big\rangle_{\mathbb{R}^n}-\big\langle\Phi,\tilde{u}\big\rangle_{\mathbb{R}^n} = \big\langle\Phi,w\big\rangle_{\mathbb{R}^n}=0\text{.}
\end{align*}
This gives a well-defined continuous injective map
\begin{align}\label{eq:mapduality1Hsp}
\left\{\begin{array}{cl}
\dot{\mathrm{H}}^{-s,p'}_0(\mathbb{R}^n_+) &\longrightarrow (\dot{\mathrm{H}}^{s,p}(\mathbb{R}^n_+))'\\
\Phi &\longmapsto \,\,\, \big\langle\Phi,\tilde{\cdot}\big\rangle_{\mathbb{R}^n}
\end{array}\right.\text{. }
\end{align}
Now, let  $\Psi\in (\dot{\mathrm{H}}^{s,p}(\mathbb{R}^n_+))'$, for all $u\in \dot{\mathrm{H}}^{s,p}(\mathbb{R}^n_+)$, since $\mathbbm{1}_{\mathbb{R}^n_+} u=u$, we may write,
\begin{align*}
    \langle\Psi,u\rangle = \langle\Psi, \mathbbm{1}_{\mathbb{R}^n_+}\widetilde{ u}\rangle\text{, }
\end{align*}
for any extension $\tilde{u}\in \dot{\mathrm{H}}^{s,p}(\mathbb{R}^n)$ of $u$, hence, as a direct consequence of the definition of restriction space $\mathbbm{1}_{\mathbb{R}^n_+} \Psi \in (\dot{\mathrm{H}}^{s,p}(\mathbb{R}^n))'=\dot{\mathrm{H}}^{-s,p'}(\mathbb{R}^n)$, so $\mathbbm{1}_{\mathbb{R}^n_+} \Psi \in  \dot{\mathrm{H}}^{-s,p'}_0(\mathbb{R}^n_+)$. The following map is well-defined continuous and injective
\begin{align}\label{eq:mapduality2Hsp}
\left\{\begin{array}{cl}
 (\dot{\mathrm{H}}^{s,p}(\mathbb{R}^n_+))' &\longrightarrow \dot{\mathrm{H}}^{-s,p'}_0(\mathbb{R}^n_+)\\
\Psi &\longmapsto \,\,\, \mathbbm{1}_{\mathbb{R}^n_+} \Psi
\end{array}\right.\text{. }
\end{align}
Both maps \eqref{eq:mapduality1Hsp} and \eqref{eq:mapduality2Hsp} are even isometric, and we obtain
\begin{align*}
    (\dot{\mathrm{H}}^{s,p}(\mathbb{R}^n_+))' = \dot{\mathrm{H}}^{-s,p'}_0(\mathbb{R}^n_+)\text{, }
\end{align*}
which was the first statement. The second statement follows from duality and reflexivity exchanging roles of involved exponents.
\end{proof}

One would also like to carry over density results in intersection spaces in order to deduce its counterpart in their real interpolation spaces.

\begin{corollary}\label{cor:densityCcinftyIntersecHsp0}Let $p\in (1,+\infty)$, $-n/p'<s_0<s_1<n/p$, i.e. such that $(\mathcal{C}_{-s_0,p'})$ and  $(\mathcal{C}_{s_1,p})$ are both satisfied.

The space $\mathrm{C}_c^\infty(\mathbb{R}^n_+)$ is dense in $\dot{\mathrm{H}}^{s_0,p}_0(\mathbb{R}^n_+)\cap \dot{\mathrm{H}}^{s_1,p}_0(\mathbb{R}^n_+)$.
\end{corollary}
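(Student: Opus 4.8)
The plan is to obtain the density by a duality (Hahn--Banach) argument, which has the advantage of treating the whole range $s_0,s_1\in(-\tfrac{n}{p'},\tfrac{n}{p})$ uniformly — in particular the ``bad'' sub-range $s_j\in(-\tfrac{n}{p'},-1+\tfrac1p]$, on which the projection $\mathcal{P}_0$ of Lemma~\ref{lemma:ProjHsp0} and Corollary~\ref{cor:ProjIntersecHomHsp0} is not available and which would otherwise be the main obstacle for a direct approximation argument. Write $Z:=\dot{\mathrm{H}}^{s_0,p}_0(\mathbb{R}^n_+)\cap\dot{\mathrm{H}}^{s_1,p}_0(\mathbb{R}^n_+)$. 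Since $(\mathcal{C}_{s_0,p})$ and $(\mathcal{C}_{s_1,p})$ hold (both $s_j<n/p$), each $\dot{\mathrm{H}}^{s_j,p}(\mathbb{R}^n)$ is complete, each $\dot{\mathrm{H}}^{s_j,p}_0(\mathbb{R}^n_+)$ is a closed subspace of it (the support condition passes to limits through the embedding into $\eus{S}'(\mathbb{R}^n)$), hence $Z$ is a Banach space; moreover $\mathrm{C}_c^\infty(\mathbb{R}^n_+)\subset Z$, the membership in $\dot{\mathrm{H}}^{s_j,p}(\mathbb{R}^n)$ of a smooth compactly supported function being ensured precisely by $s_j>-\tfrac{n}{p'}$ (low-frequency summability), and its support lying in $\overline{\mathbb{R}^n_+}$. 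By Hahn--Banach it then suffices to show that every $\Phi\in Z'$ vanishing on $\mathrm{C}_c^\infty(\mathbb{R}^n_+)$ is zero.

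First I would identify $Z'$. By Proposition~\ref{prop:densityCcinftyHsp0}\,\textit{(i)}, since $s_j\in(-\tfrac n{p'},\tfrac np)$ the space $\mathrm{C}_c^\infty(\mathbb{R}^n_+)$ is dense in $\dot{\mathrm{H}}^{s_j,p}_0(\mathbb{R}^n_+)$ for $j\in\{0,1\}$; as $\mathrm{C}_c^\infty(\mathbb{R}^n_+)\subset Z$, the intersection $Z$ is dense in each of the two factors, so the hypotheses of the duality theorem for Banach couples (\cite[Theorem~2.7.1]{BerghLofstrom1976}) are satisfied. Combined with Proposition~\ref{prop:DualitySobolevDomain} this gives
\[
    Z' = (\dot{\mathrm{H}}^{s_0,p}_0(\mathbb{R}^n_+))' + (\dot{\mathrm{H}}^{s_1,p}_0(\mathbb{R}^n_+))' = \dot{\mathrm{H}}^{-s_0,p'}(\mathbb{R}^n_+) + \dot{\mathrm{H}}^{-s_1,p'}(\mathbb{R}^n_+),
\]
the duality bracket being the canonical one, namely the restriction to $\mathbb{R}^n_+$ of the $\eus{S}'(\mathbb{R}^n)$--$\eus{S}(\mathbb{R}^n)$ pairing, exactly as in the proof of Proposition~\ref{prop:DualitySobolevDomain}.

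It then remains to conclude: given $\Phi\in Z'$, write $\Phi=\Phi_0+\Phi_1$ with $\Phi_j\in\dot{\mathrm{H}}^{-s_j,p'}(\mathbb{R}^n_+)\subset\eus{D}'(\mathbb{R}^n_+)$, so that $\Phi\in\eus{D}'(\mathbb{R}^n_+)$ and, for $\varphi\in\mathrm{C}_c^\infty(\mathbb{R}^n_+)$, the pairing $\langle\Phi,\varphi\rangle_{Z',Z}$ coincides with $\langle\Phi,\varphi\rangle_{\eus{D}'(\mathbb{R}^n_+),\mathrm{C}_c^\infty(\mathbb{R}^n_+)}$. Hence, if $\Phi$ vanishes on $\mathrm{C}_c^\infty(\mathbb{R}^n_+)$ then $\Phi=0$ in $\eus{D}'(\mathbb{R}^n_+)$, and since $\dot{\mathrm{H}}^{-s_0,p'}(\mathbb{R}^n_+)+\dot{\mathrm{H}}^{-s_1,p'}(\mathbb{R}^n_+)$ injects continuously into $\eus{D}'(\mathbb{R}^n_+)$ this forces $\Phi=0$ in $Z'$, proving the density.

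The one genuinely delicate point — and the step I would write out with care — is the compatibility between the abstract couple-duality identification of $Z'$ and the concrete realization of its elements as distributions on $\mathbb{R}^n_+$ paired in the usual way; this is where the support argument from the proof of Proposition~\ref{prop:DualitySobolevDomain} (differences of extensions of an element of a restriction space are supported in $\overline{\mathbb{R}^n_-}$ and thus pair to $0$ against functions supported in $\mathbb{R}^n_+$) gets reused. Verifying the density hypothesis of \cite[Theorem~2.7.1]{BerghLofstrom1976} is the other ingredient but is immediate from Proposition~\ref{prop:densityCcinftyHsp0}\,\textit{(i)}; everything else in the argument is soft.
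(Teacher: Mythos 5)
There is a genuine gap, and it sits exactly at the step you yourself flag as ``the one genuinely delicate point'': the passage from $\Phi_0+\Phi_1=0$ in $\eus{D}'(\mathbb{R}^n_+)$ to $\Phi=0$ in $Z'$. The Bergh--L\"ofstr\"om theorem identifies $Z'$ with the sum $X_0'+X_1'$ \emph{taken inside $Z'$}, i.e.\ as the image of the map $(\Phi_0,\Phi_1)\mapsto \Phi_0|_Z+\Phi_1|_Z$. Your final step asserts that the induced ``realization'' map $Z'\to\eus{D}'(\mathbb{R}^n_+)$, $\Phi\mapsto\Phi|_{\mathrm{C}_c^\infty(\mathbb{R}^n_+)}$, is injective. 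But by Hahn--Banach, injectivity of that map is \emph{equivalent} to the density of $\mathrm{C}_c^\infty(\mathbb{R}^n_+)$ in $Z$ — the statement being proved — so the argument is circular. Concretely, if $\Phi_0+\Phi_1=0$ in $\eus{D}'(\mathbb{R}^n_+)$ then $\Psi:=\Phi_0=-\Phi_1$ lies in $\dot{\mathrm{H}}^{-s_0,p'}(\mathbb{R}^n_+)\cap\dot{\mathrm{H}}^{-s_1,p'}(\mathbb{R}^n_+)$, and you must show that the two pairings $\langle\Psi,u\rangle_{X_0',X_0}$ and $\langle\Psi,u\rangle_{X_1',X_1}$ coincide for every $u\in Z$. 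They coincide on $\mathrm{C}_c^\infty(\mathbb{R}^n_+)$, but upgrading this to all of $Z$ requires approximating $u$ \emph{simultaneously} in both norms by test functions supported in $\mathbb{R}^n_+$ (equivalently, approximating the difference of two extensions of $\Psi$, supported in $\overline{\mathbb{R}^n_-}$, by elements of $\mathrm{C}_c^\infty(\mathbb{R}^n_-)$ in the sum norm). The support argument from the proof of Proposition \ref{prop:DualitySobolevDomain} does not transfer: there it relies on Proposition \ref{prop:densityCcinftyHsp0} for a \emph{single} space, whereas here the relevant dual intersection $\dot{\mathrm{H}}^{-s_0,p'}(\mathbb{R}^n_+)\cap\dot{\mathrm{H}}^{-s_1,p'}(\mathbb{R}^n_+)$ has regularity $-s_1$ possibly below $-1+1/p'$, outside the range where the paper's extension/projection machinery (Propositions \ref{prop:ExtOpHomSobSpaces}, \ref{prop:IntersecHomHspRn+}, Corollary \ref{cor:ProjIntersecHomHsp0}) applies, so no simultaneous approximation is available on either side.

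For comparison, the paper avoids this by never invoking the abstract dual of the intersection directly. It splits into cases according to the signs of $s_0,s_1$: for $0\leqslant s_0<s_1$ it runs the projection argument of Proposition \ref{prop:densityCcinftyHsp0} with $\mathcal{P}_0$ acting on intersection spaces (Corollary \ref{cor:ProjIntersecHomHsp0}); for the mixed-sign case it exhibits a concrete chain of embeddings $\mathrm{H}^{s_1-s_0,q}_0(\mathbb{R}^n_+)\hookrightarrow Z\hookrightarrow\dot{\mathrm{H}}^{s_1,p}_0(\mathbb{R}^n_+)$ with $1/q=1/p-s_0/n$, shows the first one is dense by a duality--reflexivity argument against an \emph{inhomogeneous} Sobolev space, and then uses the known density of $\mathrm{C}_c^\infty(\mathbb{R}^n_+)$ in $\mathrm{H}^{s_1-s_0,q}_0(\mathbb{R}^n_+)$. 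If you want to keep a duality-based proof, you would need to supply the missing consistency statement by some such concrete device; as written, your argument assumes it.
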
 

\begin{proof}Let $p\in (1,+\infty)$, $-n/p'<s_0<s_1<n/p$. There are three subcases, $0\leqslant s_0<s_1$, $s_0<0<s_1$, and $s_0<s_1\leqslant 0$. 

The case $0\leqslant s_0<s_1$ follows the lines of Proposition \ref{prop:densityCcinftyHsp0} thanks to Corollary \ref{cor:ProjIntersecHomHsp0}.

The case $s_0<0<s_1$, can be done via duality argument as in Proposition \ref{prop:densityCcinftyHsp0} for the negative index of regularity. Let us  consider $\tfrac{1}{q}=\tfrac{1}{p}-\tfrac{s_0}{n}$, the following embeddings are true
\begin{align*}
    \mathrm{H}^{s_1-s_0,q}_0(\mathbb{R}^n_+) \hookrightarrow\mathrm{L}^q(\mathbb{R}^n_+)\cap \mathrm{H}^{s_1,p}_0(\mathbb{R}^n_+) \hookrightarrow \dot{\mathrm{H}}^{s_0,p}_0(\mathbb{R}^n_+)\cap \dot{\mathrm{H}}^{s_1,p}_0(\mathbb{R}^n_+) \hookrightarrow \dot{\mathrm{H}}^{s_1,p}_0(\mathbb{R}^n_+)\text{. }
\end{align*}
One may dualize it to deduce
\begin{align*}
     \dot{\mathrm{H}}^{-s_1,p'}(\mathbb{R}^n_+) \hookrightarrow (\dot{\mathrm{H}}^{s_0,p}_0(\mathbb{R}^n_+)\cap \dot{\mathrm{H}}^{s_1,p}_0(\mathbb{R}^n_+))' \hookrightarrow \mathrm{H}^{s_0-s_1,q'}(\mathbb{R}^n_+)\text{. }
\end{align*}
We deduce that the last embedding is dense, since $(\dot{\mathrm{H}}^{s_0,p}_0(\mathbb{R}^n_+)\cap \dot{\mathrm{H}}^{s_1,p}_0(\mathbb{R}^n_+))'$ contains $\dot{\mathrm{H}}^{-s_1,p'}(\mathbb{R}^n_+)$ via canonical embedding, so that by duality and reflexivity of all involved spaces, the following embedding is dense:
\begin{align*}
     \mathrm{H}^{s_1-s_0,q}_0(\mathbb{R}^n_+) \hookrightarrow \dot{\mathrm{H}}^{s_0,p}_0(\mathbb{R}^n_+)\cap \dot{ \mathrm{H}}^{s_1,p}_0(\mathbb{R}^n_+)\text{. }
\end{align*}
Since $\mathrm{C}_c^\infty(\mathbb{R}^n_+) \hookrightarrow \mathrm{H}^{s_1-s_0,q}_0(\mathbb{R}^n_+)$ is dense,  \cite[Remark~2.7]{JerisonKenig1995}, the result follows.

We end the proof claiming that the third case $s_0<s_1\leqslant 0$ can be done similarly via duality and reflexivity arguments.
\end{proof}

\subsubsection{Homogeneous Besov spaces, Interpolation}

We are done with properties of homogeneous Sobolev spaces. We continue with a real interpolation embedding lemma, that allows us to transfer all nice properties, like boundedness of extension and projection operators, from homogeneous Sobolev spaces to homogeneous Besov spaces.

\begin{lemma}\label{lem:EmbeddingInterpHomSobspacesRn+} Let $(p,q,q_0,q_1)\in(1,+\infty)\times[1,+\infty]^3$, $s_0,s_1\in\mathbb{R}$, such that $s_0< s_1$, and set
\begin{align*}
    s:= (1-\theta)s_0+ \theta s_1\text{. }
\end{align*}
If $(\mathcal{C}_{s_0,p})$ is satisfied we have,
\begin{align}
    &\dot{\mathrm{B}}^{s}_{p,q}(\mathbb{R}^n_+)\hookrightarrow(\dot{\mathrm{H}}^{s_0,p}(\mathbb{R}^n_+),\dot{\mathrm{H}}^{s_1,p}(\mathbb{R}^n_+))_{\theta,q}\text{, }\label{eq:HomInterpEmbedding1}\\
    &\dot{\mathrm{B}}^{s}_{p,q,0}(\mathbb{R}^n_+)\hookleftarrow(\dot{\mathrm{H}}^{s_0,p}_0(\mathbb{R}^n_+),\dot{\mathrm{H}}^{s_1,p}_0(\mathbb{R}^n_+))_{\theta,q} \text{.}\label{eq:HomInterpEmbedding3}
\end{align}
Similarly if $(\mathcal{C}_{s_0,p,q_0})$ is satisfied, we also have
\begin{align}
    &\dot{\mathrm{B}}^{s}_{p,q}(\mathbb{R}^n_+)\hookrightarrow(\dot{\mathrm{B}}^{s_0}_{p,q_0}(\mathbb{R}^n_+),\dot{\mathrm{B}}^{s_1}_{p,q_1}(\mathbb{R}^n_+))_{\theta,q}\text{, }\label{eq:HomInterpEmbedding2}\\
    &\dot{\mathrm{B}}^{s}_{p,q,0}(\mathbb{R}^n_+)\hookleftarrow(\dot{\mathrm{B}}^{s_0}_{p,q_0,0}(\mathbb{R}^n_+),\dot{\mathrm{B}}^{s_1}_{p,q_1,0}(\mathbb{R}^n_+))_{\theta,q}\label{eq:HomInterpEmbedding4}\text{. }
\end{align}
\end{lemma}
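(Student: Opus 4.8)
The plan is to derive all four embeddings formally from the whole-space real interpolation identities established in Theorem \ref{thm:InterpHomSpacesRn} together with the universal interpolation property of the real functor $(\cdot,\cdot)_{\theta,q}$ (namely \cite[Theorem~3.1.2]{BerghLofstrom1976}: a bounded operator between compatible couples restricts to a bounded operator between the real interpolation spaces). The only two operators needed are the restriction operator and the "extension-by-zero" inclusion; both are contractions by the very definition of $\mathrm{X}(\mathbb{R}^n_+)$ and $\mathrm{X}_0(\mathbb{R}^n_+)$, so no analytic estimate is involved. A preliminary remark is that the couples $(\dot{\mathrm{H}}^{s_0,p}(\mathbb{R}^n_+),\dot{\mathrm{H}}^{s_1,p}(\mathbb{R}^n_+))$ and $(\dot{\mathrm{B}}^{s_0}_{p,q_0}(\mathbb{R}^n_+),\dot{\mathrm{B}}^{s_1}_{p,q_1}(\mathbb{R}^n_+))$ are admissible interpolation couples since all these spaces embed into $\eus{D}'(\mathbb{R}^n_+)$, while their "$0$"-versions all embed into $\eus{S}'(\mathbb{R}^n)$; so each interpolation space above is meaningful.

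For \eqref{eq:HomInterpEmbedding1} and \eqref{eq:HomInterpEmbedding2}, I would argue as follows. The restriction map $\mathrm{R}\,:\,\tilde u\mapsto \tilde u_{|_{\mathbb{R}^n_+}}$ is, by definition of the quotient norm, a contraction $\mathrm{X}(\mathbb{R}^n)\to \mathrm{X}(\mathbb{R}^n_+)$ for every $\mathrm{X}$ in sight, hence a single operator between the corresponding sum spaces, bounded on each endpoint. Interpolating, $\mathrm{R}$ maps $(\dot{\mathrm{H}}^{s_0,p}(\mathbb{R}^n),\dot{\mathrm{H}}^{s_1,p}(\mathbb{R}^n))_{\theta,q}$ boundedly into $(\dot{\mathrm{H}}^{s_0,p}(\mathbb{R}^n_+),\dot{\mathrm{H}}^{s_1,p}(\mathbb{R}^n_+))_{\theta,q}$; by Theorem \ref{thm:InterpHomSpacesRn} the source is $\dot{\mathrm{B}}^{s}_{p,q}(\mathbb{R}^n)$ (using $(\mathcal{C}_{s_0,p})$), with equivalent norms. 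Since $\dot{\mathrm{B}}^{s}_{p,q}(\mathbb{R}^n_+)$ is by definition the image of $\dot{\mathrm{B}}^{s}_{p,q}(\mathbb{R}^n)$ under $\mathrm{R}$ equipped with the quotient norm, taking the infimum over all extensions of a given $u$ yields \eqref{eq:HomInterpEmbedding1}. Replacing the Sobolev couple by $(\dot{\mathrm{B}}^{s_0}_{p,q_0},\dot{\mathrm{B}}^{s_1}_{p,q_1})$ and $(\mathcal{C}_{s_0,p})$ by $(\mathcal{C}_{s_0,p,q_0})$ gives \eqref{eq:HomInterpEmbedding2} verbatim.

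For \eqref{eq:HomInterpEmbedding3} and \eqref{eq:HomInterpEmbedding4}, I would use instead that the inclusion $\iota\,:\,\mathrm{X}_0(\mathbb{R}^n_+)\hookrightarrow \mathrm{X}(\mathbb{R}^n)$ is isometric, hence (again a single operator between the sum spaces) interpolation gives a bounded map $(\dot{\mathrm{H}}^{s_0,p}_0(\mathbb{R}^n_+),\dot{\mathrm{H}}^{s_1,p}_0(\mathbb{R}^n_+))_{\theta,q}\to(\dot{\mathrm{H}}^{s_0,p}(\mathbb{R}^n),\dot{\mathrm{H}}^{s_1,p}(\mathbb{R}^n))_{\theta,q}=\dot{\mathrm{B}}^{s}_{p,q}(\mathbb{R}^n)$. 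It then remains to observe the support condition: any $u$ in the source is a sum $a+b$ with $a\in\dot{\mathrm{H}}^{s_0,p}_0(\mathbb{R}^n_+)$, $b\in\dot{\mathrm{H}}^{s_1,p}_0(\mathbb{R}^n_+)$, so $\supp u\subset\overline{\mathbb{R}^n_+}$; combined with $u\in\dot{\mathrm{B}}^{s}_{p,q}(\mathbb{R}^n)$ this places $u$ in $\dot{\mathrm{B}}^{s}_{p,q,0}(\mathbb{R}^n_+)$, whose norm is exactly $\lVert u\rVert_{\dot{\mathrm{B}}^{s}_{p,q}(\mathbb{R}^n)}$, and we just proved this is controlled by $\lVert u\rVert_{(\dot{\mathrm{H}}^{s_0,p}_0(\mathbb{R}^n_+),\dot{\mathrm{H}}^{s_1,p}_0(\mathbb{R}^n_+))_{\theta,q}}$. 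The Besov case \eqref{eq:HomInterpEmbedding4} is identical with $(\mathcal{C}_{s_0,p,q_0})$.

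I do not expect a genuine obstacle here: the content is entirely a transfer of Theorem \ref{thm:InterpHomSpacesRn} through contractions. The only points requiring a little care — and which I would make explicit rather than grind through — are (a) checking admissibility of the half-space couples (the common ambient space being $\eus{D}'(\mathbb{R}^n_+)$, resp. $\eus{S}'(\mathbb{R}^n)$ for the $0$-versions), and (b) the fact that only the four \emph{embeddings}, not equalities, are claimed, which is why the equivalence (rather than equality) of norms furnished by Theorem \ref{thm:InterpHomSpacesRn} suffices. The genuinely harder "reverse" statements (that these embeddings are in fact equalities, i.e. the full interpolation identities on $\mathbb{R}^n_+$) are evidently postponed to later propositions and will require the extension and projection operators of Propositions \ref{prop:ExtOpHomSobSpaces} and Lemma \ref{lemma:ProjHsp0} together with this lemma.
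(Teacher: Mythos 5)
Your argument is correct and is essentially the paper's own: for \eqref{eq:HomInterpEmbedding3}--\eqref{eq:HomInterpEmbedding4} the paper uses exactly the same isometric inclusion into the whole-space couple followed by the support observation, and for \eqref{eq:HomInterpEmbedding1}--\eqref{eq:HomInterpEmbedding2} it delegates to the first part of the proof of \cite[Proposition~3.22]{DanchinHieberMuchaTolk2020}, which is precisely the restriction-operator/quotient-norm argument you describe (interpolate the contraction $\mathrm{R}$ against Theorem~\ref{thm:InterpHomSpacesRn} and take the infimum over extensions). No gap; your explicit remarks on admissibility of the couples and on the fact that only one-sided embeddings are claimed are accurate.
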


\begin{proof}For embeddings \eqref{eq:HomInterpEmbedding1} and \eqref{eq:HomInterpEmbedding2}, one may follow the first part of the proof of \cite[Proposition~3.22]{DanchinHieberMuchaTolk2020}.

The third embedding \eqref{eq:HomInterpEmbedding3}, (the fourth one \eqref{eq:HomInterpEmbedding4} can be treated similarly) is straightforward since,
\begin{align*}
    (\dot{\mathrm{H}}^{s_0,p}_0(\mathbb{R}^n_+),\dot{\mathrm{H}}^{s_1,p}_0(\mathbb{R}^n_+))_{\theta,q} \hookrightarrow(\dot{\mathrm{H}}^{s_0,p}(\mathbb{R}^n),\dot{\mathrm{H}}^{s_1,p}(\mathbb{R}^n))_{\theta,q} = \dot{\mathrm{B}}^{s}_{p,q}(\mathbb{R}^n)\text{.}
\end{align*}
By definition, $f\in (\dot{\mathrm{H}}^{s_0,p}_0(\mathbb{R}^n_+),\dot{\mathrm{H}}^{s_1,p}_0(\mathbb{R}^n_+))_{\theta,q}\subset \dot{\mathrm{H}}^{s_0,p}_0(\mathbb{R}^n_+)+\dot{\mathrm{H}}^{s_1,p}_0(\mathbb{R}^n_+)$, hence $\supp f \subset \overline{\mathbb{R}^n_+}$ and $f\in \dot{\mathrm{B}}^{s}_{p,q,0}(\mathbb{R}^n_+)$.
\end{proof}

As we mentioned, the above lemma can be used to prove the boundedness of some operators on a sufficiently wide range of indices on Besov spaces via  some sort of interpolation method, without the exact description of the interpolation space; see below.

\begin{corollary}\label{cor:ExtProj0HomBspq}Let $p\in(1,+\infty), q\in [1,+\infty]$, $s> -1+\tfrac{1}{p}$, $m\in\mathbb{N}$, such that  $s < m+1+\tfrac{1}{p}$. Let us  consider the extension operator $\mathrm{E}$ (resp. $\mathcal{P}_0$) given by Proposition \ref{prop:ExtOpHomSobSpaces} (resp.  Lemma \ref{lemma:ProjHsp0}).

If either one of the following conditions holds
\begin{itemize}
    \item $s > 0$ and $u\in {\mathrm{B}}^{s}_{p,q}(\mathbb{R}^n_+)$ (resp. $u \in {\mathrm{B}}^{s}_{p,q}(\mathbb{R}^n)$) ;
    \item $s\in (-1+\tfrac{1}{p},\tfrac{1}{p})$ and $u\in \dot{\mathrm{B}}^{s}_{p,q}(\mathbb{R}^n_+)$ (resp. $u \in \dot{\mathrm{B}}^{s}_{p,q}(\mathbb{R}^n)$) ;
\end{itemize}
then we have the estimate
\begin{align*}
    \lVert \mathrm{E}u \rVert_{\dot{\mathrm{B}}^{s}_{p,q}(\mathbb{R}^n)} \lesssim_{s,m,p,n}   \lVert  u \rVert_{\dot{\mathrm{B}}^{s}_{p,q}(\mathbb{R}^n_+)}\textrm{. (resp. } \lVert \mathcal{P}_0 u \rVert_{\dot{\mathrm{B}}^{s}_{p,q}(\mathbb{R}^n)} \lesssim_{s,m,p,n}   \lVert  u \rVert_{\dot{\mathrm{B}}^{s}_{p,q}(\mathbb{R}^n)}\textrm{.)}
\end{align*}
In particular, $\mathrm{E}$ (resp. $\mathcal{P}_0$) is a bounded operator from $\dot{\mathrm{B}}^{s}_{p,q}(\mathbb{R}^n_+)$ to $\dot{\mathrm{B}}^{s}_{p,q}(\mathbb{R}^n)$ (resp. from $\dot{\mathrm{B}}^{s}_{p,q}(\mathbb{R}^n)$ to $\dot{\mathrm{B}}^{s}_{p,q,0}(\mathbb{R}^n_+)$) whenever \eqref{AssumptionCompletenessExponents} is satisfied.
\end{corollary}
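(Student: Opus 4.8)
The plan is to deduce everything from the half-space homogeneous Sobolev estimates already in hand (Proposition~\ref{prop:ExtOpHomSobSpaces} and Lemma~\ref{lemma:ProjHsp0}), the real-interpolation embeddings of Lemma~\ref{lem:EmbeddingInterpHomSobspacesRn+}, and the identity $(\dot{\mathrm{H}}^{s_0,p}(\mathbb{R}^n),\dot{\mathrm{H}}^{s_1,p}(\mathbb{R}^n))_{\theta,q}=\dot{\mathrm{B}}^{s}_{p,q}(\mathbb{R}^n)$ from Theorem~\ref{thm:InterpHomSpacesRn}. I would first dispose of the range $s\in(-1+\tfrac1p,\tfrac1p)$ with no density argument: for $u\in\dot{\mathrm{B}}^{s}_{p,q}(\mathbb{R}^n_+)$ and an arbitrary extension $\tilde u\in\dot{\mathrm{B}}^{s}_{p,q}(\mathbb{R}^n)$ one has $\mathrm{E}u(x)=[\mathbbm 1_{\mathbb{R}^n_+}\tilde u](x)+\sum_{j=0}^m\alpha_j[\mathbbm 1_{\mathbb{R}^n_+}\tilde u](x',-x_n/(j+1))$, and the analogous formula for $\mathcal{P}_0$ with $\mathbbm 1_{\mathbb{R}^n_-}$; since $\mathbbm 1_{\mathbb{R}^n_\pm}$ is a bounded multiplier on $\dot{\mathrm{B}}^{s}_{p,q}(\mathbb{R}^n)$ by Proposition~\ref{prop:SobolevMultiplier}, and the affine maps $x\mapsto(x',-x_n/(j+1))$ act boundedly on $\dot{\mathrm{B}}^{s}_{p,q}(\mathbb{R}^n)$ with constants depending only on $m$, taking the infimum over $\tilde u$ gives both estimates in this range directly on the full spaces.

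For the remaining range I would run the real-interpolation functor. Given $s>0$ and $u\in{\mathrm{B}}^{s}_{p,q}(\mathbb{R}^n_+)$, I choose $s_0,s_1$ and $\theta\in(0,1)$ with $-1+\tfrac1p<s_0<\min(s,\tfrac1p,\tfrac np)$, $s<s_1<m+1+\tfrac1p$, and $s=(1-\theta)s_0+\theta s_1$, which is possible because $n\geqslant2$, $p>1$ and $s<m+1+\tfrac1p$, and which guarantees that $(\mathcal{C}_{s_0,p})$ holds and that $s_0$ lies in the range of the first step. By Lemma~\ref{lem:EmbeddingInterpHomSobspacesRn+} one gets $u\in(\dot{\mathrm{H}}^{s_0,p}(\mathbb{R}^n_+),\dot{\mathrm{H}}^{s_1,p}(\mathbb{R}^n_+))_{\theta,q}$ with norm $\lesssim\lVert u\rVert_{\dot{\mathrm{B}}^{s}_{p,q}(\mathbb{R}^n_+)}$. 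By Proposition~\ref{prop:ExtOpHomSobSpaces}, $\mathrm{E}$ is genuinely bounded $\dot{\mathrm{H}}^{s_0,p}(\mathbb{R}^n_+)\to\dot{\mathrm{H}}^{s_0,p}(\mathbb{R}^n)$ (here $(\mathcal{C}_{s_0,p})$ is used) and satisfies the homogeneous estimate at level $s_1$ on the dense subspace ${\mathrm{H}}^{s_1,p}(\mathbb{R}^n_+)$; hence $\mathrm{E}$ maps the half-space couple into the whole-space couple, and composing with the interpolation identity of Theorem~\ref{thm:InterpHomSpacesRn} gives $\lVert\mathrm{E}u\rVert_{\dot{\mathrm{B}}^{s}_{p,q}(\mathbb{R}^n)}\lesssim_{s,m,p,n}\lVert u\rVert_{\dot{\mathrm{B}}^{s}_{p,q}(\mathbb{R}^n_+)}$. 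The projection $\mathcal{P}_0$ is treated symmetrically, using that Lemma~\ref{lemma:ProjHsp0} yields the boundedness $\dot{\mathrm{H}}^{s_0,p}(\mathbb{R}^n)\to\dot{\mathrm{H}}^{s_0,p}_0(\mathbb{R}^n_+)$ and the homogeneous estimate at level $s_1$ on ${\mathrm{H}}^{s_1,p}(\mathbb{R}^n)$, together with the embedding $(\dot{\mathrm{H}}^{s_0,p}_0(\mathbb{R}^n_+),\dot{\mathrm{H}}^{s_1,p}_0(\mathbb{R}^n_+))_{\theta,q}\hookrightarrow\dot{\mathrm{B}}^{s}_{p,q,0}(\mathbb{R}^n_+)$ of Lemma~\ref{lem:EmbeddingInterpHomSobspacesRn+}. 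Once the estimates hold on the dense subspace ${\mathrm{B}}^{s}_{p,q}$, completeness of $\dot{\mathrm{B}}^{s}_{p,q}(\mathbb{R}^n_+)$ and density of $\eus{S}_0(\overline{\mathbb{R}^n_+})$ under \eqref{AssumptionCompletenessExponents} upgrade $\mathrm{E}$ (resp. $\mathcal{P}_0$) to the claimed bounded operator.

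The step I expect to be the main obstacle is that when $(\mathcal{C}_{s_1,p})$ fails the space $\dot{\mathrm{H}}^{s_1,p}(\mathbb{R}^n_+)$ is not complete, so $\mathrm{E}$ and $\mathcal{P}_0$ are only densely defined there and the interpolation functor cannot be invoked verbatim. When $s<\tfrac np$ this is sidestepped by also taking $s_1<\tfrac np$, so that both endpoints are complete. The only remaining case compatible with \eqref{AssumptionCompletenessExponents} is $q=1$, $s=\tfrac np$, where $s_1>\tfrac np$ is forced; there I would route the argument through the intersection space $\dot{\mathrm{H}}^{s_0,p}(\mathbb{R}^n_+)\cap\dot{\mathrm{H}}^{s_1,p}(\mathbb{R}^n_+)$, which is complete by Proposition~\ref{prop:IntersecHomHspRn+}, carries $\mathrm{E}$ (resp. $\mathcal{P}_0$) with \emph{decoupled} homogeneous bounds by Corollary~\ref{cor:ExtOpIntersecHomHspRn+} (resp. Corollary~\ref{cor:ProjIntersecHomHsp0}), and is dense in $(\dot{\mathrm{H}}^{s_0,p}(\mathbb{R}^n_+),\dot{\mathrm{H}}^{s_1,p}(\mathbb{R}^n_+))_{\theta,1}$; the decoupled bounds then transfer, for elements of the intersection, into a pointwise-in-$t$ estimate of the $K$-functional of $\mathrm{E}u$ on $\mathbb{R}^n$ by that of $u$ on $\mathbb{R}^n_+$, and a passage to the limit closes the argument. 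One must also verify, by an elementary inspection using $n\geqslant2$ and $s<m+1+\tfrac1p$, that $s_0$ and $s_1$ can always be placed in all the required ranges.
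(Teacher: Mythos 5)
Your strategy coincides with the paper's: Proposition~\ref{prop:SobolevMultiplier} disposes of $s\in(-1+\tfrac1p,\tfrac1p)$, and for larger $s$ one transfers the Sobolev estimates of Proposition~\ref{prop:ExtOpHomSobSpaces} and Lemma~\ref{lemma:ProjHsp0} by a (manual) real interpolation built on Lemma~\ref{lem:EmbeddingInterpHomSobspacesRn+} and Theorem~\ref{thm:InterpHomSpacesRn}. The one genuine gap is one of scope. The first assertion of the corollary --- the estimate $\lVert \mathrm{E}u\rVert_{\dot{\mathrm{B}}^{s}_{p,q}(\mathbb{R}^n)}\lesssim\lVert u\rVert_{\dot{\mathrm{B}}^{s}_{p,q}(\mathbb{R}^n_+)}$ for $u\in\mathrm{B}^{s}_{p,q}(\mathbb{R}^n_+)$ and \emph{every} $s\in(0,m+1+\tfrac1p)$ --- carries no completeness hypothesis; \eqref{AssumptionCompletenessExponents} only enters the final ``in particular''. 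Your case analysis (take $s_1<\tfrac np$ when $s<\tfrac np$; use the intersection-space route only at $(s,q)=(\tfrac np,1)$) covers precisely the regimes compatible with \eqref{AssumptionCompletenessExponents}, so the a priori estimate is left unproved for $s\geqslant\tfrac np$ outside that single endpoint. That part of the claim is not decorative: it feeds the decoupled high-regularity estimates of Proposition~\ref{prop:IntersLpHomBesovRn+=BesovRn+} and Corollary~\ref{cor:EqNormNablakmBspBaq}, where the homogeneous Besov space is not complete. The repair is exactly the device you reserve for the endpoint, and it is what the paper runs for all $s\geqslant\tfrac1p$: work with the couple $(\mathrm{L}^p(\mathbb{R}^n_+),\dot{\mathrm{H}}^{s_1,p}(\mathbb{R}^n_+))$; for $u\in\mathrm{B}^{s}_{p,q}(\mathbb{R}^n_+)$ any decomposition $u=a+b$ forces $b=u-a\in\mathrm{L}^p(\mathbb{R}^n_+)\cap\dot{\mathrm{H}}^{s_1,p}(\mathbb{R}^n_+)=\mathrm{H}^{s_1,p}(\mathbb{R}^n_+)$ by Proposition~\ref{prop:IntersecHomHspRn+}, so both pieces lie where Proposition~\ref{prop:ExtOpHomSobSpaces} yields homogeneous bounds, and the embedding \eqref{eq:HomInterpEmbedding1} (which only needs $(\mathcal{C}_{0,p})$, always true) closes the pointwise-in-$t$ $K$-functional estimate for every such $s$ and every $q$.

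Two smaller remarks. In the regime $s<\tfrac np$ your choice of two complete endpoints lets $\mathrm{E}$ interpolate directly on all of $\dot{\mathrm{B}}^{s}_{p,q}(\mathbb{R}^n_+)$, including $q=+\infty$, with no approximation step; this is actually cleaner than the paper, which treats $q=+\infty$ separately through the auxiliary operator $\mathcal{E}=\mathrm{E}[\mathbbm{1}_{\mathbb{R}^n_+}\cdot]$ and a second interpolation. Your closing appeal to density of $\eus{S}_0(\overline{\mathbb{R}^n_+})$ is therefore superfluous there, and would in any case fail for $q=+\infty$ (only weak${}^\ast$ density holds), so make sure the final extension step does not rest on it in that case. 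Finally, for $\mathcal{P}_0$ the paper simply writes $\mathcal{P}_0=\mathrm{I}-\mathrm{E}^{-}[\mathbbm{1}_{\mathbb{R}^n_-}\cdot]$ and reduces everything to $\mathrm{E}$; your symmetric treatment via \eqref{eq:HomInterpEmbedding3} and Theorem~\ref{thm:InterpHomSpacesRn} on the source side is an equivalent, slightly longer, route.
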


\begin{proof}Let $p\in(1,+\infty), q\in [1,+\infty)$, $s> -1+\tfrac{1}{p}$, $m\in\mathbb{N}$, such that  $s < m+1+\tfrac{1}{p}$. Without loss of generality, it suffices to prove the result for the operator $\mathrm{E}$, since we have the identity $\mathcal{P}_0 = \mathrm{I} - \mathrm{E}^{-}[\mathbbm{1}_{\mathbb{R}^n_-}]$, as written in the proof of Lemma \ref{lemma:ProjHsp0}.

The boundedness of $\mathrm{E}$ on $\dot{\mathrm{B}}^{s}_{p,q}(\mathbb{R}^n)$ for $(p,q)\in(1,+\infty)\times[1,+\infty]$, $s\in(-1+\tfrac{1}{p},\tfrac{1}{p})$ is again a direct consequence of Proposition \ref{prop:SobolevMultiplier}.

It remains to prove boundedness for $s\geqslant \tfrac{1}{p}$. To do so, we proceed via a manual real interpolation scheme.

Let $u\in {\mathrm{B}}^{s}_{p,q}(\mathbb{R}^n_+)$, $\theta\in(0,1)$ such that $\theta s_1 =s$, where $s_1\in(s,m+1+\tfrac{1}{p})$.
One has, 
\begin{align*}
     u\in (\mathrm{L}^p(\mathbb{R}^n_+), \mathrm{H}^{s_1,p}(\mathbb{R}^n_+))_{\theta,q} \hookrightarrow (\mathrm{L}^p(\mathbb{R}^n_+), \dot{\mathrm{H}}^{s_1,p}(\mathbb{R}^n_+))_{\theta,q}\subset \mathrm{L}^p(\mathbb{R}^n_+)+ \dot{\mathrm{H}}^{s_1,p}(\mathbb{R}^n_+)\text{. }
\end{align*}
Hence, for $a\in\mathrm{L}^p(\mathbb{R}^n_+)$, $b\in \dot{\mathrm{H}}^{s_1,p}(\mathbb{R}^n_+)$ such that $f=a+b$, we can deduce that
\begin{align*}
    b=u-a\in {\mathrm{B}}^{s}_{p,q}(\mathbb{R}^n_+)+ \mathrm{L}^p(\mathbb{R}^n_+)\subset \mathrm{L}^p(\mathbb{R}^n_+)\text{,} 
\end{align*}
so that $b\in \mathrm{L}^p(\mathbb{R}^n_+)\cap \dot{\mathrm{H}}^{s_1,p}(\mathbb{R}^n_+) = \mathrm{H}^{s_1,p}(\mathbb{R}^n_+)$ thanks to Proposition \ref{prop:IntersecHomHspRn+}. Hence, $\mathrm{E}u=\mathrm{E}a+\mathrm{E}b$, with $\mathrm{E}a\in\mathrm{L}^p(\mathbb{R}^n_+)$, $\mathrm{E}b\in \mathrm{H}^{s_1,p}(\mathbb{R}^n_+)$, with the homogeneous estimates provided by Proposition~\ref{prop:ExtOpHomSobSpaces}. Then $\mathrm{E}u_{|_{\mathbb{R}^n_+}}=u$, and we have the estimates
\begin{align*}
    K(t,\mathrm{E}u,\mathrm{L}^p(\mathbb{R}^n),\dot{\mathrm{H}}^{s_1,p}(\mathbb{R}^n)) \leqslant \lVert  \mathrm{E}a \rVert_{\mathrm{L}^p(\mathbb{R}^n)} + t \lVert  \mathrm{E}b \rVert_{\dot{\mathrm{H}}^{s_1,p}(\mathbb{R}^n)} \lesssim_{p,m,n} \lVert a \rVert_{\mathrm{L}^p(\mathbb{R}^n_+)} + t \lVert  b \rVert_{\dot{\mathrm{H}}^{s_1,p}(\mathbb{R}^n_+)}\text{. }
\end{align*}
Hence, taking infimum on all such functions $a$ and $b$, and multiplying by $t^{-\theta}$ leads to
\begin{align*}
    t^{-\theta} K(t,\mathrm{E}u,\mathrm{L}^p(\mathbb{R}^n),\dot{\mathrm{H}}^{s_1,p}(\mathbb{R}^n)) \lesssim_{p,s,s_1,n} t^{-\theta} K(t,u,\mathrm{L}^p(\mathbb{R}^n_+),\dot{\mathrm{H}}^{s_1,p}(\mathbb{R}^n_+))\text{, }
\end{align*}
so one may take the $\mathrm{L}^q_{\ast}$-norm of the above inequality and use \eqref{eq:HomInterpEmbedding1} from Lemma \ref{lem:EmbeddingInterpHomSobspacesRn+} to deduce that
\begin{align*}
    \lVert  \mathrm{E}u \rVert_{\dot{\mathrm{B}}^{s}_{p,q}(\mathbb{R}^n)} \lesssim_{p,s,q,n}\lVert  u \rVert_{\dot{\mathrm{B}}^{s}_{p,q}(\mathbb{R}^n_+)}\text{. }
\end{align*}
If $q<+\infty$, then ${\mathrm{B}}^{s}_{p,q}(\mathbb{R}^n_+)$ is dense in $\dot{\mathrm{B}}^{s}_{p,q}(\mathbb{R}^n_+)$, so that the conclusion holds by density whenever \eqref{AssumptionCompletenessExponents} is satisfied.

If $q=+\infty$, and \eqref{AssumptionCompletenessExponents} is satisfied, necessarily $s<\tfrac{n}{p}$. We introduce $\mathcal{E}:= \mathrm{E}[ \mathbbm{1}_{\mathbb{R}^n_+} \cdot]$ which is bounded, thanks to the above step, seen as an operator
\begin{align*}
    \mathcal{E}\,:\, \dot{\mathrm{B}}^{s_j}_{p,q_j}(\mathbb{R}^n) \longrightarrow \dot{\mathrm{B}}^{s_j}_{p,q_j}(\mathbb{R}^n) \text{, }
\end{align*}
provided $s_0<s<s_1 <\frac{n}{p}$, and $q_j\in[1,\infty)$, $j\in\{ 0,1\}$. Thus, by real interpolation argument, thanks to Theorem \ref{thm:InterpHomSpacesRn}, for all $U\in  \dot{\mathrm{B}}^{s}_{p,\infty}(\mathbb{R}^n)$, we have
\begin{align*}
    \lVert  \mathcal{E}U \rVert_{\dot{\mathrm{B}}^{s}_{p,\infty}(\mathbb{R}^n)} \lesssim_{p,s,q,n}\lVert  U \rVert_{\dot{\mathrm{B}}^{s}_{p,\infty}(\mathbb{R}^n)}\text{. }
\end{align*}
In particular, for all $u\in \dot{\mathrm{B}}^{s}_{p,\infty}(\mathbb{R}^n_+)$, and all $U\in  \dot{\mathrm{B}}^{s}_{p,\infty}(\mathbb{R}^n)$ such that $U_{|_{\mathbb{R}^n_+}}=u$, we have
\begin{align*}
    \lVert  \mathrm{E}u \rVert_{\dot{\mathrm{B}}^{s}_{p,\infty}(\mathbb{R}^n)} \lesssim_{p,s,q,n}\lVert  U \rVert_{\dot{\mathrm{B}}^{s}_{p,\infty}(\mathbb{R}^n)}\text{.}
\end{align*}
Therefore, taking the infimum on all such functions $U$ gives the result when $q=+\infty$ and \eqref{AssumptionCompletenessExponents} is satisfied.
\end{proof}

\begin{proposition}\label{prop:SobolevEmbeddingsBesovRn+} Let $p,q\in[1,+\infty]$, $s\in(0,n)$, such that
\begin{align*}
    \frac{1}{q}=\frac{1}{p}-\frac{s}{n}\text{.}
\end{align*}
We have the following estimates, 
\begin{align*}
    \lVert  u \rVert_{\mathrm{L}^q(\mathbb{R}^n_+)} \lesssim_{n,s,p,q,r}\lVert  u \rVert_{\dot{\mathrm{B}}^{s}_{p,r}(\mathbb{R}^n_+)}\text{, } &\forall u\in \dot{\mathrm{B}}^{s}_{p,r}(\mathbb{R}^n_+)\text{, } r\in[1,q]\\
    \lVert  u \rVert_{\dot{\mathrm{B}}^{-s}_{q,r,0}(\mathbb{R}^n_+)} \lesssim_{n,s,p,q,r} \lVert  u \rVert_{\mathrm{L}^p(\mathbb{R}^n_+)}\text{, } &\forall u\in \mathrm{L}^p(\mathbb{R}^n_+)\text{, } r\in[q,+\infty] \text{. }
\end{align*}
Moreover, we also have $\dot{\mathrm{B}}^{\frac{n}{p}}_{p,1}(\mathbb{R}^n_+)\hookrightarrow \mathrm{C}^0_0(\overline{\mathbb{R}^n_+})$, whenever $p$ is finite.
\end{proposition}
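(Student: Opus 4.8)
The plan is to reduce each of the three embeddings to the corresponding embedding on the whole space $\mathbb{R}^n$, exactly in the spirit of the proof of Proposition~\ref{prop:SobolevEmbeddingsRn+}: for the embeddings whose target is $\mathrm{L}^q$ or $\mathrm{C}^0_0$ one picks an \emph{arbitrary} extension to $\mathbb{R}^n$ and then passes to the infimum that defines the restriction norm, while for the embedding whose target is $\dot{\mathrm{B}}^{-s}_{q,r,0}(\mathbb{R}^n_+)$ one extends by zero and observes that the zero-extension is supported in $\overline{\mathbb{R}^n_+}$.

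For the first estimate, note that $\tfrac1q=\tfrac1p-\tfrac sn$ together with $s\in(0,n)$ forces $p<\infty$, and when $q<\infty$ it forces $s<n/p$; this is precisely the range of the first display (the case $q=\infty$, $s=n/p$ being the ``Moreover'' statement). Given $u\in\dot{\mathrm{B}}^{s}_{p,r}(\mathbb{R}^n_+)$ with $r\in[1,q]$, by definition of the restriction space there is $U\in\dot{\mathrm{B}}^{s}_{p,r}(\mathbb{R}^n)$ with $U_{|_{\mathbb{R}^n_+}}=u$; point \textit{(iv)} of Proposition~\ref{prop:PropertiesHomBesovSpacesRn} gives $U\in\mathrm{L}^q(\mathbb{R}^n)$ with $\lVert U\rVert_{\mathrm{L}^q(\mathbb{R}^n)}\lesssim_{n,s,p,q,r}\lVert U\rVert_{\dot{\mathrm{B}}^{s}_{p,r}(\mathbb{R}^n)}$, and since $\lVert u\rVert_{\mathrm{L}^q(\mathbb{R}^n_+)}\le\lVert U\rVert_{\mathrm{L}^q(\mathbb{R}^n)}$ while the left-hand side does not depend on $U$, taking the infimum over all such $U$ yields the claim. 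The endpoint $\dot{\mathrm{B}}^{n/p}_{p,1}(\mathbb{R}^n_+)\hookrightarrow\mathrm{C}^0_0(\overline{\mathbb{R}^n_+})$ is obtained the same way: any $u\in\dot{\mathrm{B}}^{n/p}_{p,1}(\mathbb{R}^n_+)$ is the restriction to $\overline{\mathbb{R}^n_+}$ of some $U\in\dot{\mathrm{B}}^{n/p}_{p,1}(\mathbb{R}^n)\hookrightarrow\mathrm{C}^0_0(\mathbb{R}^n)$, hence $u\in\mathrm{C}^0_0(\overline{\mathbb{R}^n_+})$ by the very definition of that space, with $\lVert u\rVert_{\mathrm{L}^\infty}\le\lVert U\rVert_{\mathrm{L}^\infty}\lesssim\lVert U\rVert_{\dot{\mathrm{B}}^{n/p}_{p,1}(\mathbb{R}^n)}$, and one concludes again by the infimum.

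For the second estimate, recall that $\mathrm{L}^p(\mathbb{R}^n_+)$ is identified with the subspace of $\mathrm{L}^p(\mathbb{R}^n)$ of functions supported in $\overline{\mathbb{R}^n_+}$ (extension by zero), and that here $p<\infty$, so such a $u$ lies in $\eus{S}'_h(\mathbb{R}^n)$. The plan is to invoke the whole-space embedding $\mathrm{L}^p(\mathbb{R}^n)\hookrightarrow\dot{\mathrm{B}}^{-s}_{q,r}(\mathbb{R}^n)$ for $\tfrac1q=\tfrac1p-\tfrac sn$, $r\in[q,+\infty]$, which follows from point \textit{(iv)} of Proposition~\ref{prop:PropertiesHomBesovSpacesRn} and Proposition~\ref{prop:dualityHomBesov} by duality in the reflexive range $q,r\in(1,\infty)$, the cases $r\in\{q,+\infty\}$ being then reached via the monotonicity $\dot{\mathrm{B}}^{-s}_{q,r_1}(\mathbb{R}^n)\hookrightarrow\dot{\mathrm{B}}^{-s}_{q,r_2}(\mathbb{R}^n)$ for $r_1\le r_2$ (alternatively one simply quotes this classical embedding, or derives the endpoint $r=+\infty$ directly from Bernstein's inequality $\lVert\dot{\Delta}_j u\rVert_{\mathrm{L}^q(\mathbb{R}^n)}\lesssim 2^{js}\lVert u\rVert_{\mathrm{L}^p(\mathbb{R}^n)}$). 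Then for $u\in\mathrm{L}^p(\mathbb{R}^n_+)$ we get $u\in\dot{\mathrm{B}}^{-s}_{q,r}(\mathbb{R}^n)$ with $\supp u\subset\overline{\mathbb{R}^n_+}$, so by definition $u\in\dot{\mathrm{B}}^{-s}_{q,r,0}(\mathbb{R}^n_+)$ with $\lVert u\rVert_{\dot{\mathrm{B}}^{-s}_{q,r,0}(\mathbb{R}^n_+)}=\lVert u\rVert_{\dot{\mathrm{B}}^{-s}_{q,r}(\mathbb{R}^n)}\lesssim\lVert u\rVert_{\mathrm{L}^p(\mathbb{R}^n)}=\lVert u\rVert_{\mathrm{L}^p(\mathbb{R}^n_+)}$.

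None of the three steps carries a genuine difficulty; the only point requiring a little care is the endpoint bookkeeping for the dual whole-space embedding in the second step — the value $r=+\infty$, and the degenerate case $q=+\infty$, $s=n/p$ — which is handled by the monotonicity of Besov norms in the third index together with the elementary Bernstein estimate above, everything else being the mechanical use of the definition of function spaces by restriction.
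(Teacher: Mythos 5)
Your proof is correct and follows essentially the same route as the paper, whose proof is a one-line reduction to the whole-space embeddings of point \textit{(iv)} of Proposition~\ref{prop:PropertiesHomBesovSpacesRn} via the definition of restriction spaces (for the first and third embeddings) and via zero-extension into the $0$-boundary space (for the second). The only difference is that you additionally justify the whole-space embedding $\mathrm{L}^p(\mathbb{R}^n)\hookrightarrow\dot{\mathrm{B}}^{-s}_{q,r}(\mathbb{R}^n)$ by duality and Bernstein, which the paper uses implicitly without stating it in Proposition~\ref{prop:PropertiesHomBesovSpacesRn}; this is a welcome completion rather than a departure.
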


\begin{proof} Each embedding is a direct consequence of the definition of each space and the corresponding ones on $\mathbb{R}^n$, see point \textit{(iv)} of Proposition \ref{prop:PropertiesHomBesovSpacesRn}.
\end{proof}

\begin{lemma}\label{lem:densityCcinftyBesov0s+} Let $p\in(1,+\infty)$, $q\in[1,+\infty)$ and $s>0$. The function space $\mathrm{C}_c^\infty(\mathbb{R}^n_+)$ is dense in $\dot{\mathrm{B}}^{s}_{p,q,0}(\mathbb{R}^n_+)$ whenever \eqref{AssumptionCompletenessExponents} is satisfied.
\end{lemma}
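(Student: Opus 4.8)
The plan is to deduce the statement from the already recorded inhomogeneous density $\mathrm{B}^{s}_{p,q,0}(\mathbb{R}^n_+)=\overline{\mathrm{C}_c^\infty(\mathbb{R}^n_+)}^{\,\lVert\cdot\rVert_{\mathrm{B}^{s}_{p,q}(\mathbb{R}^n)}}$ (Subsection~\ref{subsec:InhomSpacesRn+}) by approximating a general $u\in\dot{\mathrm{B}}^{s}_{p,q,0}(\mathbb{R}^n_+)$ by the projections $\mathcal{P}_0u_k$ of a sequence $u_k\in\eus{S}_0(\mathbb{R}^n)$ with $u_k\to u$ in $\dot{\mathrm{B}}^{s}_{p,q}(\mathbb{R}^n)$. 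Here $\mathcal{P}_0$ is the projection of Lemma~\ref{lemma:ProjHsp0}, which by Corollary~\ref{cor:ExtProj0HomBspq} (using $s>0$ and $(\mathcal{C}_{s,p,q})$) extends to a bounded operator $\dot{\mathrm{B}}^{s}_{p,q}(\mathbb{R}^n)\to\dot{\mathrm{B}}^{s}_{p,q,0}(\mathbb{R}^n_+)$, hence to a bounded idempotent on $\dot{\mathrm{B}}^{s}_{p,q}(\mathbb{R}^n)$ whose range lies in the $0$-space. For this scheme to close one needs $\mathcal{P}_0u=u$ for \emph{every} $u\in\dot{\mathrm{B}}^{s}_{p,q,0}(\mathbb{R}^n_+)$, not merely on the dense subspace of genuine functions where the explicit formula $\mathcal{P}_0=\mathrm{I}-\mathrm{E}^{-}[\mathbbm{1}_{\mathbb{R}^n_-}\cdot]$ makes this obvious; this is the point that requires care.

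To establish $\mathcal{P}_0u=u$, set $v:=u-\mathcal{P}_0u$. Then $v\in\ker\mathcal{P}_0$, and since $\supp u\subset\overline{\mathbb{R}^n_+}$ and $\supp\mathcal{P}_0u\subset\overline{\mathbb{R}^n_+}$ (the range of $\mathcal{P}_0$ lands in $\dot{\mathrm{B}}^{s}_{p,q,0}(\mathbb{R}^n_+)$ by Corollary~\ref{cor:ExtProj0HomBspq}), also $\supp v\subset\overline{\mathbb{R}^n_+}$, so that the restriction $v_{|_{\mathbb{R}^n_-}}$ vanishes in $\eus{D}'(\mathbb{R}^n_-)$, hence in $\dot{\mathrm{B}}^{s}_{p,q}(\mathbb{R}^n_-)$. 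Since $\eus{S}_0(\mathbb{R}^n)$ is dense in $\dot{\mathrm{B}}^{s}_{p,q}(\mathbb{R}^n)$ (point \textit{(iii)} of Proposition~\ref{prop:PropertiesHomBesovSpacesRn}, $p,q<+\infty$) and $\mathrm{I}-\mathcal{P}_0$ is bounded with (closed) range $\ker\mathcal{P}_0$, we may write $v=\lim_k w_k$ with $w_k:=(\mathrm{I}-\mathcal{P}_0)\varphi_k=\mathrm{E}^{-}[\mathbbm{1}_{\mathbb{R}^n_-}\varphi_k]$, $\varphi_k\in\eus{S}_0(\mathbb{R}^n)$, where $\mathrm{E}^{-}$ is the lower-half-space reflection extension from the proof of Lemma~\ref{lemma:ProjHsp0}. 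As $\mathrm{E}^{-}g$ restricted to $\mathbb{R}^n_-$ equals $g$, each $w_k=\mathrm{E}^{-}[w_{k}{}_{|_{\mathbb{R}^n_-}}]$; by continuity of restriction, $w_{k}{}_{|_{\mathbb{R}^n_-}}\to v_{|_{\mathbb{R}^n_-}}=0$ in $\dot{\mathrm{B}}^{s}_{p,q}(\mathbb{R}^n_-)$, and by boundedness of $\mathrm{E}^{-}\colon\dot{\mathrm{B}}^{s}_{p,q}(\mathbb{R}^n_-)\to\dot{\mathrm{B}}^{s}_{p,q}(\mathbb{R}^n)$ (the image of Corollary~\ref{cor:ExtProj0HomBspq} under the reflection $x_n\mapsto-x_n$, valid since $(\mathcal{C}_{s,p,q})$ holds) we get $\lVert w_k\rVert_{\dot{\mathrm{B}}^{s}_{p,q}(\mathbb{R}^n)}\lesssim\lVert w_{k}{}_{|_{\mathbb{R}^n_-}}\rVert_{\dot{\mathrm{B}}^{s}_{p,q}(\mathbb{R}^n_-)}\to 0$. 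Hence $v=0$, i.e. $\mathcal{P}_0u=u$.

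Now I would conclude. Given $u\in\dot{\mathrm{B}}^{s}_{p,q,0}(\mathbb{R}^n_+)\subset\dot{\mathrm{B}}^{s}_{p,q}(\mathbb{R}^n)$, pick $(u_k)\subset\eus{S}_0(\mathbb{R}^n)$ with $u_k\to u$ in $\dot{\mathrm{B}}^{s}_{p,q}(\mathbb{R}^n)$. Each $\mathcal{P}_0u_k=u_k-\mathrm{E}^{-}[\mathbbm{1}_{\mathbb{R}^n_-}u_k]$ is an $\mathrm{L}^p(\mathbb{R}^n)$-function supported in $\overline{\mathbb{R}^n_+}$ which also lies in $\dot{\mathrm{B}}^{s}_{p,q}(\mathbb{R}^n)$ (Corollary~\ref{cor:ExtProj0HomBspq}), hence $\mathcal{P}_0u_k\in\mathrm{L}^p(\mathbb{R}^n_+)\cap\dot{\mathrm{B}}^{s}_{p,q,0}(\mathbb{R}^n_+)=\mathrm{B}^{s}_{p,q,0}(\mathbb{R}^n_+)$ by point \textit{(v)} of Proposition~\ref{prop:PropertiesHomBesovSpacesRn}, and $\mathcal{P}_0u_k\to\mathcal{P}_0u=u$ in $\dot{\mathrm{B}}^{s}_{p,q}(\mathbb{R}^n)$ by boundedness of $\mathcal{P}_0$. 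Finally, by the inhomogeneous density each $\mathcal{P}_0u_k$ is a $\lVert\cdot\rVert_{\mathrm{B}^{s}_{p,q}(\mathbb{R}^n)}$-limit of functions in $\mathrm{C}_c^\infty(\mathbb{R}^n_+)$, and since $s>0$ the norm $\lVert\cdot\rVert_{\mathrm{B}^{s}_{p,q}(\mathbb{R}^n)}$ dominates $\lVert\cdot\rVert_{\dot{\mathrm{B}}^{s}_{p,q}(\mathbb{R}^n)}$ (point \textit{(v)} of Proposition~\ref{prop:PropertiesHomBesovSpacesRn}); a diagonal extraction then yields a sequence in $\mathrm{C}_c^\infty(\mathbb{R}^n_+)$ converging to $u$ in $\dot{\mathrm{B}}^{s}_{p,q,0}(\mathbb{R}^n_+)$, which proves the claim.

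The main obstacle is exactly the identity $\mathcal{P}_0u=u$ on all of $\dot{\mathrm{B}}^{s}_{p,q,0}(\mathbb{R}^n_+)$: for large $q$ the space $\dot{\mathrm{B}}^{s}_{p,q}(\mathbb{R}^n)$ is not contained in $\mathrm{L}^1_{\mathrm{loc}}$, so the identity cannot be read off pointwise and must be extracted from the reflection structure $\ker\mathcal{P}_0=\mathrm{E}^{-}\big[\dot{\mathrm{B}}^{s}_{p,q}(\mathbb{R}^n_-)\big]$ as above; the rest of the argument is routine functional analysis once Corollary~\ref{cor:ExtProj0HomBspq} and the inhomogeneous density are invoked.
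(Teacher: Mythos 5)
Your proposal is correct and follows essentially the same route as the paper: approximate $u$ by elements of $\eus{S}_0(\mathbb{R}^n)$ (or $\mathrm{B}^{s}_{p,q}(\mathbb{R}^n)$), push them into $\mathrm{B}^{s}_{p,q,0}(\mathbb{R}^n_+)$ via the bounded projection $\mathcal{P}_0$ of Corollary~\ref{cor:ExtProj0HomBspq}, and conclude with the inhomogeneous density of $\mathrm{C}_c^\infty(\mathbb{R}^n_+)$ together with the domination $\lVert\cdot\rVert_{\dot{\mathrm{B}}^{s}_{p,q}}\lesssim\lVert\cdot\rVert_{\mathrm{B}^{s}_{p,q}}$ for $s>0$. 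The only divergence is that the paper (via Step~1 of Proposition~\ref{prop:densityCcinftyHsp0}) simply asserts $\mathcal{P}_0u=u$ on the $0$-space, whereas your reflection argument through $\ker\mathcal{P}_0=\mathrm{ran}(\mathrm{I}-\mathcal{P}_0)$ and the boundedness of $\mathrm{E}^{-}$ actually justifies this identity for the density-extended operator — a legitimate gap in the paper's write-up that your version fills.
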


\begin{proof}As in the proof of Proposition \ref{prop:densityCcinftyHsp0}, in the case of non-negative index: by a successive approximations scheme, we use density of $\mathrm{B}^{s}_{p,q}(\mathbb{R}^n)$ in $\dot{\mathrm{B}}^{s}_{p,q}(\mathbb{R}^n)$, to approximate functions in $\dot{\mathrm{B}}^{s}_{p,q,0}(\mathbb{R}^n_+)$. Then the boundedness of $\mathcal{P}_0$ on $\dot{\mathrm{B}}^{s}_{p,q}(\mathbb{R}^n_+)$, and the density of $\mathrm{C}_c^\infty(\mathbb{R}^n_+)$ in ${\mathrm{B}}^{s}_{p,q,0}(\mathbb{R}^n_+)$ yields the result.
\end{proof}

\begin{proposition}\label{prop:InterpHomSpacesRn+}Let $(p_0,p_1,p,q)\in(1,+\infty)^3\times[1,+\infty]$, $s_0,s_1\in\mathbb{R}$, such that $s_0< s_1$, let  $(\mathfrak{h},\mathfrak{b})\in\{ (\mathrm{H},\mathrm{B}),(\mathrm{H}_0,\mathrm{B}_{\cdot,\cdot,0})\}$, and set
\begin{align*}
    \left(s,\frac{1}{p_\theta}\right):= (1-\theta)\left(s_0,\frac{1}{p_0}\right)+ \theta\left(s_1,\frac{1}{p_1}\right)\text{. }
\end{align*}

If either one of following assertions is satisfied,
\begin{enumerate}[label=(\roman*)]
    \item $q\in[1,+\infty)$, $s_j>-1+\tfrac{1}{p_j}$, $j\in\{0,1\}$; \label{assertion(i)InterpSob}
    \item $q\in[1,+\infty]$, $s_j>-1+\tfrac{1}{p_j}$, and $(\mathcal{C}_{s_j,p_j})$ is satisfied, $j\in\{0,1\}$; \label{assertion(ii)InterpSob}
\end{enumerate}
If $p_0=p_1=p$ and \eqref{AssumptionCompletenessExponents} is satisfied, the following equality is true with equivalence of norms
\begin{align}
    (\dot{\mathfrak{h}}^{s_0,p}(\mathbb{R}^n_+),\dot{\mathfrak{h}}^{s_1,p}(\mathbb{R}^n_+))_{\theta,q}&=\dot{\mathfrak{b}}^{s}_{p,q}(\mathbb{R}^n_+)\text{.}\label{eqRealInterpSobolevintoBesovRn+}
\end{align}
If $(\mathcal{C}_{s_0,p_0})$ and $(\mathcal{C}_{s_1,p_1})$ are true then also is $(\mathcal{C}_{s,p_\theta})$ and
\begin{align}
[\dot{\mathfrak{h}}^{s_0,p_0}(\mathbb{R}^n_+),\dot{\mathfrak{h}}^{s_1,p_1}(\mathbb{R}^n_+)]_{\theta} = \dot{\mathfrak{h}}^{s,p_\theta}(\mathbb{R}^n_+) \text{.}\label{eqCompInterpSobolevIntoSobolevRn+}
\end{align}
\end{proposition}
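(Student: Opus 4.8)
The strategy is to transfer the whole-space interpolation identities of Theorem \ref{thm:InterpHomSpacesRn} to the half-space by means of the extension and projection operators built in Propositions \ref{prop:ExtOpHomSobSpaces}, \ref{prop:IntersecHomHspRn+} and Lemma \ref{lemma:ProjHsp0}, exactly as one does in the inhomogeneous setting, but taking care that the relevant spaces are complete before invoking complex interpolation and that we only use \emph{decoupled} homogeneous estimates on intersections when completeness fails.

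For the complex interpolation identity \eqref{eqCompInterpSobolevIntoSobolevRn+}: assume $(\mathcal{C}_{s_0,p_0})$ and $(\mathcal{C}_{s_1,p_1})$, so all four spaces $\dot{\mathfrak{h}}^{s_j,p_j}(\mathbb{R}^n_+)$ are complete (for $\mathfrak{h}=\mathrm{H}$ this is the restriction of a complete space, for $\mathfrak{h}=\mathrm{H}_0$ it is a closed subspace of one); convexity of $s\mapsto \mathcal{C}_{s,p}$ along the segment gives $(\mathcal{C}_{s,p_\theta})$, which is the first claim. Choose $m$ large enough that $s_0,s_1<m+1+\tfrac{1}{p_j}$, and let $\mathrm{E}$ (resp. $\mathcal{P}_0$) be the operator from Proposition \ref{prop:ExtOpHomSobSpaces} (resp. Lemma \ref{lemma:ProjHsp0}). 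By Corollary \ref{cor:ExtOpIntersecHomHspRn+} (resp. Corollary \ref{cor:ProjIntersecHomHsp0}) these operators satisfy the decoupled bounds $\mathrm{E}:\dot{\mathrm{H}}^{s_j,p_j}(\mathbb{R}^n_+)\to\dot{\mathrm{H}}^{s_j,p_j}(\mathbb{R}^n)$ and the restriction $(\cdot)_{|\mathbb{R}^n_+}:\dot{\mathrm{H}}^{s_j,p_j}(\mathbb{R}^n)\to\dot{\mathrm{H}}^{s_j,p_j}(\mathbb{R}^n_+)$, $j\in\{0,1\}$, with $\mathrm{E}u_{|\mathbb{R}^n_+}=u$; hence $\dot{\mathfrak{h}}^{s_\bullet,p_\bullet}(\mathbb{R}^n_+)$ is a retract of $\dot{\mathrm{H}}^{s_\bullet,p_\bullet}(\mathbb{R}^n)$ via the couple $((\cdot)_{|\mathbb{R}^n_+},\mathrm{E})$ in the category of interpolation couples. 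The general retract principle \cite[Theorem~6.4.2]{BerghLofstrom1976}, combined with \eqref{eq:complexInterpHomHspRn} from Theorem \ref{thm:InterpHomSpacesRn}, then yields $[\dot{\mathfrak{h}}^{s_0,p_0}(\mathbb{R}^n_+),\dot{\mathfrak{h}}^{s_1,p_1}(\mathbb{R}^n_+)]_\theta=\dot{\mathfrak{h}}^{s,p_\theta}(\mathbb{R}^n)_{|\mathbb{R}^n_+}=\dot{\mathfrak{h}}^{s,p_\theta}(\mathbb{R}^n_+)$ (the $\mathrm{H}_0$ case uses in addition that $\mathrm{E}$ lands in the support-restricted subspace and that restriction preserves supports in $\overline{\mathbb{R}^n_+}$). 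One subtlety: if $s_j-\tfrac{1}{p_j}\in\mathbb{N}$ for one index, the decoupled estimate is obtained through the auxiliary complex-interpolation argument already carried out inside the proof of Proposition \ref{prop:ExtOpHomSobSpaces}, so no new work is needed here.

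For the real interpolation identity \eqref{eqRealInterpSobolevintoBesovRn+} with $p_0=p_1=p$ and $(\mathcal{C}_{s,p})$ (equivalently $(\mathcal{C}_{s_0,p})$ since $s_0<s$): one inclusion is Lemma \ref{lem:EmbeddingInterpHomSobspacesRn+}, namely \eqref{eq:HomInterpEmbedding1} for $\mathfrak{h}=\mathrm{H}$ and \eqref{eq:HomInterpEmbedding3} for $\mathfrak{h}=\mathrm{H}_0$, giving $\dot{\mathfrak{b}}^{s}_{p,q}(\mathbb{R}^n_+)\hookrightarrow(\dot{\mathfrak{h}}^{s_0,p}(\mathbb{R}^n_+),\dot{\mathfrak{h}}^{s_1,p}(\mathbb{R}^n_+))_{\theta,q}$ (resp. the reverse embedding for the $0$-subspaces). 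For the opposite inclusion, apply $\mathrm{E}$ (resp. $\mathcal{P}_0\circ\mathcal{E}$ with $\mathcal{E}=\mathrm{E}[\mathbbm 1_{\mathbb{R}^n_+}\cdot]$): by Corollary \ref{cor:ExtProj0HomBspq} these are bounded $\dot{\mathrm{B}}^{s_j}_{p,q_j}(\mathbb{R}^n_+)\to\dot{\mathrm{B}}^{s_j}_{p,q_j}(\mathbb{R}^n)$ for any $q_j$ on the admissible range, and $\mathrm{E}:\dot{\mathrm{H}}^{s_j,p}(\mathbb{R}^n_+)\to\dot{\mathrm{H}}^{s_j,p}(\mathbb{R}^n)$ by Proposition \ref{prop:ExtOpHomSobSpaces}; hence $\mathrm{E}$ maps $(\dot{\mathfrak{h}}^{s_0,p}(\mathbb{R}^n_+),\dot{\mathfrak{h}}^{s_1,p}(\mathbb{R}^n_+))_{\theta,q}$ into $(\dot{\mathrm{H}}^{s_0,p}(\mathbb{R}^n),\dot{\mathrm{H}}^{s_1,p}(\mathbb{R}^n))_{\theta,q}=\dot{\mathrm{B}}^{s}_{p,q}(\mathbb{R}^n)$ by \eqref{eq:realInterpHomBspqRn}, and restricting back gives $(\dot{\mathfrak{h}}^{s_0,p}(\mathbb{R}^n_+),\dot{\mathfrak{h}}^{s_1,p}(\mathbb{R}^n_+))_{\theta,q}\hookrightarrow\dot{\mathrm{B}}^{s}_{p,q}(\mathbb{R}^n_+)$, with the support condition in the $\mathrm{H}_0$ case promoting this to $\dot{\mathrm{B}}^{s}_{p,q,0}(\mathbb{R}^n_+)$. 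Combining the two embeddings yields \eqref{eqRealInterpSobolevintoBesovRn+}. The main obstacle throughout is bookkeeping on the three separate ranges of $s$ (the subcritical range $s<\tfrac{1}{p}$ handled directly by the multiplier Proposition \ref{prop:SobolevMultiplier}, the generic range $s-\tfrac1p\notin\mathbb N$, and the exceptional integers $s-\tfrac1p\in\mathbb N$) and making sure that every application of complex interpolation is to a complete space — which is precisely why the decoupled-estimate corollaries on intersection spaces, rather than a naive extension operator on the full scale, are the right tool.
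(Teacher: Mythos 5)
Your complex-interpolation argument and the ``easy'' embedding for real interpolation match the paper's Step 1 and are fine. The gap is in the reverse embedding of \eqref{eqRealInterpSobolevintoBesovRn+} under assertion \textit{(i)}, which is the only genuinely new case. There you write that $\mathrm{E}\,:\,\dot{\mathrm{H}}^{s_j,p}(\mathbb{R}^n_+)\to\dot{\mathrm{H}}^{s_j,p}(\mathbb{R}^n)$ is bounded ``by Proposition \ref{prop:ExtOpHomSobSpaces}'' and then interpolate the operator. But under assertion \textit{(i)} only $(\mathcal{C}_{s_0,p})$ is assumed; when $(\mathcal{C}_{s_1,p})$ fails, Proposition \ref{prop:ExtOpHomSobSpaces} (and likewise Corollary \ref{cor:ExtProj0HomBspq}) gives the estimate only on the dense, non-closed subspace ${\mathrm{H}}^{s_1,p}(\mathbb{R}^n_+)$, and since $\dot{\mathrm{H}}^{s_1,p}(\mathbb{R}^n_+)$ is not complete there is no unique bounded extension. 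So $\mathrm{E}$ is not an admissible operator on the couple $(\dot{\mathrm{H}}^{s_0,p}(\mathbb{R}^n_+),\dot{\mathrm{H}}^{s_1,p}(\mathbb{R}^n_+))$, and the conclusion ``$\mathrm{E}$ maps the interpolation space into $(\dot{\mathrm{H}}^{s_0,p}(\mathbb{R}^n),\dot{\mathrm{H}}^{s_1,p}(\mathbb{R}^n))_{\theta,q}$'' does not follow: for a decomposition $u=a+b$ with $b\in\dot{\mathrm{H}}^{s_1,p}(\mathbb{R}^n_+)$ arbitrary, $\mathrm{E}b$ need not even be defined. Your introductory plan correctly identifies the decoupled intersection estimates as ``the right tool,'' but the body of the argument then reverts to exactly the naive operator-interpolation it warns against.

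The repair is the paper's: take $f$ in a dense subspace ($\eus{S}_0(\overline{\mathbb{R}^n_+})$ for $\mathfrak{h}=\mathrm{H}$, $\mathrm{C}_c^\infty(\mathbb{R}^n_+)$ for $\mathfrak{h}=\mathrm{H}_0$) and observe that for \emph{any} splitting $f=a+b$ one automatically has $b=f-a\in\dot{\mathrm{H}}^{s_0,p}(\mathbb{R}^n_+)\cap\dot{\mathrm{H}}^{s_1,p}(\mathbb{R}^n_+)$, a complete space on which Corollary \ref{cor:ExtOpIntersecHomHspRn+} (resp. Corollary \ref{cor:ProjIntersecHomHsp0}) provides decoupled bounds for $\mathrm{E}$ (resp. $\mathcal{P}_0$). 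This controls $K(t,\mathrm{E}a+\mathrm{E}b,\dot{\mathrm{H}}^{s_0,p}(\mathbb{R}^n),\dot{\mathrm{H}}^{s_1,p}(\mathbb{R}^n))$ by $K(t,f,\cdot,\cdot)$ and yields the norm equivalence on the dense subspace; one then concludes by density, which is precisely where the restriction $q<+\infty$ in assertion \textit{(i)} enters (you never explain why \textit{(i)} excludes $q=+\infty$ while \textit{(ii)} allows it). For the $(\mathrm{H}_0,\mathrm{B}_{\cdot,\cdot,0})$ case the density of $\mathrm{C}_c^\infty(\mathbb{R}^n_+)$ in $\dot{\mathrm{B}}^{s}_{p,q,0}(\mathbb{R}^n_+)$ is itself nontrivial and must be established first (the paper's Step 2.2, resting on Corollary \ref{cor:densityCcinftyIntersecHsp0} and Proposition \ref{prop:SobolevMultiplier}); your proposal omits this ingredient entirely.
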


\begin{proof}We start noticing that \eqref{eqCompInterpSobolevIntoSobolevRn+} only makes sense under assertion \textit{\ref{assertion(ii)InterpSob}}. 

\textbf{Step 1:} We prove first \eqref{eqCompInterpSobolevIntoSobolevRn+} and \eqref{eqRealInterpSobolevintoBesovRn+} under assertion \textit{\ref{assertion(ii)InterpSob}}.

It suffices to assert that $\{ \dot{\mathfrak{h}}^{s_0,p_0}(\mathbb{R}^n_+),\dot{\mathfrak{h}}^{s_1,p_1}(\mathbb{R}^n_+)\}$ is a retraction of $\{ \dot{\mathrm{H}}^{s_0,p_0}(\mathbb{R}^n),\dot{\mathrm{H}}^{s_1,p_1}(\mathbb{R}^n)\}$, thanks to \cite[Theorem~6.4.2]{BerghLofstrom1976}. Indeed, both retractions are given by
\begin{align*}
    \mathrm{E} \,:\, \dot{\mathrm{H}}^{s_j,p_j}(\mathbb{R}^n_+)\longrightarrow \dot{\mathrm{H}}^{s_j,p_j}(\mathbb{R}^n) &\text{ and } \mathrm{R}_{{\mathbb{R}^n_+}} \,:\, \dot{\mathrm{H}}^{s_j,p_j}(\mathbb{R}^n)\longrightarrow \dot{\mathrm{H}}^{s_j,p_j}(\mathbb{R}^n_+) \text{, }\\
    \iota \,:\, \dot{\mathrm{H}}^{s_j,p_j}_0(\mathbb{R}^n_+)\longrightarrow \dot{\mathrm{H}}^{s_j,p_j}(\mathbb{R}^n) &\text{ and }\,\,\,\, \mathcal{P}_{0} \,:\, \dot{\mathrm{H}}^{s_j,p_j}(\mathbb{R}^n)\longrightarrow \dot{\mathrm{H}}^{s_j,p_j}_0(\mathbb{R}^n_+) \text{. }
\end{align*}
Here, $\mathrm{R}_{{\mathbb{R}^n_+}}$ and $\iota$ stand respectively for the restriction and the canonical injection operator. Boundedness and range of $\mathrm{E}$ and $\mathcal{P}_0$ provided by Lemma \ref{lemma:ProjHsp0} and Corollary \ref{cor:ExtProj0HomBspq} lead to \eqref{eqCompInterpSobolevIntoSobolevRn+} and \eqref{eqRealInterpSobolevintoBesovRn+} under assertion \textit{\ref{assertion(ii)InterpSob}}.

\textbf{Step 2:} We prove \eqref{eqRealInterpSobolevintoBesovRn+} under assertion \textit{\ref{assertion(i)InterpSob}}.

\textbf{Step 2.1:} $(\mathfrak{h},\mathfrak{b})=(\mathrm{H},\mathrm{B})$.\\
Thanks to Lemma \ref{lem:EmbeddingInterpHomSobspacesRn+}, we have continuous embedding,
\begin{align}\label{eq:interpEmbeddingproofStep21}
    \dot{\mathrm{B}}^{s}_{p,q}(\mathbb{R}^n_+)\hookrightarrow(\dot{\mathrm{H}}^{s_0,p}(\mathbb{R}^n_+),\dot{\mathrm{H}}^{s_1,p}(\mathbb{R}^n_+))_{\theta,q}\text{. }
\end{align}
Let us  prove the reverse embedding,
\begin{align*}
    \dot{\mathrm{B}}^{s}_{p,q}(\mathbb{R}^n_+)\hookleftarrow(\dot{\mathrm{H}}^{s_0,p}(\mathbb{R}^n_+),\dot{\mathrm{H}}^{s_1,p}(\mathbb{R}^n_+))_{\theta,q}\text{. }
\end{align*}
Without loss of generality, we can assume $s_1\geqslant \tfrac{n}{p}$. Let $f\in \eus{S}_0(\overline{\mathbb{R}^n_+})\subset \dot{\mathrm{B}}^{s}_{p,q}(\mathbb{R}^n_+)$, it follows that $f\in(\dot{\mathrm{H}}^{s_0,p}(\mathbb{R}^n_+),\dot{\mathrm{H}}^{s_1,p}(\mathbb{R}^n_+))_{\theta,q}\subset \dot{\mathrm{H}}^{s_0,p}(\mathbb{R}^n_+)+\dot{\mathrm{H}}^{s_1,p}(\mathbb{R}^n_+)$. Thus, for all $(a,b)\in \dot{\mathrm{H}}^{s_0,p}(\mathbb{R}^n_+)\times\dot{\mathrm{H}}^{s_1,p}(\mathbb{R}^n_+)$ such that $f=a+b$, we have,
\begin{align*}
    b=f-a \in (\eus{S}_0(\overline{\mathbb{R}^n_+})+ \dot{\mathrm{H}}^{s_0,p}(\mathbb{R}^n_+)) \cap \dot{\mathrm{H}}^{s_1,p}(\mathbb{R}^n_+)\text{. }
\end{align*}
In particular, we have $a \in\dot{\mathrm{H}}^{s_0,p}(\mathbb{R}^n_+)$ and $b\in\dot{\mathrm{H}}^{s_0,p}(\mathbb{R}^n_+) \cap \dot{\mathrm{H}}^{s_1,p}(\mathbb{R}^n_+)$. Hence, we can introduce $F:= \mathrm{E}a + \mathrm{E}b$, where $F_{|_{\mathbb{R}^n_+}} = f $, $\mathrm{E}a \in\dot{\mathrm{H}}^{s_0,p}(\mathbb{R}^n)$ and $\mathrm{E} b\in\dot{\mathrm{H}}^{s_0,p}(\mathbb{R}^n) \cap \dot{\mathrm{H}}^{s_1,p}(\mathbb{R}^n)$, with the estimates, given by Corollary \ref{cor:ExtOpIntersecHomHspRn+},
\begin{align*}
    \lVert \mathrm{E} a \rVert_{\dot{\mathrm{H}}^{s_0,p}(\mathbb{R}^n)} \lesssim_{s_0,m,p,n}   \lVert  a \rVert_{\dot{\mathrm{H}}^{s_0,p}(\mathbb{R}^n_+)}\text{ and }\lVert \mathrm{E} b \rVert_{\dot{\mathrm{H}}^{s_1,p}(\mathbb{R}^n)} \lesssim_{s_1,m,p,n}   \lVert  b \rVert_{\dot{\mathrm{H}}^{s_1,p}(\mathbb{R}^n_+)}\text{. }
\end{align*}
Then, one may bound the $K$-functional of $F$, for $t>0$,
\begin{align*}
    K(t,F,\dot{\mathrm{H}}^{s_0,p}(\mathbb{R}^n),\dot{\mathrm{H}}^{s_1,p}(\mathbb{R}^n)) \leqslant \lVert \mathrm{E} a \rVert_{\dot{\mathrm{H}}^{s_0,p}(\mathbb{R}^n)}+ t\lVert \mathrm{E} b \rVert_{\dot{\mathrm{H}}^{s_1,p}(\mathbb{R}^n)} \lesssim_{s_j,p,n} \lVert a \rVert_{\dot{\mathrm{H}}^{s_0,p}(\mathbb{R}^n_+)}+ t\lVert  b \rVert_{\dot{\mathrm{H}}^{s_1,p}(\mathbb{R}^n_+)}
\end{align*}
Taking the infimum over all such functions $a$ and $b$, we obtain
\begin{align*}
    K(t,F,\dot{\mathrm{H}}^{s_0,p}(\mathbb{R}^n),\dot{\mathrm{H}}^{s_1,p}(\mathbb{R}^n))  \lesssim_{s_j,p,n} K(t,f,\dot{\mathrm{H}}^{s_0,p}(\mathbb{R}^n_+),\dot{\mathrm{H}}^{s_1,p}(\mathbb{R}^n_+))\text{, }
\end{align*}
from which we obtain, after multiplying by $t^{-\theta}$, taking the $\mathrm{L}^q_\ast$-norm with respect to $t$, and applying Theorem \ref{thm:InterpHomSpacesRn},
\begin{align*}
    \lVert f \rVert_{\dot{\mathrm{B}}^{s}_{p,q}(\mathbb{R}^n_+)}\leqslant\lVert F \rVert_{\dot{\mathrm{B}}^{s}_{p,q}(\mathbb{R}^n)}  \lesssim_{s,p,n} \lVert f \rVert_{(\dot{\mathrm{H}}^{s_0,p}(\mathbb{R}^n_+),\dot{\mathrm{H}}^{s_1,p}(\mathbb{R}^n_+))_{\theta,q}}\text{. }
\end{align*}
Finally, thanks to the first embedding \eqref{eq:interpEmbeddingproofStep21}, we have
\begin{align*}
    \lVert f \rVert_{\dot{\mathrm{B}}^{s}_{p,q}(\mathbb{R}^n_+)}\sim_{p,s,n} \lVert f \rVert_{(\dot{\mathrm{H}}^{s_0,p}(\mathbb{R}^n_+),\dot{\mathrm{H}}^{s_1,p}(\mathbb{R}^n_+))_{\theta,q}}\text{, } \forall f\in\eus{S}_0(\overline{\mathbb{R}^n_+})\text{. }
\end{align*}
Since $q<+\infty$, we can conclude by density of $\eus{S}_0(\overline{\mathbb{R}^n_+})$ in both $\dot{\mathrm{B}}^{s}_{p,q}(\mathbb{R}^n_+)$ and in the interpolation space $(\dot{\mathrm{H}}^{s_0,p}(\mathbb{R}^n_+),\dot{\mathrm{H}}^{s_1,p}(\mathbb{R}^n_+))_{\theta,q}$. The density argument for the later one is carried over by Lemma \ref{lem:IntersecHomHsp} and \cite[Theorem~3.4.2]{BerghLofstrom1976}\text{. }

\textbf{Step 2.2:} $\mathrm{C}_c^\infty(\mathbb{R}^n_+)$ is dense in $\dot{\mathrm{B}}^{s}_{p,q,0}$, provided $-1+\tfrac{1}{p}<s<\tfrac{1}{p}$, $p\in(1,+\infty)$, $q\in[1,+\infty)$.

Thanks to \textbf{Step 1} one may find, $-1+\tfrac{1}{p}<s_0<s<s_1<\tfrac{1}{p}$, $\theta\in(0,1)$, such that, as a consequence of \cite[Theorem~3.4.2]{BerghLofstrom1976}, we have the following dense embedding,
\begin{align*}
    \dot{\mathrm{H}}^{s_0,p}(\mathbb{R}^n_+)\cap\dot{\mathrm{H}}^{s_1,p}(\mathbb{R}^n_+) \hookrightarrow (\dot{\mathrm{H}}^{s_0,p}(\mathbb{R}^n_+),\dot{\mathrm{H}}^{s_1,p}(\mathbb{R}^n_+))_{\theta,q} = \dot{\mathrm{B}}^{s}_{p,q}(\mathbb{R}^n_+) = \dot{\mathrm{B}}^{s}_{p,q,0}(\mathbb{R}^n_+)\text{. }
\end{align*}
The equality in the above line is a direct consequence of Proposition \ref{prop:SobolevMultiplier}. In this case, the density of $\mathrm{C}_c^\infty(\mathbb{R}^n_+)$ is a straightforward application of Corollary \ref{cor:densityCcinftyIntersecHsp0} by successive approximations.

\textbf{Step 2.3:} $(\mathfrak{h},\mathfrak{b})=(\mathrm{H}_0,\mathrm{B}_{\cdot,\cdot,0})$.\\
Thanks to Lemma \ref{lem:EmbeddingInterpHomSobspacesRn+}, we have continuous embedding,
\begin{align*}
    (\dot{\mathrm{H}}^{s_0,p}_0(\mathbb{R}^n_+),\dot{\mathrm{H}}^{s_1,p}_0(\mathbb{R}^n_+))_{\theta,q} \hookrightarrow \dot{\mathrm{B}}^{s}_{p,q,0}(\mathbb{R}^n_+)\text{. }
\end{align*}
We are going to prove the reverse embedding, 
\begin{align*}
    (\dot{\mathrm{H}}^{s_0,p}_0(\mathbb{R}^n_+),\dot{\mathrm{H}}^{s_1,p}_0(\mathbb{R}^n_+))_{\theta,q} \hookleftarrow \dot{\mathrm{B}}^{s}_{p,q,0}(\mathbb{R}^n_+)\text{. }
\end{align*}
Again, without loss of generality we can assume $s_1\geqslant \tfrac{n}{p}$, otherwise one can go back to \textbf{Step~1}. Let us  consider $u\in \mathrm{C}_c^\infty(\mathbb{R}^n_+)$, then, $u$ belongs to $\dot{\mathrm{H}}^{s_0,p}(\mathbb{R}^n)+\dot{\mathrm{H}}^{s_1,p}(\mathbb{R}^n)$. In particular for $(a,b)\in \dot{\mathrm{H}}^{s_0,p}(\mathbb{R}^n)\times\dot{\mathrm{H}}^{s_1,p}(\mathbb{R}^n)$, such that $u=a+b$ we have
\begin{align*}
    b=u-a \in (\mathrm{C}_c^\infty(\mathbb{R}^n_+)+\dot{\mathrm{H}}^{s_0,p}(\mathbb{R}^n))\cap\dot{\mathrm{H}}^{s_1,p}(\mathbb{R}^n)\text{. }
\end{align*}
in particular we have $a \in\dot{\mathrm{H}}^{s_0,p}(\mathbb{R}^n)$ and $b\in\dot{\mathrm{H}}^{s_0,p}(\mathbb{R}^n) \cap \dot{\mathrm{H}}^{s_1,p}(\mathbb{R}^n)$. Consequently, we have $u = \mathcal{P}_0 u = \mathcal{P}_0 a + \mathcal{P}_0b$, with $\mathcal{P}_0a \in\dot{\mathrm{H}}^{s_0,p}_0(\mathbb{R}^n_+)$ and $\mathcal{P}_0 b\in\dot{\mathrm{H}}^{s_0,p}_0(\mathbb{R}^n_+) \cap \dot{\mathrm{H}}^{s_1,p}_0(\mathbb{R}^n_+)$, with the estimates
\begin{align*}
    \lVert \mathcal{P}_0 a \rVert_{\dot{\mathrm{H}}^{s_0,p}_0(\mathbb{R}^n_+)} \lesssim_{s_0,m,p,n}   \lVert  a \rVert_{\dot{\mathrm{H}}^{s_0,p}(\mathbb{R}^n)}\text{ and }\lVert \mathcal{P}_0 b \rVert_{\dot{\mathrm{H}}^{s_1,p}_0(\mathbb{R}^n_+)} \lesssim_{s_1,m,p,n}   \lVert  b \rVert_{\dot{\mathrm{H}}^{s_1,p}(\mathbb{R}^n)}\text{, }
\end{align*}
thanks to Corollary \ref{cor:ProjIntersecHomHsp0}. Thus, one may follow the lines of \textbf{Step 2.1}, to obtain for all $u\in \mathrm{C}_c^\infty(\mathbb{R}^n_+)$,
\begin{align*}
    \lVert u \rVert_{\dot{\mathrm{B}}^{s}_{p,q,0}(\mathbb{R}^n_+)} \sim_{s,p,n} \lVert u \rVert_{(\dot{\mathrm{H}}^{s_0,p}_0(\mathbb{R}^n_+),\dot{\mathrm{H}}^{s_1,p}_0(\mathbb{R}^n_+))_{\theta,q}} \text{. }
\end{align*}
Again, one can conclude via density arguments since $q<+\infty$, and $\mathrm{C}_c^\infty(\mathbb{R}^n_+)$ is dense in $\dot{\mathrm{B}}^{s}_{p,q,0}(\mathbb{R}^n_+)$ thanks to \textbf{Step 2.2} and Lemma \ref{lem:densityCcinftyBesov0s+}.
\end{proof}

The \textbf{Step 2.2} in the proof above can be turned more formally into the following corollary.
\begin{corollary}\label{cor:Bspq=Bspq0Rn+} Let $p\in(1,+\infty)$, $q\in[1,+\infty]$, $s\in(-1+\tfrac{1}{p},\tfrac{1}{p})$. Then the following equality holds with equivalence of norms,
\begin{align*}
    \dot{\mathrm{B}}^{s}_{p,q}(\mathbb{R}^n_+) = \dot{\mathrm{B}}^{s}_{p,q,0}(\mathbb{R}^n_+)\text{. }
\end{align*}
Moreover, the space $\mathrm{C}_c^\infty(\mathbb{R}^n_+)$ is dense whenever $q<+\infty$.
\end{corollary}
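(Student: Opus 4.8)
The plan is to split according to the value of $q$ and to leverage the work already done. First I would dispose of the finite-$q$ case, $q\in[1,+\infty)$, together with the density claim. Here the key input is \textbf{Step 2.2} of the proof of Proposition \ref{prop:InterpHomSpacesRn+}, which already establishes the chain of dense embeddings
\begin{align*}
    \dot{\mathrm{H}}^{s_0,p}(\mathbb{R}^n_+)\cap\dot{\mathrm{H}}^{s_1,p}(\mathbb{R}^n_+) \hookrightarrow (\dot{\mathrm{H}}^{s_0,p}(\mathbb{R}^n_+),\dot{\mathrm{H}}^{s_1,p}(\mathbb{R}^n_+))_{\theta,q} = \dot{\mathrm{B}}^{s}_{p,q}(\mathbb{R}^n_+) = \dot{\mathrm{B}}^{s}_{p,q,0}(\mathbb{R}^n_+)
\end{align*}
whenever one can pick $-1+\tfrac1p<s_0<s<s_1<\tfrac1p$ and $\theta\in(0,1)$ with $s=(1-\theta)s_0+\theta s_1$, which is exactly possible when $s\in(-1+\tfrac1p,\tfrac1p)$. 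The middle equality is Proposition \ref{prop:InterpHomSpacesRn+}\eqref{eqRealInterpSobolevintoBesovRn+} (applicable since each $s_j\in(-1+\tfrac1p,\tfrac1p)\subset(-1+\tfrac1p,+\infty)$ and $s_j<n/p$, so \eqref{AssumptionCompletenessExponents} holds), and the final equality of Besov spaces is exactly Proposition \ref{prop:SobolevMultiplier}: since $s_0,s_1\in(-1+\tfrac1p,\tfrac1p)$, multiplication by $\mathbbm{1}_{\mathbb{R}^n_+}$ is bounded on $\dot{\mathrm{H}}^{s_j,p}(\mathbb{R}^n)$, which forces $\dot{\mathrm{H}}^{s_j,p}(\mathbb{R}^n_+)=\dot{\mathrm{H}}^{s_j,p}_0(\mathbb{R}^n_+)$ and hence the corresponding real-interpolation Besov identity $\dot{\mathrm{B}}^{s}_{p,q}(\mathbb{R}^n_+)=\dot{\mathrm{B}}^{s}_{p,q,0}(\mathbb{R}^n_+)$ with equivalent norms. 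The density of $\mathrm{C}_c^\infty(\mathbb{R}^n_+)$ then drops out: by the above dense embedding it suffices to approximate an element of $\dot{\mathrm{H}}^{s_0,p}(\mathbb{R}^n_+)\cap\dot{\mathrm{H}}^{s_1,p}(\mathbb{R}^n_+)=\dot{\mathrm{H}}^{s_0,p}_0(\mathbb{R}^n_+)\cap\dot{\mathrm{H}}^{s_1,p}_0(\mathbb{R}^n_+)$ by $\mathrm{C}_c^\infty(\mathbb{R}^n_+)$ functions in that intersection norm, which is Corollary \ref{cor:densityCcinftyIntersecHsp0} (noting that here $-n/p'<s_0<s_1<n/p$ holds a fortiori), and then conclude by a triangle-inequality / successive-approximation argument.

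It remains to treat $q=+\infty$, where $\dot{\mathrm{B}}^{s}_{p,\infty,0}(\mathbb{R}^n_+)$ is no longer separable so $\mathrm{C}_c^\infty(\mathbb{R}^n_+)$ cannot be dense, but the equality of spaces with equivalent norms should still hold. The cleanest route is a direct argument rather than interpolation: one always has the continuous injection $\dot{\mathrm{B}}^{s}_{p,\infty,0}(\mathbb{R}^n_+)\hookrightarrow\dot{\mathrm{B}}^{s}_{p,\infty}(\mathbb{R}^n_+)$ by definition, so only the reverse embedding needs proof. For that I would invoke Corollary \ref{cor:ExtProj0HomBspq}: for $s\in(-1+\tfrac1p,\tfrac1p)$ and $q=\infty$ the operator $\mathcal{P}_0$ is bounded from $\dot{\mathrm{B}}^{s}_{p,\infty}(\mathbb{R}^n)$ to $\dot{\mathrm{B}}^{s}_{p,\infty,0}(\mathbb{R}^n_+)$ (the corollary's last sentence covers $q=\infty$ when \eqref{AssumptionCompletenessExponents} holds, which it does since $s<\tfrac1p<\tfrac{n}{p}$ as $n\geqslant2$), and it restricts to the identity on functions supported in $\overline{\mathbb{R}^n_+}$. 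Given $u\in\dot{\mathrm{B}}^{s}_{p,\infty}(\mathbb{R}^n_+)$ pick an extension $U\in\dot{\mathrm{B}}^{s}_{p,\infty}(\mathbb{R}^n)$ with $U_{|_{\mathbb{R}^n_+}}=u$ and near-minimal norm; then $\mathcal{P}_0U\in\dot{\mathrm{B}}^{s}_{p,\infty,0}(\mathbb{R}^n_+)$ is again an extension of $u$ (because $\mathbbm{1}_{\mathbb{R}^n_+}U$ has the same restriction and $\mathcal{P}_0=\mathrm{I}-\mathrm{E}^-[\mathbbm{1}_{\mathbb{R}^n_-}\cdot]$ only modifies the part supported in $\mathbb{R}^n_-$), with $\lVert \mathcal{P}_0U\rVert_{\dot{\mathrm{B}}^{s}_{p,\infty}(\mathbb{R}^n)}\lesssim_{s,p,n}\lVert U\rVert_{\dot{\mathrm{B}}^{s}_{p,\infty}(\mathbb{R}^n)}$; taking the infimum over $U$ gives $\lVert u\rVert_{\dot{\mathrm{B}}^{s}_{p,\infty,0}(\mathbb{R}^n_+)}\lesssim_{s,p,n}\lVert u\rVert_{\dot{\mathrm{B}}^{s}_{p,\infty}(\mathbb{R}^n_+)}$, which is the desired equivalence. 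Actually this $\mathcal{P}_0$ argument works uniformly for every $q\in[1,+\infty]$, so I would present the equality-of-spaces part for all $q$ at once and only add the finite-$q$ density statement separately.

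The main obstacle is a bookkeeping one rather than a conceptual one: making sure that Corollary \ref{cor:ExtProj0HomBspq} genuinely applies at $q=+\infty$ (checking that \eqref{AssumptionCompletenessExponents} is satisfied for the relevant $s<\tfrac1p$, which uses $n\geqslant2$), and verifying carefully that $\mathcal{P}_0U$ really is an extension of $u$ — i.e. that the defining formula $\mathcal{P}_0=\mathrm{I}-\mathrm{E}^-[\mathbbm{1}_{\mathbb{R}^n_-}\cdot]$ leaves $U_{|_{\mathbb{R}^n_+}}$ unchanged, which is immediate from the fact that $\mathrm{E}^-[\mathbbm{1}_{\mathbb{R}^n_-}U]$ is built by reflecting the $\mathbb{R}^n_-$-part and hence has no effect on $\mathbb{R}^n_+$ beyond what $\mathbbm{1}_{\mathbb{R}^n_-}$ already zeroed out — wait, more precisely $\mathcal{P}_0 U$ and $U$ agree on $\mathbb{R}^n_+$ because on $\mathbb{R}^n_+$ one has $\mathrm{E}^-[\mathbbm{1}_{\mathbb{R}^n_-}U](x)=\sum_j\alpha_j(\mathbbm{1}_{\mathbb{R}^n_-}U)(x',-x_n/(j+1))$ which involves only values of $U$ on $\mathbb{R}^n_-$, so this term is precisely what is subtracted, and what remains is a legitimate but generally different extension of $u$; the point is only that it lies in the $0$-class, which is what we need. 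Once these checks are in place the proof is short.
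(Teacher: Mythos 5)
Your treatment of $q<+\infty$ is exactly the paper's: the corollary is stated there as a formalisation of \textbf{Step 2.2} of the proof of Proposition \ref{prop:InterpHomSpacesRn+}, i.e. the chain of dense embeddings through $(\dot{\mathrm{H}}^{s_0,p}(\mathbb{R}^n_+),\dot{\mathrm{H}}^{s_1,p}(\mathbb{R}^n_+))_{\theta,q}$ with $-1+\tfrac1p<s_0<s<s_1<\tfrac1p$, the equality of spaces coming from Proposition \ref{prop:SobolevMultiplier} and the density from Corollary \ref{cor:densityCcinftyIntersecHsp0} plus \cite[Theorem~3.4.2]{BerghLofstrom1976}. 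That part is correct and requires no comment.

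Your $q=+\infty$ argument, however, contains a genuine error, and since you propose to run it ``uniformly for every $q$'' it would contaminate the whole equality-of-spaces claim. The operator $\mathcal{P}_0=\mathrm{I}-\mathrm{E}^{-}[\mathbbm{1}_{\mathbb{R}^n_-}\,\cdot\,]$ does \emph{not} leave $U_{|_{\mathbb{R}^n_+}}$ unchanged: for $x_n>0$ one has $\mathrm{E}^{-}[\mathbbm{1}_{\mathbb{R}^n_-}U](x)=\sum_{j}\alpha_j U(x',-x_n/(j+1))$, the higher-order reflection of $U_{|_{\mathbb{R}^n_-}}$ into the upper half-space, which is generically nonzero there. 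Hence $(\mathcal{P}_0U)_{|_{\mathbb{R}^n_+}}=u-\sum_j\alpha_j U(\cdot',-\cdot_n/(j+1))\neq u$, so $\mathcal{P}_0U$ is \emph{not} an extension of $u$ — your own parenthetical already computes the offending term and then draws the wrong conclusion from it ($\mathcal{P}_0$ is a projection onto the $0$-class that fixes functions already supported in $\overline{\mathbb{R}^n_+}$; it is not restriction-preserving, and in the paper it is only ever used to approximate elements that already lie in the $0$-class). The repair is the first clause of your own parenthesis: take $\mathbbm{1}_{\mathbb{R}^n_+}U$ instead. By the Besov part of Proposition \ref{prop:SobolevMultiplier} it lies in $\dot{\mathrm{B}}^{s}_{p,q}(\mathbb{R}^n)$ with norm $\lesssim_{s,p,n}\lVert U\rVert_{\dot{\mathrm{B}}^{s}_{p,q}(\mathbb{R}^n)}$, it is supported in $\overline{\mathbb{R}^n_+}$, and it restricts to $u$ on $\mathbb{R}^n_+$; taking the infimum over $U$ gives the reverse embedding for every $q\in[1,+\infty]$ at once. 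This is precisely what the paper means by ``direct consequence of Proposition \ref{prop:SobolevMultiplier}'', and with that substitution your proof is complete.
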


From general interpolation theory, we are able to deduce the following,

\begin{corollary}\label{cor:weakstardensity}Let $p\in(1,+\infty)$, $s>-1+1/p$, such that $(\mathcal{C}_{s,p,\infty})$ is satisfied.
\begin{itemize}
    \item The space $\mathrm{C}^\infty_c(\mathbb{R}^n_+)$ is weak${}^\ast$ dense in $\dot{\mathrm{B}}^{s}_{p,\infty,0}(\mathbb{R}^n_+)$.
    \item The space $\eus{S}_0(\overline{\mathbb{R}^n_+})$ is weak${}^\ast$ dense in $\dot{\mathrm{B}}^{s}_{p,\infty}(\mathbb{R}^n_+)$.
\end{itemize}
\end{corollary}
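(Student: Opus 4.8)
The plan is to deduce both weak${}^\ast$ density statements from a general functional-analytic principle: if $E$ is a normed space that is dense in a reflexive Banach space $F$ with $F\hookrightarrow G$ continuously and densely, and if moreover $G'$ embeds densely into... — more precisely, the cleanest route is to identify $\dot{\mathrm{B}}^{s}_{p,\infty,0}(\mathbb{R}^n_+)$ (resp. $\dot{\mathrm{B}}^{s}_{p,\infty}(\mathbb{R}^n_+)$) as a dual space, namely as the dual of $\dot{\mathrm{B}}^{-s}_{p',1}(\mathbb{R}^n_+)$ (resp. $\dot{\mathrm{B}}^{-s}_{p',1,0}(\mathbb{R}^n_+)$), via the duality results available in the paper (Proposition \ref{prop:dualityBesovRn+}, valid here since $(\mathcal{C}_{s,p,\infty})$ holds, i.e. $s\leqslant n/p$ with the appropriate endpoint, and since $-s<n/p'$ by $s>-1+1/p>-n/p'$ when $n\geqslant 2$). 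Then I would use the elementary fact that a convex subspace $V$ of a dual Banach space $X'=Z'$ is weak${}^\ast$ dense if and only if it is weak${}^\ast$ sequentially... no — rather: $V$ is weak${}^\ast$ dense in $Z'$ iff $V^{\perp}=\{0\}$ in $Z$, i.e. iff the only $z\in Z$ with $\langle \varphi, z\rangle = 0$ for all $\varphi\in V$ is $z=0$.

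Concretely, first I would establish the duality $\dot{\mathrm{B}}^{s}_{p,\infty,0}(\mathbb{R}^n_+) = (\dot{\mathrm{B}}^{-s}_{p',1}(\mathbb{R}^n_+))'$ from Proposition \ref{prop:dualityBesovRn+}, and likewise $\dot{\mathrm{B}}^{s}_{p,\infty}(\mathbb{R}^n_+) = (\dot{\mathrm{B}}^{-s}_{p',1,0}(\mathbb{R}^n_+))'$. Second, for the first bullet I would take $\Phi\in \dot{\mathrm{B}}^{-s}_{p',1}(\mathbb{R}^n_+)$ (the predual) vanishing against every $\varphi\in\mathrm{C}_c^\infty(\mathbb{R}^n_+)$ through the duality bracket; since $-s\in(-1+1/p', \text{something})$... more safely: since $-s < n/p'$ so $(\mathcal{C}_{-s,p',1})$ holds and $\dot{\mathrm{B}}^{-s}_{p',1}(\mathbb{R}^n_+)$ is complete, and since, by the density results already proved (Corollary \ref{cor:Bspq=Bspq0Rn+} together with Lemma \ref{lem:densityCcinftyBesov0s+} or Corollary \ref{cor:densityCcinftyIntersecHsp0}), $\mathrm{C}_c^\infty(\mathbb{R}^n_+)$ is dense in a space into which $\dot{\mathrm{B}}^{-s}_{p',1}(\mathbb{R}^n_+)$... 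Actually the correct pairing: $\mathrm{C}_c^\infty(\mathbb{R}^n_+)$ sits densely inside $\dot{\mathrm{B}}^{-s}_{p',1,0}(\mathbb{R}^n_+)$ whenever $-1+1/p'<-s<1/p'$ and more generally one has $\mathrm{C}_c^\infty(\mathbb{R}^n_+)$ dense in $\dot{\mathrm{B}}^{-s}_{p',1}(\mathbb{R}^n_+) = \dot{\mathrm{B}}^{-s}_{p',1,0}(\mathbb{R}^n_+)$ in the range where the Sobolev multiplier result applies. Hence if $\Phi$ annihilates $\mathrm{C}_c^\infty(\mathbb{R}^n_+)$ it annihilates the whole predual $\dot{\mathrm{B}}^{-s}_{p',1}(\mathbb{R}^n_+)$, forcing $\Phi=0$; this gives weak${}^\ast$ density of $\mathrm{C}_c^\infty(\mathbb{R}^n_+)$ in $\dot{\mathrm{B}}^{s}_{p,\infty,0}(\mathbb{R}^n_+)$. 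For the second bullet the argument is symmetric, with $\eus{S}_0(\overline{\mathbb{R}^n_+})$ in place of $\mathrm{C}_c^\infty(\mathbb{R}^n_+)$ and $\dot{\mathrm{B}}^{-s}_{p',1,0}(\mathbb{R}^n_+)$ as predual: $\eus{S}_0(\overline{\mathbb{R}^n_+})$ is dense in $\dot{\mathrm{B}}^{-s}_{p',1,0}(\mathbb{R}^n_+)$... here one must be careful, because $\eus{S}_0(\overline{\mathbb{R}^n_+})$ need not consist of functions supported in $\overline{\mathbb{R}^n_+}$; the right statement is that $\eus{S}_0(\overline{\mathbb{R}^n_+})$ is dense in $\dot{\mathrm{B}}^{s'}_{p',1}(\mathbb{R}^n_+)$ for all admissible $s'$ by definition of restriction spaces, and one uses that the predual $\dot{\mathrm{B}}^{-s}_{p',1,0}(\mathbb{R}^n_+)$ injects into $\dot{\mathrm{B}}^{-s}_{p',1}(\mathbb{R}^n_+)$ with dense image of $\eus{S}_0(\overline{\mathbb{R}^n_+})$ once one invokes the reflexivity/duality loop — in fact the simplest is: $\eus{S}_0(\overline{\mathbb{R}^n_+})$ is dense in $\dot{\mathrm{B}}^{-s}_{p',1,0}(\mathbb{R}^n_+)$ for $-s<1/p'$ by Corollary \ref{cor:Bspq=Bspq0Rn+}, and in general one reduces to this case by real interpolation as in \textbf{Step 2.2} of Proposition \ref{prop:InterpHomSpacesRn+}.

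The main obstacle is pinning down exactly which density statement in the predual is available in the relevant index range: the predual exponent is $-s$, and while $s>-1+1/p$ translates to $-s<1-1/p=1/p'$... wait, $-s<1-1/p'$ is automatic only at one end; one genuinely needs $\mathrm{C}_c^\infty(\mathbb{R}^n_+)$ (resp. $\eus{S}_0(\overline{\mathbb{R}^n_+})$) dense in $\dot{\mathrm{B}}^{-s}_{p',1}(\mathbb{R}^n_+)$ over the full range $-s<1/p'$, i.e. $s>-1+1/p$, which is precisely the range covered by Corollary \ref{cor:Bspq=Bspq0Rn+} (for $|{-s}|<1/p'$) supplemented by Lemma \ref{lem:densityCcinftyBesov0s+} / Corollary \ref{cor:densityCcinftyIntersecHsp0} in the remaining subrange where $-s\leqslant -1+1/p'$, noting $q=1<+\infty$ so those lemmas apply and $(\mathcal{C}_{-s,p',1})$ holds because $-s<1/p'\leqslant n/p'$. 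Once the predual density is secured, the weak${}^\ast$ density is a one-line Hahn–Banach consequence: the annihilator in the predual is trivial, so the weak${}^\ast$ closure of the subspace is everything. I would present the argument uniformly for both bullets, treating $\mathrm{C}_c^\infty(\mathbb{R}^n_+)\subset\dot{\mathrm{B}}^{s}_{p,\infty,0}(\mathbb{R}^n_+)$ and $\eus{S}_0(\overline{\mathbb{R}^n_+})\subset\dot{\mathrm{B}}^{s}_{p,\infty}(\mathbb{R}^n_+)$ in parallel, since only the identity of the predual and the corresponding norm-density input differ.
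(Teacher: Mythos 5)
Your overall strategy — identify $\dot{\mathrm{B}}^{s}_{p,\infty,0}(\mathbb{R}^n_+)=(\dot{\mathrm{B}}^{-s}_{p',1}(\mathbb{R}^n_+))'$ and $\dot{\mathrm{B}}^{s}_{p,\infty}(\mathbb{R}^n_+)=(\dot{\mathrm{B}}^{-s}_{p',1,0}(\mathbb{R}^n_+))'$ via Proposition \ref{prop:dualityBesovRn+} and then show the annihilator of the test class in the predual is trivial — is legitimate and genuinely different from the paper's proof, which instead realizes $\dot{\mathrm{B}}^{s}_{p,\infty,0}(\mathbb{R}^n_+)$ as the \emph{bidual} of the continuous real interpolation space $(\dot{\mathrm{H}}^{s_0,p}_0(\mathbb{R}^n_+),\dot{\mathrm{H}}^{s_1,p}_0(\mathbb{R}^n_+))_{\theta}$ (Bergh–L\"ofstr\"om, Theorem 3.7.1 and the remark after its proof) and feeds in the strong density of $\mathrm{C}_c^\infty(\mathbb{R}^n_+)$ in the intersection $\dot{\mathrm{H}}^{s_0,p}_0\cap\dot{\mathrm{H}}^{s_1,p}_0$ together with Proposition \ref{prop:InterpHomSpacesRn+}. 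However, your execution has a genuine gap in the key step. The Hahn–Banach criterion requires the annihilator $V^{\perp}\subset Z$ of $V\subset Z'$ to be trivial; you instead try to use \emph{norm density of $V$ in the predual $Z$ itself}, i.e. density of $\mathrm{C}_c^\infty(\mathbb{R}^n_+)$ in $\dot{\mathrm{B}}^{-s}_{p',1}(\mathbb{R}^n_+)$ (regularity $-s$, integrability $p'$). This is the wrong space on two counts: first, an element $\Phi$ of the predual does not pair with the predual, so "$\Phi$ annihilates the whole predual" is not the conclusion density would give; second, the range is wrong. The hypothesis $s>-1+1/p$ only gives $-s<1/p'$, and for $s\geqslant 1/p$ one has $-s\leqslant -1+1/p'$, a regularity regime that none of the results you cite covers: Corollary \ref{cor:Bspq=Bspq0Rn+} needs $-s\in(-1+1/p',1/p')$, Lemma \ref{lem:densityCcinftyBesov0s+} needs positive regularity, and Corollary \ref{cor:densityCcinftyIntersecHsp0} concerns the subscript-$0$ Sobolev spaces. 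Since $n\geqslant 2$, the subrange $s\in[1/p,n/p)$ is nonempty, so the proof as written does not cover the full statement, and the vague appeal to "real interpolation as in Step 2.2" does not repair this, as that step is itself confined to $s<1/p$.

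The repair is to argue the triviality of the annihilator directly, without any density in the predual. For the first bullet: if $v\in\dot{\mathrm{B}}^{-s}_{p',1}(\mathbb{R}^n_+)$ satisfies $\langle \varphi,\tilde v\rangle_{\mathbb{R}^n}=0$ for every $\varphi\in\mathrm{C}_c^\infty(\mathbb{R}^n_+)$, then (the pairing of Proposition \ref{prop:dualityHomBesov} coinciding with the distributional pairing for such $\varphi$) the extension $\tilde v$ vanishes as a distribution on the open set $\mathbb{R}^n_+$, hence $v=0$ in the restriction space, which embeds into $\eus{D}'(\mathbb{R}^n_+)$. For the second bullet: if $v\in\dot{\mathrm{B}}^{-s}_{p',1,0}(\mathbb{R}^n_+)$ is annihilated by every element of $\eus{S}_0(\overline{\mathbb{R}^n_+})$, then, since each such element extends to some $\Phi\in\eus{S}_0(\mathbb{R}^n)$ and the pairing does not depend on the extension ($v$ being supported in $\overline{\mathbb{R}^n_+}$), one gets $\langle \Phi,v\rangle_{\mathbb{R}^n}=0$ for all $\Phi\in\eus{S}_0(\mathbb{R}^n)$; thus $\eus{F}v$ is supported at the origin, so $v$ is a polynomial, and the only polynomial in $\eus{S}'_h(\mathbb{R}^n)$ is $0$ (equivalently, use the density of $\eus{S}_0(\mathbb{R}^n)$ in $\dot{\mathrm{B}}^{s}_{p,1}(\mathbb{R}^n)$ — note this is density at regularity $+s$ on the \emph{whole space}, which holds for all $s$, not at regularity $-s$ on the half-space). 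With this correction your route closes over the full stated range and is arguably more elementary than the paper's interpolation-theoretic argument, which in exchange produces the stronger structural information that $\dot{\mathrm{B}}^{s}_{p,\infty,0}(\mathbb{R}^n_+)$ is the bidual of a space in which $\mathrm{C}_c^\infty(\mathbb{R}^n_+)$ is strongly dense.
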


\begin{proof}The \cite[Theorem~3.7.1]{BerghLofstrom1976} with the remark at the end of its proof in combination with Lemma \ref{cor:densityCcinftyIntersecHsp0}, with the use of \cite[Theorem~3.4.2]{BerghLofstrom1976}, and Proposition \ref{prop:InterpHomSpacesRn+} imply that, for some $-1+1/p<s_0<s<s_1$, with $\theta\in (0,1)$, such that $s=(1-\theta)s_0 + \theta s_1$, we have the following strongly dense embedding,
\begin{align*}
     \mathrm{C}_c^\infty(\mathbb{R}^n_+)\hookrightarrow \dot{\mathrm{H}}^{s_0,p}_0(\mathbb{R}^n_+)\cap\dot{\mathrm{H}}^{s_1,p}_0(\mathbb{R}^n_+) \hookrightarrow (\dot{\mathrm{H}}^{s_0,p}_0(\mathbb{R}^n_+),\dot{\mathrm{H}}^{s_1,p}_0(\mathbb{R}^n_+))_{\theta}\text{, }
\end{align*}
and the following weak${}^\ast$ dense embedding
\begin{align*}
    (\dot{\mathrm{H}}^{s_0,p}_0(\mathbb{R}^n_+),\dot{\mathrm{H}}^{s_1,p}_0(\mathbb{R}^n_+))_{\theta}\hookrightarrow (\dot{\mathrm{H}}^{s_0,p}_0(\mathbb{R}^n_+),\dot{\mathrm{H}}^{s_1,p}_0(\mathbb{R}^n_+))_{\theta}'' = (\dot{\mathrm{H}}^{s_0,p}_0(\mathbb{R}^n_+),\dot{\mathrm{H}}^{s_1,p}_0(\mathbb{R}^n_+))_{\theta,\infty} = \dot{\mathrm{B}}^{s}_{p,\infty,0}(\mathbb{R}^n_+)\text{, }
\end{align*}
so that the result follows. We mention that $(\cdot,\cdot)_{\theta}$ is the real interpolation functor asking the $K$-functional to decay at infinity and near the origin, see for instance \cite[Definition~1.2]{bookLunardiInterpTheory}.

The same argument applies for the weak${}^\ast$ density of $\eus{S}_0(\overline{\mathbb{R}^n_+})$ in $\dot{\mathrm{B}}^{s}_{p,\infty}(\mathbb{R}^n_+)$.
\end{proof}

We state below the Besov analogue of Corollary \ref{cor:ProjIntersecHomHsp0}, Lemma \ref{lem:IntersecHomHsp} and Proposition \ref{prop:IntersecHomHspRn+}, for which the proofs are similar and left to the reader. 

\begin{proposition}\label{prop:IntersLpHomBesovRn+=BesovRn+}Let $p_j\in(1,+\infty)$, $q_j\in[1,+\infty]$, $s_j> -1+\tfrac{1}{p_j}$, $j\in\{0,1\}$, $m\in\mathbb{N}$, such that $(\mathcal{C}_{s_0,p_0,q_0})$ is satisfied and $s_j < m+1+\tfrac{1}{p_j}$, and consider the extension operator $\mathrm{E}$ given by Proposition \ref{prop:ExtOpHomSobSpaces}.

Then for all $u\in \dot{\mathrm{B}}^{s_0}_{p_0,q_0}(\mathbb{R}^n_+)\cap \dot{\mathrm{B}}^{s_1}_{p_1,q_1}(\mathbb{R}^n_+)$, we have $\mathrm{E}u\in \dot{\mathrm{B}}^{s_j}_{p_j,q_j}(\mathbb{R}^n)$, $j\in\{0,1\}$, with the estimate
\begin{align*}
    \lVert \mathrm{E} u \rVert_{\dot{\mathrm{B}}^{s_j}_{p_j,q_j}(\mathbb{R}^n)} \lesssim_{s_j,m,p,n}   \lVert  u \rVert_{\dot{\mathrm{B}}^{s_j}_{p_j,q_j}(\mathbb{R}^n_+)}\text{. }
\end{align*}
The same result holds replacing $(\mathrm{E},\dot{\mathrm{B}}^{s_j}_{p_j,q_j}(\mathbb{R}^n_+),\dot{\mathrm{B}}^{s_j}_{p_j,q_j}(\mathbb{R}^n))$ by $(\mathcal{P}_0,\dot{\mathrm{B}}^{s_j}_{p_j,q_j}(\mathbb{R}^n),\dot{\mathrm{B}}^{s_j}_{p_j,q_j,0}(\mathbb{R}^n_+))$, where $\mathcal{P}_0$ is the projection operator given in Lemma \ref{lemma:ProjHsp0}.

Thus, the following equality of vector spaces holds with equivalence of norms 
\begin{align*}
    \dot{\mathrm{B}}^{s_0}_{p_0,q_0}(\mathbb{R}^n_+)\cap \dot{\mathrm{B}}^{s_1}_{p_1,q_1}(\mathbb{R}^n_+) = [\dot{\mathrm{B}}^{s_0}_{p_0,q_0}\cap \dot{\mathrm{B}}^{s_1}_{p_1,q_1}](\mathbb{R}^n_+)\text{. }
\end{align*}
In particular, $\dot{\mathrm{B}}^{s_0}_{p_0,q_0}(\mathbb{R}^n_+)\cap \dot{\mathrm{B}}^{s_1}_{p_1,q_1}(\mathbb{R}^n_+)$ is a Banach space, and it admits $\eus{S}_0(\overline{\mathbb{R}^n_+})$ as a dense subspace whenever $q_j<+\infty$, $j\in\{ 0,1\}$.

Similarly, the following equality with equivalence of norms holds for all $s>0$, $q\in[1,+\infty]$,
\begin{align*}
    {\mathrm{L}}^{p}(\mathbb{R}^n_+)\cap \dot{\mathrm{B}}^{s}_{p,q}(\mathbb{R}^n_+) = {\mathrm{B}}^{s}_{p,q}(\mathbb{R}^n_+)\text{. }
\end{align*}
\end{proposition}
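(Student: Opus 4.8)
The plan is to transpose, essentially line by line, the development already carried out for homogeneous Sobolev spaces on $\mathbb{R}^n_+$ (Lemma~\ref{lem:IntersecHomHsp}, Proposition~\ref{prop:IntersecHomHspRn+}, Corollaries~\ref{cor:ExtOpIntersecHomHspRn+} and~\ref{cor:ProjIntersecHomHsp0}), replacing point~\textit{(iii)} of Proposition~\ref{prop:PropertiesHomSobolevSpacesRn} by the equivalence of norms~\eqref{eq:equivNormsBesRn} from Proposition~\ref{prop:PropertiesHomBesovSpacesRn}, and the boundedness of the reflection/projection operators on homogeneous Sobolev spaces by their Besov analogues furnished by Corollary~\ref{cor:ExtProj0HomBspq}. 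As a preliminary one first records the $\mathbb{R}^n$-version of Lemma~\ref{lem:IntersecHomHsp}: if $(\mathcal{C}_{s_0,p_0,q_0})$ holds, then $\dot{\mathrm{B}}^{s_0}_{p_0,q_0}(\mathbb{R}^n)\cap \dot{\mathrm{B}}^{s_1}_{p_1,q_1}(\mathbb{R}^n)$ is complete (a Cauchy sequence converges in the complete factor $\dot{\mathrm{B}}^{s_0}_{p_0,q_0}(\mathbb{R}^n)\subset\eus{S}'_h(\mathbb{R}^n)$, and the second norm passes to the limit by Fatou applied blockwise to the Littlewood--Paley pieces), and $\eus{S}_0(\mathbb{R}^n)$ is dense in it when $q_0,q_1<+\infty$, by the truncation-and-localisation argument of \cite[Proposition~2.27]{bookBahouriCheminDanchin} used in the proof of Lemma~\ref{lem:IntersecHomHsp}.

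The core step is the decoupled estimate for $\mathrm{E}$ (then $\mathrm{E}^{(\ell)}$ and $\mathcal{P}_0$). Fix $m$ with $s_j<m+1+\tfrac{1}{p_j}$, $j\in\{0,1\}$, and take $\mathrm{E}$ from Proposition~\ref{prop:ExtOpHomSobSpaces}. Since $(\mathcal{C}_{s_0,p_0,q_0})$ holds, Corollary~\ref{cor:ExtProj0HomBspq} gives directly $\mathrm{E}u\in\dot{\mathrm{B}}^{s_0}_{p_0,q_0}(\mathbb{R}^n)\subset\eus{S}'_h(\mathbb{R}^n)$ with the bound for $j=0$; this $\eus{S}'_h$-membership is precisely what later licenses the use of~\eqref{eq:equivNormsBesRn}. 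For the index $s_1$ one picks $\ell\in\mathbb{N}$ with $(\mathcal{C}_{s_1-\ell,p_1,q_1})$ satisfied and $s_1-\ell>-1+\tfrac{1}{p_1}$ (such an $\ell\geqslant0$ exists since $n\geqslant2$, exactly as in the proof of Proposition~\ref{prop:IntersecHomHspRn+}), and uses the commutation identities $\partial_{x_k}^\ell\mathrm{E}=\mathrm{E}\,\partial_{x_k}^\ell$ for $k\in\llb1,n-1\rrb$ and $\partial_{x_n}^\ell\mathrm{E}=\mathrm{E}^{(\ell)}\partial_{x_n}^\ell$ recalled in the proof of Proposition~\ref{prop:ExtOpHomSobSpaces}. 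Since $\partial_{x_k}^\ell u\in\dot{\mathrm{B}}^{s_1-\ell}_{p_1,q_1}(\mathbb{R}^n_+)$ by definition of restriction spaces and~\eqref{eq:equivNormsBesRn}, and $\mathrm{E}$ and $\mathrm{E}^{(\ell)}$ (being reflection operators of the same structure) are bounded from $\dot{\mathrm{B}}^{s_1-\ell}_{p_1,q_1}(\mathbb{R}^n_+)$ into $\dot{\mathrm{B}}^{s_1-\ell}_{p_1,q_1}(\mathbb{R}^n)$ by Corollary~\ref{cor:ExtProj0HomBspq}, one obtains $\partial_{x_k}^\ell\mathrm{E}u\in\dot{\mathrm{B}}^{s_1-\ell}_{p_1,q_1}(\mathbb{R}^n)$ with $\sum_{k=1}^n\lVert\partial_{x_k}^\ell\mathrm{E}u\rVert_{\dot{\mathrm{B}}^{s_1-\ell}_{p_1,q_1}(\mathbb{R}^n)}\lesssim\lVert u\rVert_{\dot{\mathrm{B}}^{s_1}_{p_1,q_1}(\mathbb{R}^n_+)}$; as $\mathrm{E}u\in\eus{S}'_h(\mathbb{R}^n)$ and these are its genuine distributional derivatives, \eqref{eq:equivNormsBesRn} closes the loop and yields $\lVert\mathrm{E}u\rVert_{\dot{\mathrm{B}}^{s_1}_{p_1,q_1}(\mathbb{R}^n)}\lesssim\lVert u\rVert_{\dot{\mathrm{B}}^{s_1}_{p_1,q_1}(\mathbb{R}^n_+)}$. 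The $\mathcal{P}_0$ statement follows by running the identical argument from the formula $\mathcal{P}_0=\mathrm{I}-\mathrm{E}^{-}[\mathbbm{1}_{\mathbb{R}^n_-}\cdot]$ of the proof of Lemma~\ref{lemma:ProjHsp0}, the reflected operator $\mathrm{E}^{-}$ obeying the same decoupled bounds.

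The remaining assertions are then formal. The embedding $[\dot{\mathrm{B}}^{s_0}_{p_0,q_0}\cap\dot{\mathrm{B}}^{s_1}_{p_1,q_1}](\mathbb{R}^n_+)\hookrightarrow\dot{\mathrm{B}}^{s_0}_{p_0,q_0}(\mathbb{R}^n_+)\cap\dot{\mathrm{B}}^{s_1}_{p_1,q_1}(\mathbb{R}^n_+)$ holds by the definition of restriction spaces; the reverse one is obtained by feeding $u$ into $\mathrm{E}$, whose decoupled estimate produces an admissible common extension $\mathrm{E}u\in\dot{\mathrm{B}}^{s_0}_{p_0,q_0}(\mathbb{R}^n)\cap\dot{\mathrm{B}}^{s_1}_{p_1,q_1}(\mathbb{R}^n)$. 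This gives the claimed equality of normed spaces, and completeness together with the density of $\eus{S}_0(\overline{\mathbb{R}^n_+})$ (for $q_0,q_1<+\infty$) are inherited from the $\mathbb{R}^n$-statement of the first paragraph through restriction, as noted at the beginning of Section~\ref{sec:FunctionSpacesRn+}. Finally, for $s>0$ one writes ${\mathrm{B}}^{s}_{p,q}(\mathbb{R}^n_+)={\mathrm{B}}^{s}_{p,q}(\mathbb{R}^n)_{|_{\mathbb{R}^n_+}}=[\mathrm{L}^p\cap\dot{\mathrm{B}}^{s}_{p,q}](\mathbb{R}^n)_{|_{\mathbb{R}^n_+}}$ using point~\textit{(v)} of Proposition~\ref{prop:PropertiesHomBesovSpacesRn}, and reruns the decoupled-estimate argument with $\mathrm{L}^p=\dot{\mathrm{H}}^{0,p}$ (which is complete, i.e.\ $(\mathcal{C}_{0,p})$ holds, and on which $\mathrm{E}$ is bounded) playing the role of the first factor, to identify this restriction with $\mathrm{L}^p(\mathbb{R}^n_+)\cap\dot{\mathrm{B}}^{s}_{p,q}(\mathbb{R}^n_+)$.

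The main obstacle is exactly the decoupled estimate of the second paragraph: one cannot apply~\eqref{eq:equivNormsBesRn} to $\mathrm{E}u$ before knowing $\mathrm{E}u\in\eus{S}'_h(\mathbb{R}^n)$, so the completeness hypothesis $(\mathcal{C}_{s_0,p_0,q_0})$ on the first factor has to be used as the entry point; once that membership is secured everything reduces, after differentiating $\ell$ times, to the low-regularity boundedness already contained in Proposition~\ref{prop:SobolevMultiplier} and Corollary~\ref{cor:ExtProj0HomBspq}, together with the adaptation of \cite[Proposition~2.27]{bookBahouriCheminDanchin} for the $\eus{S}_0(\mathbb{R}^n)$-density, which must be carried out by hand since no prior density argument is available at that stage.
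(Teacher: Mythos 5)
Your proposal is correct and is exactly the adaptation the paper intends: the paper omits this proof, stating it is "similar" to Corollary \ref{cor:ProjIntersecHomHsp0}, Lemma \ref{lem:IntersecHomHsp} and Proposition \ref{prop:IntersecHomHspRn+}, and you carry out precisely that transposition (completeness/density of the intersection on $\mathbb{R}^n$, decoupled estimates for $\mathrm{E}$ and $\mathcal{P}_0$ via the choice of $\ell$ with $s_1-\ell$ in the admissible window and the commutation identities, then the formal identification of the restricted intersection). You also correctly isolate the one genuinely delicate point, namely securing $\mathrm{E}u\in\eus{S}'_h(\mathbb{R}^n)$ from the complete factor before invoking \eqref{eq:equivNormsBesRn}.
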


With direct consequence similar to Corollary \ref{cor:EqNormNablakmHspHaq}:

\begin{corollary}\label{cor:EqNormNablakmBspBaq} Let $p_j\in(1,+\infty)$, $q_j\in[1,+\infty]$ $m_j\in\llb 1,+\infty\llb$, $s_j>m_j-1+\tfrac{1}{p_j} $, $j\in\{ 0,1\}$, such that $(\mathcal{C}_{s_0,p_0,q_0})$ is satisfied. For all $u\in [\dot{\mathrm{B}}^{s_0}_{p_0,q_0}\cap \dot{\mathrm{B}}^{s_1}_{p_1,q_1}] (\mathbb{R}^n_+)$,
\begin{align*}
    \lVert \nabla^{m_j} u \rVert_{\dot{\mathrm{B}}^{s_j-m_j}_{p_j,q_j}(\mathbb{R}^n_+)} \sim_{s_j,m_j,p_j,n} \lVert u \rVert_{\dot{\mathrm{B}}^{s_j}_{p_j,q_j}(\mathbb{R}^n_+)}\,\text{. }
\end{align*}
\end{corollary}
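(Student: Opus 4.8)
The statement is the Besov analogue of Corollary~\ref{cor:EqNormNablakmHspHaq}, and the strategy is exactly parallel. Fix $j\in\{0,1\}$ and write $(p,q,m,s)$ for $(p_j,q_j,m_j,s_j)$. First I would record the trivial direction: by definition of restriction spaces and the boundedness of differentiation on $\mathbb{R}^n$ (point \textit{(ii)} of Proposition~\ref{prop:PropertiesHomBesovSpacesRn}, via the norm equivalence \eqref{eq:equivNormsBesRn}), one has for every $u\in\dot{\mathrm{B}}^{s}_{p,q}(\mathbb{R}^n_+)$ the estimate $\lVert\nabla^{m}u\rVert_{\dot{\mathrm{B}}^{s-m}_{p,q}(\mathbb{R}^n_+)}\lesssim\lVert u\rVert_{\dot{\mathrm{B}}^{s}_{p,q}(\mathbb{R}^n_+)}$, simply by applying $\nabla^m$ to any extension and taking the infimum. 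This costs nothing and needs no hypothesis on $s$ or on $(\mathcal{C})$.

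The reverse inequality is where the work sits, and it mirrors the proof of Proposition~\ref{prop:EqNormNablakHsp}. Pick $m\in\mathbb{N}$ large enough that $s_k<m+1+\tfrac{1}{p_k}$ for $k\in\{0,1\}$ and invoke the extension operator $\mathrm{E}$ of Proposition~\ref{prop:ExtOpHomSobSpaces}; because we are working in the intersection space $[\dot{\mathrm{B}}^{s_0}_{p_0,q_0}\cap\dot{\mathrm{B}}^{s_1}_{p_1,q_1}](\mathbb{R}^n_+)$ with $(\mathcal{C}_{s_0,p_0,q_0})$ satisfied, the decoupled homogeneous estimates $\lVert\mathrm{E}u\rVert_{\dot{\mathrm{B}}^{s_k}_{p_k,q_k}(\mathbb{R}^n)}\lesssim\lVert u\rVert_{\dot{\mathrm{B}}^{s_k}_{p_k,q_k}(\mathbb{R}^n_+)}$ hold by Proposition~\ref{prop:IntersLpHomBesovRn+=BesovRn+} (the Besov counterpart of Corollary~\ref{cor:ExtOpIntersecHomHspRn+}), so $\mathrm{E}u$ genuinely lands in $\dot{\mathrm{B}}^{s_k}_{p_k,q_k}(\mathbb{R}^n)$ and in particular in $\eus{S}'_h(\mathbb{R}^n)$. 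Then one differentiates the explicit reflection formula: for the tangential directions $\partial_{x_\ell}^{m}\mathrm{E}u=\mathrm{E}[\partial_{x_\ell}^{m}u]$, $\ell\in\llb1,n-1\rrb$, and for the normal direction $\partial_{x_n}^{m}\mathrm{E}u=\mathrm{E}^{(m)}\partial_{x_n}^{m}u$, where $\mathrm{E}^{(m)}$ is again an operator of the same reflection type (with coefficients $\alpha_\ell(\tfrac{-1}{\ell+1})^m$), hence bounded on $\dot{\mathrm{B}}^{s-m}_{p,q}$ for the relevant index by the same proposition. Since $s-m>-1+\tfrac1p$, each $\partial_{x_k}^{m}u\in\dot{\mathrm{B}}^{s-m}_{p,q}(\mathbb{R}^n_+)$ and, after extension, each $\partial_{x_k}^{m}\mathrm{E}u$ is controlled in $\dot{\mathrm{B}}^{s-m}_{p,q}(\mathbb{R}^n)$ by $\lVert\partial_{x_k}^{m}u\rVert_{\dot{\mathrm{B}}^{s-m}_{p,q}(\mathbb{R}^n_+)}$. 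Summing and applying the equivalence of norms on the whole space (point \textit{(ii)} of Proposition~\ref{prop:PropertiesHomBesovSpacesRn}) gives
\begin{align*}
    \lVert u\rVert_{\dot{\mathrm{B}}^{s}_{p,q}(\mathbb{R}^n_+)}\leqslant\lVert\mathrm{E}u\rVert_{\dot{\mathrm{B}}^{s}_{p,q}(\mathbb{R}^n)}\sim\sum_{k=1}^{n}\lVert\partial_{x_k}^{m}\mathrm{E}u\rVert_{\dot{\mathrm{B}}^{s-m}_{p,q}(\mathbb{R}^n)}\lesssim\sum_{k=1}^{n}\lVert\partial_{x_k}^{m}u\rVert_{\dot{\mathrm{B}}^{s-m}_{p,q}(\mathbb{R}^n_+)}\text{. }
\end{align*}
Combined with the trivial direction and the obvious chain $\sum_k\lVert\partial_{x_k}^{m}u\rVert\leqslant\lVert\nabla^{m}u\rVert\leqslant C\sum_k\lVert\partial_{x_k}^{m}u\rVert$, this yields all three equivalences.

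The one point demanding care — and the main obstacle — is the justification, at the level of $\eus{S}'(\mathbb{R}^n)$, that $\partial_{x_k}^{m}\mathrm{E}u$ coincides with the reflection of $\partial_{x_k}^{m}u$ on the complement of $\overline{\mathbb{R}^n_+}$ and that no boundary distributions appear; this is precisely the regularity bookkeeping carried out in Propositions~\ref{prop:ExtOpHomSobSpaces} and \ref{prop:EqNormNablakHsp}, and the choice of $m$ with $s-m\in(-1+\tfrac1p,\tfrac1p)$ is what makes $\mathrm{E}^{(m)}$ bounded on $\dot{\mathrm{B}}^{s-m}_{p,q}$. Since all arguments are the exact transcription of the Sobolev case using the extension, projection and intersection results already established for Besov spaces, the details are routine and are left to the reader, as stated before Proposition~\ref{prop:IntersLpHomBesovRn+=BesovRn+}.
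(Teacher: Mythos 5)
Your architecture is the right one and is exactly what the paper intends for this corollary: the easy direction by restriction, and the converse by extending $u$ with the reflection operator, using the decoupled estimates of Proposition~\ref{prop:IntersLpHomBesovRn+=BesovRn+} on the intersection, the identities $\partial_{x_\ell}^{m_j}\mathrm{E}u=\mathrm{E}[\partial_{x_\ell}^{m_j}u]$ and $\partial_{x_n}^{m_j}\mathrm{E}u=\mathrm{E}^{(m_j)}[\partial_{x_n}^{m_j}u]$, and the whole-space equivalence \eqref{eq:equivNormsBesRn}.

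There is, however, a gap in the last inequality of your displayed chain. You justify $\lVert\partial_{x_k}^{m_j}\mathrm{E}u\rVert_{\dot{\mathrm{B}}^{s_j-m_j}_{p_j,q_j}(\mathbb{R}^n)}\lesssim\lVert\partial_{x_k}^{m_j}u\rVert_{\dot{\mathrm{B}}^{s_j-m_j}_{p_j,q_j}(\mathbb{R}^n_+)}$ by ``the choice of $m$ with $s-m\in(-1+\tfrac{1}{p},\tfrac{1}{p})$''. But the differentiation order $m_j$ is prescribed by the statement (you have conflated it with the free reflection order of $\mathrm{E}$), and the hypothesis $s_j>m_j-1+\tfrac{1}{p_j}$ only gives the lower bound $s_j-m_j>-1+\tfrac{1}{p_j}$: nothing prevents $s_j-m_j\geqslant \tfrac{1}{p_j}$ (take $s_1=3$, $m_1=1$, $p_1=2$). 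In that regime Proposition~\ref{prop:SobolevMultiplier} no longer applies, and the boundedness of $\mathrm{E}$ and $\mathrm{E}^{(m_1)}$ on all of $\dot{\mathrm{B}}^{s_1-m_1}_{p_1,q_1}(\mathbb{R}^n_+)$ via Corollary~\ref{cor:ExtProj0HomBspq} would require $(\mathcal{C}_{s_1-m_1,p_1,q_1})$, which is not assumed for $j=1$; the homogeneous estimate there is otherwise only available for data in the inhomogeneous space $\mathrm{B}^{s_1-m_1}_{p_1,q_1}(\mathbb{R}^n_+)$, and you do not know \emph{a priori} that $\partial_{x_k}^{m_1}u$ lies in it. The repair is the device the paper uses throughout: by Proposition~\ref{prop:IntersLpHomBesovRn+=BesovRn+}, $\eus{S}_0(\overline{\mathbb{R}^n_+})$ is dense in the intersection when $q_0,q_1<+\infty$; for such $f$ one has $\partial_{x_k}^{m_j}f\in\mathrm{B}^{s_j-m_j}_{p_j,q_j}(\mathbb{R}^n_+)$, so Corollary~\ref{cor:ExtProj0HomBspq} applies with its homogeneous bound and your chain is valid; since both sides of the claimed equivalence are continuous for the intersection norm (the left-hand side by your trivial direction), the equivalence passes to the limit, and the case $q_j=+\infty$ is then recovered by real interpolation as at the end of the proof of Corollary~\ref{cor:ExtProj0HomBspq}. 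A last, minor point: the inequality $\lVert\nabla^{m}u\rVert\leqslant C\sum_k\lVert\partial_{x_k}^{m}u\rVert$ on the half-space is not ``obvious'' (mixed derivatives are not among the pure ones), but it is also not needed for the stated equivalence, which only involves $\lVert\nabla^{m_j}u\rVert$ and $\lVert u\rVert$.
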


Above Proposition \ref{prop:IntersLpHomBesovRn+=BesovRn+} also implies the expected interpolation result for Besov spaces, for which the proof is similar to the one of Proposition \ref{prop:InterpHomSpacesRn+} and left again to the reader. 
\begin{proposition}\label{prop:InterpHomBesSpacesRn+}Let $(p_0,p_1,p,q,q_0,q_1)\in(1,+\infty)^3\times[1,+\infty]^3$, $s_0,s_1\in\mathbb{R}$, such that $s_0< s_1$, and let  $\mathfrak{b}\in\{\mathrm{B},\mathrm{B}_{\cdot,\cdot,0}\}$, and set
\begin{align*}
    \left(s,\frac{1}{p_\theta},\frac{1}{q_\theta}\right):= (1-\theta)\left(s_0,\frac{1}{p_0},\frac{1}{q_0}\right)+ \theta\left(s_1,\frac{1}{p_1},\frac{1}{q_1}\right)\text{. }
\end{align*}
such that the following assertion is satisfied,
\begin{itemize}
    \item $s_j>-1+\frac{1}{p_j}$, $j\in\{0,1\}$, and $(\mathcal{C}_{s_0,p_0,q_0})$ is true;
\end{itemize}
Then if $p_0=p_1=p$, and \eqref{AssumptionCompletenessExponents} is satisfied, the following equality holds with equivalence of norms
\begin{align}
    (\dot{\mathfrak{b}}^{s_0}_{p,q_0}(\mathbb{R}^n_+),\dot{\mathfrak{b}}^{s_1}_{p,q_1}(\mathbb{R}^n_+))_{\theta,q}&=\dot{\mathfrak{b}}^{s}_{p,q}(\mathbb{R}^n_+)\text{.}
\end{align}
If $(\mathcal{C}_{s_0,p_0,q_0})$ and $(\mathcal{C}_{s_1,p_1,q_1})$ are true then also is $(\mathcal{C}_{s,p_\theta,q_\theta})$ and with equivalence of norms,
    \begin{align}
    [\dot{\mathfrak{b}}^{s_0}_{p_0,q_0}(\mathbb{R}^n_+),\dot{\mathfrak{b}}^{s_1}_{p_1,q_1}(\mathbb{R}^n_+)]_{\theta} &= \dot{\mathfrak{b}}^{s}_{p_\theta,q_\theta}(\mathbb{R}^n_+) \text{,}
    \end{align}
whenever $q_\theta<+\infty$.
\end{proposition}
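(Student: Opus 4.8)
The plan is to transcribe, almost line by line, the proof of Proposition~\ref{prop:InterpHomSpacesRn+}, replacing its Sobolev ingredients by the Besov analogues established above: the boundedness of the higher-order reflection extension $\mathrm{E}$ and of the projection $\mathcal{P}_0$ on homogeneous Besov spaces (Corollary~\ref{cor:ExtProj0HomBspq}); the identity $\dot{\mathrm{B}}^{s_0}_{p_0,q_0}(\mathbb{R}^n_+)\cap\dot{\mathrm{B}}^{s_1}_{p_1,q_1}(\mathbb{R}^n_+)=[\dot{\mathrm{B}}^{s_0}_{p_0,q_0}\cap\dot{\mathrm{B}}^{s_1}_{p_1,q_1}](\mathbb{R}^n_+)$ together with the \emph{decoupled} bounds for $\mathrm{E}$ and $\mathcal{P}_0$ on such intersections (Proposition~\ref{prop:IntersLpHomBesovRn+=BesovRn+}); the interpolation embeddings of Lemma~\ref{lem:EmbeddingInterpHomSobspacesRn+}; the density statements of Lemma~\ref{lem:densityCcinftyBesov0s+} and Corollary~\ref{cor:Bspq=Bspq0Rn+}; and the interpolation identities on $\mathbb{R}^n$ from Theorem~\ref{thm:InterpHomSpacesRn}.

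\textbf{Step 1 (retraction regime).} When both $(\mathcal{C}_{s_0,p_0,q_0})$ and $(\mathcal{C}_{s_1,p_1,q_1})$ hold — which is always the case for the complex identity, and also for the real identity as soon as $s_1<n/p$ — fix $m$ large enough that $s_j<m+1+1/p_j$ for $j\in\{0,1\}$, so a single operator $\mathrm{E}$ (resp.\ $\mathcal{P}_0$) serves both endpoints. Then $\{\dot{\mathfrak{b}}^{s_0}_{p_0,q_0}(\mathbb{R}^n_+),\dot{\mathfrak{b}}^{s_1}_{p_1,q_1}(\mathbb{R}^n_+)\}$ is a retract of $\{\dot{\mathrm{B}}^{s_0}_{p_0,q_0}(\mathbb{R}^n),\dot{\mathrm{B}}^{s_1}_{p_1,q_1}(\mathbb{R}^n)\}$: for $\mathfrak{b}=\mathrm{B}$, through the coretraction $\mathrm{E}$ and the retraction $\mathrm{R}_{\mathbb{R}^n_+}$ (restriction); for $\mathfrak{b}=\mathrm{B}_{\cdot,\cdot,0}$, through the canonical injection $\iota$ and the retraction $\mathcal{P}_0$, whose range consists of distributions supported in $\overline{\mathbb{R}^n_+}$. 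Boundedness of all four maps on the relevant spaces is exactly Corollary~\ref{cor:ExtProj0HomBspq}. Invoking \cite[Theorem~6.4.2]{BerghLofstrom1976} together with Theorem~\ref{thm:InterpHomSpacesRn} (for both the complex identity on $\mathbb{R}^n$ when $q_\theta<+\infty$, and the real one) yields the two displayed equalities in this regime; that $(\mathcal{C}_{s,p_\theta,q_\theta})$ follows from $(\mathcal{C}_{s_j,p_j,q_j})$, $j\in\{0,1\}$, is seen as in Theorem~\ref{thm:InterpHomSpacesRn}, using the affinity of $\theta\mapsto(s,1/p_\theta,1/q_\theta)$ and the convexity of the region cut out by $(\mathcal{C}_{\cdot,\cdot,\cdot})$.

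\textbf{Step 2 (real interpolation, $p_0=p_1=p$, $(\mathcal{C}_{s_1,p,q_1})$ failing, $q<+\infty$).} Here $s_1\geqslant n/p$ and the retract argument is unavailable, so one reproduces Step~2 of the proof of Proposition~\ref{prop:InterpHomSpacesRn+}. One inclusion, $\dot{\mathfrak{b}}^{s}_{p,q}(\mathbb{R}^n_+)\hookrightarrow(\dot{\mathfrak{b}}^{s_0}_{p,q_0}(\mathbb{R}^n_+),\dot{\mathfrak{b}}^{s_1}_{p,q_1}(\mathbb{R}^n_+))_{\theta,q}$, is Lemma~\ref{lem:EmbeddingInterpHomSobspacesRn+}. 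For the reverse, work with $f$ in a dense subspace — $\eus{S}_0(\overline{\mathbb{R}^n_+})$ when $\mathfrak{b}=\mathrm{B}$, and $\mathrm{C}_c^\infty(\mathbb{R}^n_+)$ when $\mathfrak{b}=\mathrm{B}_{\cdot,\cdot,0}$ (density of the latter from Lemma~\ref{lem:densityCcinftyBesov0s+}). Such an $f$ already lies in $\dot{\mathrm{B}}^{s_0}_{p,q_0}(\mathbb{R}^n_+)$, so every splitting $f=a+b$ with $a\in\dot{\mathrm{B}}^{s_0}_{p,q_0}(\mathbb{R}^n_+)$, $b\in\dot{\mathrm{B}}^{s_1}_{p,q_1}(\mathbb{R}^n_+)$ forces $b=f-a\in\dot{\mathrm{B}}^{s_0}_{p,q_0}(\mathbb{R}^n_+)\cap\dot{\mathrm{B}}^{s_1}_{p,q_1}(\mathbb{R}^n_+)=[\dot{\mathrm{B}}^{s_0}_{p,q_0}\cap\dot{\mathrm{B}}^{s_1}_{p,q_1}](\mathbb{R}^n_+)$ by Proposition~\ref{prop:IntersLpHomBesovRn+=BesovRn+}, so that Proposition~\ref{prop:IntersLpHomBesovRn+=BesovRn+} supplies the decoupled bounds $\lVert\mathrm{E}a\rVert_{\dot{\mathrm{B}}^{s_0}_{p,q_0}(\mathbb{R}^n)}\lesssim\lVert a\rVert_{\dot{\mathrm{B}}^{s_0}_{p,q_0}(\mathbb{R}^n_+)}$ and $\lVert\mathrm{E}b\rVert_{\dot{\mathrm{B}}^{s_1}_{p,q_1}(\mathbb{R}^n)}\lesssim\lVert b\rVert_{\dot{\mathrm{B}}^{s_1}_{p,q_1}(\mathbb{R}^n_+)}$ (for $\mathfrak{b}=\mathrm{B}_{\cdot,\cdot,0}$ one first decomposes $f$ in the full spaces $\dot{\mathrm{B}}^{s_0}_{p,q_0}(\mathbb{R}^n)+\dot{\mathrm{B}}^{s_1}_{p,q_1}(\mathbb{R}^n)$ and applies $\mathcal{P}_0$, using $f=\mathcal{P}_0f$). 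With $F:=\mathrm{E}a+\mathrm{E}b$ (resp.\ $\mathcal{P}_0a+\mathcal{P}_0b$), which restricts to $f$, one estimates $K(t,F,\dot{\mathrm{B}}^{s_0}_{p,q_0}(\mathbb{R}^n),\dot{\mathrm{B}}^{s_1}_{p,q_1}(\mathbb{R}^n))\leqslant\lVert\mathrm{E}a\rVert+t\lVert\mathrm{E}b\rVert$, takes the infimum over splittings of $f$ to get $K(t,F,\cdot,\cdot)\lesssim K(t,f,\dot{\mathfrak{b}}^{s_0}_{p,q_0}(\mathbb{R}^n_+),\dot{\mathfrak{b}}^{s_1}_{p,q_1}(\mathbb{R}^n_+))$, and then multiplies by $t^{-\theta}$, takes the $\mathrm{L}^q_\ast$-norm and applies Theorem~\ref{thm:InterpHomSpacesRn} on $\mathbb{R}^n$ to reach $\lVert f\rVert_{\dot{\mathfrak{b}}^{s}_{p,q}(\mathbb{R}^n_+)}\lesssim\lVert f\rVert_{(\dot{\mathfrak{b}}^{s_0}_{p,q_0}(\mathbb{R}^n_+),\dot{\mathfrak{b}}^{s_1}_{p,q_1}(\mathbb{R}^n_+))_{\theta,q}}$. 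Since $q<+\infty$, the chosen subspace is dense in both members (in the interpolation space by Proposition~\ref{prop:IntersLpHomBesovRn+=BesovRn+} and \cite[Theorem~3.4.2]{BerghLofstrom1976}; for $\mathfrak{b}=\mathrm{B}_{\cdot,\cdot,0}$ also by the Besov version of Corollary~\ref{cor:densityCcinftyIntersecHsp0}, proved by the same duality/reflexivity argument), so the identity extends by density.

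\textbf{Main obstacle.} As in Proposition~\ref{prop:InterpHomSpacesRn+}, the crux is that $\dot{\mathrm{B}}^{s_1}_{p,q_1}$ fails to be complete once $s_1\geqslant n/p$, so the pair cannot be treated as a retract of the corresponding pair on $\mathbb{R}^n$ and neither complex interpolation nor a direct density argument is available; Proposition~\ref{prop:IntersLpHomBesovRn+=BesovRn+} is exactly what bypasses this, by taming the intersection with a complete space and furnishing the decoupled boundedness of $\mathrm{E}$ and $\mathcal{P}_0$. The one genuinely extra point compared with the scheme of Proposition~\ref{prop:InterpHomSpacesRn+} is the real endpoint $q=+\infty$, which forces $q_0=q_1=\infty$ and $s<n/p$: it is covered by Step~1 when $s_1<n/p$, whereas when $(\mathcal{C}_{s_1,p,q_1})$ fails one should either restrict the statement to $q<+\infty$ (as is done in Proposition~\ref{prop:InterpHomSpacesRn+}) or add a reiteration argument realizing $\dot{\mathrm{B}}^{s}_{p,\infty}(\mathbb{R}^n_+)$ as a real interpolation space of two \emph{complete} homogeneous Besov spaces on $\mathbb{R}^n_+$ and combining it with Lemma~\ref{lem:EmbeddingInterpHomSobspacesRn+} and the reiteration theorem \cite[Theorem~3.5.3]{BerghLofstrom1976}. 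I expect this endpoint book-keeping to be the only part requiring care beyond a mechanical transcription of the Sobolev proof.
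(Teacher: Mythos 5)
Your proposal is correct and follows exactly the route the paper intends: the proof of Proposition~\ref{prop:InterpHomBesSpacesRn+} is explicitly ``left to the reader'' as a transcription of the proof of Proposition~\ref{prop:InterpHomSpacesRn+}, with Corollary~\ref{cor:ExtProj0HomBspq} and Proposition~\ref{prop:IntersLpHomBesovRn+=BesovRn+} supplying the retraction and the decoupled estimates in place of their Sobolev counterparts. Your remark on the real endpoint $q=+\infty$ when $(\mathcal{C}_{s_1,p,q_1})$ fails is apt — the stated hypotheses do not exclude it, and the reiteration argument you sketch (or a restriction to $q<+\infty$, as in the Sobolev statement) is indeed what is needed to close that case.
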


We finish stating a duality result for homogeneous Besov space son the half-space.
\begin{proposition}\label{prop:dualityBesovRn+}Let $p\in(1,+\infty)$, $q\in(1,+\infty]$, $s>-1+\frac{1}{p}$, if \eqref{AssumptionCompletenessExponents} is satisfied then the following isomorphisms hold
\begin{align*}
    (\dot{\mathrm{B}}^{-s}_{p',q',0}(\mathbb{R}^n_+))'=\dot{\mathrm{B}}^{s}_{p,q}(\mathbb{R}^n_+)\, \text{ and }\, (\dot{\mathrm{B}}^{-s}_{p',q'}(\mathbb{R}^n_+))'=\dot{\mathrm{B}}^{s}_{p,q,0}(\mathbb{R}^n_+)\text{. }
\end{align*}
\end{proposition}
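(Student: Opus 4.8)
The plan is to mirror the proof of Proposition~\ref{prop:DualitySobolevDomain}, replacing $\dot{\mathrm{H}}$ by $\dot{\mathrm{B}}$ and using as substitute for $\mathrm{L}^p$--$\mathrm{L}^{p'}$ duality the whole--space Besov duality $(\dot{\mathrm{B}}^{-s}_{p',q'}(\mathbb{R}^n))'=\dot{\mathrm{B}}^{s}_{p,q}(\mathbb{R}^n)$ of Proposition~\ref{prop:dualityHomBesov}, which applies since $q\in(1,+\infty]$ and $-n/p'<-1+\tfrac1p\leqslant s<\tfrac np$ (the last inequality being forced by $q>1$ in \eqref{AssumptionCompletenessExponents}, and $-1/p'>-n/p'$ because $n\geqslant 2$). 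The two asserted identities are dual to one another, so I would first establish $(\dot{\mathrm{B}}^{-s}_{p',q'}(\mathbb{R}^n_+))'=\dot{\mathrm{B}}^{s}_{p,q,0}(\mathbb{R}^n_+)$ by exhibiting the two inverse maps, and then deduce the identity $(\dot{\mathrm{B}}^{-s}_{p',q',0}(\mathbb{R}^n_+))'=\dot{\mathrm{B}}^{s}_{p,q}(\mathbb{R}^n_+)$ from it.

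For the first identity: given $\Phi\in\dot{\mathrm{B}}^{s}_{p,q,0}(\mathbb{R}^n_+)\subset\dot{\mathrm{B}}^{s}_{p,q}(\mathbb{R}^n)$ and $u\in\dot{\mathrm{B}}^{-s}_{p',q'}(\mathbb{R}^n_+)$ I would set $\langle\Phi,u\rangle_{\mathbb{R}^n_+}:=\langle\Phi,\widetilde u\rangle_{\mathbb{R}^n}$ for an arbitrary extension $\widetilde u\in\dot{\mathrm{B}}^{-s}_{p',q'}(\mathbb{R}^n)$. This is independent of $\widetilde u$: the difference $w$ of two extensions is supported in $\overline{\mathbb{R}^n_-}$, $\Phi$ is the strong (resp.\ weak$^\ast$, when $q=+\infty$) limit of a sequence $(\Phi_k)\subset\mathrm{C}^\infty_c(\mathbb{R}^n_+)$ --- here Lemma~\ref{lem:densityCcinftyBesov0s+} together with Corollary~\ref{cor:Bspq=Bspq0Rn+} give norm density of $\mathrm{C}^\infty_c(\mathbb{R}^n_+)$ in $\dot{\mathrm{B}}^{s}_{p,q,0}(\mathbb{R}^n_+)$ for every $s>-1+\tfrac1p$ with $q<+\infty$, and Corollary~\ref{cor:weakstardensity} covers $q=+\infty$ --- and each $\langle\Phi_k,w\rangle_{\mathbb{R}^n}$ vanishes because $\Phi_k$ is compactly supported in $\{x_n>0\}$ while $w$ is supported in $\{x_n\leqslant0\}$ (the pairing of Proposition~\ref{prop:dualityHomBesov} coincides with the $\eus{S}'$--$\eus{S}$ pairing on a Schwartz, a fortiori compactly supported, argument). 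The resulting functional $\Phi\mapsto\langle\Phi,\widetilde{\,\cdot\,}\rangle_{\mathbb{R}^n}$ is isometric into $(\dot{\mathrm{B}}^{-s}_{p',q'}(\mathbb{R}^n_+))'$ by the definition of the restriction norm. Conversely, for $\Psi\in(\dot{\mathrm{B}}^{-s}_{p',q'}(\mathbb{R}^n_+))'$ the formula $\langle\mathbbm{1}_{\mathbb{R}^n_+}\Psi,\widetilde v\rangle_{\mathbb{R}^n}:=\langle\Psi,\widetilde v_{|\mathbb{R}^n_+}\rangle$ defines an element of $(\dot{\mathrm{B}}^{-s}_{p',q'}(\mathbb{R}^n))'=\dot{\mathrm{B}}^{s}_{p,q}(\mathbb{R}^n)$; it annihilates $\mathrm{C}^\infty_c(\{x_n<0\})$ (these test functions belong to $\dot{\mathrm{B}}^{-s}_{p',q'}(\mathbb{R}^n)$ because $-s<n/p'$), hence is supported in $\overline{\mathbb{R}^n_+}$, i.e.\ lies in $\dot{\mathrm{B}}^{s}_{p,q,0}(\mathbb{R}^n_+)$. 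These two maps are mutually inverse and isometric, which yields $(\dot{\mathrm{B}}^{-s}_{p',q'}(\mathbb{R}^n_+))'=\dot{\mathrm{B}}^{s}_{p,q,0}(\mathbb{R}^n_+)$.

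For the second identity I would realize $\dot{\mathrm{B}}^{-s}_{p',q',0}(\mathbb{R}^n_+)$ as the closed subspace $N:=\{w\in Z:\supp w\subset\overline{\mathbb{R}^n_+}\}$ of $Z:=\dot{\mathrm{B}}^{-s}_{p',q'}(\mathbb{R}^n)$, so that $N'=Z'/N^{\perp}$ with $Z'=\dot{\mathrm{B}}^{s}_{p,q}(\mathbb{R}^n)$ by Proposition~\ref{prop:dualityHomBesov}. Testing $\Phi\in N^{\perp}$ against $\mathrm{C}^\infty_c(\mathbb{R}^n_+)\subset N$ forces $\supp\Phi\subset\overline{\mathbb{R}^n_-}$; conversely, every $\Phi\in\dot{\mathrm{B}}^{s}_{p,q}(\mathbb{R}^n)$ supported in $\overline{\mathbb{R}^n_-}$ annihilates $N$ by the orthogonality statement below (the mirror image of the vanishing used in the previous paragraph, obtained by approximating $\Phi$ --- whose regularity index $s>-1+\tfrac1p$ is admissible --- in norm, or weak$^\ast$ for $q=+\infty$, by functions in $\mathrm{C}^\infty_c(\{x_n<0\})$). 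Hence $N^{\perp}=\dot{\mathrm{B}}^{s}_{p,q,0}(\mathbb{R}^n_-)$, which is precisely the kernel of the restriction map $\dot{\mathrm{B}}^{s}_{p,q}(\mathbb{R}^n)\to\dot{\mathrm{B}}^{s}_{p,q}(\mathbb{R}^n_+)$, so that $N'=Z'/N^{\perp}=\dot{\mathrm{B}}^{s}_{p,q}(\mathbb{R}^n_+)$ with the quotient norm, which is by definition the norm of $\dot{\mathrm{B}}^{s}_{p,q}(\mathbb{R}^n_+)$. This gives $(\dot{\mathrm{B}}^{-s}_{p',q',0}(\mathbb{R}^n_+))'=\dot{\mathrm{B}}^{s}_{p,q}(\mathbb{R}^n_+)$.

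The main obstacle is the orthogonality statement: since the pairing of Proposition~\ref{prop:dualityHomBesov} is defined through a Littlewood--Paley series whose operators do not preserve supports, the vanishing of $\langle\Phi,w\rangle$ for $\Phi$ and $w$ supported in opposite closed half-spaces is not formal. The point is that one only ever approximates the factor whose regularity index is $>-1+\tfrac1{\,\cdot\,}$ --- where Lemma~\ref{lem:densityCcinftyBesov0s+}, Corollary~\ref{cor:Bspq=Bspq0Rn+} and Corollary~\ref{cor:weakstardensity} provide (weak$^\ast$) density of smooth functions compactly supported in an \emph{open} half-space, for which the pairing reduces to the usual distribution pairing and vanishes by disjointness --- while the other, possibly low--regularity, factor is never approximated; this is why no hypothesis beyond \eqref{AssumptionCompletenessExponents} is required, and why the $q=+\infty$ case only needs the weak$^\ast$ density of Corollary~\ref{cor:weakstardensity} in place of norm density. (Alternatively, in the window $s\in(-1+\tfrac1p,\tfrac1p)$, where $\dot{\mathrm{B}}^{\pm s}_{\cdot,\cdot}(\mathbb{R}^n_+)=\dot{\mathrm{B}}^{\pm s}_{\cdot,\cdot,0}(\mathbb{R}^n_+)$ by Corollary~\ref{cor:Bspq=Bspq0Rn+}, both identities can be obtained by writing the Besov spaces as real interpolation couples of homogeneous Sobolev spaces via Proposition~\ref{prop:InterpHomSpacesRn+}, applying \cite[Theorem~3.7.1]{BerghLofstrom1976} and Proposition~\ref{prop:DualitySobolevDomain} exactly as in the proof of Proposition~\ref{prop:dualityHomBesov}; the general case would then be recovered by transport along the differentiation isomorphism of Corollary~\ref{cor:EqNormNablakmBspBaq}, but the direct argument above avoids this detour entirely.)
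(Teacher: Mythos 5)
Your proof is correct and, for the identity $(\dot{\mathrm{B}}^{-s}_{p',q'}(\mathbb{R}^n_+))'=\dot{\mathrm{B}}^{s}_{p,q,0}(\mathbb{R}^n_+)$, it follows essentially the same route as the paper: the same two mutually inverse maps, the same use of (weak${}^\ast$) density of $\mathrm{C}^\infty_c(\mathbb{R}^n_+)$ to show independence of the choice of extension, and the same support argument obtained by testing against $\mathrm{C}^\infty_c(\mathbb{R}^n_-)$. For the second identity the paper only says it "can be shown similarly"; your annihilator/quotient argument, realizing $\dot{\mathrm{B}}^{-s}_{p',q',0}(\mathbb{R}^n_+)$ as a closed subspace $N$ of $\dot{\mathrm{B}}^{-s}_{p',q'}(\mathbb{R}^n)$ and identifying $N^{\perp}=\dot{\mathrm{B}}^{s}_{p,q,0}(\mathbb{R}^n_-)$ with the kernel of the restriction map so that $N'=Z'/N^{\perp}=\dot{\mathrm{B}}^{s}_{p,q}(\mathbb{R}^n_+)$ carries the quotient (i.e.\ restriction) norm, is a clean and valid way of making that step explicit. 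Two cosmetic remarks: "isometric" should read "isomorphic with equivalent norms", since the whole-space duality of Proposition~\ref{prop:dualityHomBesov} is only a norm equivalence; and the membership $\mathrm{C}^\infty_c(\{x_n<0\})\subset\dot{\mathrm{B}}^{-s}_{p',q'}(\mathbb{R}^n)$ is governed by the low-frequency condition $-s>-n/p$ (equivalently $s<n/p$, which holds), not by the completeness condition $-s<n/p'$ you cite.
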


\begin{proof}We only prove $(\dot{\mathrm{B}}^{-s}_{p',q'}(\mathbb{R}^n_+))'=\dot{\mathrm{B}}^{s}_{p,q,0}(\mathbb{R}^n_+)$, the other equality can be shown similarly. First let  $q<+\infty$, and choose $u\in \dot{\mathrm{B}}^{s}_{p,q,0}(\mathbb{R}^n_+)$, it follows that $u$ induce a linear form on $\dot{\mathrm{B}}^{-s}_{p',q'}(\mathbb{R}^n_+)$,
\begin{align*}
    v\longmapsto \big\langle u,\tilde{v}\big\rangle_{\mathbb{R}^n}
\end{align*}
where $\tilde{v}\in\dot{\mathrm{B}}^{-s}_{p',q'}(\mathbb{R}^n)$ is any extension of $v\in\dot{\mathrm{B}}^{-s}_{p',q'}(\mathbb{R}^n_+)$. If one choose $v'$ to be any other extension of $v$, we have that $\tilde{v}-v'\in\dot{\mathrm{B}}^{-s}_{p',q',0}(\mathbb{R}^n_-)$. Since the space $\mathrm{C}_c^\infty(\mathbb{R}^n_+)$ is dense in $\dot{\mathrm{B}}^{s}_{p,q,0}(\mathbb{R}^n_+)$, see either Lemma~\ref{lem:densityCcinftyBesov0s+} or Corollary~\ref{cor:Bspq=Bspq0Rn+}, for $(u_k)_{k\in\mathbb{N}}\subset \mathrm{C}_c^\infty(\mathbb{R}^n_+)$ converging to $u$, we have
\begin{align*}
    \big\langle u,\tilde{v}-v'\big\rangle_{\mathbb{R}^n}=\lim_{k\rightarrow +\infty} \big\langle u_k,\tilde{v}-v'\big\rangle_{\mathbb{R}^n} = 0
\end{align*}
due to the fact that $\mathbb{R}^n_+\cap\mathbb{R}^n_- = \emptyset$. Thus, the map does not depend on the choice of the extension, but is entirely and uniquely determined by $u$. We have the continuous canonical embedding
\begin{align*}
    \dot{\mathrm{B}}^{s}_{p,q,0}(\mathbb{R}^n_+)\hookrightarrow(\dot{\mathrm{B}}^{-s}_{p',q'}(\mathbb{R}^n_+))'\text{. }
\end{align*}
In fact, the same result holds for $q=+\infty$: the space $\mathrm{C}_c^\infty(\mathbb{R}^n_+)$ is sequentially weak${}^\ast$ dense in $\dot{\mathrm{B}}^{s}_{p,\infty,0}(\mathbb{R}^n_+)$ by Corollary \ref{cor:weakstardensity}. 

For the reverse embedding, if $ U \in (\dot{\mathrm{B}}^{-s}_{p',q'}(\mathbb{R}^n_+))'$, it induces a continuous linear functional on $\dot{\mathrm{B}}^{-s}_{p',q'}(\mathbb{R}^n)$ by the mean of
\begin{align*}
    v\longmapsto \big\langle U, \mathbbm{1}_{\mathbb{R}^n_+}\tilde{v}\big\rangle \textit{ , }
\end{align*}
where again $\tilde{v}\in\dot{\mathrm{B}}^{-s}_{p',q'}(\mathbb{R}^n)$ is any extension of $v\in\dot{\mathrm{B}}^{-s}_{p',q'}(\mathbb{R}^n_+)$. Thus, $\mathbbm{1}_{\mathbb{R}^n_+} U \in (\dot{\mathrm{B}}^{-s}_{p',q'}(\mathbb{R}^n))'$ and by Proposition \ref{prop:dualityHomBesov} there exists a unique $u\in \dot{\mathrm{B}}^{s}_{p,q}(\mathbb{R}^n)$ such that, for all $\tilde{v}\in \dot{\mathrm{B}}^{-s}_{p',q'}(\mathbb{R}^n)$,
\begin{align*}
    \big\langle U, \mathbbm{1}_{\mathbb{R}^n_+}\tilde{v}\big\rangle = \big\langle u, \tilde{v}\big\rangle_{\mathbb{R}^n}\text{. }
\end{align*}
Finally, if we test with $\tilde{v}\in \mathrm{C}_c^\infty(\mathbb{R}^n_-)$, it shows that $\supp u \subset \overline{\mathbb{R}^n_+}$, then $u\in \dot{\mathrm{B}}^{s}_{p,q,0}(\mathbb{R}^n_+)$ which closes the proof.
\end{proof}


\section{On traces of functions}\label{Sec:TracesofFunctions}

Dealing with function spaces on domains implies that one may need to investigate the meaning of traces at the boundary if those exist, \textit{i.e.}, to see in our setting if the trace operator
\begin{align*}
    \gamma_0\,:\, u \,\longmapsto\, u_{|_{\partial\mathbb{R}^n_+}}
\end{align*}
still has the expected behavior on $\dot{\mathrm{H}}^{s,p}(\mathbb{R}^n_+)$ and $\dot{\mathrm{B}}^{s}_{p,q}(\mathbb{R}^n_+)$. In fact, in the complete case, it behaves as in the case of inhomogeneous function spaces.

The idea here is to give some appropriate trace theorems for homogeneous Sobolev and Besov spaces. It seems there is no clear trace theorem for homogeneous function spaces in the literature, except maybe \cite{Jawerth78}, but in this case the corresponding results were obtained in a different framework.

\subsection{On inhomogeneous function spaces.}

We discuss first about the usual well known trace theorem on $\mathbb{R}^n$ with trace on $\mathbb{R}^{n-1}\times\{0\}$ in the inhomogeneous case, the result is a rewritten weaker version adapted to our context.
\begin{theorem}[{\cite[Theorem~6.6.1]{BerghLofstrom1976}} ]\label{thm:inhomTracewholespace} Let $p\in(1,+\infty)$, $q\in[1,+\infty]$, $s\in(\frac{1}{p},+\infty)$, and consider the following operator
\begin{align*}
\gamma_0\,:\,\left\{\begin{array}{rl}
\eus{S}(\mathbb{R}^n) &\longrightarrow \eus{S}(\mathbb{R}^{n-1})\\
u &\longmapsto \,\,\, u(\cdot,0)
\end{array}\right.\text{, }
\end{align*}
then following statements are true:
\begin{enumerate}[label=($\roman*$)]
    \item the trace operator extends uniquely as a bounded surjection $\gamma_{0}\,:\, \mathrm{H}^{s,p}(\mathbb{R}^n)\longrightarrow \mathrm{B}^{s-\frac{1}{p}}_{p,p}(\mathbb{R}^{n-1})$, in particular for all $u\in \mathrm{H}^{s,p}(\mathbb{R}^{n})$,
    \begin{align*}
        \lVert  \gamma_{0}u \rVert_{\mathrm{B}^{s-\frac{1}{p}}_{p,p}(\mathbb{R}^{n-1})}  \lesssim_{s,p,n} \lVert u \rVert_{\mathrm{H}^{s,p}(\mathbb{R}^{n})} ;
    \end{align*}
    \item the trace operator extends uniquely as a bounded surjection $\gamma_{0}\,:\, \mathrm{B}^{s}_{p,q}(\mathbb{R}^{n})\longrightarrow \mathrm{B}^{s-\frac{1}{p}}_{p,q}(\mathbb{R}^{n-1})$, in particular for all $u\in \mathrm{B}^{s}_{p,q}(\mathbb{R}^{n})$, 
    \begin{align*}
        \lVert \gamma_{0} u \rVert_{\mathrm{B}^{s-\frac{1}{p}}_{p,q}(\mathbb{R}^{n-1})} \lesssim_{s,p,n,q} \lVert u \rVert_{\mathrm{B}^{s}_{p,q}(\mathbb{R}^{n})}  ;
    \end{align*}
    \item the trace operator extends uniquely as a bounded surjection $\gamma_{0}\,:\, \mathrm{B}^{\frac{1}{p}}_{p,1}(\mathbb{R}^{n})\longrightarrow \mathrm{L}^p(\mathbb{R}^{n-1})$, in particular for all $u\in \mathrm{B}^{\frac{1}{p}}_{p,1}(\mathbb{R}^{n})$, 
    \begin{align*}
        \lVert \gamma_{0} u \rVert_{\mathrm{L}^p(\mathbb{R}^{n-1})} \lesssim_{p,n} \lVert u \rVert_{\mathrm{B}^{\frac{1}{p}}_{p,1}(\mathbb{R}^{n})}  ;
    \end{align*}
\end{enumerate}
Moreover the trace operator $\gamma_0$ admits a linear right bounded inverse $\mathrm{Ext}$ in cases (i) and (ii).
\end{theorem}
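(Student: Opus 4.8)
The statement to prove is the standard inhomogeneous trace theorem on $\mathbb{R}^n$ with trace onto $\mathbb{R}^{n-1}\times\{0\}$. This is Theorem 6.6.1 of Bergh–Löfström, restated in the paper's notation. Let me sketch how I'd prove it.

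\textbf{Proof plan.} This is \cite[Theorem~6.6.1]{BerghLofstrom1976} in a slightly weakened form, so in principle one may simply quote that reference; for completeness I would organize the argument in three blocks: boundedness on the Besov scale, boundedness on the Bessel-potential scale, and construction of the right inverse $\mathrm{Ext}$.

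First I would treat the Besov statements (ii) and (iii). Writing $u=\sum_{k\geqslant -1}\Delta_k u$ — an honest decomposition, since $\Delta_k=0$ for $k\leqslant -2$, so no low-frequency convergence issue arises, which is why no completeness hypothesis is needed here — the key tool is a Nikolskii-type inequality: if $v\in\eus{S}(\mathbb{R}^n)$ has $\supp\eus{F}v\subset\{\,|\xi|\leqslant 2^{k+1}\,\}$, then $\lVert\gamma_0 v\rVert_{\mathrm{L}^p(\mathbb{R}^{n-1})}\lesssim 2^{k/p}\lVert v\rVert_{\mathrm{L}^p(\mathbb{R}^n)}$. This follows from $|v(x',0)|^p\leqslant p\int_0^{+\infty}|v(x',t)|^{p-1}|\partial_t v(x',t)|\,\mathrm{d}t$, Hölder's inequality in $t$, Fubini in $x'$, and the Bernstein estimate $\lVert\partial_{x_n} v\rVert_{\mathrm{L}^p(\mathbb{R}^n)}\lesssim 2^k\lVert v\rVert_{\mathrm{L}^p(\mathbb{R}^n)}$. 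Since $\eus{F}(\gamma_0\Delta_j u)$ is localized at frequency $\sim 2^j$ in $\mathbb{R}^{n-1}$, one gets $\lVert\Delta'_k\gamma_0 u\rVert_{\mathrm{L}^p(\mathbb{R}^{n-1})}\lesssim\sum_{|j-k|\leqslant C}2^{j/p}\lVert\Delta_j u\rVert_{\mathrm{L}^p(\mathbb{R}^n)}$; multiplying by $2^{k(s-1/p)}$ and taking the $\ell^q$-norm in $k$ gives (ii), while at $s=1/p$, $q=1$ one takes instead the $\ell^1$-norm and uses $\mathrm{B}^0_{p,1}(\mathbb{R}^{n-1})\hookrightarrow\mathrm{L}^p(\mathbb{R}^{n-1})$ to get (iii). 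The restriction $s>1/p$ (resp. $s=1/p$, $q=1$) is precisely what makes these sums finite.

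For statement (i) one cannot argue with the Littlewood--Paley square function alone, because the trace of a Bessel-potential space (which behaves like $\mathrm{F}^{s}_{p,2}$) lands in a Besov space whose fine index is $p$, not $2$. I would obtain it by complex interpolation: choose integers $m_0<s<m_1$, use the classical trace theorem $\gamma_0:\mathrm{H}^{m_j,p}(\mathbb{R}^n)\to\mathrm{B}^{m_j-1/p}_{p,p}(\mathbb{R}^{n-1})$, and then combine the identities $[\mathrm{H}^{m_0,p},\mathrm{H}^{m_1,p}]_\theta=\mathrm{H}^{s,p}$ and $[\mathrm{B}^{m_0-1/p}_{p,p},\mathrm{B}^{m_1-1/p}_{p,p}]_\theta=\mathrm{B}^{s-1/p}_{p,p}$ recalled in Subsection~\ref{subsec:InhomSpacesRn+}, with $s=(1-\theta)m_0+\theta m_1$. (Alternatively one invokes the Triebel--Lizorkin trace theorem directly.) For surjectivity and the bounded right inverse in cases (i) and (ii), I would exhibit the explicit lifting $\mathrm{Ext}(g)(x',x_n):=\sum_{k\geqslant -1}\Psi(2^k x_n)\,(\Delta'_k g)(x')$ for a fixed profile $\Psi\in\eus{S}(\mathbb{R})$ with $\Psi(0)=1$ and $\eus{F}\Psi$ suitably supported, so that each summand has Fourier support essentially in a dyadic shell and $\gamma_0\mathrm{Ext}(g)=\sum_k\Delta'_k g=g$. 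The Littlewood--Paley (resp. square-function) characterization of $\mathrm{B}^s_{p,q}(\mathbb{R}^n)$ (resp. $\mathrm{H}^{s,p}(\mathbb{R}^n)$) reduces $\lVert\mathrm{Ext}(g)\rVert$ to a constant times $\lVert g\rVert_{\mathrm{B}^{s-1/p}_{p,q}(\mathbb{R}^{n-1})}$ (resp. $\lVert g\rVert_{\mathrm{B}^{s-1/p}_{p,p}(\mathbb{R}^{n-1})}$), via the one-dimensional computation $\lVert\Psi(2^k\cdot)\rVert_{\mathrm{L}^p(\mathbb{R})}\sim 2^{-k/p}$.

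The genuinely delicate point is the index switch in (i): bounding an $\ell^p$-sum over boundary frequencies by an $\ell^2$-square function in the interior is not a routine Littlewood--Paley manipulation and forces either the Triebel--Lizorkin trace theorem or the interpolation detour above; the remaining steps are standard Bernstein/Nikolskii bookkeeping.
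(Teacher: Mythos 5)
The paper does not actually prove this statement: it is quoted (in slightly weakened form) from Bergh--L\"ofstr\"om, Theorem~6.6.1, and the only surrounding discussion is Remark~\ref{rem:TracesInhom} on the $q=+\infty$ case. So there is no internal proof to compare against, and your reconstruction of the classical argument is the right thing to attempt; the architecture (Nikolskii inequality for band-limited functions, dyadic summation for the Besov case, a separate device for the Bessel-potential case, explicit lifting for surjectivity) is the standard one.

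Two steps would fail as written. First, the claim that $\eus{F}(\gamma_0\Delta_j u)$ is localized at frequency $\sim 2^j$ in $\mathbb{R}^{n-1}$ is false: the annulus $\{|\xi|\sim 2^j\}$ in $\mathbb{R}^n$ projects onto the full ball $\{|\xi'|\lesssim 2^j\}$ (the frequency may sit entirely in $\xi_n$), so $\Delta'_k\gamma_0\Delta_j u$ vanishes only for $j\leqslant k-C$, and the correct bound is $\lVert\Delta'_k\gamma_0 u\rVert_{\mathrm{L}^p}\lesssim\sum_{j\geqslant k-C}2^{j/p}\lVert\Delta_j u\rVert_{\mathrm{L}^p}$. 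With your two-sided localization the sum is finite for every $s$ and the theorem would hold for all $s\in\mathbb{R}$, which is false; the one-sided sum is exactly where $s>1/p$ enters, via $\sum_{j\geqslant k-C}2^{(k-j)(s-1/p)}<+\infty$, consistent with your own closing remark but not with your displayed estimate. Second, the interpolation route to (i) has no admissible lower endpoint when $s\in(1/p,1)$: you need the trace bounded on $\mathrm{H}^{m_0,p}(\mathbb{R}^n)$ for an integer $m_0<s$, and the only candidate is $m_0=0$, where no trace exists. For that range one must argue directly on $\mathrm{F}^{s}_{p,2}$ (the Triebel--Lizorkin trace theorem you mention as a fallback) or use the standard argument that the trace of $\mathrm{F}^{s}_{p,q}$ is independent of $q$. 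A symmetric subtlety, which you flag for the trace direction but not for the lifting, occurs in bounding $\mathrm{Ext}(g)$ in $\mathrm{H}^{s,p}=\mathrm{F}^{s}_{p,2}$ by $\lVert g\rVert_{\mathrm{B}^{s-1/p}_{p,p}}$: the embedding $\mathrm{B}^{s}_{p,\min(p,2)}\hookrightarrow\mathrm{F}^{s}_{p,2}$ gives the wrong Besov index on the boundary when $p>2$, and one genuinely needs the almost-disjointness in $x_n$ of the pieces $\Psi(2^k x_n)\Delta'_k g(x')$. Since the paper itself only cites the reference here, the cleanest repair is to do the same.
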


\begin{remark}\label{rem:TracesInhom} $\bullet$ About the boundedness for the trace operator $\gamma_{0}\,:\, \mathrm{B}^{s}_{p,\infty}(\mathbb{R}^{n})\longrightarrow \mathrm{B}^{s-\frac{1}{p}}_{p,\infty}(\mathbb{R}^{n-1})$. The construction does not follow directly by density argument since $\eus{S}(\mathbb{R}^n)$ is not strongly but only weakly* dense in $\mathrm{B}^{s}_{p,\infty}(\mathbb{R}^{n})$. However, $\eus{S}(\mathbb{R}^n)$ is a dense subspace of $\mathrm{B}^{s_0}_{p,1}(\mathbb{R}^n)\cap \mathrm{B}^{s_1}_{p,1}(\mathbb{R}^n)$, so that the trace is consistent on both spaces $\mathrm{B}^{s_0}_{p,1}(\mathbb{R}^n)$ and $\mathrm{B}^{s_1}_{p,1}(\mathbb{R}^n)$, and we can define the trace operator
\begin{align*}
    \gamma_{0}\,:\, \mathrm{B}^{s_0}_{p,1}(\mathbb{R}^{n}) + \mathrm{B}^{s_1}_{p,1}(\mathbb{R}^{n}) \longrightarrow \mathrm{B}^{s_0-\frac{1}{p}}_{p,1}(\mathbb{R}^{n-1}) + \mathrm{B}^{s_1-\frac{1}{p}}_{p,1}(\mathbb{R}^{n-1}).
\end{align*}
But by real interpolation, if $1/p<s_0<s<s_1$ we have $$\mathrm{B}^{s}_{p,\infty}(\mathbb{R}^n)=(\mathrm{B}^{s_0}_{p,1}(\mathbb{R}^n),\mathrm{B}^{s_1}_{p,1}(\mathbb{R}^n))_{\frac{s-s_0}{s_1-s_0},\infty} \hookrightarrow \mathrm{B}^{s_0}_{p,1}(\mathbb{R}^n)+\mathrm{B}^{s_1}_{p,1}(\mathbb{R}^n).$$
Thus, we have the well-defined (and bounded) trace operator
\begin{align*}
    \gamma_{0}\,:\, \mathrm{B}^{s}_{p,\infty}(\mathbb{R}^{n}) \longrightarrow \mathrm{B}^{s_0-\frac{1}{p}}_{p,1}(\mathbb{R}^{n-1}) + \mathrm{B}^{s_1-\frac{1}{p}}_{p,1}(\mathbb{R}^{n-1}).
\end{align*}
It remains then to prove the boundedness $\gamma_{0}\,:\, \mathrm{B}^{s}_{p,\infty}(\mathbb{R}^{n}) \longrightarrow \mathrm{B}^{s-\frac{1}{p}}_{p,\infty}(\mathbb{R}^{n-1})$, which is given by real interpolation.

A simpler argument is available (arguing that $\mathrm{B}^{s}_{p,\infty}(\mathbb{R}^{n})\hookrightarrow \mathrm{B}^{s-\varepsilon}_{p,1}(\mathbb{R}^{n})$) but the one presented here will also apply to the trace operator in the case of homogeneous Besov spaces, for the Theorem~\ref{thm:Tracehalfspace}.

$\bullet$ One also mentions \cite[Theorems~2.2~\&~2.10]{Schneider2010}, \cite[Sections~V-VII]{JonssonWallin1984}, which give different proofs of the trace theorem (in a more general geometric setting). Notice that in  \cite[Theorems~2.2~\&~2.10]{Schneider2010} and \cite[Theorems~4.47,~4.48]{bookSawano2018} the right bounded inverse they give is not linear but covers case \textit{(iii)}.
\end{remark}

\subsection{On homogeneous function spaces.}

\begin{theorem}\label{thm:Tracehalfspace}Let $p\in(1,+\infty)$, $q\in[1,+\infty]$, $s\in(\frac{1}{p},+\infty)$, then for $(\mathfrak{h},\mathfrak{b})\in\{ (\mathrm{H},\mathrm{B}),\, (\dot{\mathrm{H}},\dot{\mathrm{B}})\}$, we consider the trace operator
\begin{align*}
\gamma_0\,:\,u &\longmapsto \, u(\cdot,0)\text{. }
\end{align*}
The following assertions are true.
\begin{enumerate}[label=($\roman*$)]
    \item For all $u\in \mathrm{H}^{s,p}(\mathbb{R}^{n}_+)$, we have $u\in\mathrm{C}^0_{0,x_n}(\overline{\mathbb{R}_+},\mathfrak{b}^{s-\frac{1}{p}}_{p,p}(\mathbb{R}^{n-1}))$, with the estimate
    \begin{align*}
        \lVert  u \rVert_{\mathrm{L}^{\infty}_{x_n}(\mathbb{R}_+,\mathfrak{b}^{s-\frac{1}{p}}_{p,p}(\mathbb{R}^{n-1}))}  \lesssim_{s,p,n} \lVert u \rVert_{\mathfrak{h}^{s,p}(\mathbb{R}^{n}_+)} ;
    \end{align*}
    In particular, the trace operator extends uniquely to a bounded linear operator $$\gamma_0\,:\,\dot{\mathrm{H}}^{s,p}(\mathbb{R}^n_+)\rightarrow \dot{\mathrm{B}}^{s-\tfrac{1}{p}}_{p,p}(\mathbb{R}^{n-1})$$ whenever $(\mathcal{C}_{s,p})$ is satisfied, and the following continuous embedding holds
    \begin{align*}
        \dot{\mathrm{H}}^{s,p}(\mathbb{R}^n_+)\hookrightarrow \mathrm{C}^{0}_{0,x_n}(\overline{\mathbb{R}_+},\dot{\mathrm{B}}^{s-\frac{1}{p}}_{p,p}(\mathbb{R}^{n-1})) \text{. }
    \end{align*}
    \item For all $u\in \mathrm{B}^{s}_{p,q}(\mathbb{R}^{n}_+)$, we have $u\in\mathrm{C}^0_{0,x_n}(\overline{\mathbb{R}_+},\mathfrak{b}^{s-\frac{1}{p}}_{p,q}(\mathbb{R}^{n-1}))$, with the estimate
    \begin{align*}
        \lVert  u \rVert_{\mathrm{L}^{\infty}_{x_n}(\mathbb{R}_+,\mathfrak{b}^{s-\frac{1}{p}}_{p,q}(\mathbb{R}^{n-1}))}  \lesssim_{s,p,n} \lVert u \rVert_{\mathfrak{b}^{s}_{p,q}(\mathbb{R}^{n}_+)} ;
    \end{align*}
    In particular, the trace operator extends uniquely to a bounded linear operator $$\gamma_0\,:\,\dot{\mathrm{B}}^{s}_{p,q}(\mathbb{R}^n_+)\rightarrow \dot{\mathrm{B}}^{s-\tfrac{1}{p}}_{p,q}(\mathbb{R}^{n-1})$$ whenever \eqref{AssumptionCompletenessExponents} is satisfied, and the following continuous embedding holds
    \begin{align*}
        \dot{\mathrm{B}}^{s}_{p,q}(\mathbb{R}^n_+)\hookrightarrow \mathrm{C}^{0}_{0,x_n}(\overline{\mathbb{R}_+},\dot{\mathrm{B}}^{s-\frac{1}{p}}_{p,q}(\mathbb{R}^{n-1})) \text{. }
    \end{align*}
    
    If $q=+\infty$, the result still holds with uniform boundedness and weak${}^\ast$ continuity only.
    \item For all $u\in \mathrm{B}^{1/p}_{p,1}(\mathbb{R}^{n}_+)$, we have $u\in\mathrm{C}^0_{0,x_n}(\overline{\mathbb{R}_+},\mathrm{L}^{p}(\mathbb{R}^{n-1}))$, with the estimate
    \begin{align*}
        \lVert  u \rVert_{\mathrm{L}^{\infty}_{x_n}(\mathbb{R}_+,\mathrm{L}^{p}(\mathbb{R}^{n-1}))}  \lesssim_{s,p,n} \lVert u \rVert_{\mathfrak{b}^{1/p}_{p,1}(\mathbb{R}^{n}_+)} ;
    \end{align*}
    In particular, the trace operator extends uniquely to a bounded linear operator $$\gamma_0\,:\,\dot{\mathrm{B}}^{1/p}_{p,1}(\mathbb{R}^n_+)\rightarrow \mathrm{L}^{p}(\mathbb{R}^{n-1})$$ and the following continuous embedding holds
    \begin{align*}
        \dot{\mathrm{B}}^{1/p}_{p,1}(\mathbb{R}^n_+)\hookrightarrow \mathrm{C}^{0}_{0,x_n}(\overline{\mathbb{R}_+},\mathrm{L}^{p}(\mathbb{R}^{n-1})) \text{. }
    \end{align*}
\end{enumerate}
Moreover, 
\begin{enumerate}[label=($\alph*$)]
    \item If $(\mathfrak{h},\mathfrak{b}) = (\mathrm{H},\mathrm{B})$, the trace operator $\gamma_0$ admits a linear right bounded inverse $\mathrm{Ext}_{\mathbb{R}^n_+}$ in cases (i) and (ii).
    \item If $(\mathfrak{h},\mathfrak{b}) = (\dot{\mathrm{H}},\dot{\mathrm{B}})$, the trace operator $\gamma_0$ admits a linear right bounded inverse $\underline{\mathrm{Ext}}_{\mathbb{R}^n_+}$ in cases (i) and (ii). 
\end{enumerate}
\end{theorem}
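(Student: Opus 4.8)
The plan is to reduce the homogeneous trace theorem to the inhomogeneous one (Theorem~\ref{thm:inhomTracewholespace}) plus scaling, and to obtain the right inverses by a semigroup-type formula. Throughout, I would work with $\eus{S}_0(\overline{\mathbb{R}^n_+})$, which is dense in every space involved when the summability exponent is finite, and handle $q=+\infty$ separately via the interpolation trick already recorded in Remark~\ref{rem:TracesInhom}.

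\textbf{Step 1: Continuity in $x_n$ and the uniform bound.} First I would fix $u\in\eus{S}_0(\overline{\mathbb{R}^n_+})$ and view it as the map $x_n\mapsto u(\cdot,x_n)$. Translating in the last variable and applying the inhomogeneous trace estimate (i)--(iii) of Theorem~\ref{thm:inhomTracewholespace} (after extending by Stein's or the reflection operator $\mathrm{E}$ of Proposition~\ref{prop:ExtOpHomSobSpaces}) gives, for every $x_n\geqslant 0$,
\begin{align*}
    \lVert u(\cdot,x_n)\rVert_{\mathrm{B}^{s-\frac1p}_{p,p}(\mathbb{R}^{n-1})}\lesssim_{s,p,n}\lVert u\rVert_{\mathrm{H}^{s,p}(\mathbb{R}^n_+)},
\end{align*}
and similarly for the Besov and endpoint cases. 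Continuity $x_n\mapsto u(\cdot,x_n)$ into the boundary space, together with decay as $x_n\to+\infty$, is clear for $u\in\eus{S}_0(\overline{\mathbb{R}^n_+})$; the point is that the bound is uniform, so $u\mapsto(x_n\mapsto u(\cdot,x_n))$ extends to a bounded map $\mathrm{H}^{s,p}(\mathbb{R}^n_+)\to\mathrm{C}^0_{0,x_n}(\overline{\mathbb{R}_+},\mathrm{B}^{s-\frac1p}_{p,p}(\mathbb{R}^{n-1}))$ and likewise in the Besov scale, giving the inhomogeneous part of (i)--(iii).

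\textbf{Step 2: Passage to the homogeneous spaces by scaling.} To get the homogeneous estimate I would apply the inhomogeneous one to the dilate $u_\lambda:=u(\lambda\cdot)$, use $\gamma_0 u_\lambda=(\gamma_0 u)(\lambda\cdot)$, and compare the homogeneous norms of $u_\lambda$ and $\gamma_0u_\lambda$: since $\mathrm{H}^{s,p}$ and $\dot{\mathrm{H}}^{s,p}$ differ only at low frequencies (Proposition~\ref{prop:PropertiesHomSobolevSpacesRn}, point (vi)), and the same for Besov (Proposition~\ref{prop:PropertiesHomBesovSpacesRn}, point (v)), letting $\lambda\to 0$ or $\lambda\to+\infty$ kills the inhomogeneous contribution on both sides and leaves
\begin{align*}
    \lVert \gamma_0 u\rVert_{\dot{\mathrm{B}}^{s-\frac1p}_{p,p}(\mathbb{R}^{n-1})}\lesssim_{s,p,n}\lVert u\rVert_{\dot{\mathrm{H}}^{s,p}(\mathbb{R}^n_+)},
\end{align*}
this being exactly the scaling argument used for Proposition~\ref{prop:SobolevMultiplier}. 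A density argument using $\eus{S}_0(\overline{\mathbb{R}^n_+})$ then extends $\gamma_0$ to all of $\dot{\mathrm{H}}^{s,p}(\mathbb{R}^n_+)$ when $(\mathcal C_{s,p})$ holds (and to $\dot{\mathrm{B}}^s_{p,q}$ when \eqref{AssumptionCompletenessExponents} holds with $q<+\infty$); the $q=+\infty$ case follows by interpolating between two $\dot{\mathrm{B}}^{s_i}_{p,1}$ traces exactly as in Remark~\ref{rem:TracesInhom}, now invoking Proposition~\ref{prop:InterpHomBesSpacesRn+} in place of the inhomogeneous interpolation, yielding uniform boundedness and weak${}^\ast$ continuity. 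The embedding into $\mathrm{C}^0_{0,x_n}(\overline{\mathbb{R}_+},\dot{\mathrm{B}}^{s-\frac1p}_{p,p})$ is then read off the uniform-in-$x_n$ bound together with density.

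\textbf{Step 3: The right inverse.} For $(\mathfrak h,\mathfrak b)=(\mathrm H,\mathrm B)$, (a) is just Theorem~\ref{thm:inhomTracewholespace} composed with the restriction $\mathbb{R}^n\to\mathbb{R}^n_+$ (extend $\mathrm{Ext}$'s output, then restrict). For the homogeneous case (b) I would use an explicit extension adapted to $\mathbb{R}^n_+$: given boundary data $g\in\dot{\mathrm{B}}^{s-\frac1p}_{p,p}(\mathbb{R}^{n-1})$, set $\underline{\mathrm{Ext}}_{\mathbb{R}^n_+}g(x',x_n):=\chi(x_n)\,\big(e^{-x_n(-\Delta')^{1/2}}g\big)(x')$ for a fixed cutoff $\chi\in\mathrm{C}^\infty_c([0,\infty))$ with $\chi\equiv 1$ near $0$, or more simply the Poisson-type extension $\mathcal{F}^{-1}_{\xi'}[\theta(x_n|\xi'|)\widehat g(\xi')]$ with $\theta\in\mathrm{C}_c^\infty$, $\theta(0)=1$. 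That $\gamma_0\circ\underline{\mathrm{Ext}}_{\mathbb{R}^n_+}=\mathrm{Id}$ is immediate; the boundedness $\dot{\mathrm{B}}^{s-\frac1p}_{p,p}(\mathbb{R}^{n-1})\to\dot{\mathrm{H}}^{s,p}(\mathbb{R}^n_+)$ and $\dot{\mathrm{B}}^{s-\frac1p}_{p,q}(\mathbb{R}^{n-1})\to\dot{\mathrm{B}}^s_{p,q}(\mathbb{R}^n_+)$ is a standard Littlewood--Paley computation on $\mathbb{R}^n$ (controlling $\dot\Delta_j$ of the extension by the frequency-localized data, then integrating in $x_n$), followed by restriction to $\mathbb{R}^n_+$; homogeneity of the symbol $\theta(x_n|\xi'|)$ makes the estimate automatically scale-invariant, so no inhomogeneous correction is needed. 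One checks the range lands in $\eus{S}'_h$ because the extension has no spurious low-frequency mass.

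\textbf{Main obstacle.} The delicate point is not the trace bound itself—scaling plus the inhomogeneous theorem handles it—but the bookkeeping forced by non-completeness: one must verify at each step that the objects constructed (the extension $\mathrm{E}u$, the Poisson extension of the boundary data, the limits of dilates) genuinely live in $\eus{S}'_h(\mathbb{R}^n)$ and that the density of $\eus{S}_0(\overline{\mathbb{R}^n_+})$ is available in the precise space at hand, so that "extend by density'' is legitimate only when \eqref{AssumptionCompletenessExponents} (resp. $(\mathcal C_{s,p})$) holds; outside that range the statement is only the uniform pointwise-in-$x_n$ estimate on the dense subspace, which is exactly how (i)--(iii) are phrased. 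The $q=+\infty$ endpoint, requiring the weak${}^\ast$-density machinery of Corollary~\ref{cor:weakstardensity} and real interpolation of homogeneous Besov spaces, is the second place where care is needed.
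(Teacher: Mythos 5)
Your proposal follows essentially the same route as the paper: translation in $x_n$ plus the inhomogeneous trace theorem for the uniform-in-$x_n$ bound, the identity $\mathrm{B}^{s}_{p,q}(\mathbb{R}^n_+)=\mathrm{L}^p(\mathbb{R}^n_+)\cap\dot{\mathrm{B}}^{s}_{p,q}(\mathbb{R}^n_+)$ followed by the dilation $u_\lambda=u(\lambda\cdot)$ with $\lambda\to+\infty$ (not $\lambda\to 0$, since one divides by $\lambda^{s-n/p}$ and needs $\lambda^{-s}\to 0$) to kill the low-frequency term, density and real interpolation for $q=+\infty$, and the Poisson semigroup $e^{-x_n(-\Delta')^{1/2}}$ as the homogeneous right inverse (Proposition~\ref{prop:HarmExtenOpSobolevBesov}). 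One caveat: your first candidate for $\underline{\mathrm{Ext}}_{\mathbb{R}^n_+}$, the cutoff version $\chi(x_n)e^{-x_n(-\Delta')^{1/2}}g$, introduces a length scale and breaks the exact scale invariance you then invoke, so only your second, uncut Poisson-type extension (which is what the paper uses) actually delivers the homogeneous estimate.
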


\begin{proof}We cut the proof in several steps.

\textbf{Step 1: } The case $(\mathfrak{h},\mathfrak{b}) = (\mathrm{H},\mathrm{B})$.

We first check validity of the embedding
\begin{align*}
    \mathrm{H}^{s,p}(\mathbb{R}^n)\hookrightarrow \mathrm{C}^{0}_{0,x_n}(\mathbb{R},\mathrm{B}^{s-\frac{1}{p}}_{p,p}(\mathbb{R}^{n-1}))\text{. }
\end{align*}
Let $u\in \mathrm{H}^{s,p}(\mathbb{R}^n)$, for $t>0$, for almost every $x=(x',x_n)\in\mathbb{R}^n$, we introduce $u_t(x',x_n):= u(x',x_n+t)$, we have $u_t \in \mathrm{H}^{s,p}(\mathbb{R}^n)$, and by Theorem \ref{thm:inhomTracewholespace},
\begin{align*}
\lVert \gamma_0 u_t  \rVert_{\mathrm{B}^{s-\frac{1}{p}}_{p,p}(\mathbb{R}^{n-1})} &\lesssim_{p,s,n} \lVert u \rVert_{\mathrm{H}^{s,p}(\mathbb{R}^n)} \text{, }\\
    \lVert \gamma_0 (u_t -u) \rVert_{\mathrm{B}^{s-\frac{1}{p}}_{p,p}(\mathbb{R}^{n-1})} &\lesssim_{p,s,n} \lVert u_t -u \rVert_{\mathrm{H}^{s,p}(\mathbb{R}^n)}\text{. }
\end{align*}
Therefore, by strong continuity of translation in Lebesgue spaces, then in Sobolev spaces, we obtain
\begin{align*}
    \lVert \gamma_0 (u_t -u) \rVert_{\mathrm{B}^{s-\frac{1}{p}}_{p,p}(\mathbb{R}^{n-1})} &\lesssim_{p,s,n} \lVert u_t -u \rVert_{\mathrm{H}^{s,p}(\mathbb{R}^n)} \xrightarrow[t\rightarrow 0]{} 0\text{. }
\end{align*}
Hence, $u\in\mathrm{C}_{b,x_n}^0(\mathbb{R},\mathrm{B}^{s-\frac{1}{p}}_{p,p}(\mathbb{R}^{n-1}))$, with the estimate,
\begin{align*}
    \Big\lVert t\mapsto\lVert u(\cdot,t) \rVert_{\mathrm{B}^{s-\frac{1}{p}}_{p,p}(\mathbb{R}^{n-1})}\Big\rVert_{\mathrm{L}^{\infty}(\mathbb{R})} &\lesssim_{p,s,n} \lVert u \rVert_{\mathrm{H}^{s,p}(\mathbb{R}^n)} \text{. }
\end{align*}
Finally, one can approximate $u$ by Schwartz functions to deduce
\begin{align*}
    u\in\mathrm{C}_{0,x_n}^0(\mathbb{R},\mathrm{B}^{s-\frac{1}{p}}_{p,p}(\mathbb{R}^{n-1}))\text{. }
\end{align*}
One may perform a similar proof for all other cases, and one may check \cite[Proposition~1.9]{Guidetti1991Interp} for the continuity of translation in Besov spaces, one may also use a density and an interpolation argument. Concerning the case of Besov spaces with index $q=+\infty$, the argument is postponed to the end of the next \textbf{Step 2.1}.

Now, one can use the definition of function spaces by restriction.

One choose $\mathrm{Ext}_{\mathbb{R}^n_+}=[\mathrm{Ext} \cdot]_{|_{\mathbb{R}^n_+}}$ introduced in the proof of Theorem \ref{thm:inhomTracewholespace} which satisfies the desired boundedness properties. 

\textbf{Step 2.1: } The case $(\mathfrak{h},\mathfrak{b}) = (\dot{\mathrm{H}},\dot{\mathrm{B}})$. Boundedness of the trace operator.

We only treat the case \textit{(ii)} other ones can be done similarly. From \textbf{Step 1}, and for fixed $p\in(1,+\infty)$, $q\in[1,+\infty]$, $s>\frac{1}{p}$, and $u\in {\mathrm{B}}^{s}_{p,q}(\mathbb{R}^n_+)$, we have
\begin{align*}
    \lVert  u \rVert_{\mathrm{L}^{\infty}_{x_n}(\mathbb{R}_+,\dot{\mathrm{B}}^{s-\frac{1}{p}}_{p,q}(\mathbb{R}^{n-1}))} \lesssim_{p,s,n}\lVert  u \rVert_{\mathrm{L}^{\infty}_{x_n}(\mathbb{R}_+,\mathrm{B}^{s-\frac{1}{p}}_{p,q}(\mathbb{R}^{n-1}))}  \lesssim_{s,p,n} \lVert u \rVert_{\mathrm{B}^{s}_{p,q}(\mathbb{R}^{n}_+)}  \text{. }
\end{align*}
Thus, one may use the fact that ${\mathrm{B}^{s}_{p,q}(\mathbb{R}^{n}_+)} = \mathrm{L}^p(\mathbb{R}^n_+) \cap {\dot{\mathrm{B}}^{s}_{p,q}(\mathbb{R}^{n}_+)}$, which comes from Proposition~\ref{prop:IntersLpHomBesovRn+=BesovRn+}, to obtain
\begin{align*}
     \lVert  u \rVert_{\mathrm{L}^{\infty}_{x_n}(\mathbb{R}_+,\dot{\mathrm{B}}^{s-\frac{1}{p}}_{p,q}(\mathbb{R}^{n-1}))} \lesssim_{s,p,n,q} \lVert  u \rVert_{\mathrm{L}^{p}(\mathbb{R}^{n}_+)} +  \lVert u \rVert_{\dot{\mathrm{B}}^{s}_{p,q}(\mathbb{R}^{n}_+)}  \text{. }
\end{align*}
So that, by a dilation argument, replacing $u$, by $u_\lambda:= u(\lambda \cdot)$, for $\lambda \in 2^\mathbb{N}$, 
\begin{align*}
     \lambda^{s- \frac{n}{p}}\lVert  u \rVert_{\mathrm{L}^{\infty}_{x_n}(\mathbb{R}_+,\dot{\mathrm{B}}^{s-\frac{1}{p}}_{p,q}(\mathbb{R}^{n-1}))} \lesssim_{s,p,n,q} \lambda^{- \frac{n}{p}}\lVert  u \rVert_{\mathrm{L}^{p}(\mathbb{R}^{n}_+)} +  \lambda^{s- \frac{n}{p}}\lVert u \rVert_{\dot{\mathrm{B}}^{s}_{p,q}(\mathbb{R}^{n}_+)}  \text{. }
\end{align*}
Hence, we can divide by $\lambda^{s-\frac{n}{p}}$ on both sides and pass to the limit $\lambda \longrightarrow +\infty$,
\begin{align*}
    \lVert  u \rVert_{\mathrm{L}^{\infty}_{x_n}(\mathbb{R}_+,\dot{\mathrm{B}}^{s-\frac{1}{p}}_{p,q}(\mathbb{R}^{n-1}))} \lesssim_{s,p,n,q} \lVert u \rVert_{\dot{\mathrm{B}}^{s}_{p,q}(\mathbb{R}^{n}_+)}  \text{. }
\end{align*}
Therefore, if $q<+\infty$, and \eqref{AssumptionCompletenessExponents} is satisfied, the embedding
\begin{align*}
    \dot{\mathrm{B}}^{s}_{p,q}(\mathbb{R}^n_+)\hookrightarrow \mathrm{C}^{0}_{0,x_n}(\overline{\mathbb{R}_+},\dot{\mathrm{B}}^{s-\frac{1}{p}}_{p,q}(\mathbb{R}^{n-1}))
\end{align*}
holds by density. If $q=+\infty$, and \eqref{AssumptionCompletenessExponents} is satisfied, the result follows from real interpolation applied to the map $u\mapsto u(\cdot,x_n)=\gamma_0 u_{x_n}$ yielding for all $x_n\in[0,+\infty)$,
\begin{align}\label{eq:traceEstimateBspInfty}
    \lVert  u(\cdot,x_n) \rVert_{\dot{\mathrm{B}}^{s-\frac{1}{p}}_{p,\infty}(\mathbb{R}^{n-1})} \lesssim_{s,p,n} \lVert u \rVert_{\dot{\mathrm{B}}^{s}_{p,\infty}(\mathbb{R}^{n}_+)}
\end{align}
for more details about the meaning of traces in this case $q=+\infty$, see the discussion in the first part of Remark \ref{rem:TracesInhom}. Notice that the bound is uniform with respect to $x_n$.

It remains to show weak* continuity with respect to $x_n$. This follows from  Lemma~\ref{lem:weak*continuityBspinfty} in Appendix~\ref{Append:ContinTranslation}. Let $\phi\in \dot{\mathrm{B}}^{-s+1-1/p'}_{p',1}(\mathbb{R}^{n-1})$. One has by Lemma~\ref{lem:weak*continuityBspinfty}
\begin{align*}
    \langle u(\cdot,x_n), \phi\rangle_{\mathbb{R}^{n-1}} = \langle \gamma_0 u_{x_n}, \phi\rangle_{\mathbb{R}^{n-1}} = \langle u_{x_n}, \gamma_0^\ast\phi\rangle_{\mathbb{R}^{n}_+}\xrightarrow[x_n\rightarrow 0]{} \langle u , \gamma_0^\ast\phi\rangle_{\mathbb{R}^{n}_+} = \langle \gamma_0 u , \phi\rangle_{\mathbb{R}^{n-1}},
\end{align*}
where $\gamma_0^{\ast} = \mathrm{I}\otimes[\delta_{0}'\ast]\,:\,\dot{\mathrm{B}}^{-s+1-1/p'}_{p',1}(\mathbb{R}^{n-1})\longrightarrow \dot{\mathrm{B}}^{-s}_{p',1,0}(\mathbb{R}^{n}_+)$ is the (pre-)dual map of the trace operator.

This yields the weak* continuity with respect to $x_n$, from which we deduce the (weak*) measurability of $x_n\mapsto u(\cdot,x_n)$, and this allows to take the supremum in \eqref{eq:traceEstimateBspInfty}.

\textbf{Step 2.2: } The case $(\mathfrak{h},\mathfrak{b}) = (\dot{\mathrm{H}},\dot{\mathrm{B}})$. Boundedness of the extension operator.

The operator $T$ given by Proposition \ref{prop:HarmExtenOpSobolevBesov} is an appropriate extension operator which satisfies the desired boundedness properties. Thus $\underline{\mathrm{Ext}}_{\mathbb{R}^n_+}:=T$ behaves as expected.
\end{proof}

A raised question is about what happens when we want to deal with intersection of homogeneous Sobolev and Besov spaces.

\begin{proposition}\label{prop:TracehalfspaceIntersec1}Let $p\in(1,+\infty)$, $q\in[1,+\infty)$, $-1+\frac{1}{p} < s_0 < \frac{1}{p} < s_1$, and $\theta\in(0,1)$ such that
\begin{align*}
    \frac{1}{p}=(1-\theta)s_0+\theta s_1\text{. }
\end{align*}
Then the following assertions hold.
\begin{enumerate}[label=($\roman*$)]
    \item for all $u\in \dot{\mathrm{H}}^{s_0,p}(\mathbb{R}^n_+)\cap \dot{\mathrm{H}}^{s_1,p}(\mathbb{R}^n_+)$, we have $\gamma_0 u \in \mathrm{B}^{s_1-\frac{1}{p}}_{p,p}(\mathbb{R}^{n-1})$, with the estimate
    \begin{align*}
        \lVert  \gamma_{0}u \rVert_{\mathrm{B}^{s_1-\frac{1}{p}}_{p,p}(\mathbb{R}^{n-1})}  \lesssim_{s_0,s_1,p,n} \lVert u \rVert_{\dot{\mathrm{H}}^{s_0,p}(\mathbb{R}^{n}_+)}^{1-\theta}\lVert u \rVert_{\dot{\mathrm{H}}^{s_1,p}(\mathbb{R}^{n}_+)}^\theta +  \lVert u \rVert_{\dot{\mathrm{H}}^{s_1,p}(\mathbb{R}^{n}_+)} \text{. }
    \end{align*}
    We also have,
    \begin{align*}
        \lVert  \gamma_{0}u \rVert_{\dot{\mathrm{B}}^{s_1-\frac{1}{p}}_{p,p}(\mathbb{R}^{n-1})}  \lesssim_{s_0,s_1,p,n}  \lVert u \rVert_{\dot{\mathrm{H}}^{s_1,p}(\mathbb{R}^{n}_+)} \text{ ; }
    \end{align*}
    \item for all $u\in \dot{\mathrm{B}}^{s_0}_{p,q}(\mathbb{R}^n_+)\cap \dot{\mathrm{B}}^{s_1}_{p,q}(\mathbb{R}^n_+)$, we have $\gamma_0 u \in \mathrm{B}^{s_1-\frac{1}{p}}_{p,q}(\mathbb{R}^{n-1})$, with the estimate
    \begin{align*}
        \lVert  \gamma_{0}u \rVert_{\mathrm{B}^{s_1-\frac{1}{p}}_{p,q}(\mathbb{R}^{n-1})}  \lesssim_{s_0,s_1,p,n} \lVert u \rVert_{\dot{\mathrm{B}}^{s_0}_{p,q}(\mathbb{R}^{n}_+)}^{1-\theta}\lVert u \rVert_{\dot{\mathrm{B}}^{s_1}_{p,q}(\mathbb{R}^{n}_+)}^\theta +  \lVert u \rVert_{\dot{\mathrm{B}}^{s_1}_{p,q}(\mathbb{R}^{n}_+)} \text{. }
    \end{align*}
    We also have,
    \begin{align*}
        \lVert  \gamma_{0}u \rVert_{\dot{\mathrm{B}}^{s_1-\frac{1}{p}}_{p,q}(\mathbb{R}^{n-1})}  \lesssim_{s_0,s_1,p,n} \lVert u \rVert_{\dot{\mathrm{B}}^{s_1}_{p,q}(\mathbb{R}^{n}_+)} ;
    \end{align*}
    \item for all $u\in \dot{\mathrm{B}}^{s_0}_{p,\infty}(\mathbb{R}^n_+)\cap \dot{\mathrm{B}}^{s_1}_{p,\infty}(\mathbb{R}^n_+)$, we have $\gamma_0 u \in \mathrm{L}^{p}(\mathbb{R}^{n-1})$, with the estimate
    \begin{align*}
        \lVert  \gamma_{0}u \rVert_{\mathrm{L}^{p}(\mathbb{R}^{n-1})}  \lesssim_{s_0,s_1,p,n}  \lVert u \rVert_{\dot{\mathrm{B}}^{s_0}_{p,\infty}(\mathbb{R}^{n}_+)}^{1-\theta}\lVert u \rVert_{\dot{\mathrm{B}}^{s_1}_{p,\infty}(\mathbb{R}^{n}_+)}^{\theta} \text{. }
    \end{align*}
\end{enumerate}
\end{proposition}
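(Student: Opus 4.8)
The plan is to transport everything to the whole space $\mathbb{R}^n$ by means of the extension operator $\mathrm{E}$ of Proposition~\ref{prop:ExtOpHomSobSpaces}, exploiting the \emph{decoupled} bounds it enjoys on intersection spaces. Since $n\geqslant 2$ and $s_0<\tfrac1p\leqslant\tfrac np$, the conditions $(\mathcal{C}_{s_0,p})$ and $(\mathcal{C}_{s_0,p,q})$ hold for every $q\in[1,+\infty]$, so Corollary~\ref{cor:ExtOpIntersecHomHspRn+} (resp. Proposition~\ref{prop:IntersLpHomBesovRn+=BesovRn+}) applies: for $m$ large enough, $\mathrm{E}$ maps $\dot{\mathrm{H}}^{s_0,p}(\mathbb{R}^n_+)\cap\dot{\mathrm{H}}^{s_1,p}(\mathbb{R}^n_+)$ into $\dot{\mathrm{H}}^{s_0,p}(\mathbb{R}^n)\cap\dot{\mathrm{H}}^{s_1,p}(\mathbb{R}^n)$ with $\lVert\mathrm{E}u\rVert_{\dot{\mathrm{H}}^{s_j,p}(\mathbb{R}^n)}\lesssim\lVert u\rVert_{\dot{\mathrm{H}}^{s_j,p}(\mathbb{R}^n_+)}$ for $j\in\{0,1\}$ separately, and likewise on the Besov scale with $q_0=q_1=q\in[1,+\infty]$. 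Because $s_1>\tfrac1p$, whole-space elements of these intersections are continuous in $x_n$ with values in a Besov space, so $\gamma_0(\mathrm{E}u)$ is well defined and independent of the choice of $\mathrm{E}$ (two extensions differ by a whole-space element supported in $\overline{\mathbb{R}^n_-}$, whose trace vanishes), and we set $\gamma_0 u:=\gamma_0(\mathrm{E}u)$. It therefore suffices to prove the corresponding estimates on $\mathbb{R}^n$, for elements of $\dot{\mathrm{H}}^{s_0,p}(\mathbb{R}^n)\cap\dot{\mathrm{H}}^{s_1,p}(\mathbb{R}^n)$ (resp. the Besov intersection), and then substitute $U=\mathrm{E}u$.

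\textbf{Splitting the target.} For cases (i) and (ii), since $s_1-\tfrac1p>0$, point~(v) of Proposition~\ref{prop:PropertiesHomBesovSpacesRn} gives $\mathrm{B}^{s_1-\frac1p}_{p,r}(\mathbb{R}^{n-1})=\mathrm{L}^p(\mathbb{R}^{n-1})\cap\dot{\mathrm{B}}^{s_1-\frac1p}_{p,r}(\mathbb{R}^{n-1})$ with equivalence of norms ($r=p$, resp. $r=q$). The homogeneous part is controlled by the homogeneous trace bound of Theorem~\ref{thm:Tracehalfspace}, restricted from $\mathbb{R}^n_+$ to the whole space: for $U\in\eus{S}_0(\mathbb{R}^n)$ one has $\lVert\gamma_0 U\rVert_{\dot{\mathrm{B}}^{s_1-1/p}_{p,r}(\mathbb{R}^{n-1})}\lesssim\lVert U\rVert_{\dot{\mathrm{H}}^{s_1,p}(\mathbb{R}^n)}$ (resp. $\lVert U\rVert_{\dot{\mathrm{B}}^{s_1}_{p,q}(\mathbb{R}^n)}$), which extends to the whole intersection by density of $\eus{S}_0(\mathbb{R}^n)$ (Lemma~\ref{lem:IntersecHomHsp}, resp. Proposition~\ref{prop:IntersLpHomBesovRn+=BesovRn+} for $q<+\infty$). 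This yields the additive term of the claimed estimates, as well as the secondary "$\dot{\mathrm{B}}$" estimates.

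\textbf{The $\mathrm{L}^p$-contribution (and case (iii)).} The multiplicative term is the $\mathrm{L}^p$-part of the trace, and this is exactly the content of (iii). Theorem~\ref{thm:Tracehalfspace}(iii), restricted to the whole space, gives $\gamma_0\colon\dot{\mathrm{B}}^{1/p}_{p,1}(\mathbb{R}^n)\to\mathrm{L}^p(\mathbb{R}^{n-1})$ boundedly; on the other hand, with the given $\theta$, Theorem~\ref{thm:InterpHomSpacesRn} (valid since $(\mathcal{C}_{s_0,p})$, resp. $(\mathcal{C}_{s_0,p,q})$, $(\mathcal{C}_{s_0,p,\infty})$, holds) identifies $\dot{\mathrm{B}}^{1/p}_{p,1}(\mathbb{R}^n)$ with $(\dot{\mathrm{H}}^{s_0,p},\dot{\mathrm{H}}^{s_1,p})_{\theta,1}$, with $(\dot{\mathrm{B}}^{s_0}_{p,q},\dot{\mathrm{B}}^{s_1}_{p,q})_{\theta,1}$, and with $(\dot{\mathrm{B}}^{s_0}_{p,\infty},\dot{\mathrm{B}}^{s_1}_{p,\infty})_{\theta,1}$ respectively. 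Finally, a direct computation of the $K$-functional shows that for any interpolation couple $X_0\cap X_1\hookrightarrow(X_0,X_1)_{\theta,1}$ with $\lVert x\rVert_{(X_0,X_1)_{\theta,1}}\lesssim_{\theta}\lVert x\rVert_{X_0}^{1-\theta}\lVert x\rVert_{X_1}^{\theta}$. Chaining these gives $\lVert\gamma_0 U\rVert_{\mathrm{L}^p(\mathbb{R}^{n-1})}\lesssim\lVert U\rVert_{X_0}^{1-\theta}\lVert U\rVert_{X_1}^{\theta}$ with $(X_0,X_1)$ the appropriate couple; combined with the splitting and the homogeneous bound of the previous step, and after plugging in $U=\mathrm{E}u$ together with the decoupled extension estimates, this closes (i), (ii) and (iii). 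Note that in case (iii) no density step is needed: $\dot{\mathrm{B}}^{1/p}_{p,1}(\mathbb{R}^n)$ is complete and $\dot{\mathrm{B}}^{s_0}_{p,\infty}(\mathbb{R}^n)\cap\dot{\mathrm{B}}^{s_1}_{p,\infty}(\mathbb{R}^n)$ embeds into it outright.

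\textbf{Main obstacle.} The difficulty is structural rather than computational: because $\dot{\mathrm{H}}^{s_1,p}$ and $\dot{\mathrm{B}}^{s_1}_{p,q}$ need not be complete (no $(\mathcal{C}_{s_1,p})$-type hypothesis is imposed), the trace theorems and the density/interpolation machinery cannot be applied to them in isolation; the whole argument hinges on working inside the complete intersection space and on the availability of an extension operator with \emph{decoupled} estimates (Corollary~\ref{cor:ExtOpIntersecHomHspRn+}, Proposition~\ref{prop:IntersLpHomBesovRn+=BesovRn+}), which is precisely what permits separating the $s_0$- and $s_1$-norms in the final bound. A secondary delicate point is recognising the $\mathrm{L}^p$-contribution of the trace as a geometric mean, for which the $(\theta,1)$-real-interpolation identity of Theorem~\ref{thm:InterpHomSpacesRn} and the elementary embedding of the intersection into the $(\theta,1)$-space are the right tools.
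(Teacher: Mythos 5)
Your proof is correct and rests on the same two key mechanisms as the paper's: the elementary embedding of the intersection into the $(\theta,1)$ real interpolation space with the geometric-mean bound, identified with $\dot{\mathrm{B}}^{1/p}_{p,1}$ via Theorem~\ref{thm:InterpHomSpacesRn}, followed by the trace into $\mathrm{L}^p(\mathbb{R}^{n-1})$ for the multiplicative term; and, separately, the homogeneous $s_1$-trace bound plus the identification $\mathrm{B}^{s_1-1/p}_{p,r}=\mathrm{L}^p\cap\dot{\mathrm{B}}^{s_1-1/p}_{p,r}$ and a density pass through $\eus{S}_0$ for the additive term. The only organisational difference is that you first transport everything to $\mathbb{R}^n$ via the extension operator with decoupled estimates and then interpolate on the whole space, whereas the paper stays on $\mathbb{R}^n_+$ and directly invokes the half-space interpolation identity $(\dot{\mathrm{B}}^{s_0}_{p,q}(\mathbb{R}^n_+),\dot{\mathrm{B}}^{s_1}_{p,q}(\mathbb{R}^n_+))_{\theta,1}=\dot{\mathrm{B}}^{1/p}_{p,1}(\mathbb{R}^n_+)$ together with Theorem~\ref{thm:Tracehalfspace}(iii); your detour is harmless but re-derives machinery (extension, restriction, well-definedness of $\gamma_0(\mathrm{E}u)$) that the half-space statements already package. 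One small point worth tightening: the claim that a whole-space element of the intersection supported in $\overline{\mathbb{R}^n_-}$ has vanishing trace itself needs the density of $\mathrm{C}^\infty_c(\mathbb{R}^n_-)$ in the corresponding $0$-boundary intersection space (Corollary~\ref{cor:densityCcinftyIntersecHsp0} and its Besov analogue); it is cleaner to define $\gamma_0$ on the half-space intersection directly as the continuous extension of evaluation at $x_n=0$ from the dense subspace $\eus{S}_0(\overline{\mathbb{R}^n_+})$, which makes the independence of the extension automatic.
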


\begin{proof}We only start proving the point \textit{(ii)}, and claim that point \textit{(i)} can be achieved similarly. We start noticing, the following continuous embedding,
\begin{align*}
    \dot{\mathrm{B}}^{s_0}_{p,q}(\mathbb{R}^n_+)\cap \dot{\mathrm{B}}^{s_1}_{p,q}(\mathbb{R}^n_+) \overset{\iota}{\hookrightarrow} (\dot{\mathrm{B}}^{s_0}_{p,q}(\mathbb{R}^n_+), \dot{\mathrm{B}}^{s_1}_{p,q}(\mathbb{R}^n_+))_{\theta,1} = \dot{\mathrm{B}}^{\frac{1}{p}}_{p,1}(\mathbb{R}^n_+) \overset{\gamma_0}{\hookrightarrow} \mathrm{L}^p (\mathbb{R}^{n-1})\text{. }
\end{align*}
Here, $\iota$ is the canonical embedding obtained via standard interpolation theory, and the last embedding via the trace operator is a direct consequence of Theorem \ref{thm:Tracehalfspace}, and everything can be turned into the following inequality,
\begin{align*}
    \lVert  \gamma_{0}u \rVert_{\mathrm{L}^{p}(\mathbb{R}^{n-1})}  \lesssim_{s_0,s_1,p,n} \lVert u \rVert_{\dot{\mathrm{B}}^{s_0}_{p,q}(\mathbb{R}^{n}_+)}^{1-\theta}\lVert u \rVert_{\dot{\mathrm{B}}^{s_1}_{p,q}(\mathbb{R}^{n}_+)}^\theta\text{, } \forall u\in\dot{\mathrm{B}}^{s_0}_{p,q}(\mathbb{R}^n_+)\cap \dot{\mathrm{B}}^{s_1}_{p,q}(\mathbb{R}^n_+) \text{. }
\end{align*}
Again, from Theorem \ref{thm:Tracehalfspace} we obtain for all $u\in\eus{S}_0(\overline{\mathbb{R}^n_+})$,
\begin{align*}
    \lVert  \gamma_{0}u \rVert_{\dot{\mathrm{B}}^{s_1-\frac{1}{p}}_{p,q}(\mathbb{R}^{n-1})}  \lesssim_{s_1,p,n}  \lVert u \rVert_{\dot{\mathrm{B}}^{s_1}_{p,q}(\mathbb{R}^{n}_+)}\text{. }
\end{align*}
Then one may sum both inequality, notice that $\mathrm{L}^{p}(\mathbb{R}^{n-1})\cap \dot{\mathrm{B}}^{s_1-\frac{1}{p}}_{p,p}(\mathbb{R}^{n-1}) = {\mathrm{B}}^{s_1-\frac{1}{p}}_{p,p}(\mathbb{R}^{n-1})$ and use the density argument provided by Proposition \ref{prop:IntersLpHomBesovRn+=BesovRn+} so that each estimate holds.
\end{proof}

\begin{remark}As in Theorem \ref{thm:Tracehalfspace}, above Proposition \ref{prop:TracehalfspaceIntersec1} could be turned into an $\mathrm{C}^{0}_{0,x_n}$-embedding in the appropriate Besov space.
\end{remark}

\begin{proposition}\label{prop:TracehalfspaceIntersec2}Let $p_j\in(1,+\infty)$, $q_j\in[1,+\infty)$, $ s_j> 1/p_j$, $j\in\{0,1\}$, such that $(\mathcal{C}_{s_0,p_0})$ (resp. $(\mathcal{C}_{s_0,p_0,q_0})$) is satisfied.
Then,
\begin{enumerate}[label=($\roman*$)]
    \item For all $u\in [\dot{\mathrm{H}}^{s_0,p_0}\cap\dot{\mathrm{H}}^{s_1,p_1}](\mathbb{R}^n_+) $, we have $\gamma_0 u \in \dot{\mathrm{B}}^{s_j-\frac{1}{p_j}}_{p_j,p_j}(\mathbb{R}^{n-1})$, $j\in\{0,1\}$, with the estimate
    \begin{align*}
        \lVert  \gamma_{0}u \rVert_{\dot{\mathrm{B}}^{s_j-\frac{1}{p_j}}_{p_j,p_j}(\mathbb{R}^{n-1})}  \lesssim_{s_j,p_j,n} \lVert u \rVert_{\dot{\mathrm{H}}^{s_j,p_j}(\mathbb{R}^{n}_+)} ;
    \end{align*}
    \item For all $u\in [\dot{\mathrm{B}}^{s_0}_{p_0,q_0}\cap \dot{\mathrm{B}}^{s_1}_{p_1,q_1}](\mathbb{R}^n_+)$, we have $\gamma_0 u \in \dot{\mathrm{B}}^{s_j-\frac{1}{p_j}}_{p_j,q_j}(\mathbb{R}^{n-1})$, $j\in\{0,1\}$, with the estimate
    \begin{align*}
        \lVert  \gamma_{0}u \rVert_{\dot{\mathrm{B}}^{s_j-\frac{1}{p_j}}_{p_j,q_j}(\mathbb{R}^{n-1})}  \lesssim_{s_j,p_j,n} \lVert u \rVert_{\dot{\mathrm{B}}^{s_j}_{p_j,q_j}(\mathbb{R}^{n}_+)};
    \end{align*}
\end{enumerate}
\end{proposition}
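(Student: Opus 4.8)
The plan is to reduce both points to the already-established trace theorem on the whole space, combined with the decoupled extension estimates obtained earlier. First, since $(\mathcal{C}_{s_0,p_0})$ (resp. $(\mathcal{C}_{s_0,p_0,q_0})$) holds and $s_0>1/p_0$, the space $\dot{\mathrm{H}}^{s_0,p_0}(\mathbb{R}^n_+)$ (resp. $\dot{\mathrm{B}}^{s_0}_{p_0,q_0}(\mathbb{R}^n_+)$) is complete, so Theorem~\ref{thm:Tracehalfspace} already provides $\gamma_0u\in\dot{\mathrm{B}}^{s_0-1/p_0}_{p_0,p_0}(\mathbb{R}^{n-1})$ (resp. $\dot{\mathrm{B}}^{s_0-1/p_0}_{p_0,q_0}(\mathbb{R}^{n-1})$) with the estimate controlled by $\lVert u\rVert_{\dot{\mathrm{H}}^{s_0,p_0}(\mathbb{R}^n_+)}$ (resp. $\lVert u\rVert_{\dot{\mathrm{B}}^{s_0}_{p_0,q_0}(\mathbb{R}^n_+)}$); thus the case $j=0$ is immediate. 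In particular $\gamma_0$ is well-defined on $[\dot{\mathrm{H}}^{s_0,p_0}\cap\dot{\mathrm{H}}^{s_1,p_1}](\mathbb{R}^n_+)$, which by Proposition~\ref{prop:IntersecHomHspRn+} (resp. Proposition~\ref{prop:IntersLpHomBesovRn+=BesovRn+}) equals $\dot{\mathrm{H}}^{s_0,p_0}(\mathbb{R}^n_+)\cap\dot{\mathrm{H}}^{s_1,p_1}(\mathbb{R}^n_+)$, a Banach space in which $\eus{S}_0(\overline{\mathbb{R}^n_+})$ is dense. So only the case $j=1$ requires an argument, the difficulty being that $(\mathcal{C}_{s_1,p_1})$ may fail.

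For $j=1$, I would fix $m\in\mathbb{N}$ with $s_j<m+1+1/p_j$, $j\in\{0,1\}$, and take the associated extension operator $\mathrm{E}$ of Proposition~\ref{prop:ExtOpHomSobSpaces}. By Corollary~\ref{cor:ExtOpIntersecHomHspRn+} (resp. by the corresponding statement inside Proposition~\ref{prop:IntersLpHomBesovRn+=BesovRn+}), for every $u$ in the intersection one has simultaneously $\mathrm{E}u\in\dot{\mathrm{H}}^{s_0,p_0}(\mathbb{R}^n)$ and $\mathrm{E}u\in\dot{\mathrm{H}}^{s_1,p_1}(\mathbb{R}^n)$ with the decoupled bound $\lVert\mathrm{E}u\rVert_{\dot{\mathrm{H}}^{s_1,p_1}(\mathbb{R}^n)}\lesssim_{s_1,p_1,m,n}\lVert u\rVert_{\dot{\mathrm{H}}^{s_1,p_1}(\mathbb{R}^n_+)}$. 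Hence $\mathrm{E}u$ belongs to the \emph{complete} intersection space $\dot{\mathrm{H}}^{s_0,p_0}(\mathbb{R}^n)\cap\dot{\mathrm{H}}^{s_1,p_1}(\mathbb{R}^n)$ of Lemma~\ref{lem:IntersecHomHsp} (resp. its Besov analogue), in which $\eus{S}_0(\mathbb{R}^n)$ is dense. Moreover $\mathrm{E}u_{|\mathbb{R}^n_+}=u$ and the trace does not depend on the chosen extension — the difference of two extensions being supported in $\overline{\mathbb{R}^n_+}^c$ and thus, by Proposition~\ref{prop:densityCcinftyHsp0}, a strong limit of functions in $\mathrm{C}_c^\infty(\overline{\mathbb{R}^n_+}^c)$ whose traces vanish — so that $\gamma_0u=\gamma_0(\mathrm{E}u)$, consistently with the definition of $\gamma_0$ on $\mathbb{R}^n_+$.

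It then remains to prove the homogeneous whole-space trace estimate $\lVert\gamma_0v\rVert_{\dot{\mathrm{B}}^{s_1-1/p_1}_{p_1,p_1}(\mathbb{R}^{n-1})}\lesssim_{s_1,p_1,n}\lVert v\rVert_{\dot{\mathrm{H}}^{s_1,p_1}(\mathbb{R}^n)}$ for every $v\in\dot{\mathrm{H}}^{s_0,p_0}(\mathbb{R}^n)\cap\dot{\mathrm{H}}^{s_1,p_1}(\mathbb{R}^n)$. By density of $\eus{S}_0(\mathbb{R}^n)$ it suffices to establish it for $v\in\eus{S}_0(\mathbb{R}^n)$, where — exactly as in Step~2.1 of the proof of Theorem~\ref{thm:Tracehalfspace}, composing the inhomogeneous estimate of Theorem~\ref{thm:inhomTracewholespace} with the dilations $v\mapsto v(\lambda\,\cdot)$ and letting $\lambda\to+\infty$ — it reduces to a scaling argument that uses no completeness; one then passes to the limit using the lower semicontinuity of $\lVert\cdot\rVert_{\dot{\mathrm{B}}^{s_1-1/p_1}_{p_1,p_1}(\mathbb{R}^{n-1})}$ with respect to $\eus{S}'$-convergence (Fatou applied to the Littlewood–Paley blocks), the convergence of the traces of the approximants being actually controlled in the complete space $\dot{\mathrm{B}}^{s_0-1/p_0}_{p_0,p_0}(\mathbb{R}^{n-1})$. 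Combining, $\lVert\gamma_0u\rVert_{\dot{\mathrm{B}}^{s_1-1/p_1}_{p_1,p_1}(\mathbb{R}^{n-1})}=\lVert\gamma_0(\mathrm{E}u)\rVert_{\dot{\mathrm{B}}^{s_1-1/p_1}_{p_1,p_1}(\mathbb{R}^{n-1})}\lesssim\lVert\mathrm{E}u\rVert_{\dot{\mathrm{H}}^{s_1,p_1}(\mathbb{R}^n)}\lesssim\lVert u\rVert_{\dot{\mathrm{H}}^{s_1,p_1}(\mathbb{R}^n_+)}$, which is point~\textit{(i)}. Point~\textit{(ii)} is obtained verbatim, replacing $\dot{\mathrm{H}}^{s_j,p_j}$ by $\dot{\mathrm{B}}^{s_j}_{p_j,q_j}$, Corollary~\ref{cor:ExtOpIntersecHomHspRn+} by Proposition~\ref{prop:IntersLpHomBesovRn+=BesovRn+}, Lemma~\ref{lem:IntersecHomHsp} by its Besov analogue, and Theorem~\ref{thm:inhomTracewholespace}\,\textit{(i)} by Theorem~\ref{thm:inhomTracewholespace}\,\textit{(ii)}; here $q_j<+\infty$ guarantees the density of $\eus{S}_0$ used above.

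The substantial content is thus purely organisational: since the trace theorem and the extension operator are only available with homogeneous estimates when the relevant completeness condition holds, I route everything through the extension $\mathrm{E}$ into the complete intersection space on $\mathbb{R}^n$, where both $\eus{S}_0(\mathbb{R}^n)$-density and the scaling-only trace inequality are available, and then pull back by restriction. The one point deserving care — which I expect to be the main obstacle — is checking that the whole-space homogeneous trace estimate genuinely requires no completeness hypothesis and that the trace so obtained is consistent with the one already defined on $\dot{\mathrm{H}}^{s_0,p_0}(\mathbb{R}^n_+)$ (resp. $\dot{\mathrm{B}}^{s_0}_{p_0,q_0}(\mathbb{R}^n_+)$) via Theorem~\ref{thm:Tracehalfspace}.
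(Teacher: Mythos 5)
Your argument is correct, and all the ingredients you invoke are available under the stated hypotheses. For the record, the paper gives no proof of this proposition at all, so the comparison is with the argument implicit in the surrounding material. Your route is sound but longer than it needs to be: the a priori estimate of Theorem \ref{thm:Tracehalfspace} (namely $\lVert \gamma_0 u\rVert_{\dot{\mathrm{B}}^{s_1-1/p_1}_{p_1,p_1}(\mathbb{R}^{n-1})}\lesssim \lVert u\rVert_{\dot{\mathrm{H}}^{s_1,p_1}(\mathbb{R}^n_+)}$ for $u\in\mathrm{H}^{s_1,p_1}(\mathbb{R}^n_+)$, obtained there by exactly the dilation trick you redo) holds \emph{without} any completeness hypothesis — completeness is only used in that theorem to extend $\gamma_0$ to the full, possibly non-dense, space. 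Since $\eus{S}_0(\overline{\mathbb{R}^n_+})$ is dense in the complete intersection $[\dot{\mathrm{H}}^{s_0,p_0}\cap\dot{\mathrm{H}}^{s_1,p_1}](\mathbb{R}^n_+)$ by Proposition \ref{prop:IntersecHomHspRn+}, one can apply that estimate to the approximants, identify the limit of their traces in the complete space $\dot{\mathrm{B}}^{s_0-1/p_0}_{p_0,p_0}(\mathbb{R}^{n-1})$, and conclude for $j=1$ by the Fatou property of the Littlewood--Paley blocks — precisely your final step, but with no need to pass through $\mathrm{E}$, Corollary \ref{cor:ExtOpIntersecHomHspRn+}, or a separate whole-space homogeneous trace inequality. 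What your detour buys is an explicit whole-space statement and an explicit consistency check between the half-space and whole-space traces; what it costs is having to verify that consistency (your argument via supports in $\overline{\mathbb{R}^n_+}^c$ works, though note it need only be run in the $s_0$-component, where Proposition \ref{prop:densityCcinftyHsp0} applies, since the trace is a single distribution already pinned down by the $j=0$ case). The one step that genuinely requires care — lower semicontinuity of the $\dot{\mathrm{B}}^{s_1-1/p_1}_{p_1,p_1}$-norm along a sequence whose traces converge only in the $s_0$-topology — you handle correctly, and the same remarks apply verbatim to point \textit{(ii)} with $q_1<+\infty$ guaranteeing the density used.
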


\begin{remark}Corollary \ref{cor:HarmExtenOpSobolevBesovIntersec} yields the ontoness of the trace operator on intersection spaces given by above Proposition \ref{prop:TracehalfspaceIntersec2}.
\end{remark}

From now on, since we justified that the meaning of traces does not depend on the underlying function spaces (they agree on their intersection, when it is well-defined), and since we have the description $\partial\mathbb{R}^n_+=\mathbb{R}^{n-1}\times\{0\}$, we will refer to the trace of $u$ as $u_{|_{\partial \mathbb{R}^n_+}}$ instead of $\gamma_{0} u$.

\begin{lemma}\label{lem:IdentHsp0and0trace} Let $p_j\in(1,+\infty)$, $s\in(1/p_j,1+1/p_j)$, $j\in\{0,1\}$ such that $(\mathcal{C}_{s_0,p_0})$ is satisfied. For all $u\in [\dot{\mathrm{H}}^{s_0,p_0}\cap\dot{\mathrm{H}}^{s_1,p_1}](\mathbb{R}^n_+,\mathbb{C})$ such that $u_{|_{\partial\mathbb{R}^n_+}} =0$, the extension $\tilde{u}$ to $\mathbb{R}^n$ by $0$, satisfies
\begin{align*}
    \tilde{u}\in [\dot{\mathrm{H}}^{s_0,p_0}_0\cap\dot{\mathrm{H}}^{s_1,p_1}_0](\mathbb{R}^n_+,\mathbb{C})
\end{align*}
with the estimate
\begin{align*}
    \lVert \tilde{u} \rVert_{\dot{\mathrm{H}}^{s_j,p_j}(\mathbb{R}^n)} \lesssim_{p_j,s_j,n} \lVert {u} \rVert_{\dot{\mathrm{H}}^{s_j,p_j}(\mathbb{R}^n_+)} \,\text{, }\, j\in\{0,1\}\text{. }
\end{align*}

Hence, we have the following canonical isomorphism of Banach spaces
\begin{align*}
    \{\, u\in[\dot{\mathrm{H}}^{s_0,p_0}\cap\dot{\mathrm{H}}^{s_1,p_1}](\mathbb{R}^n_+,\mathbb{C}) \,|\, u_{|_{\partial\mathbb{R}^n_+}} =0 \,\} \simeq [\dot{\mathrm{H}}^{s_0,p_0}_0\cap\dot{\mathrm{H}}^{s_1,p_1}_0](\mathbb{R}^n_+,\mathbb{C}) \text{. }
\end{align*}

The result still holds replacing $\dot{\mathrm{H}}^{s_j,p_j}$ by $\dot{\mathrm{B}}^{s_j}_{p_j,q_j}$, $q_j\in[1,+\infty]$, $j\in\{0,1\}$ assuming that $(\mathcal{C}_{s_0,p_0,q_0})$ is satisfied.
\end{lemma}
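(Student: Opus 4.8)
The idea is to reduce the fractional-order, possibly-intersection statement to the "completeness-friendly" space $\dot{\mathrm{H}}^{s_0,p_0}$ (resp. $\dot{\mathrm{B}}^{s_0}_{p_0,q_0}$) on which all known tools apply, exploiting the decoupled extension estimates already at our disposal. Fix $u\in[\dot{\mathrm{H}}^{s_0,p_0}\cap\dot{\mathrm{H}}^{s_1,p_1}](\mathbb{R}^n_+)$ with $u_{|_{\partial\mathbb{R}^n_+}}=0$. Since $s_j\in(1/p_j,1+1/p_j)$, the trace operator is well-defined on each factor by Theorem~\ref{thm:Tracehalfspace} (and consistent across factors by Proposition~\ref{prop:TracehalfspaceIntersec2}), so the vanishing-trace condition is unambiguous. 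The first step is to show $\tilde u$, the extension of $u$ by $0$, lies in $\dot{\mathrm{H}}^{s_j,p_j}_0(\mathbb{R}^n_+)$ with the claimed decoupled bounds. I would do this by a scaling reduction: by Proposition~\ref{prop:IntersecHomHspRn+} and the density of $\eus{S}_0(\overline{\mathbb{R}^n_+})$, it suffices to treat $u\in\mathrm{H}^{s_0,p_0}(\mathbb{R}^n_+)\cap\mathrm{H}^{s_1,p_1}(\mathbb{R}^n_+)$ (inhomogeneous, hence with $\mathrm{L}^{p_j}$ control available), prove $\tilde u\in\mathrm{H}^{s_j,p_j}_0(\mathbb{R}^n_+)$ there — which is the classical fact that a function in $\mathrm{H}^{s,p}(\mathbb{R}^n_+)$ with $s\in(1/p,1+1/p)$ vanishing at the boundary extends by $0$ to $\mathrm{H}^{s,p}(\mathbb{R}^n)$, e.g.\ via \cite[Section~2]{JerisonKenig1995} — and then remove the $\mathrm{L}^{p_j}$-contribution by the dilation argument $u\mapsto u(\lambda\cdot)$ exactly as in the proof of Proposition~\ref{prop:SobolevMultiplier}: apply the inhomogeneous bound to $u_\lambda$, use $\widetilde{u_\lambda}=(\tilde u)_\lambda$, match homogeneity exponents, divide by $\lambda^{s_j-n/p_j}$ and let $\lambda\to+\infty$ to kill the $\lambda^{-n/p_j}\lVert u\rVert_{\mathrm{L}^{p_j}}$ term. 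This yields $\lVert\tilde u\rVert_{\dot{\mathrm{H}}^{s_j,p_j}(\mathbb{R}^n)}\lesssim\lVert u\rVert_{\dot{\mathrm{H}}^{s_j,p_j}(\mathbb{R}^n_+)}$ on the dense subspace $\mathrm{H}^{s_0,p_0}\cap\mathrm{H}^{s_1,p_1}$, hence everywhere on the intersection once we invoke completeness of $\dot{\mathrm{H}}^{s_0,p_0}(\mathbb{R}^n_+)\cap\dot{\mathrm{H}}^{s_1,p_1}(\mathbb{R}^n_+)$ (Proposition~\ref{prop:IntersecHomHspRn+}) and Fatou to pass to the limit in the norms.

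The second step is the reverse inclusion and the identification of Banach spaces. If $v\in[\dot{\mathrm{H}}^{s_0,p_0}_0\cap\dot{\mathrm{H}}^{s_1,p_1}_0](\mathbb{R}^n_+)$, then $\supp v\subset\overline{\mathbb{R}^n_+}$ so its restriction $u=v_{|_{\mathbb{R}^n_+}}$ belongs to the intersection of restriction spaces (trivially, since $v$ is already an extension), and $v=\tilde u$; that $u_{|_{\partial\mathbb{R}^n_+}}=0$ follows because $v\in\dot{\mathrm{H}}^{s_j,p_j}_0(\mathbb{R}^n_+)$ can be approximated in the intersection norm by $\mathrm{C}^\infty_c(\mathbb{R}^n_+)$ functions (Proposition~\ref{prop:densityCcinftyHsp0}, resp.\ its Besov analogue), all of which have zero trace, and $\gamma_0$ is continuous (Proposition~\ref{prop:TracehalfspaceIntersec2}). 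Together with Step~1 this gives the two mutually inverse maps $u\mapsto\tilde u$ and $v\mapsto v_{|_{\mathbb{R}^n_+}}$, both bounded, hence the canonical isomorphism
\begin{align*}
    \{\, u\in[\dot{\mathrm{H}}^{s_0,p_0}\cap\dot{\mathrm{H}}^{s_1,p_1}](\mathbb{R}^n_+,\mathbb{C}) \,|\, u_{|_{\partial\mathbb{R}^n_+}} =0 \,\} \simeq [\dot{\mathrm{H}}^{s_0,p_0}_0\cap\dot{\mathrm{H}}^{s_1,p_1}_0](\mathbb{R}^n_+,\mathbb{C})\text{,}
\end{align*}
and the left-hand side is a Banach space because it is a closed subspace of the Banach space $[\dot{\mathrm{H}}^{s_0,p_0}\cap\dot{\mathrm{H}}^{s_1,p_1}](\mathbb{R}^n_+)$ (closedness again from continuity of $\gamma_0$). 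The Besov case $\dot{\mathrm{H}}^{s_j,p_j}\rightsquigarrow\dot{\mathrm{B}}^{s_j}_{p_j,q_j}$ is identical: the inhomogeneous zero-extension fact holds for $\mathrm{B}^{s}_{p,q}$ with $s\in(1/p,1+1/p)$ by real interpolation of the Sobolev version (or directly from \cite{JerisonKenig1995}), the dilation argument is unchanged, Proposition~\ref{prop:IntersLpHomBesovRn+=BesovRn+} supplies completeness and density of $\eus{S}_0(\overline{\mathbb{R}^n_+})$, and Lemma~\ref{lem:densityCcinftyBesov0s+}/Corollary~\ref{cor:Bspq=Bspq0Rn+} supply $\mathrm{C}^\infty_c$-density for the zero-trace identification.

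The main obstacle I anticipate is the inhomogeneous zero-extension statement for \emph{fractional} $s$ with $s-1/p$ not necessarily in $(0,1)$ but allowed up to just below $1+1/p$ — i.e.\ making sure that "$u\in\mathrm{H}^{s,p}(\mathbb{R}^n_+)$, $s\in(1/p,1+1/p)$, $\gamma_0 u=0$ $\Rightarrow$ $\tilde u\in\mathrm{H}^{s,p}(\mathbb{R}^n)$" is genuinely available in this range and not just for $s\in(1/p,1)$; one handles this by writing $s=1+s'$ with $s'\in(-1+1/p,1/p)$ when $s>1$, differentiating once, and combining the $\mathrm{L}^p$-vanishing-trace case with Proposition~\ref{prop:SobolevMultiplier} applied to the (negative-order) derivative components, much as in the proof of Proposition~\ref{prop:ExtOpHomSobSpaces}. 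The other mildly delicate point is ensuring the decoupled estimates survive the scaling argument simultaneously for $j=0$ and $j=1$: this is fine because the dilation is a single operation and each homogeneous norm transforms with its own power of $\lambda$, so letting $\lambda\to+\infty$ extracts each homogeneous bound separately, exactly as in Propositions~\ref{prop:SobolevMultiplier} and \ref{prop:ExtOpHomSobSpaces}.
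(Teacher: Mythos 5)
Your route is genuinely different from the paper's. The paper argues directly by duality: since $\gamma_0 u=0$, integration by parts against $\phi\in[\dot{\mathrm{H}}^{1-s_j,p_j'}\cap\eus{S}](\mathbb{R}^n,\mathbb{C}^n)$ produces no boundary term, whence $\nabla\tilde u=\widetilde{\nabla u}$ in $\eus{S}'(\mathbb{R}^n,\mathbb{C}^n)$; because $s_j-1\in(-1+1/p_j,1/p_j)$, Proposition \ref{prop:SobolevMultiplier} bounds $\widetilde{\nabla u}$ in $\dot{\mathrm{H}}^{s_j-1,p_j}(\mathbb{R}^n)$ by $\lVert\nabla u\rVert_{\dot{\mathrm{H}}^{s_j-1,p_j}(\mathbb{R}^n_+)}$, and Proposition \ref{prop:dualityRieszpotential} together with Corollary \ref{cor:EqNormNablakmHspHaq} converts this into the claimed bound for $\tilde u$ --- for \emph{every} $u$ in the intersection with vanishing trace, with no approximation whatsoever. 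Your detour through the inhomogeneous theory plus dilation is in principle viable, and your treatment of the reverse inclusion and of the Banach-space identification is fine, but it contains one concrete gap.

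The gap is the density reduction. You reduce to $u\in\mathrm{H}^{s_0,p_0}(\mathbb{R}^n_+)\cap\mathrm{H}^{s_1,p_1}(\mathbb{R}^n_+)$ ``by density of $\eus{S}_0(\overline{\mathbb{R}^n_+})$'', but the approximants supplied by Proposition \ref{prop:IntersecHomHspRn+} do not have vanishing trace, and the zero-extension map is not bounded (indeed not even well defined into $\mathrm{H}^{s,p}(\mathbb{R}^n)$ for $s>1/p$) off the zero-trace subspace, so you cannot pass to the limit along such a sequence. What your argument actually requires is that zero-trace inhomogeneous functions are dense in $\{u\in[\dot{\mathrm{H}}^{s_0,p_0}\cap\dot{\mathrm{H}}^{s_1,p_1}](\mathbb{R}^n_+)\,:\,\gamma_0u=0\}$; this is not among the results available before the lemma and must be proved separately, e.g.\ by correcting an approximating sequence $u_k$ to $u_k-T\gamma_0u_k$ using the Poisson lifting $T$ of Proposition \ref{prop:HarmExtenOpSobolevBesov} and the continuity of $\gamma_0$ from Proposition \ref{prop:TracehalfspaceIntersec2}. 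Note finally that your own fix for the fractional inhomogeneous zero-extension in the range $s\in(1,1+1/p)$ --- differentiate once and apply the multiplier result to the (negative-order) derivative --- is precisely the paper's argument, which works uniformly on all of $(1/p,1+1/p)$ and renders the inhomogeneous/scaling detour unnecessary.
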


\begin{proof} Let $u\in [\dot{\mathrm{H}}^{s_0,p_0}\cap\dot{\mathrm{H}}^{s_1,p_1}](\mathbb{R}^n_+,\mathbb{C})$ such that $u_{|_{\partial\mathbb{R}^n_+}} =0$, then for all $\phi\in [\dot{\mathrm{H}}^{1-s_j,p_j'}\cap\eus{S}](\mathbb{R}^n,\mathbb{C}^n)$, we have
\begin{align*}
    \int_{\mathbb{R}^n_+} \nabla u \cdot \phi = - \int_{\mathbb{R}^n_+} \div (\phi) \, u \text{. }
\end{align*}
So that introducing the extensions by $0$ to $\mathbb{R}^n$,  $\tilde{u}$ and $\widetilde{\nabla u}$,
\begin{align*}
    \int_{\mathbb{R}^n} \widetilde{\nabla u} \cdot \phi = \int_{\mathbb{R}^n_+} \nabla u \cdot \phi = - \int_{\mathbb{R}^n_+} \div (\phi) \, u = - \int_{\mathbb{R}^n} \div (\phi) \, \tilde{u} = \big\langle \nabla\tilde{ u} ,\phi \big\rangle_{\mathbb{R}^n}\text{. }
\end{align*}
Therefore, for all $\phi\in [\dot{\mathrm{H}}^{1-s_j,p_j'}\cap\eus{S}](\mathbb{R}^n,\mathbb{C}^n)$,
\begin{align*}
    \int_{\mathbb{R}^n} \widetilde{\nabla u} \cdot \phi= \big\langle \nabla\tilde{ u} ,\phi \big\rangle_{\mathbb{R}^n}\text{. }
\end{align*}
Hence $\widetilde{\nabla u} = \nabla\tilde{ u}$ in $\eus{S}'(\mathbb{R}^n,\mathbb{C}^n)$. Thus, by Propositions \ref{prop:dualityRieszpotential} and \ref{prop:SobolevMultiplier}, we deduce that
\begin{align*}
    \big\lvert\big\langle\nabla\tilde{ u} ,\phi \big\rangle _{\mathbb{R}^n}\big\rvert &\leqslant \lVert \phi \rVert_{\dot{\mathrm{H}}^{1-s_j,p_j'}(\mathbb{R}^n)}\lVert \widetilde{\nabla u} \rVert_{\dot{\mathrm{H}}^{s_j-1,p_j}(\mathbb{R}^n)}\\
    &\lesssim_{p_j,n,s_j} \lVert {\phi} \rVert_{\dot{\mathrm{H}}^{1-s_j,p_j'}(\mathbb{R}^n)}\lVert {\nabla u} \rVert_{\dot{\mathrm{H}}^{s_j-1,p_j}(\mathbb{R}^n_+)}\\
    &\lesssim_{p_j,n,s_j} \lVert {\phi} \rVert_{\dot{\mathrm{H}}^{1-s_j,p_j'}(\mathbb{R}^n)}\lVert {u} \rVert_{\dot{\mathrm{H}}^{s_j,p_j}(\mathbb{R}^n_+)}\text{. }
\end{align*}
One may conclude thanks to Proposition \ref{prop:dualityRieszpotential}, and Corollary \ref{cor:EqNormNablakmHspHaq}. The case of Besov spaces follows the same lines. The isomorphism is then a direct consequence.
\end{proof}

\subsection{Several remarks}

We finish this section giving several remarks. The goal of presenting here a definitive construction of homogeneous Sobolev and Besov spaces on the half-space is certainly not reached:

\begin{itemize}
    \item A related problem is that the extension operator we use is not general enough and disallow to recover too much negative index of regularity in case of homogeneous function spaces. It would be of interest to know if one can also recover non-complete positive index independently, without using intersection or density tricks. As mentioned at the beginning of this section, to know if one can construct an operator similar to Rychkov's extension operator, from \cite{Rychkov1999}, $\mathcal{E}$ such that $\mathcal{E}(\eus{S}'_h(\overline{\mathbb{R}^n_+}))\subset \eus{S}'_h(\mathbb{R}^n)$ with homogeneous estimates would be a sufficiently powerful result to overcome such troubles.
    \item Other definitions are possible for $\eus{S}'_h(\mathbb{R}^n)$. We have chosen here the one with the strongest convergence for the sum of low frequencies to continue the work started in \cite[Chapter~2]{bookBahouriCheminDanchin} and \cite[Chapter~3]{DanchinHieberMuchaTolk2020}. The choice of possible definitions and their functional analytic consequences on Besov spaces' construction are reviewed by Cobb in \cite[Appendix]{Cobb2021} and \cite{Cobb2022}. 
\end{itemize}

Not to further burden the actual presentation, we just mention that one could also investigate spaces such as
\begin{align*}
    \dot{\mathcal{B}}^{s}_{p,\infty}(\mathbb{R}^n_+)\text{ and }\dot{\mathcal{B}}^{s}_{p,\infty,0}(\mathbb{R}^n_+)\text{. }
\end{align*}
For those spaces, the space $\eus{S}_0(\overline{\mathbb{R}^n_+})$ is (strongly) dense in the first one by construction, and we can show that the space $\mathrm{C}_c^\infty(\mathbb{R}^n_+)$ is dense in the second one, and both may be recovered from interpolation of other appropriate homogeneous Sobolev and Besov spaces. We can also prove corresponding duality and traces results with values in $\dot{\mathcal{B}}^{s-1/p}_{p,\infty}(\mathbb{R}^{n-1})$ as presented in Theorem \eqref{thm:Tracehalfspace}. Details are left to the interested reader.


\section{Applications: the Dirichlet and Neumann Laplacians on the half-space}\label{Sec:DirNeuHalfspace}

\subsubsection*{Notations and Definitions : Sectorial operators, Operators on Sobolev and Besov spaces.}
We introduce domains for an operator $A$ acting on Sobolev or Besov spaces, denoting
\begin{itemize}
    \item $\mathrm{D}^{s}_{p}(A)$ (resp. $\dot{\mathrm{D}}^{s}_{p}(A)$)  its domain on $\mathrm{H}^{s,p}$ (resp. $\dot{\mathrm{H}}^{s,p}$);
    \item $\mathrm{D}^{s}_{p,q}(A)$ (resp. $\dot{\mathrm{D}}^{s}_{p,q}(A)$)  its domain on $\mathrm{B}^{s}_{p,q}$ (resp. $\dot{\mathrm{B}}^{s}_{p,q}$);
    \item $\mathrm{D}_{p}(A) = \mathrm{D}^{0}_{p}(A) = \dot{\mathrm{D}}^{0}_{p}(A)$ its domain on $\mathrm{L}^p$.
\end{itemize}

Similarly, $\mathrm{N}^{s}_{p}(A)$, $\mathrm{N}^{s}_{p,q}(A)$ will stand for its nullspace on $\mathrm{H}^{s,p}$ and $\mathrm{B}^{s}_{p,q}$, and range spaces will be given respectively by $\mathrm{R}^{s}_{p}(A)$ and $\mathrm{R}^{s}_{p,q}(A)$. We replace $\mathrm{N}$ and $\mathrm{R}$ by $\dot{\mathrm{N}}$ and $\dot{\mathrm{R}}$ for their corresponding sets on homogeneous function spaces. 

If the operator $A$ has different realizations depending on various function spaces and on the considered open set, we may write its domain $\mathrm{D}(A,\Omega)$, and similarly for its nullspace $\mathrm{N}$ and range space $\mathrm{R}$. We omit the open set $\Omega$ if there is no possible confusion.

We introduce the following subsets of the complex plane
\begin{align*}
    \Sigma_\mu &:=\{ \,z\in\mathbb{C}^\ast\,:\,\lvert\mathrm{arg}(z)\rvert<\mu\,\}\text{, if } \mu\in(0,\pi)\text{, }
\end{align*}
we also define $\Sigma_0 := (0,+\infty)$,  and later we are going to consider $\overline{\Sigma}_\mu$ its closure.

An operator $(\mathrm{D}(A),A)$ on complex valued Banach space $X$ is said to be $\omega$-\textit{\textbf{sectorial}}, if for a fixed $\omega\in (0,\pi)$, both conditions are satisfied
\begin{enumerate}
    \item $\sigma(A)\subset \overline{\Sigma}_\omega $, where $\sigma(A)$ denotes the spectrum of $A$ ;
    \item For all $\mu\in(\omega,\pi)$, $\sup_{\lambda\in \mathbb{C}\setminus\overline{\Sigma}_\mu}\lVert \lambda(\lambda \mathrm{I}-A)^{-1}\rVert_{X\rightarrow X} < +\infty$ .
\end{enumerate}

Sectorial operators is widely reviewed in several references but we mention here Haase's book \cite{bookHaase2006}. One may also check \cite[Chapter~3]{EgertPhDThesis2015}.

\subsubsection*{The Dirichlet and Neumann Laplacians.}

Before starting the analysis of the Dirichlet and Neumann Laplacians on the half-space, we introduce two appropriate extension operators. We denote $\mathrm{E}_\mathcal{J}$, for $\mathcal{J}\in\{\mathcal{D},\mathcal{N}\}$,  the extension operator defined for any measurable function $u$ on $\mathbb{R}^n_+$, for almost every $x=(x',x_n)\in\mathbb{R}^{n-1}\times\mathbb{R}_+$:
\begin{align*}
    \mathrm{E}_\mathcal{D}u (x',x_n) &:= \begin{cases}
  u(x',x_n)\,&\text{, if } (x',x_n)\in\mathbb{R}^{n-1}\times\mathbb{R}_+\text{, }\\    
  -u(x',-x_n)\,&\text{, if } (x',x_n)\in\mathbb{R}^{n-1}\times\mathbb{R}_-^*\text{ ; }
\end{cases}\\
    \mathrm{E}_\mathcal{N}u (x',x_n) &:= \begin{cases}
  u(x',x_n)\,&\text{, if } (x',x_n)\in\mathbb{R}^{n-1}\times\mathbb{R}_+\text{, }\\    
  u(x',-x_n)\,&\text{, if } (x',x_n)\in\mathbb{R}^{n-1}\times\mathbb{R}_-^* \text{. }
\end{cases}
\end{align*}

Obviously, for $\mathcal{J}\in\{\mathcal{D},\mathcal{N}\}$, $s\in (-1+1/p,1/p)$, $p\in(1,+\infty)$, the Proposition \ref{prop:SobolevMultiplier} leads to boundedness of
\begin{align}
    \mathrm{E}_\mathcal{J}\,:\, \dot{\mathrm{H}}^{s,p}(\mathbb{R}^n_+)\longrightarrow \dot{\mathrm{H}}^{s,p}(\mathbb{R}^n)  \text{. }
\end{align}
The same result holds replacing $\dot{\mathrm{H}}^{s,p}$ by either ${\mathrm{H}}^{s,p}$, ${\mathrm{B}}^{s}_{p,q}$, or even by $\dot{\mathrm{B}}^{s}_{p,q}$, $q\in[1,+\infty]$.

We are going to use the properties of Laplacian acting on the whole space to build the resolvent estimates for both the Dirichlet and the Neumann Laplacian. Usual Dirichlet and Neumann Laplacians are the operators $(\mathrm{D}(\Delta_\mathcal{J}),-\Delta_\mathcal{J})$, for $\mathcal{J}\in\{\mathcal{D},\mathcal{N}\}$, where the subscript $\mathcal{D}$ (resp. $\mathcal{N}$) stands for the Dirichlet (resp. Neumann) Laplacian, with, for $p\in(1,+\infty)$,
\begin{align*}
    \mathrm{D}_p(\Delta_\mathcal{D}) &:= \left\{\,u\in \mathrm{H}^{1,p}(\mathbb{R}^n_+,\mathbb{C})\,\Big{|}\, \Delta u \in \mathrm{L}^{p}(\mathbb{R}_+^n,\mathbb{C})\text{ and } u_{|_{\partial\mathbb{R}^n_+}}=0 \,\right\}\text{, }\\
    \mathrm{D}_p(\Delta_\mathcal{N}) &:= \left\{\,u\in \mathrm{H}^{1,p}(\mathbb{R}^n_+,\mathbb{C})\,\Big{|}\, \Delta u \in \mathrm{L}^{p}(\mathbb{R}_+^n,\mathbb{C})\text{ and } \partial_{\nu}u_{|_{\partial\mathbb{R}^n_+}}=0 \,\right\}\text{. }
\end{align*}
For $\mathcal{J}\in\{\mathcal{D},\mathcal{N}\}$, and all $u\in \mathrm{D}_p(\Delta_{\mathcal{J}})$,
\begin{align*}
    -\Delta_\mathcal{J}u:=-\Delta u\text{. }
\end{align*}

When $p=2$, one can also realize both Dirichlet and Neumann Laplacians by the mean of densely defined, symmetric, accretive, continuous, closed,  sesquilinear forms on $\mathrm{L}^2(\mathbb{R}^n_+,\mathbb{C})$, for $\mathcal{J}\in\{\mathcal{D},\mathcal{N}\}$,
\begin{align}\label{eq:sesqlinformLaplacianDirNeu}
    \mathfrak{a}_\mathcal{J}\,:\, \mathrm{D}_2(\mathfrak{a}_\mathcal{J})^2\ni(u,v) \longmapsto \int_{\mathbb{R}^n_+}\nabla u \cdot \overline{\nabla v}
\end{align}
with $\mathrm{D}_2(\mathfrak{a}_\mathcal{D})= \mathrm{H}^{1}_0(\mathbb{R}^n_+,\mathbb{C})$, $\mathrm{D}_2(\mathfrak{a}_\mathcal{N})= \mathrm{H}^{1}(\mathbb{R}^n_+,\mathbb{C})$, so that it is easy to see, and well-known, that both, the Neumann and Dirichlet Laplacians, are closed, densely defined, non-negative self-adjoint operators on $\mathrm{L}^2(\mathbb{R}^n_+,\mathbb{C})$, see \cite[Chapter~1,~Section~1.2]{bookOuhabaz2005}. We can be even more precise.

\begin{proposition}\label{prop:L2DirNEuLapl} Provided $\mathcal{J}\in\{\mathcal{D},\mathcal{N}\}$, the operator $(\mathrm{D}_2(\Delta_\mathcal{J}),-\Delta_\mathcal{J})$ is an injective non-negative self-adjoint and $0$-sectorial operator on $\mathrm{L}^2(\mathbb{R}^n_+,\mathbb{C})$.

Moreover, the following hold
\begin{enumerate}
    \item $\mathrm{D}_2(\Delta_\mathcal{J})$ is a closed subspace of $\mathrm{H}^2(\mathbb{R}^n_+,\mathbb{C})$;
    \item Provided $\mu\in [0,\pi)$, for $\lambda\in\Sigma_\mu$, $f\in\mathrm{L}^2(\mathbb{R}^n_+,\mathbb{C})$, then $u:=(\lambda\mathrm{I}-\Delta_\mathcal{J})^{-1}f $ satisfies
    \begin{align*}
        \lvert\lambda\rvert\lVert  u\rVert_{{\mathrm{L}}^{2}(\mathbb{R}^n_+)}+\lvert\lambda\rvert^\frac{1}{2}\lVert \nabla u\rVert_{{\mathrm{L}}^{2}(\mathbb{R}^n_+)}+\lVert \nabla^2 u\rVert_{{\mathrm{L}}^{2}(\mathbb{R}^n_+)} &\lesssim_{n,\mu} \lVert f\rVert_{{\mathrm{L}}^{2}(\mathbb{R}^n_+)} \text{ ; }
    \end{align*}
    \item The following resolvent identity holds for all $\mu\in [0,\pi)$, $\lambda\in\Sigma_\mu$, $f\in\mathrm{L}^2(\mathbb{R}^n_+,\mathbb{C})$,
    \begin{align*}
        \mathrm{E}_\mathcal{J}(\lambda\mathrm{I}-\Delta_\mathcal{J})^{-1}f = (\lambda\mathrm{I}-\Delta)^{-1}\mathrm{E}_\mathcal{J} f\text{. }
    \end{align*}
\end{enumerate}
\end{proposition}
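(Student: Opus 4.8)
The plan is to reduce everything to the corresponding statements for the full-space Laplacian $-\Delta$ on $\mathbb{R}^n$ via the reflection operators $\mathrm{E}_\mathcal{D}$, $\mathrm{E}_\mathcal{N}$, and then transfer back by restriction. First I would recall the classical fact that the full Laplacian $-\Delta$ with domain $\mathrm{H}^2(\mathbb{R}^n,\mathbb{C})$ is an injective, non-negative self-adjoint, $0$-sectorial operator on $\mathrm{L}^2(\mathbb{R}^n,\mathbb{C})$, with resolvent $(\lambda\mathrm{I}-\Delta)^{-1}$ given by the Fourier multiplier $(\lambda+|\xi|^2)^{-1}$ for $\lambda\in\mathbb{C}\setminus(-\infty,0]$, and hence the elementary multiplier bounds $|\lambda|\,|(\lambda+|\xi|^2)^{-1}| + |\lambda|^{1/2}|\xi|\,|(\lambda+|\xi|^2)^{-1}| + |\xi|^2|(\lambda+|\xi|^2)^{-1}| \lesssim_{\mu} 1$ uniformly for $\lambda\in\overline{\Sigma}_\mu$ with $\mu<\pi$ (using $|\lambda + |\xi|^2| \gtrsim_\mu |\lambda| + |\xi|^2$ on such sectors). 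This gives the resolvent estimate (ii) and the $0$-sectoriality on $\mathbb{R}^n$.

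Next I would establish the key algebraic identity (iii). The point is that $\mathrm{E}_\mathcal{J}$ intertwines the two realizations: for $f\in\mathrm{L}^2(\mathbb{R}^n_+,\mathbb{C})$, set $v := (\lambda\mathrm{I}-\Delta)^{-1}\mathrm{E}_\mathcal{J}f \in \mathrm{H}^2(\mathbb{R}^n,\mathbb{C})$. One checks, using that $\mathrm{E}_\mathcal{D}f$ is odd and $\mathrm{E}_\mathcal{N}f$ is even in $x_n$ and that $-\Delta$ commutes with the reflection $x_n\mapsto -x_n$, that $v$ inherits the same parity; consequently $v_{|_{\partial\mathbb{R}^n_+}}=0$ in the Dirichlet case (odd $\mathrm{H}^2$ function has vanishing trace) and $\partial_\nu v_{|_{\partial\mathbb{R}^n_+}}=\partial_{x_n}v_{|_{x_n=0}}=0$ in the Neumann case (even $\mathrm{H}^2$ function has vanishing normal derivative). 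Since $(\lambda\mathrm{I}-\Delta)v = \mathrm{E}_\mathcal{J}f$, restricting to $\mathbb{R}^n_+$ gives $(\lambda\mathrm{I}-\Delta)(v_{|_{\mathbb{R}^n_+}}) = f$ with $v_{|_{\mathbb{R}^n_+}}\in\mathrm{H}^2(\mathbb{R}^n_+,\mathbb{C})$ and the correct boundary condition, so $v_{|_{\mathbb{R}^n_+}}\in\mathrm{D}_2(\Delta_\mathcal{J})$. Because $(\mathrm{D}_2(\Delta_\mathcal{J}),-\Delta_\mathcal{J})$ is already known (from the sesquilinear form) to be self-adjoint and non-negative, hence $\lambda\mathrm{I}-\Delta_\mathcal{J}$ is invertible for $\lambda\in\mathbb{C}\setminus(-\infty,0]$, uniqueness of the solution forces $(\lambda\mathrm{I}-\Delta_\mathcal{J})^{-1}f = v_{|_{\mathbb{R}^n_+}}$. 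Applying $\mathrm{E}_\mathcal{J}$ and using the parity of $v$ to see $\mathrm{E}_\mathcal{J}(v_{|_{\mathbb{R}^n_+}}) = v$ gives exactly (iii).

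From (iii) the remaining points follow cheaply. For (ii): $\mathrm{E}_\mathcal{J}$ is an isometry up to constants from $\mathrm{L}^2(\mathbb{R}^n_+)$ to $\mathrm{L}^2(\mathbb{R}^n)$ and likewise preserves $\mathrm{L}^2$-norms of $\nabla$ and $\nabla^2$ (away from the measure-zero hyperplane), so $|\lambda|\|u\|_{\mathrm{L}^2(\mathbb{R}^n_+)} + |\lambda|^{1/2}\|\nabla u\|_{\mathrm{L}^2(\mathbb{R}^n_+)} + \|\nabla^2 u\|_{\mathrm{L}^2(\mathbb{R}^n_+)} \sim |\lambda|\|\mathrm{E}_\mathcal{J}u\|_{\mathrm{L}^2(\mathbb{R}^n)} + |\lambda|^{1/2}\|\nabla \mathrm{E}_\mathcal{J}u\|_{\mathrm{L}^2(\mathbb{R}^n)} + \|\nabla^2 \mathrm{E}_\mathcal{J}u\|_{\mathrm{L}^2(\mathbb{R}^n)}$, and the latter is controlled by $\|\mathrm{E}_\mathcal{J}f\|_{\mathrm{L}^2(\mathbb{R}^n)}\sim\|f\|_{\mathrm{L}^2(\mathbb{R}^n_+)}$ by the full-space estimate. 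Point (ii) with the $\lambda$-bound immediately yields $0$-sectoriality (take $\mu\in(0,\pi)$ arbitrary; the resolvent bound $\sup_{\lambda\notin\overline{\Sigma}_\mu}\|\lambda(\lambda\mathrm{I}-\Delta_\mathcal{J})^{-1}\| <\infty$ is the first term alone, and the spectrum is contained in $[0,\infty)\subset\overline{\Sigma}_\mu$ since the operator is self-adjoint non-negative). Injectivity on $\mathrm{L}^2(\mathbb{R}^n_+)$ follows because if $-\Delta_\mathcal{J}u=0$ then $-\Delta(\mathrm{E}_\mathcal{J}u)=0$ in $\mathbb{R}^n$ with $\mathrm{E}_\mathcal{J}u\in\mathrm{L}^2(\mathbb{R}^n)$, forcing $\mathrm{E}_\mathcal{J}u=0$ by injectivity of the full Laplacian, hence $u=0$. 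Point (1), that $\mathrm{D}_2(\Delta_\mathcal{J})$ is a closed subspace of $\mathrm{H}^2(\mathbb{R}^n_+,\mathbb{C})$, follows because the resolvent at a fixed $\lambda>0$ is a bounded bijection $\mathrm{L}^2(\mathbb{R}^n_+)\to\mathrm{D}_2(\Delta_\mathcal{J})$ whose graph norm is, by the elliptic estimate just proved, equivalent to the $\mathrm{H}^2$ norm; closedness in $\mathrm{H}^2$ is then inherited from completeness of $\mathrm{L}^2$ and the characterization $\mathrm{D}_2(\Delta_\mathcal{J}) = \{u\in\mathrm{H}^2(\mathbb{R}^n_+,\mathbb{C}) : u_{|_{\partial\mathbb{R}^n_+}}=0\}$ (resp. $\partial_\nu u_{|_{\partial\mathbb{R}^n_+}}=0$), which is visibly a closed subspace of $\mathrm{H}^2$ since the trace and normal-trace maps are continuous on $\mathrm{H}^2(\mathbb{R}^n_+,\mathbb{C})$.

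The main obstacle I anticipate is the careful verification that the parity of $\mathrm{E}_\mathcal{J}f$ is genuinely transmitted to $v=(\lambda\mathrm{I}-\Delta)^{-1}\mathrm{E}_\mathcal{J}f$ and, crucially, that this parity translates into the correct \emph{pointwise} boundary condition at $\{x_n=0\}$ (rather than merely an a.e. statement) — this requires invoking that $v\in\mathrm{H}^2(\mathbb{R}^n,\mathbb{C})$ has a well-defined trace and normal trace, that an odd $\mathrm{H}^2$ function has zero trace and an even one has zero normal derivative, and that $v_{|_{\mathbb{R}^n_+}}$ lands in the form-domain characterization of $\mathrm{D}_2(\Delta_\mathcal{J})$. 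Once this bookkeeping with reflections and traces is done, everything else is routine transfer of full-space facts.
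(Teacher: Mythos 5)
Your proof is correct, and it follows the same overall reflection strategy as the paper, but it runs the key intertwining step in the opposite direction. The paper starts from the half-space solution $u=(\lambda\mathrm{I}-\Delta_\mathcal{J})^{-1}f$ produced by the form method, extends it by $\mathrm{E}_\mathcal{J}$, and verifies via a distributional integration by parts (pairing $\mathrm{E}_\mathcal{J}u$ against $-\Delta\phi$ for $\phi\in\eus{S}(\mathbb{R}^n)$ and cancelling the boundary terms using the boundary condition and the commutation rules $\partial_{x_n}\mathrm{E}_\mathcal{N}=\mathrm{E}_\mathcal{D}\partial_{x_n}$, $\partial_{x_n}\mathrm{E}_\mathcal{D}=\mathrm{E}_\mathcal{N}\partial_{x_n}$) that $-\Delta\mathrm{E}_\mathcal{J}u=\mathrm{E}_\mathcal{J}[-\Delta u]$; uniqueness of the full-space resolvent then gives (iii). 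You instead start from the full-space solution $v=(\lambda\mathrm{I}-\Delta)^{-1}\mathrm{E}_\mathcal{J}f$, use that the resolvent commutes with the reflection $x_n\mapsto -x_n$ to propagate the parity of $\mathrm{E}_\mathcal{J}f$ to $v$, read off the boundary condition from the parity and the $\mathrm{H}^2$ trace theory, and conclude by uniqueness on the half-space. Both arguments ultimately rest on the same two inputs (the form realization of $\Delta_\mathcal{J}$ and the full-space elliptic estimates), and both must invoke a uniqueness statement; your parity argument replaces the paper's boundary-term bookkeeping with a symmetry observation, which is arguably cleaner here, while the paper's computation is the one that generalizes to settings where the resolvent has no obvious symmetry. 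The remaining deductions — (ii) from (iii) via the norm equivalences under $\mathrm{E}_\mathcal{J}$, $0$-sectoriality and injectivity from self-adjointness plus the resolvent bound, and (i) from (ii) at $\lambda=1$ — match the paper's.
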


\begin{remark}\label{rem:commutPartialExtOpDirNeu} For $u\,:\,\mathbb{R}^n_+\longrightarrow \mathbb{C}$, we set
\begin{align*}
    \tilde{u}_{\mathcal{J}}:= [\mathrm{E}_{\mathcal{J}} u]_{|_{\mathbb{R}^n_{-}}}
\end{align*}
for $\mathcal{J}\in\{\mathcal{D},\mathcal{N}\}$. We notice that in $\eus{D}'(\mathbb{R}^n_-,\mathbb{C})$,
\begin{align*}
    \partial_{x_n} [\tilde{u}_{\mathcal{N}}] = \widetilde{[\partial_{x_n} u]}_{\mathcal{D}} \text{ and  } \partial_{x_n} [\tilde{u}_{\mathcal{D}}] = \widetilde{[\partial_{x_n} u]}_{\mathcal{N}} \text{. }
\end{align*}
\end{remark}

\begin{proof}One may use self-adjointness and \eqref{eq:sesqlinformLaplacianDirNeu} which gives, by standard Hilbertian theory, the following resolvent estimate
\begin{align*}
        \lvert\lambda\rvert\lVert  u\rVert_{{\mathrm{L}}^{2}(\mathbb{R}^n_+)}+\lvert\lambda\rvert^\frac{1}{2}\lVert \nabla u\rVert_{{\mathrm{L}}^{2}(\mathbb{R}^n_+)}+\lVert \Delta u\rVert_{{\mathrm{L}}^{2}(\mathbb{R}^n_+)} &\lesssim_{\mu} \lVert f\rVert_{{\mathrm{L}}^{2}(\mathbb{R}^n_+)} \text{, }
\end{align*}
where $u:=(\lambda\mathrm{I}-\Delta_\mathcal{J})^{-1}f$, $f\in\mathrm{L}^2(\mathbb{R}^n_+,\mathbb{C})$, $\lambda \in\Sigma_\mu$, $\mu\in[0,\pi)$.

Now, for fixed  $f\in\mathrm{L}^2(\mathbb{R}^n_+,\mathbb{C})$, $\lambda \in\Sigma_\mu$, $\mu\in[0,\pi)$, we consider $u:=(\lambda\mathrm{I}-\Delta_\mathcal{J})^{-1}f$. Assuming $\mathcal{J}=\mathcal{N}$, we have for $\phi\in\eus{S}(\mathbb{R}^n,\mathbb{C})$,
\begin{align*}
    \big\langle \mathrm{E}_{\mathcal{N}}u, -\Delta \phi \big\rangle_{\mathbb{R}^n} &= \big\langle u, -\Delta \phi \big\rangle_{\mathbb{R}^n_+}  +  \big\langle \tilde{u}_{\mathcal{N}}, -\Delta \phi \big\rangle_{\mathbb{R}^n_{-}}\\
    &= \big\langle \nabla u, \nabla \phi \big\rangle_{\mathbb{R}^n_+} + \big\langle u, \nabla \phi \cdot\mathfrak{e_n} \big\rangle_{\partial\mathbb{R}^n_+} - \big\langle \tilde{u}_{\mathcal{N}}, \nabla \phi \cdot\mathfrak{e_n} \big\rangle_{\partial\mathbb{R}^n_-} \\
    &\quad + \big\langle \widetilde{[\nabla' u]}_{\mathcal{N}}, \nabla' \phi  \big\rangle_{\mathbb{R}^n_-} + \big\langle \widetilde{[\partial_{x_n} u]}_{\mathcal{D}}, \partial_{x_n} \phi  \big\rangle_{\mathbb{R}^n_-}
\end{align*}
Since $\partial \mathbb{R}^n_+ = \partial \mathbb{R}^n_-=\mathbb{R}^{n-1}\times\{0\}$, with traces $\tilde{u}_{\mathcal{N}|_{\partial \mathbb{R}^n_-}}={u}_{|_{\partial \mathbb{R}^n_+}}$, we deduce $\big\langle u_{|_{\partial \mathbb{R}^n_+}}, \nabla \phi \cdot\mathfrak{e_n} \big\rangle_{\partial\mathbb{R}^n_+} - \big\langle \tilde{u}_{\mathcal{N}|_{\partial \mathbb{R}^n_-}}, \nabla \phi \cdot\mathfrak{e_n} \big\rangle_{\partial\mathbb{R}^n_-}=0$. Then, thanks to Remark \ref{rem:commutPartialExtOpDirNeu} and the boundary condition on $u$, \textit{i.e.} $\partial_{x_n}u_{|_{\partial \mathbb{R}^n_+}}=0$, we have
\begin{align*}
    \big\langle \mathrm{E}_{\mathcal{N}}u, -\Delta \phi \big\rangle_{\mathbb{R}^n} &= \big\langle \nabla u, \nabla \phi \big\rangle_{\mathbb{R}^n_+} + \big\langle \widetilde{[\nabla' u]}_{\mathcal{N}}, \nabla' \phi  \big\rangle_{\mathbb{R}^n_-} + \big\langle \widetilde{[\partial_{x_n} u]}_{\mathcal{D}}, \partial_{x_n} \phi  \big\rangle_{\mathbb{R}^n_-}\\
    &= \big\langle -\Delta u, \phi \big\rangle_{\mathbb{R}^n_+} + \big\langle \widetilde{[-\Delta' u]}_{\mathcal{N}}, \phi \big\rangle_{\mathbb{R}^n_- } + \big\langle \widetilde{[-\partial_{x_n}^2 u]}_{\mathcal{N}}, \phi \big\rangle_{\mathbb{R}^n_- } \\
    &\qquad - \big\langle \partial_{x_n} u, \phi \big\rangle_{\partial\mathbb{R}^n_+} - \big\langle \widetilde{[\partial_{x_n} u]}_{\mathcal{D}}, \phi \big\rangle_{\partial\mathbb{R}^n_{-}} \\
    &= \big\langle \mathrm{E}_{\mathcal{N}}[-\Delta u],  \phi \big\rangle_{\mathbb{R}^n} \text{. }
\end{align*}
Thus, $-\Delta\mathrm{E}_{\mathcal{N}}u = \mathrm{E}_{\mathcal{N}}[-\Delta u]$ in $\eus{S}'(\mathbb{R}^n,\mathbb{C})$. One may reproduce the above calculations for $\mathcal{J}=\mathcal{D}$. So for $\mathcal{J}\in\{\mathcal{D},\mathcal{N}\}$, $\mathrm{E}_{\mathcal{J}} u$ is a solution of
\begin{align*}
    \lambda U - \Delta U = \mathrm{E}_\mathcal{J}f\text{. }
\end{align*}
We have $\mathrm{E}_\mathcal{J}f\in\mathrm{L}^2(\mathbb{R}^n,\mathbb{C})$. By uniqueness of  the solution provided in $\mathbb{R}^n$, we necessarily have $U=\mathrm{E}_{\mathcal{J}} u$, which can be written as
\begin{align*}
        \mathrm{E}_\mathcal{J}(\lambda\mathrm{I}-\Delta_\mathcal{J})^{-1}f = (\lambda\mathrm{I}-\Delta)^{-1}\mathrm{E}_\mathcal{J} f\text{. }
\end{align*}
Thus one deduces point \textit{(iii)}, from the definition of function spaces by restriction, \textit{(ii)} follows, and finally setting $\lambda=1$ in point \textit{(ii)} yields \textit{(i)}.
\end{proof}

We want to show some sharp regularity results on the Dirichlet and Neumann resolvent problems, on the scale of inhomogeneous and homogeneous Sobolev and Besov spaces. To do so, we introduce their corresponding domains on each space. Provided $p\in(1,+\infty)$ $s\in(-1+1/p,1+1/p)$, if is satisfied $(\mathcal{C}_{s,p})$:

\begin{align*}
    \dot{\mathrm{D}}^s_p(\Delta_\mathcal{D}) &:= \left\{\,u\in [\dot{\mathrm{H}}^{s,p}_0\cap\dot{\mathrm{H}}^{s+1,p}](\mathbb{R}_+^n,\mathbb{C})\,\Big{|}\, \Delta u \in \dot{\mathrm{H}}^{s,p}_0(\mathbb{R}_+^n,\mathbb{C}) \text{ and } u_{|_{\partial\mathbb{R}^n_+}}=0 \,\right\}\subset \dot{\mathrm{H}}^{s,p}_0(\mathbb{R}_+^n,\mathbb{C})\text{, }\\
    \dot{\mathrm{D}}^s_p(\Delta_\mathcal{N}) &:= \left\{\,u\in [\dot{\mathrm{H}}^{s,p}\cap\dot{\mathrm{H}}^{s+1,p}](\mathbb{R}_+^n,\mathbb{C})\,\Big{|}\, \Delta u \in \dot{\mathrm{H}}^{s,p}(\mathbb{R}_+^n,\mathbb{C})\text{ and } \partial_{\nu}u_{|_{\partial\mathbb{R}^n_+}}=0 \,\right\}\subset \dot{\mathrm{H}}^{s,p}(\mathbb{R}_+^n,\mathbb{C})\text{. }
\end{align*}

We can also consider their domains on inhomogeneous Sobolev and Besov spaces, as well as homogeneous spaces, replacing $(\dot{\mathrm{D}}^s_p,\dot{\mathrm{H}}^{s,p})$ by either $({\mathrm{D}}^s_p,{\mathrm{H}}^{s,p})$, $({\mathrm{D}}^s_{p,q},{\mathrm{B}}^{s}_{p,q})$ and finally $(\dot{\mathrm{D}}^s_{p,q},\dot{\mathrm{B}}^{s}_{p,q})$ provided $q\in[1,+\infty]$, and \eqref{AssumptionCompletenessExponents} is satisfied. Certainly, the conditions $(\mathcal{C}_{s,p})$ and \eqref{AssumptionCompletenessExponents} are no longer necessary when one only deals with inhomogeneous function spaces.

\begin{remark} We allowed us a slight abuse of notation here: we identified $\dot{\mathrm{H}}^{s,p}_0(\mathbb{R}^n_+)$ with either
\begin{itemize}
    \item $\dot{\mathrm{H}}^{s,p}(\mathbb{R}^n_+)$ when $s\in(-1+1/p,1/p)$, thanks to Proposition~\ref{prop:SobolevMultiplier};
    \item $\dot{\mathrm{H}}^{s,p}(\mathbb{R}^n_+)$ with homogeneous Dirichlet boundary condition when $s\in(1/p,1+1/p)$, thanks to Lemma~\ref{lem:IdentHsp0and0trace}.
\end{itemize}
The same identification is made for Besov spaces, and inhomogeneous function spaces.
\end{remark}

It is then not difficult to see that the Dirichlet and Neumann Laplacians are well-defined unbounded closed linear operators, densely defined, if $q\in[1,+\infty)$ in the case of inhomogeneous and homogeneous Besov spaces. If $q=+\infty$, the domain of the Dirichlet (resp. Neumann) Laplacian is only known to be weak${}^\ast$ dense in ${\mathrm{B}}^{s}_{p,\infty,0}$ (resp. in ${\mathrm{B}}^{s}_{p,\infty}$) and $\dot{\mathrm{B}}^{s}_{p,\infty,0}$ (resp. $\dot{\mathrm{B}}^{s}_{p,\infty}$).

\begin{proposition}\label{prop:DirResolvPbHspBspqRn+} Let $p,\tilde{p}\in(1,+\infty)$, $q,\tilde{q}\in[1,+\infty]$, $s\in(-1+\frac{1}{p},1+\frac{1}{p})$, $s\neq 1/p$, $\alpha\in(-1+\frac{1}{\tilde{p}},1+\frac{1}{\tilde{p}})$, $\alpha\neq1/\tilde{p}$, and $\lambda\in\Sigma_\mu$ provided $\mu\in[0,\pi)$. We assume that $(\mathcal{C}_{s,p})$, and we let  $f\in\dot{\mathrm{H}}^{s,p}_0(\mathbb{R}^n_+,\mathbb{C})$. Let us consider the \textbf{resolvent Dirichlet} problem with homogeneous boundary condition:
\begin{align}\tag{$\mathcal{DL}_{\lambda}$}\label{ResolvDirLap}
\left\{\begin{array}{rrl}
        \lambda u -\Delta u   &=  f\text{,} \,&\text{ in } \mathbb{R}^n_+ \text{, }\\
         u_{|_{\partial\mathbb{R}^n_+}}  &= 0\text{,} \,&\text{ on } \partial\mathbb{R}^n_+ \text{. }
\end{array}
\right.
\end{align}
The problem \eqref{ResolvDirLap} admits a unique solution $u\in [\dot{\mathrm{H}}^{s,p}_0\cap\dot{\mathrm{H}}^{s+2,p}](\mathbb{R}^n_+,\mathbb{C})$ with the estimate
    \begin{align*}
        \lvert\lambda\rvert\lVert  u\rVert_{\dot{\mathrm{H}}^{s,p}(\mathbb{R}^n_+)}+\lvert\lambda\rvert^\frac{1}{2}\lVert \nabla u\rVert_{\dot{\mathrm{H}}^{s,p}(\mathbb{R}^n_+)}+\lVert \nabla^2 u\rVert_{\dot{\mathrm{H}}^{s,p}(\mathbb{R}^n_+)} &\lesssim_{p,n,s,\mu} \lVert f\rVert_{\dot{\mathrm{H}}^{s,p}(\mathbb{R}^n_+)} \text{. }
    \end{align*}
    
    If moreover $\alpha\neq 1/\tilde{p}$ and $f\in\dot{\mathrm{H}}^{\alpha,\tilde{p}}_0(\mathbb{R}^n_+,\mathbb{C})$, then we also have $u\in [\dot{\mathrm{H}}^{\alpha,\tilde{p}}\cap\dot{\mathrm{H}}^{\alpha+2,\tilde{p}}](\mathbb{R}^n_+,\mathbb{C})$ with the corresponding estimate
    \begin{align*}
        \lvert\lambda\rvert\lVert  u\rVert_{\dot{\mathrm{H}}^{\alpha,\tilde{p}}(\mathbb{R}^n_+)}+\lvert\lambda\rvert^\frac{1}{2}\lVert \nabla u\rVert_{\dot{\mathrm{H}}^{\alpha,\tilde{p}}(\mathbb{R}^n_+)}+\lVert \nabla^2 u\rVert_{\dot{\mathrm{H}}^{\alpha,\tilde{p}}(\mathbb{R}^n_+)} &\lesssim_{\tilde{p},n,\alpha,\mu} \lVert f\rVert_{\dot{\mathrm{H}}^{\alpha,\tilde{p}}(\mathbb{R}^n_+)} \text{. }
    \end{align*}
    
    The result still holds replacing $(\dot{\mathrm{H}}^{s,p},\dot{\mathrm{H}}^{s+2,p},\dot{\mathrm{H}}^{\alpha,\tilde{p}},\dot{\mathrm{H}}^{\alpha+2,\tilde{p}})$ by $(\dot{\mathrm{B}}^{s}_{p,q},\dot{\mathrm{B}}^{s+2}_{p,q},\dot{\mathrm{B}}^{\alpha}_{\tilde{p},\tilde{q}},\dot{\mathrm{B}}^{\alpha+2}_{\tilde{p},\tilde{q}})$ whenever \eqref{AssumptionCompletenessExponents} is satisfied.

    The whole result still holds for inhomogeneous function spaces, replacing $(\dot{\mathrm{H}}, \dot{\mathrm{B}})$ by $({\mathrm{H}}, {\mathrm{B}})$. However, in this case, the consideration of intersection spaces and the conditions $(\mathcal{C}_{s,p})$ and \eqref{AssumptionCompletenessExponents} are no longer necessary.
\end{proposition}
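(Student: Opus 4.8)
The plan is to reduce the half-space resolvent problem to the whole-space resolvent problem via the reflection extension operators $\mathrm{E}_{\mathcal{D}}$ and (for Neumann) $\mathrm{E}_{\mathcal{N}}$, exactly as in Proposition~\ref{prop:L2DirNEuLapl}, but now propagating the $\mathrm{L}^2$-identity to the whole Sobolev/Besov scale by a density argument starting from $\eus{S}_0(\overline{\mathbb{R}^n_+})$. First I would recall that on $\mathbb{R}^n$ the operator $-\Delta$ is $0$-sectorial on each $\dot{\mathrm{H}}^{s,p}(\mathbb{R}^n)$ and $\dot{\mathrm{B}}^{s}_{p,q}(\mathbb{R}^n)$ when the relevant completeness condition holds, since $(\lambda\mathrm{I}-\Delta)^{-1}$ is a Fourier multiplier with symbol $(\lambda+|\xi|^2)^{-1}$ which, together with $\lambda(\lambda+|\xi|^2)^{-1}$, $|\xi|\lambda^{1/2}(\lambda+|\xi|^2)^{-1}$, $|\xi|^2(\lambda+|\xi|^2)^{-1}$, satisfies Mikhlin–Hörmander bounds uniformly in $\lambda\in\Sigma_\mu$ and is homogeneous of degree $0$; this gives the whole-space a priori estimate
\begin{align*}
    |\lambda|\lVert U\rVert_{\dot{\mathrm{H}}^{s,p}(\mathbb{R}^n)}+|\lambda|^{\frac12}\lVert \nabla U\rVert_{\dot{\mathrm{H}}^{s,p}(\mathbb{R}^n)}+\lVert \nabla^2 U\rVert_{\dot{\mathrm{H}}^{s,p}(\mathbb{R}^n)} \lesssim_{p,n,s,\mu} \lVert F\rVert_{\dot{\mathrm{H}}^{s,p}(\mathbb{R}^n)},
\end{align*}
with $U=(\lambda\mathrm{I}-\Delta)^{-1}F$, and similarly on Besov spaces. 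Here point \textit{(v)} of Proposition~\ref{prop:PropertiesHomSobolevSpacesRn} and the equivalence $\lVert \nabla^2 U\rVert_{\dot{\mathrm{H}}^{s,p}}\sim \lVert U\rVert_{\dot{\mathrm{H}}^{s+2,p}}$ from \eqref{eq:equivNormsSobRn} are used.

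Next I would construct the solution. Given $f\in\dot{\mathrm{H}}^{s,p}_0(\mathbb{R}^n_+)$, set $U:=(\lambda\mathrm{I}-\Delta)^{-1}\mathrm{E}_{\mathcal{J}}f$ on $\mathbb{R}^n$ (with $\mathrm{E}_{\mathcal{D}}$ for Dirichlet, $\mathrm{E}_{\mathcal{N}}$ for Neumann); by the boundedness of $\mathrm{E}_{\mathcal{J}}$ on $\dot{\mathrm{H}}^{s,p}$ for $s\in(-1+\tfrac1p,\tfrac1p)$ (Proposition~\ref{prop:SobolevMultiplier}) and the above whole-space estimate, $U\in\dot{\mathrm{H}}^{s+2,p}(\mathbb{R}^n)$ with the stated bounds. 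One then checks that the odd (resp. even) symmetry of $\mathrm{E}_{\mathcal{J}}f$ across $x_n=0$ is inherited by $U$ — because $(\lambda\mathrm{I}-\Delta)^{-1}$ commutes with the reflection $x_n\mapsto -x_n$ — so that $U$ is odd for $\mathcal{J}=\mathcal{D}$ (hence $U_{|_{\partial\mathbb{R}^n_+}}=0$ by the trace theorem, Theorem~\ref{thm:Tracehalfspace}, applied to $U$ and to the mirror of $U$) and even for $\mathcal{J}=\mathcal{N}$ (hence $\partial_{x_n}U$ is odd, so $\partial_{\nu}U_{|_{\partial\mathbb{R}^n_+}}=0$, using Theorem~\ref{thm:Tracehalfspace} on $\partial_{x_n}U\in\dot{\mathrm{H}}^{s+1,p}(\mathbb{R}^n)$ which has a trace since $s+1>\tfrac1p$). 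Then $u:=U_{|_{\mathbb{R}^n_+}}$ solves \eqref{ResolvDirLap}, lies in $[\dot{\mathrm{H}}^{s,p}_0\cap\dot{\mathrm{H}}^{s+2,p}](\mathbb{R}^n_+)$ — using Proposition~\ref{prop:IntersecHomHspRn+} / Corollary~\ref{cor:ExtOpIntersecHomHspRn+} to identify the intersection of restriction spaces — and inherits the resolvent estimate since each homogeneous norm on $\mathbb{R}^n_+$ is bounded by its counterpart on $\mathbb{R}^n$ (definition of restriction norm). For the range $s\in(\tfrac1p,1+\tfrac1p)$ one instead starts from $f\in\dot{\mathrm{H}}^{s,p}_0$, which by Lemma~\ref{lem:IdentHsp0and0trace} extends by zero to $\tilde f\in\dot{\mathrm{H}}^{s,p}(\mathbb{R}^n)$ with support in $\overline{\mathbb{R}^n_+}$, applies the whole-space resolvent to $\tilde f$, and uses the same symmetry/trace argument; alternatively, and more cleanly, I would invoke the established $s\in(-1+\tfrac1p,\tfrac1p)$ case together with the commutation $\Delta_{\mathcal{J}}(\lambda\mathrm{I}-\Delta_{\mathcal{J}})^{-1}=\mathrm{I}-\lambda(\lambda\mathrm{I}-\Delta_{\mathcal{J}})^{-1}$ and the gradient-shift equivalences of Corollary~\ref{cor:EqNormNablakmHspHaq}, bootstrapping regularity index by index.

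For uniqueness, suppose $u\in[\dot{\mathrm{H}}^{s,p}_0\cap\dot{\mathrm{H}}^{s+2,p}](\mathbb{R}^n_+)$ solves the homogeneous problem ($f=0$): extend $u$ by its $\mathcal{J}$-reflection $\mathrm{E}_{\mathcal{J}}u$, which by Remark~\ref{rem:commutPartialExtOpDirNeu} and the vanishing boundary data satisfies $(\lambda\mathrm{I}-\Delta)\mathrm{E}_{\mathcal{J}}u=0$ in $\eus{S}'(\mathbb{R}^n)$ — this is the same distributional computation as in the proof of Proposition~\ref{prop:L2DirNEuLapl}; since $(\lambda\mathrm{I}-\Delta)$ is injective on $\eus{S}'_h(\mathbb{R}^n)$ (its symbol $\lambda+|\xi|^2$ is nonvanishing for $\lambda\in\Sigma_\mu$, and a homogeneous tempered distribution annihilated by a nonvanishing multiplier is zero), we get $\mathrm{E}_{\mathcal{J}}u=0$, hence $u=0$. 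Then the two additional statements — the second-index regularity for $f\in\dot{\mathrm{H}}^{\alpha,\tilde p}_0$, and the Besov-scale version — follow from uniqueness plus, respectively, the $(\alpha,\tilde p)$ instance of the construction and real interpolation (Theorem~\ref{thm:InterpHomSpacesRn}, Proposition~\ref{prop:InterpHomSpacesRn+}), noting that the reflection operators are bounded on the Besov scale too. Finally the inhomogeneous-space version is the classical statement, recovered identically using the inhomogeneous boundedness of $\mathrm{E}_{\mathcal{J}}$ and the inhomogeneous multiplier estimates, with no completeness hypothesis needed. The main obstacle I anticipate is the careful handling of the trace/symmetry step at the boundary across the full range $s\in(-1+\tfrac1p,1+\tfrac1p)\setminus\{\tfrac1p\}$ — in particular making sense of, and justifying the vanishing of, the Neumann trace $\partial_{\nu}u_{|_{\partial\mathbb{R}^n_+}}$ and of the Dirichlet trace in the non-complete regime — which is exactly where Lemma~\ref{lem:IdentHsp0and0trace}, Theorem~\ref{thm:Tracehalfspace}, and Remark~\ref{rem:commutPartialExtOpDirNeu} are needed, and where one must be scrupulous about which intersection spaces are complete before invoking density.
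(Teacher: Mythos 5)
Your overall strategy is the same as the paper's: extend $f$ by odd reflection, apply the whole-space resolvent, restrict back, and prove uniqueness via the identity $\mathrm{E}_{\mathcal{D}}u=(\lambda\mathrm{I}-\Delta)^{-1}\mathrm{E}_{\mathcal{D}}f$. Your verification of the boundary condition by the odd symmetry of $U=(\lambda\mathrm{I}-\Delta)^{-1}\mathrm{E}_{\mathcal{D}}f$ (the resolvent commutes with $x_n\mapsto-x_n$) together with the trace theorem is a legitimate alternative to the paper's route, which instead passes to the limit from the $\mathrm{L}^2$ theory of Proposition \ref{prop:L2DirNEuLapl} using the density of $[\mathrm{L}^2\cap\dot{\mathrm{H}}^{s,p}](\mathbb{R}^n_+)$; both work, and yours is arguably cleaner in the base range $s\in(-1+\tfrac1p,\tfrac1p)$.

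There is, however, a genuine gap in your treatment of the range $s\in(\tfrac1p,1+\tfrac1p)$. You propose to extend $f\in\dot{\mathrm{H}}^{s,p}_0(\mathbb{R}^n_+)$ by zero to $\tilde f$ and apply the whole-space resolvent to $\tilde f$; but $(\lambda\mathrm{I}-\Delta)^{-1}\tilde f$ has no symmetry across $\{x_n=0\}$ and its trace on $\partial\mathbb{R}^n_+$ does not vanish, so its restriction does not solve \eqref{ResolvDirLap} — the "same symmetry/trace argument" is not available for $\tilde f$. Your fallback bootstrap via $\Delta_{\mathcal{D}}(\lambda\mathrm{I}-\Delta_{\mathcal{D}})^{-1}=\mathrm{I}-\lambda(\lambda\mathrm{I}-\Delta_{\mathcal{D}})^{-1}$ and Corollary \ref{cor:EqNormNablakmHspHaq} is also not worked out: that identity controls $\Delta u$, not $\nabla^2 u$, and on the half-space the passage from one to the other is precisely what is at stake. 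The correct move — and what the paper does — is to keep the odd extension $\mathrm{E}_{\mathcal{D}}f$ in this range too, and to show that $\mathrm{E}_{\mathcal{D}}f\in\dot{\mathrm{H}}^{s,p}(\mathbb{R}^n)$ by commuting one derivative through the reflection: $\partial_{x_k}\mathrm{E}_{\mathcal{D}}f=\mathrm{E}_{\mathcal{D}}\partial_{x_k}f$ for $k\leqslant n-1$ and $\partial_{x_n}\mathrm{E}_{\mathcal{D}}f=\mathrm{E}_{\mathcal{N}}\partial_{x_n}f$ (valid in $\eus{S}'(\mathbb{R}^n)$ exactly because $f$ has zero trace, cf. Remark \ref{rem:commutPartialExtOpDirNeu} and Lemma \ref{lem:IdentHsp0and0trace}), so that $\nabla f$ lands in $\dot{\mathrm{H}}^{s-1,p}$ with $s-1\in(-1+\tfrac1p,\tfrac1p)$, where Proposition \ref{prop:SobolevMultiplier} applies. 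With that repair the rest of your argument (the decoupled $(\alpha,\tilde p)$ estimate by running the same construction on the second index, the Besov case by the analogous argument and real interpolation at $q=+\infty$, and the inhomogeneous case) goes through as in the paper.
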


\begin{remark} $\bullet$ For this specific Proposition \ref{prop:DirResolvPbHspBspqRn+}, we have excluded the cases $s=1/p$ and $\alpha= 1/\tilde{p}$. Both require to introduce, e.g. in case of Sobolev spaces, the homogeneous counterpart of the Lions-Magenes Sobolev space $\dot{\mathrm{H}}^{1/q,q}_{00}(\mathbb{R}^n_+)$, $q\in\{p,\tilde{p}\}$. See for instance \cite[Chapter~1,~Theorem~11.7]{LionsMagenes1972} for the inhomogeneous space in the case $q=2$.

$\bullet$ We bring to the attention of the reader that $(\mathcal{C}_{\alpha,\tilde{p}})$ is \textbf{NEVER} assumed, only $(\mathcal{C}_{s,p})$ is. This is in order to echo the principle of decoupled estimates in intersection spaces when one wants to deal with higher regularities involving some non-complete spaces. All the other results presented below follow the same principle.
\end{remark}

\begin{proof} Provided $p\in(1,+\infty)$, and firstly that $s\in(-1+1/p,1/p)$, for $f\in\dot{\mathrm{H}}^{s,p}(\mathbb{R}^n_+,\mathbb{C})$, 
it follows from Proposition \ref{prop:SobolevMultiplier} that for $U:=(\lambda\mathrm{I}-\Delta)^{-1}\mathrm{E}_{\mathcal{D}}f$
\begin{align*}
        \lvert\lambda\rvert\lVert U \rVert_{\dot{\mathrm{H}}^{s,p}(\mathbb{R}^n)}+\lvert\lambda\rvert^\frac{1}{2}\lVert  \nabla U\rVert_{\dot{\mathrm{H}}^{s,p}(\mathbb{R}^n)}+\lVert \nabla^2  U\rVert_{\dot{\mathrm{H}}^{s,p}(\mathbb{R}^n)} &\lesssim_{p,n,s,\mu} \lVert f\rVert_{\dot{\mathrm{H}}^{s,p}(\mathbb{R}^n_+)} \text{. }
\end{align*}
Thus, by the definition of function spaces by restriction, we set $u:=U_{|_{\mathbb{R}^n_+}}$ which satisfies
\begin{align*}
        \lvert\lambda\rvert\lVert u \rVert_{\dot{\mathrm{H}}^{s,p}(\mathbb{R}^n_+)}+\lvert\lambda\rvert^\frac{1}{2}\lVert  \nabla u\rVert_{\dot{\mathrm{H}}^{s,p}(\mathbb{R}^n_+)}+\lVert \nabla^2  u\rVert_{\dot{\mathrm{H}}^{s,p}(\mathbb{R}^n_+)} &\lesssim_{p,n,s,\mu} \lVert f\rVert_{\dot{\mathrm{H}}^{s,p}(\mathbb{R}^n_+)} \text{, }
\end{align*}
then the map $f\mapsto [(\lambda\mathrm{I}-\Delta)^{-1}\mathrm{E}_{\mathcal{D}}f]_{|_{\mathbb{R}^n_+}}$ is a bounded map on $\dot{\mathrm{H}}^{s,p}(\mathbb{R}^n_+,\mathbb{C})$. Everything goes similarly for ${\mathrm{H}}^{s,p}(\mathbb{R}^n_+,\mathbb{C})$. One may check, as in the proof of Proposition \ref{prop:L2DirNEuLapl}, and by a limiting argument, given the density of $[\mathrm{L}^2\cap\dot{\mathrm{H}}^{s,p}](\mathbb{R}^n_+,\mathbb{C})$ in $\dot{\mathrm{H}}^{s,p}(\mathbb{R}^n_+,\mathbb{C})$, that $u_{|_{\partial \mathbb{R}^n_+}} = 0 $, and
\begin{align*}
    \lambda u - \Delta u = f \,\text{ in } \mathbb{R}^n_+ \text{. }
\end{align*}
Again, as in the proof of Proposition \ref{prop:L2DirNEuLapl}, one may check that any solution $u$ to the above resolvent Dirichlet problem necessarily satisfies $\mathrm{E}_\mathcal{D}u=(\lambda\mathrm{I}-\Delta)^{-1}\mathrm{E}_{\mathcal{D}}f$.

Now if $s\in(1/p,1+1/p)$, $f\in [\dot{\mathrm{H}}^{s-1,p}_0\cap\dot{\mathrm{H}}^{s,p}_0](\mathbb{R}^n_+,\mathbb{C})$ then we have, thanks to previous considerations, $U:=(\lambda\mathrm{I}-\Delta)^{-1}\mathrm{E}_{\mathcal{D}}f\in \dot{\mathrm{H}}^{s-1,p}(\mathbb{R}^n,\mathbb{C})$. It suffices to show that $U\in \dot{\mathrm{H}}^{s,p}(\mathbb{R}^n,\mathbb{C})$, which is true. Indeed, we have
\begin{align*}
        \lvert\lambda\rvert\lVert U \rVert_{\dot{\mathrm{H}}^{s,p}(\mathbb{R}^n)} &\lesssim_{s,p,n,\mu} \vert\lambda\rvert\lVert \nabla U \rVert_{\dot{\mathrm{H}}^{s-1,p}(\mathbb{R}^n)}\\
        &\lesssim_{s,p,n,\mu} \lVert \nabla \mathrm{E}_\mathcal{D} f \rVert_{\dot{\mathrm{H}}^{s-1,p}(\mathbb{R}^n)}\\
         &\lesssim_{s,p,n,\mu} \sum_{k=1}^n\lVert \partial_{x_k} \mathrm{E}_\mathcal{D} f \rVert_{\dot{\mathrm{H}}^{s-1,p}(\mathbb{R}^n)} \text{. }
\end{align*}
Since equalities $\partial_{x_k} \mathrm{E}_\mathcal{D} f =  \mathrm{E}_\mathcal{D} \partial_{x_k} f$, $k\in\llb 1,n-1\rrb$ and $\partial_{x_n} \mathrm{E}_\mathcal{D} f =  \mathrm{E}_\mathcal{N} \partial_{x_n} f$ occur in $\eus{S}'(\mathbb{R}^n,\mathbb{C})$, we deduce
\begin{align*}
        \lvert\lambda\rvert\lVert u \rVert_{\dot{\mathrm{H}}^{s,p}(\mathbb{R}^n_+)}\leqslant\lvert\lambda\rvert\lVert U \rVert_{\dot{\mathrm{H}}^{s,p}(\mathbb{R}^n)} \lesssim_{s,p,n,\mu} \lVert  f \rVert_{\dot{\mathrm{H}}^{s,p}(\mathbb{R}^n_+)} \text{. }
\end{align*}
One may proceed similarly as before to obtain the full estimate
\begin{align*}
        \lvert\lambda\rvert\lVert u \rVert_{\dot{\mathrm{H}}^{s,p}(\mathbb{R}^n_+)}+\lvert\lambda\rvert^\frac{1}{2}\lVert \nabla  u\rVert_{\dot{\mathrm{H}}^{s,p}(\mathbb{R}^n_+)}+\lVert \nabla^2  u\rVert_{\dot{\mathrm{H}}^{s,p}(\mathbb{R}^n_+)} &\lesssim_{p,n,s,\mu} \lVert f\rVert_{\dot{\mathrm{H}}^{s,p}(\mathbb{R}^n_+)} \text{. }
\end{align*}
Thus the estimates still hold by density for all $f\in \dot{\mathrm{H}}^{s,p}_0(\mathbb{R}^n_+)$, $s\in(-1+1/p,1+1/p)$, $s\neq 1/p$, whenever $(\mathcal{C}_{s,p})$ is satisfied.

The $\dot{\mathrm{H}}^{\alpha,\tilde{p}}$-estimate for $f\in[\dot{\mathrm{H}}^{s,p}_0\cap \dot{\mathrm{H}}^{\alpha,\tilde{p}}_0](\mathbb{R}^n_+)$ can be obtained the same way, whenever $(\mathcal{C}_{s,p})$ is satisfied.

The case of Besov spaces $\dot{\mathrm{B}}^{s}_{p,q,0}$ can be achieved via similar argument for $q<+\infty$, the case $q=+\infty$ is obtained via real interpolation. The case of the $\dot{\mathrm{B}}^{\alpha}_{\tilde{p},\tilde{q},0}$-estimate for $f\in\dot{\mathrm{B}}^{s}_{p,q,0}\cap\dot{\mathrm{B}}^{\alpha}_{\tilde{p},\tilde{q},0}$ can be done as above.
\end{proof}

The proof for the Neumann resolvent problem in the proposition below is fairly similar to the proof of Proposition \ref{prop:DirResolvPbHspBspqRn+}, a complex interpolation argument allows values $s=1/p$ and $\alpha= 1/\tilde{p}$.

\begin{proposition}\label{prop:NeuResolvPbHspBspqRn+} Let $p,\tilde{p}\in(1,+\infty)$, $q,\tilde{q}\in[1,+\infty]$, $s\in(-1+\frac{1}{p},1+\frac{1}{p})$, $\alpha\in(-1+\frac{1}{\tilde{p}},1+\frac{1}{\tilde{p}})$ and $\lambda\in\Sigma_\mu$ provided $\mu\in[0,\pi)$. We assume that $(\mathcal{C}_{s,p})$, and we let $f\in\dot{\mathrm{H}}^{s,p}(\mathbb{R}^n_+,\mathbb{C})$. Let us consider the \textbf{resolvent Neumann} problem with homogeneous boundary condition:
\begin{align}\tag{$\mathcal{NL}_{\lambda}$}\label{ResolvNeumannLap}
\left\{\begin{array}{rrl}
        \lambda u -\Delta u   &=  f\text{,} \,&\text{ in } \mathbb{R}^n_+ \text{, }\\
        \partial_\nu u_{|_{\partial\mathbb{R}^n_+}}  &= 0\text{,} \,&\text{ on } \partial\mathbb{R}^n_+ \text{. }
\end{array}
\right.
\end{align}
The problem \eqref{ResolvNeumannLap} admits a unique solution $u\in [\dot{\mathrm{H}}^{s,p}\cap\dot{\mathrm{H}}^{s+2,p}](\mathbb{R}^n_+,\mathbb{C})$ with the estimate
    \begin{align*}
        \lvert\lambda\rvert\lVert  u\rVert_{\dot{\mathrm{H}}^{s,p}(\mathbb{R}^n_+)}+\lvert\lambda\rvert^\frac{1}{2}\lVert \nabla u\rVert_{\dot{\mathrm{H}}^{s,p}(\mathbb{R}^n_+)}+\lVert \nabla^2 u\rVert_{\dot{\mathrm{H}}^{s,p}(\mathbb{R}^n_+)} &\lesssim_{p,n,s,\mu} \lVert f\rVert_{\dot{\mathrm{H}}^{s,p}(\mathbb{R}^n_+)} \text{. }
    \end{align*}
    
    If moreover $f\in\dot{\mathrm{H}}^{\alpha,\tilde{p}}(\mathbb{R}^n_+,\mathbb{C})$, then we also have $u\in [\dot{\mathrm{H}}^{\alpha,\tilde{p}}\cap\dot{\mathrm{H}}^{\alpha+2,\tilde{p}}](\mathbb{R}^n_+,\mathbb{C})$ with corresponding the estimate
    \begin{align*}
        \lvert\lambda\rvert\lVert  u\rVert_{\dot{\mathrm{H}}^{\alpha,\tilde{p}}(\mathbb{R}^n_+)}+\lvert\lambda\rvert^\frac{1}{2}\lVert \nabla u\rVert_{\dot{\mathrm{H}}^{\alpha,\tilde{p}}(\mathbb{R}^n_+)}+\lVert \nabla^2 u\rVert_{\dot{\mathrm{H}}^{\alpha,\tilde{p}}(\mathbb{R}^n_+)} &\lesssim_{\tilde{p},n,\alpha,\mu} \lVert f\rVert_{\dot{\mathrm{H}}^{\alpha,\tilde{p}}(\mathbb{R}^n_+)} \text{. }
    \end{align*}
    
    The result still holds replacing $(\dot{\mathrm{H}}^{s,p},\dot{\mathrm{H}}^{s+2,p},\dot{\mathrm{H}}^{\alpha,\tilde{p}},\dot{\mathrm{H}}^{\alpha+2,\tilde{p}})$ by $(\dot{\mathrm{B}}^{s}_{p,q},\dot{\mathrm{B}}^{s+2}_{p,q},\dot{\mathrm{B}}^{\alpha}_{\tilde{p},\tilde{q}},\dot{\mathrm{B}}^{\alpha+2}_{\tilde{p},\tilde{q}})$ whenever \eqref{AssumptionCompletenessExponents} is satisfied.

    The whole result still holds for inhomogeneous function spaces, replacing $(\dot{\mathrm{H}}, \dot{\mathrm{B}})$ by $({\mathrm{H}}, {\mathrm{B}})$. However, in this case, the consideration of intersection spaces and the conditions $(\mathcal{C}_{s,p})$ and \eqref{AssumptionCompletenessExponents} are no longer necessary.
\end{proposition}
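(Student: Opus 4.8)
The plan is to transplant the resolvent estimates for $-\Delta$ on $\mathbb{R}^n$ to the half--space through the \emph{even} reflection operator $\mathrm{E}_\mathcal{N}$, in exact analogy with the proof of Proposition~\ref{prop:DirResolvPbHspBspqRn+} (with $\mathrm{E}_\mathcal{N}$ in place of $\mathrm{E}_\mathcal{D}$), and then to recover the endpoint exponents $s=1/p$ and $\alpha=1/\tilde p$ by complex interpolation. The key structural reason this last step is available for the Neumann problem, while it fails for the Dirichlet one, is that $\dot{\mathrm{H}}^{s,p}(\mathbb{R}^n_+)$ and $\dot{\mathrm{B}}^{s}_{p,q}(\mathbb{R}^n_+)$ are plain restriction spaces with no jump at $s=1/p$, whereas the $0$--trace spaces interpolate there to a Lions--Magenes type space rather than to $\dot{\mathrm{H}}^{1/p,p}_0(\mathbb{R}^n_+)$.

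First, for $s\in(-1+\tfrac1p,\tfrac1p)$ and $f\in\dot{\mathrm{H}}^{s,p}(\mathbb{R}^n_+,\mathbb{C})$, set $U:=(\lambda\mathrm{I}-\Delta)^{-1}\mathrm{E}_{\mathcal N}f$. The symbols $\lambda(\lambda+|\xi|^2)^{-1}$, $|\lambda|^{1/2}\xi_k(\lambda+|\xi|^2)^{-1}$, $\xi_j\xi_k(\lambda+|\xi|^2)^{-1}$ are Mikhlin multipliers uniformly in $\lambda\in\Sigma_\mu$, so they give the whole--space resolvent estimate for $U$, and Proposition~\ref{prop:SobolevMultiplier} bounds $\lVert\mathrm{E}_{\mathcal N}f\rVert_{\dot{\mathrm{H}}^{s,p}(\mathbb{R}^n)}$ by $\lVert f\rVert_{\dot{\mathrm{H}}^{s,p}(\mathbb{R}^n_+)}$; restricting to $\mathbb{R}^n_+$ gives $u:=U_{|_{\mathbb{R}^n_+}}$ with the claimed estimate. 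As in the proof of Proposition~\ref{prop:L2DirNEuLapl}, Remark~\ref{rem:commutPartialExtOpDirNeu} combined with a limiting argument using the density of $[\mathrm{L}^2\cap\dot{\mathrm{H}}^{s,p}](\mathbb{R}^n_+,\mathbb{C})$ in $\dot{\mathrm{H}}^{s,p}(\mathbb{R}^n_+,\mathbb{C})$ yields $-\Delta\mathrm{E}_{\mathcal N}u=\mathrm{E}_{\mathcal N}[-\Delta u]$, hence $\lambda u-\Delta u=f$ in $\mathbb{R}^n_+$; since $\mathrm{E}_{\mathcal N}f$ is even in $x_n$ and the resolvent commutes with $x_n\mapsto-x_n$, $U$ is even, $\partial_{x_n}U$ is odd, and $\partial_\nu u_{|_{\partial\mathbb{R}^n_+}}=0$. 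The same identity shows any solution satisfies $\mathrm{E}_{\mathcal N}u=(\lambda\mathrm{I}-\Delta)^{-1}\mathrm{E}_{\mathcal N}f$, giving uniqueness. For $s\in(\tfrac1p,1+\tfrac1p)$ and $f\in[\dot{\mathrm{H}}^{s-1,p}\cap\dot{\mathrm{H}}^{s,p}](\mathbb{R}^n_+,\mathbb{C})$ one uses the gradient trick: the previous step gives $U\in\dot{\mathrm{H}}^{s-1,p}(\mathbb{R}^n)$, and writing $\partial_{x_k}\mathrm{E}_\mathcal{N}f=\mathrm{E}_\mathcal{N}\partial_{x_k}f$ for $k\leqslant n-1$, $\partial_{x_n}\mathrm{E}_\mathcal{N}f=\mathrm{E}_\mathcal{D}\partial_{x_n}f$ (Remark~\ref{rem:commutPartialExtOpDirNeu}), each bounded on $\dot{\mathrm{H}}^{s-1,p}$ with $s-1\in(-1+\tfrac1p,\tfrac1p)$ by Proposition~\ref{prop:SobolevMultiplier}, one bootstraps $|\lambda|\,\lVert U\rVert_{\dot{\mathrm{H}}^{s,p}(\mathbb{R}^n)}\lesssim|\lambda|\,\lVert\nabla U\rVert_{\dot{\mathrm{H}}^{s-1,p}(\mathbb{R}^n)}\lesssim\lVert\nabla\mathrm{E}_\mathcal{N}f\rVert_{\dot{\mathrm{H}}^{s-1,p}(\mathbb{R}^n)}\lesssim\lVert f\rVert_{\dot{\mathrm{H}}^{s,p}(\mathbb{R}^n_+)}$, and similarly for the $\nabla$ and $\nabla^2$ terms, using Corollary~\ref{cor:EqNormNablakmHspHaq} to pass between $\lVert\nabla^2 u\rVert_{\dot{\mathrm{H}}^{s,p}}$ and membership in $\dot{\mathrm{H}}^{s+2,p}$ inside the intersection space; since $\eus{S}_0(\overline{\mathbb{R}^n_+})\subset[\dot{\mathrm{H}}^{s-1,p}\cap\dot{\mathrm{H}}^{s,p}](\mathbb{R}^n_+)$ is dense in $\dot{\mathrm{H}}^{s,p}(\mathbb{R}^n_+)$ under $(\mathcal{C}_{s,p})$ (Proposition~\ref{prop:IntersecHomHspRn+}), everything extends by density.

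For the endpoint $s=1/p$, which satisfies $(\mathcal{C}_{1/p,p})$ because $n\geqslant 2$, choose $s_0\in(-1+\tfrac1p,\tfrac1p)$, $s_1\in(\tfrac1p,\min(1+\tfrac1p,\tfrac np))$ and $\theta$ with $\tfrac1p=(1-\theta)s_0+\theta s_1$. The solution operator $R_\lambda:f\mapsto u$ built above, together with $|\lambda|^{1/2}\nabla R_\lambda$ and $\nabla^2 R_\lambda$, is consistent on $\dot{\mathrm{H}}^{s_0,p}(\mathbb{R}^n_+)$ and $\dot{\mathrm{H}}^{s_1,p}(\mathbb{R}^n_+)$ (by the $\mathrm{E}_\mathcal{N}$--formula), both of which are complete; Proposition~\ref{prop:InterpHomSpacesRn+} gives $[\dot{\mathrm{H}}^{s_0,p}(\mathbb{R}^n_+),\dot{\mathrm{H}}^{s_1,p}(\mathbb{R}^n_+)]_\theta=\dot{\mathrm{H}}^{1/p,p}(\mathbb{R}^n_+)$, so these operators are bounded there with the right $\lambda$--weights, and Corollary~\ref{cor:EqNormNablakmHspHaq} promotes this to $u\in[\dot{\mathrm{H}}^{1/p,p}\cap\dot{\mathrm{H}}^{1/p+2,p}](\mathbb{R}^n_+)$ — note that only the source exponents are interpolated, so no completeness is required at regularity $s+2$. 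The decoupled $\dot{\mathrm{H}}^{\alpha,\tilde p}$--estimates (and the endpoint $\alpha=1/\tilde p$) are obtained on the \emph{same} $u$ by replaying the previous paragraph with $(\alpha,\tilde p)$ in place of $(s,p)$ but working throughout inside the complete intersection space $[\dot{\mathrm{H}}^{s,p}\cap\dot{\mathrm{H}}^{\alpha,\tilde p}](\mathbb{R}^n_+)$ and using the decoupled boundedness of the reflection operators on intersections (the analogue of Corollary~\ref{cor:ExtOpIntersecHomHspRn+}), so that indeed only $(\mathcal{C}_{s,p})$ is ever assumed. The Besov statements follow by real interpolation from the Sobolev ones via Theorem~\ref{thm:InterpHomSpacesRn} and Proposition~\ref{prop:InterpHomSpacesRn+} when $q<+\infty$, and by one more real interpolation step for $q=+\infty$; the inhomogeneous versions are identical but simpler, all spaces being complete and no intersection device being needed.

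The genuinely delicate point is the endpoint step: establishing that the homogeneous Neumann Sobolev (and Besov) spaces on the half--space form a complex interpolation scale through $s=1/p$. This is exactly where the $\eus{S}'_h$--construction of Section~\ref{sec:FunctionSpacesRn+} pays off, and it is precisely the feature that is unavailable for the Dirichlet problem (hence the exclusion of $s=1/p$, $\alpha=1/\tilde p$ in Proposition~\ref{prop:DirResolvPbHspBspqRn+}). A secondary, purely technical nuisance is the careful bookkeeping of the decoupled estimates, making sure one never invokes completeness at the auxiliary exponents $\alpha$ or $\alpha+2$.
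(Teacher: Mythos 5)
Your proposal is correct and follows essentially the same route as the paper, whose proof of this proposition is simply the observation that one repeats the Dirichlet argument with $\mathrm{E}_{\mathcal{N}}$ in place of $\mathrm{E}_{\mathcal{D}}$ (using $\partial_{x_n}\mathrm{E}_{\mathcal{N}}=\mathrm{E}_{\mathcal{D}}\partial_{x_n}$ for the gradient bootstrap) and recovers $s=1/p$, $\alpha=1/\tilde p$ by complex interpolation. Your write-up merely fills in the details the paper leaves implicit, including the correct explanation of why the endpoint is reachable here but not in the Dirichlet case.
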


\begin{proposition}\label{prop:DirPbRn+} Let $p,\tilde{p}\in(1,+\infty)$, $q,\tilde{q}\in[1,+\infty]$, $s\in(-1+\frac{1}{p},+\infty)$, $\alpha\in(-1+\frac{1}{\tilde{p}},+\infty)$ such that $(\mathcal{C}_{s+2,p})$ is satisfied.
For $f\in\dot{\mathrm{H}}^{s,p}(\mathbb{R}^n_+,\mathbb{C})$, $g\in \dot{\mathrm{B}}^{s+2-\frac{1}{p}}_{p,p}(\mathbb{R}^{n-1},\mathbb{C})$, let us consider the \textbf{Dirichlet} problem with inhomogeneous boundary condition:
\begin{align}\tag{$\mathcal{DL}_{0}$}\label{DirLap}
\left\{\begin{array}{rrl}
         -\Delta u   &=  f\text{,} \,&\text{ in } \mathbb{R}^n_+ \text{, }\\
         u_{|_{\partial\mathbb{R}^n_+}}  &= g\text{,} \,&\text{ on } \partial\mathbb{R}^n_+ \text{. }
\end{array}
\right.
\end{align}

The problem \eqref{DirLap} admits a unique solution $u$ such that
\begin{align*}
    u\in\dot{\mathrm{H}}^{s+2,p}(\mathbb{R}^n_+,\mathbb{C})\subset \mathrm{C}^0_{0,x_n}(\overline{\mathbb{R}_+},\dot{\mathrm{B}}^{s+2-\frac{1}{p}}_{p,p}(\mathbb{R}^{n-1},\mathbb{C}))
\end{align*}
with the estimate
    \begin{align*}
        \lVert u\rVert_{\mathrm{L}^\infty(\mathbb{R}_+,\dot{\mathrm{B}}^{s+2-\frac{1}{p}}_{p,p}(\mathbb{R}^{n-1}))} \lesssim_{s,p,n}\lVert \nabla^2 u\rVert_{\dot{\mathrm{H}}^{s,p}(\mathbb{R}^n_+)} &\lesssim_{p,n,s} \lVert f\rVert_{\dot{\mathrm{H}}^{s,p}(\mathbb{R}^n_+)} + \lVert g\rVert_{\dot{\mathrm{B}}^{s+2-\frac{1}{p}}_{p,p}(\mathbb{R}^{n-1})}  \text{. }
    \end{align*}
    
    If moreover $f\in\dot{\mathrm{H}}^{\alpha,\tilde{p}}(\mathbb{R}^n_+,\mathbb{C})$ and $g\in \dot{\mathrm{B}}^{\alpha+2-\frac{1}{\tilde{p}}}_{\tilde{p},\tilde{p}}(\mathbb{R}^{n-1},\mathbb{C})$ then the solution $u$ also satisfies $ u\in \dot{\mathrm{H}}^{\alpha+2,\tilde{p}}(\mathbb{R}^n_+,\mathbb{C})$ with the corresponding estimate
    \begin{align*}
        \lVert \nabla^2 u\rVert_{\dot{\mathrm{H}}^{\alpha,\tilde{p}}(\mathbb{R}^n_+)} &\lesssim_{\tilde{p},n,\alpha} \lVert f\rVert_{\dot{\mathrm{H}}^{\alpha,\tilde{p}}(\mathbb{R}^n_+)} + \lVert g\rVert_{\dot{\mathrm{B}}^{\alpha+2-\frac{1}{\tilde{p}}}_{\tilde{p},\tilde{p}}(\mathbb{R}^{n-1})}  \text{. }
    \end{align*}
    
    The result still holds if we replace both families of spaces $(\dot{\mathrm{H}}^{s,p},\dot{\mathrm{H}}^{s+2,p},\dot{\mathrm{B}}^{s+2-\frac{1}{p}}_{p,p})$ and $(\dot{\mathrm{H}}^{\alpha,\tilde{p}},\dot{\mathrm{H}}^{\alpha+2,\tilde{p}},\dot{\mathrm{B}}^{\alpha+2-\frac{1}{\tilde{p}}}_{\tilde{p},\tilde{p}})$ by respectively $(\dot{\mathrm{B}}^{s}_{p,q},\dot{\mathrm{B}}^{s+2}_{p,q},\dot{\mathrm{B}}^{s+2-\frac{1}{p}}_{p,q})$ and $(\dot{\mathrm{B}}^{\alpha}_{\tilde{p},\tilde{q}},\dot{\mathrm{B}}^{\alpha+2}_{\tilde{p},\tilde{q}},\dot{\mathrm{B}}^{\alpha+2-\frac{1}{\tilde{p}}}_{\tilde{p},\tilde{q}})$ whenever $(\mathcal{C}_{s+2,p,q})$ is satisfied, $q<+\infty$.
    
    If $q=+\infty$, everything still holds except $x_n\mapsto u(\cdot,x_n)$ is no more strongly continuous but only weak${}^\ast$ continuous with values in $\dot{\mathrm{B}}^{s+2-\frac{1}{p}}_{p,q}(\mathbb{R}^{n-1},\mathbb{C})$.
\end{proposition}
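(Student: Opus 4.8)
The plan is to reduce \eqref{DirLap} to the two elementary building blocks already available — inversion of $-\Delta$ on $\mathbb{R}^n$ and harmonic extension from $\partial\mathbb{R}^n_+$ — and to cancel the trace error with the latter. Since $(\mathcal{C}_{s+2,p})$ holds, so does $(\mathcal{C}_{s,p})$; hence by Proposition~\ref{prop:ExtOpHomSobSpaces} (with $m$ large) there is an extension $\mathrm{E}f\in\dot{\mathrm{H}}^{s,p}(\mathbb{R}^n)$ of $f$ with $\lVert\mathrm{E}f\rVert_{\dot{\mathrm{H}}^{s,p}(\mathbb{R}^n)}\lesssim\lVert f\rVert_{\dot{\mathrm{H}}^{s,p}(\mathbb{R}^n_+)}$, and by points \textit{(ii)}–\textit{(iii)} of Proposition~\ref{prop:PropertiesHomSobolevSpacesRn} the map $-\Delta\,:\,\dot{\mathrm{H}}^{s+2,p}(\mathbb{R}^n)\to\dot{\mathrm{H}}^{s,p}(\mathbb{R}^n)$ is a Banach space isomorphism, so $W:=(-\Delta)^{-1}\mathrm{E}f\in\dot{\mathrm{H}}^{s+2,p}(\mathbb{R}^n)$ with comparable norm. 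Its restriction $w:=W_{|_{\mathbb{R}^n_+}}$ solves $-\Delta w=f$ in $\mathbb{R}^n_+$ but has a generally nonzero trace $h:=w_{|_{\partial\mathbb{R}^n_+}}\in\dot{\mathrm{B}}^{s+2-\frac1p}_{p,p}(\mathbb{R}^{n-1})$, with $\lVert h\rVert_{\dot{\mathrm{B}}^{s+2-1/p}_{p,p}(\mathbb{R}^{n-1})}\lesssim\lVert w\rVert_{\dot{\mathrm{H}}^{s+2,p}(\mathbb{R}^n_+)}$ by Theorem~\ref{thm:Tracehalfspace}~\textit{(i)}. Letting $T=\underline{\mathrm{Ext}}_{\mathbb{R}^n_+}$ be the harmonic extension operator of Proposition~\ref{prop:HarmExtenOpSobolevBesov} (a bounded right inverse of $\gamma_0$ producing harmonic functions), the function
\begin{align*}
    u := w - Th + Tg \in \dot{\mathrm{H}}^{s+2,p}(\mathbb{R}^n_+)
\end{align*}
satisfies $-\Delta u=f$ in $\mathbb{R}^n_+$ and $u_{|_{\partial\mathbb{R}^n_+}}=g$, with $\lVert u\rVert_{\dot{\mathrm{H}}^{s+2,p}(\mathbb{R}^n_+)}\lesssim\lVert f\rVert_{\dot{\mathrm{H}}^{s,p}(\mathbb{R}^n_+)}+\lVert g\rVert_{\dot{\mathrm{B}}^{s+2-1/p}_{p,p}(\mathbb{R}^{n-1})}$. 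Proposition~\ref{prop:EqNormNablakHsp} with $k=2$ (legitimate since $s+2>1+\frac1p$ and $(\mathcal{C}_{s,p})$ holds) turns this into the claimed $\nabla^2$-estimate, and the $\mathrm{C}^0_{0,x_n}$-membership together with the $\mathrm{L}^\infty_{x_n}$-bound is then exactly Theorem~\ref{thm:Tracehalfspace}~\textit{(i)}.

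For uniqueness, if $u_1,u_2$ both solve \eqref{DirLap} then $v:=u_1-u_2\in\dot{\mathrm{H}}^{s+2,p}(\mathbb{R}^n_+)$ is harmonic with $v_{|_{\partial\mathbb{R}^n_+}}=0$. Arguing as in the proof of Proposition~\ref{prop:L2DirNEuLapl}, the vanishing Dirichlet trace makes the odd reflection $\mathrm{E}_\mathcal{D}v$ harmonic on all of $\mathbb{R}^n$, and — reducing to the range $s+2\in(\frac1p,1+\frac1p)$ by differentiating tangentially if necessary and invoking the reasoning of Lemma~\ref{lem:IdentHsp0and0trace} — it belongs to $\dot{\mathrm{H}}^{s+2,p}(\mathbb{R}^n)\subset\eus{S}'_h(\mathbb{R}^n)$. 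Since the only harmonic tempered distributions are polynomials and $\eus{S}'_h(\mathbb{R}^n)$ contains no nonzero polynomial, $\mathrm{E}_\mathcal{D}v=0$, hence $v=0$.

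The extra $\dot{\mathrm{H}}^{\alpha,\tilde p}$ (resp. $\dot{\mathrm{B}}^{\alpha}_{\tilde p,\tilde q}$) regularity, obtained \emph{without} assuming $(\mathcal{C}_{\alpha+2,\tilde p})$ and only $(\mathcal{C}_{s+2,p})$, is produced by rerunning the same construction inside the intersection spaces with decoupled estimates: by Proposition~\ref{prop:IntersecHomHspRn+}, $f\in[\dot{\mathrm{H}}^{s,p}\cap\dot{\mathrm{H}}^{\alpha,\tilde p}](\mathbb{R}^n_+)$; Corollary~\ref{cor:ExtOpIntersecHomHspRn+} gives $\mathrm{E}f\in[\dot{\mathrm{H}}^{s,p}\cap\dot{\mathrm{H}}^{\alpha,\tilde p}](\mathbb{R}^n)$ with separate bounds; $(-\Delta)^{-1}$ maps $\dot{\mathrm{H}}^{\alpha,\tilde p}(\mathbb{R}^n)$ to $\dot{\mathrm{H}}^{\alpha+2,\tilde p}(\mathbb{R}^n)$ boundedly (again Proposition~\ref{prop:PropertiesHomSobolevSpacesRn}); Proposition~\ref{prop:TracehalfspaceIntersec2}~\textit{(i)} controls $h$ in $\dot{\mathrm{B}}^{\alpha+2-1/\tilde p}_{\tilde p,\tilde p}(\mathbb{R}^{n-1})$; and Corollary~\ref{cor:HarmExtenOpSobolevBesovIntersec} handles $Th$ and $Tg$ — each step yielding a $\tilde p$-scale estimate not involving $(\mathcal{C}_{\alpha+2,\tilde p})$. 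The Besov version is word-for-word identical using the Besov analogues (Corollary~\ref{cor:ExtProj0HomBspq}, Proposition~\ref{prop:IntersLpHomBesovRn+=BesovRn+}, Proposition~\ref{prop:TracehalfspaceIntersec2}~\textit{(ii)} and Corollary~\ref{cor:HarmExtenOpSobolevBesovIntersec}); for $q=+\infty$ one recovers uniform boundedness and weak${}^\ast$ continuity of $x_n\mapsto u(\cdot,x_n)$ by real interpolation between two $q<+\infty$ instances and by Lemma~\ref{lem:weak*continuityBspinfty}, exactly as in Theorem~\ref{thm:Tracehalfspace}.

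The main obstacle is not the existence argument, which is a short assembly of already-proved operators, but the bookkeeping of the decoupled estimates in the intersection spaces: one must verify that every operator in the chain ($\mathrm{E}$, $(-\Delta)^{-1}$, $\gamma_0$, $T$) respects $\dot{\mathrm{H}}^{s+2,p}\cap\dot{\mathrm{H}}^{\alpha+2,\tilde p}$ with \emph{separate} bounds while only $(\mathcal{C}_{s+2,p})$ is available — the very principle of decoupled estimates emphasized in the preceding remark — together with the milder point that in the uniqueness step $\mathrm{E}_\mathcal{D}v$ indeed lands in $\eus{S}'_h(\mathbb{R}^n)$ when $s$ is large, which is where the tangential-derivative reduction and Lemma~\ref{lem:IdentHsp0and0trace} are genuinely used.
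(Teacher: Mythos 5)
Your existence construction is essentially the paper's: extend $f$, invert $-\Delta$ on $\mathbb{R}^n$ (point \textit{(ii)} of Proposition~\ref{prop:PropertiesHomSobolevSpacesRn} under $(\mathcal{C}_{s+2,p})$), restrict, and correct the trace with the Poisson extension $T$ of Proposition~\ref{prop:HarmExtenOpSobolevBesov}; your $u=w-Th+Tg$ is exactly the paper's $u=v+T(g-v(\cdot,0))$, and the decoupled intersection estimates and the $q=+\infty$ discussion follow the same pattern as the paper (Propositions~\ref{prop:TracehalfspaceIntersec2}, \ref{prop:IntersecHomHspRn+}, Lemma~\ref{lem:IntersecHomHsp}, Corollary~\ref{cor:HarmExtenOpSobolevBesovIntersec}). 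Where you genuinely diverge is uniqueness: the paper recasts \eqref{DirLap} as the second-order evolution problem $-\partial_{x_n}^2u-\Delta'u=f$ and invokes the uniqueness theorem for bounded decaying solutions governed by the Poisson semigroup (\cite[Theorem~3.8.3]{ArendtBattyHieberNeubranker2011}), exploiting the embedding $\dot{\mathrm{H}}^{s+2,p}(\mathbb{R}^n_+)\hookrightarrow\mathrm{C}^0_{0,x_n}(\overline{\mathbb{R}_+},\dot{\mathrm{B}}^{s+2-1/p}_{p,p}(\mathbb{R}^{n-1}))$, whereas you use odd reflection plus a Liouville argument in $\eus{S}'_h(\mathbb{R}^n)$. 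Your route is more self-contained (no external semigroup theorem), while the paper's transfers painlessly to the $q=+\infty$ weak${}^\ast$ setting.

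The one soft spot is precisely that uniqueness step. Since $s>-1+\tfrac1p$ forces $s+2>1+\tfrac1p$, the range $s+2\in(\tfrac1p,1+\tfrac1p)$ where Lemma~\ref{lem:IdentHsp0and0trace} applies is \emph{never} attained by $v$ itself, so the tangential-derivative detour is always needed; but differentiating tangentially and applying the lemma to $\partial_{x'}^{\beta}v$ only yields that $\partial_{x'}^{\beta}\mathrm{E}_{\mathcal{D}}v$ lies in $\eus{S}'_h(\mathbb{R}^n)$ and hence vanishes — it does \emph{not} deliver your asserted conclusion that $\mathrm{E}_{\mathcal{D}}v\in\dot{\mathrm{H}}^{s+2,p}(\mathbb{R}^n)\subset\eus{S}'_h(\mathbb{R}^n)$ (the zero-extension of a function with only vanishing Dirichlet trace is not in $\dot{\mathrm{H}}^{\sigma,p}_0$ for $\sigma\geqslant 1+\tfrac1p$ in general). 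From $\partial_{x'}^{\beta}\mathrm{E}_{\mathcal{D}}v=0$ and harmonicity you get that $\mathrm{E}_{\mathcal{D}}v$ is a polynomial, and you still must rule out nonzero polynomials. The clean repair is to bypass all of this: by Proposition~\ref{prop:SobolevEmbeddingsRn+} and $(\mathcal{C}_{s+2,p})$ one has $v\in\mathrm{L}^r(\mathbb{R}^n_+)$ with $\tfrac1r=\tfrac1p-\tfrac{s+2}{n}\in(0,1)$, hence $\mathrm{E}_{\mathcal{D}}v\in\mathrm{L}^r(\mathbb{R}^n)$; a harmonic tempered distribution is a polynomial, and a polynomial in $\mathrm{L}^r(\mathbb{R}^n)$ with $r<+\infty$ is zero. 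With that substitution (and the reflection identity $-\Delta\mathrm{E}_{\mathcal{D}}v=\mathrm{E}_{\mathcal{D}}(-\Delta v)$ justified by the integration by parts of Proposition~\ref{prop:L2DirNEuLapl}, which uses $v_{|_{\partial\mathbb{R}^n_+}}=0$), your argument is complete.
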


\begin{proof} Let $p\in(1,+\infty)$, $s>-1+1/p$, such that $(\mathcal{C}_{s+2,p})$ is satisfied. Then for $f\in\dot{\mathrm{H}}^{s,p}(\mathbb{R}^n_+,\mathbb{C})$, $g\in \dot{\mathrm{B}}^{s+2-\frac{1}{p}}_{p,p}(\mathbb{R}^{n-1},\mathbb{C})$ we can write the problem \eqref{DirLap} as an evolution problem in the $x_n$ variable,
\begin{align}\label{eq:DirichletEvol}
\left\{\begin{array}{rrl}
         -\partial_{x_n}^2 u -\Delta' u   &=  f\text{,} \,&\text{ in } \mathbb{R}^{n-1}\times (0,+\infty) \text{, }\\
         u(\cdot,0)  &= g\text{,} \,&\text{ on } \mathbb{R}^{n-1} \text{. }
\end{array}
\right.
\end{align}
Thanks to \cite[Theorem~3.8.3]{ArendtBattyHieberNeubranker2011}, considering the semigroup $(e^{-x_n(-\Delta')^{1/2}})_{x_n\geqslant 0}$ and its mapping properties given by Proposition \ref{prop:HarmExtenOpSobolevBesov} and Theorem \ref{thm:Tracehalfspace}, if $f=0$, the problem \eqref{eq:DirichletEvol} admits a unique solution  $u\in \mathrm{C}^0_{0,x_n}(\overline{\mathbb{R}_+},\dot{\mathrm{B}}^{s+2-\frac{1}{p}}_{p,p}(\mathbb{R}^{n-1},\mathbb{C}))$. Thus, by linearity, we also have uniqueness of the solution $u$ in $\mathrm{C}^0_{0,x_n}(\overline{\mathbb{R}_+},\dot{\mathrm{B}}^{s+2-\frac{1}{p}}_{p,p}(\mathbb{R}^{n-1},\mathbb{C}))$ for non-identically zero function $f$. Therefore, it suffices to construct a solution.

Since $f\in \dot{\mathrm{H}}^{s,p}(\mathbb{R}^n_+,\mathbb{C})$, by definition, there exists $F\in \dot{\mathrm{H}}^{s,p}(\mathbb{R}^n,\mathbb{C})$ such that
\begin{align*}
    F_{|_{\mathbb{R}^n_+}}=f,\qquad \text{ and }\qquad \left\lVert f \right\rVert_{\dot{\mathrm{H}}^{s,p}(\mathbb{R}^n_+)} \sim \left\lVert F \right\rVert_{\dot{\mathrm{H}}^{s,p}(\mathbb{R}^n)}.
\end{align*}
Let $v:=(-\Delta)^{-1}F\in \dot{\mathrm{H}}^{s+2,p}(\mathbb{R}^n,\mathbb{C})$, this follows from the isomorphism property,  which is itself due to the point \textit{(ii)} of Proposition \ref{prop:PropertiesHomSobolevSpacesRn}, $-\Delta\,:\,\dot{\mathrm{H}}^{s+2,p}(\mathbb{R}^n,\mathbb{C})\longrightarrow \dot{\mathrm{H}}^{s,p}(\mathbb{R}^n,\mathbb{C})$ since $(\mathcal{C}_{s+2,p})$ is assumed (one may argue by density). One obtains consequently the estimates
\begin{align*}
    \left\lVert v \right\rVert_{\dot{\mathrm{H}}^{s+2,p}(\mathbb{R}^n)} \lesssim_{s,p,n} \left\lVert F \right\rVert_{\dot{\mathrm{H}}^{s,p}(\mathbb{R}^n)} \lesssim_{s,p,n} \left\lVert f \right\rVert_{\dot{\mathrm{H}}^{s,p}(\mathbb{R}^n_+)}.
\end{align*}
So it suffices to prove the result for $w\in \dot{\mathrm{H}}^{s+2,p}(\mathbb{R}^n_+,\mathbb{C})$, such that
\begin{align*}
\left\{\begin{array}{rrl}
         -\Delta w &=  0\text{,} \,&\text{ in } \mathbb{R}^{n-1}\times (0,+\infty) \text{, }\\
         w_{|_{\partial \mathbb{R}^n_+}}  &= \Tilde{g}\text{,} \,&\text{ on } \mathbb{R}^{n-1} \text{, }
\end{array}
\right.
\end{align*}
where $\Tilde{g}\in \dot{\mathrm{B}}^{s+2-\frac{1}{p}}_{p,p}(\mathbb{R}^{n-1},\mathbb{C})$ can be seen as $g-v(\cdot,0)$. But such a $w$ exists and is unique thanks to Proposition \ref{prop:HarmExtenOpSobolevBesov} and \cite[Theorem~3.8.3]{ArendtBattyHieberNeubranker2011}, and satisfies
\begin{align*}
    \left\lVert w \right\rVert_{\dot{\mathrm{H}}^{s+2,p}(\mathbb{R}^n_+)} \lesssim_{p,n,s} \left\lVert \Tilde{g} \right\rVert_{\dot{\mathrm{B}}^{s+2-\frac{1}{p}}_{p,p}(\mathbb{R}^{n-1})}.
\end{align*}
Now, we can set $u:=v+w$ which is a solution of \eqref{DirLap}, and the triangle inequality leads to
\begin{align*}
    \left\lVert u \right\rVert_{\dot{\mathrm{H}}^{s+2,p}(\mathbb{R}^n_+)} &\leqslant \,\,\,\,\quad\left\lVert v \right\rVert_{\dot{\mathrm{H}}^{s+2,p}(\mathbb{R}^n_+)}+\left\lVert w \right\rVert_{\dot{\mathrm{H}}^{s+2,p}(\mathbb{R}^n_+)} \\
    &\lesssim_{p,n,s} \left\lVert v \right\rVert_{\dot{\mathrm{H}}^{s+2,p}(\mathbb{R}^n_+)}+\left\lVert g \right\rVert_{\dot{\mathrm{B}}^{s+2-\frac{1}{p}}_{p,p}(\mathbb{R}^{n-1})} + \left\lVert v(\cdot,0) \right\rVert_{\dot{\mathrm{B}}^{s+2-\frac{1}{p}}_{p,p}(\mathbb{R}^{n-1})}\\
    &\lesssim_{p,n,s} \left\lVert f \right\rVert_{\dot{\mathrm{H}}^{s,p}(\mathbb{R}^n_+)} + \left\lVert g \right\rVert_{\dot{\mathrm{B}}^{s+2-\frac{1}{p}}_{p,p}(\mathbb{R}^{n-1})}
\end{align*}
which was the desired bound.

The Besov spaces case for $(f,g)\in\dot{\mathrm{B}}^{s}_{p,q}(\mathbb{R}^n_+,\mathbb{C})\times \dot{\mathrm{B}}^{s+2-1/p}_{p,q}(\mathbb{R}^{n-1},\mathbb{C})$, whenever $(\mathcal{C}_{s+2,p,q})$ is satisfied, follows the same lines as before, except when $q=+\infty$ where the uniqueness argument can only be checked in a weak sense since $(e^{-x_n(-\Delta')^{1/2}})_{x_n\geqslant 0}$ is only weak${}^\ast$ continuous in $\dot{\mathrm{B}}^{s+2-1/p}_{p,\infty}(\mathbb{R}^{n-1},\mathbb{C})$.

Now, if we assume that $f\in [\dot{\mathrm{H}}^{s,p}\cap\dot{\mathrm{H}}^{\alpha,\tilde{p}}](\mathbb{R}^n_+,\mathbb{C})$ and $g\in [\dot{\mathrm{B}}^{s+2-1/p}_{p,p}\cap\dot{\mathrm{B}}^{\alpha+2-1/\tilde{p}}_{\tilde{p},\tilde{p}}](\mathbb{R}^{n-1},\mathbb{C})$, then with the same notations as above, by Proposition \ref{prop:TracehalfspaceIntersec2}, we have
\begin{align*}
    v=(-\Delta)^{-1}F\in [\dot{\mathrm{H}}^{s+2,p}\cap\dot{\mathrm{H}}^{\alpha+2,\tilde{p}}](\mathbb{R}^n,\mathbb{C})\,\text{ and }\,v(\cdot,0)\in \dot{\mathrm{B}}^{s+2-1/p}_{p,p}\cap\dot{\mathrm{B}}^{\alpha+2-1/\tilde{p}}_{\tilde{p},\tilde{p}}](\mathbb{R}^{n-1},\mathbb{C})\text{. }
\end{align*}
As before, the fact that $v\in [\dot{\mathrm{H}}^{s+2,p}\cap\dot{\mathrm{H}}^{\alpha+2,\tilde{p}}](\mathbb{R}^n,\mathbb{C})$ can be obtained arguing by density, by mean of point \textit{(ii)} of Proposition \ref{prop:PropertiesHomSobolevSpacesRn} and Lemma \ref{lem:IntersecHomHsp}.
From this, one may reproduce the estimates as above to obtain
\begin{align*}
    \lVert \nabla^2 u\rVert_{\dot{\mathrm{H}}^{\alpha,\tilde{p}}(\mathbb{R}^n_+)} &\lesssim_{\tilde{p},n,\alpha} \lVert f\rVert_{\dot{\mathrm{H}}^{\alpha,\tilde{p}}(\mathbb{R}^n_+)} + \lVert g\rVert_{\dot{\mathrm{B}}^{\alpha+2-\frac{1}{\tilde{p}}}_{\tilde{p},\tilde{p}}(\mathbb{R}^{n-1})}\text{. }
\end{align*}
The case of intersection of Besov spaces follows the same lines.
\end{proof}

We state the same result for the corresponding Neumann problem, for which the proof is very close.

\begin{proposition}\label{prop:NeuPbRn+} Let $p,\tilde{p}\in(1,+\infty)$, $q,\tilde{q}\in[1,+\infty]$, $s\in(-1+\frac{1}{p},+\infty)$, $\alpha\in(-1+\frac{1}{\tilde{p}},+\infty)$, such that $(\mathcal{C}_{s+2,p})$ is satisfied.
For $f\in\dot{\mathrm{H}}^{s,p}(\mathbb{R}^n_+,\mathbb{C})$, $g\in \dot{\mathrm{B}}^{s+1-\frac{1}{p}}_{p,p}(\mathbb{R}^{n-1},\mathbb{C})$, let us consider the \textbf{Neumann} problem with inhomogeneous boundary condition:
\begin{align}\tag{$\mathcal{NL}_{0}$}\label{NeumannLap}
\left\{\begin{array}{rrl}
         -\Delta u   &=  f\text{,} \,&\text{ in } \mathbb{R}^n_+ \text{, }\\
        \partial_\nu u_{|_{\partial\mathbb{R}^n_+}}  &= g\text{,} \,&\text{ on } \partial\mathbb{R}^n_+ \text{. }
\end{array}
\right.
\end{align}

The problem \eqref{NeumannLap} admits a unique solution $u$ such that
\begin{align*}
    u \in\dot{\mathrm{H}}^{s+2,p}(\mathbb{R}^n_+,\mathbb{C})\subset \mathrm{C}^0_{0,x_n}(\overline{\mathbb{R}_+},\dot{\mathrm{B}}^{s+2-\frac{1}{p}}_{p,p}(\mathbb{R}^{n-1},\mathbb{C}))
\end{align*}
with the estimate
    \begin{align*}
        \lVert u\rVert_{\mathrm{L}^\infty(\mathbb{R}_+,\dot{\mathrm{B}}^{s+2-\frac{1}{p}}_{p,p}(\mathbb{R}^{n-1}))} \lesssim_{s,p,n}\lVert \nabla^2 u\rVert_{\dot{\mathrm{H}}^{s,p}(\mathbb{R}^n_+)} &\lesssim_{p,n,s} \lVert f\rVert_{\dot{\mathrm{H}}^{s,p}(\mathbb{R}^n_+)} + \lVert g\rVert_{\dot{\mathrm{B}}^{s+1-\frac{1}{p}}_{p,p}(\mathbb{R}^{n-1})}  \text{. }
    \end{align*}
    
    If moreover $f\in\dot{\mathrm{H}}^{\alpha,\tilde{p}}(\mathbb{R}^n_+,\mathbb{C})$ and $g\in \dot{\mathrm{B}}^{\alpha+1-\frac{1}{\tilde{p}}}_{\tilde{p},\tilde{p}}(\mathbb{R}^{n-1},\mathbb{C})$ then the solution $u$ also satisfies $u\in \dot{\mathrm{H}}^{\alpha+2,\tilde{p}}(\mathbb{R}^n_+,\mathbb{C})$ with the corresponding estimate
    \begin{align*}
        \lVert \nabla^2 u\rVert_{\dot{\mathrm{H}}^{\alpha,\tilde{p}}(\mathbb{R}^n_+)} &\lesssim_{\tilde{p},n,\alpha} \lVert f\rVert_{\dot{\mathrm{H}}^{\alpha,\tilde{p}}(\mathbb{R}^n_+)} + \lVert g\rVert_{\dot{\mathrm{B}}^{\alpha+1-\frac{1}{\tilde{p}}}_{\tilde{p},\tilde{p}}(\mathbb{R}^{n-1})}  \text{. }
    \end{align*}
    
    The result still holds, replacing $(\dot{\mathrm{H}}^{s,p},\dot{\mathrm{H}}^{s+2,p},\dot{\mathrm{B}}^{s+1-\frac{1}{p}}_{p,p},\dot{\mathrm{B}}^{s+2-\frac{1}{p}}_{p,p})$ by $(\dot{\mathrm{B}}^{s}_{p,q},\dot{\mathrm{B}}^{s+2}_{p,q},\dot{\mathrm{B}}^{s+1-\frac{1}{p}}_{p,q},\dot{\mathrm{B}}^{s+2-\frac{1}{p}}_{p,q})$ and $(\dot{\mathrm{H}}^{\alpha,\tilde{p}},\dot{\mathrm{H}}^{\alpha+2,\tilde{p}},\dot{\mathrm{B}}^{\alpha+1-\frac{1}{\tilde{p}}}_{\tilde{p},\tilde{p}})$ by  $(\dot{\mathrm{B}}^{\alpha}_{\tilde{p},\tilde{q}},\dot{\mathrm{B}}^{\alpha+2}_{\tilde{p},\tilde{q}},\dot{\mathrm{B}}^{\alpha+1-\frac{1}{\tilde{p}}}_{\tilde{p},\tilde{q}})$ whenever $(\mathcal{C}_{s+2,p,q})$ is satisfied and $q<+\infty$.
    
    If $q=+\infty$, everything still holds except $x_n\mapsto u(\cdot,x_n)$ is no more strongly continuous but only weak${}^\ast$ continuous with values in $\dot{\mathrm{B}}^{s+2-\frac{1}{p}}_{p,\infty}(\mathbb{R}^{n-1},\mathbb{C})$.
\end{proposition}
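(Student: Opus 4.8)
The plan is to follow the strategy of the proof of Proposition~\ref{prop:DirPbRn+}, adapting it to the fact that the Neumann condition reads $\partial_\nu u_{|_{\partial\mathbb{R}^n_+}} = -\partial_{x_n} u(\cdot,0)$. First I would rewrite \eqref{NeumannLap} as the evolution problem
\begin{align*}
\left\{\begin{array}{rrl}
 -\partial_{x_n}^2 u -\Delta' u &= f\text{,} \,&\text{ in } \mathbb{R}^{n-1}\times(0,+\infty)\text{, }\\
 -\partial_{x_n}u(\cdot,0) &= g\text{,} \,&\text{ on } \mathbb{R}^{n-1}\text{, }
\end{array}\right.
\end{align*}
and establish uniqueness: by linearity it suffices to show that the only solution lying in $\mathrm{C}^0_{0,x_n}(\overline{\mathbb{R}_+},\dot{\mathrm{B}}^{s+2-1/p}_{p,p}(\mathbb{R}^{n-1},\mathbb{C}))$ with $f=0$ and $g=0$ is $u\equiv 0$. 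Invoking \cite[Theorem~3.8.3]{ArendtBattyHieberNeubranker2011} together with the mapping properties of $(e^{-x_n(-\Delta')^{1/2}})_{x_n\geqslant0}$ from Proposition~\ref{prop:HarmExtenOpSobolevBesov}, any bounded-at-infinity harmonic function of this class satisfies $u(\cdot,x_n)=e^{-x_n(-\Delta')^{1/2}}\gamma_0 u$, so differentiating and setting $x_n=0$ forces $(-\Delta')^{1/2}\gamma_0 u=0$, hence $\gamma_0 u=0$ and $u\equiv 0$. When $q=+\infty$ only the weak${}^\ast$-continuous representative is available, which is why the conclusion is stated with weak${}^\ast$ continuity in that case.

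For existence I would split $u=v+w$. Since $(\mathcal{C}_{s+2,p})$ holds, point~\textit{(ii)} of Proposition~\ref{prop:PropertiesHomSobolevSpacesRn} makes $-\Delta\colon\dot{\mathrm{H}}^{s+2,p}(\mathbb{R}^n,\mathbb{C})\to\dot{\mathrm{H}}^{s,p}(\mathbb{R}^n,\mathbb{C})$ an isomorphism; choosing an extension $F\in\dot{\mathrm{H}}^{s,p}(\mathbb{R}^n,\mathbb{C})$ of $f$ with comparable norm and setting $v:=[(-\Delta)^{-1}F]_{|_{\mathbb{R}^n_+}}$ gives $v\in\dot{\mathrm{H}}^{s+2,p}(\mathbb{R}^n_+,\mathbb{C})$ with $-\Delta v=f$ and $\lVert v\rVert_{\dot{\mathrm{H}}^{s+2,p}(\mathbb{R}^n_+)}\lesssim_{s,p,n}\lVert f\rVert_{\dot{\mathrm{H}}^{s,p}(\mathbb{R}^n_+)}$. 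By Proposition~\ref{prop:EqNormNablakHsp} one has $\partial_{x_n}v\in\dot{\mathrm{H}}^{s+1,p}(\mathbb{R}^n_+)$, so Theorem~\ref{thm:Tracehalfspace} gives its Neumann trace $\partial_\nu v(\cdot,0)\in\dot{\mathrm{B}}^{s+1-1/p}_{p,p}(\mathbb{R}^{n-1},\mathbb{C})$. It remains to solve $-\Delta w=0$ in $\mathbb{R}^n_+$ with $\partial_\nu w(\cdot,0)=\tilde g:=g-\partial_\nu v(\cdot,0)$; here I would take $w(\cdot,x_n):=(-\Delta')^{-1/2}e^{-x_n(-\Delta')^{1/2}}\tilde g$, the Neumann harmonic lifting, whose boundedness $\dot{\mathrm{B}}^{s+1-1/p}_{p,p}(\mathbb{R}^{n-1})\to\dot{\mathrm{H}}^{s+2,p}(\mathbb{R}^n_+)$ follows from Proposition~\ref{prop:HarmExtenOpSobolevBesov} (shifted by one order of regularity, equivalently by controlling $\partial_{x_n}w=-e^{-x_n(-\Delta')^{1/2}}\tilde g$ and $\nabla'w$ in $\dot{\mathrm{H}}^{s+1,p}(\mathbb{R}^n_+)$ and applying Proposition~\ref{prop:EqNormNablakHsp}). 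Then $u:=v+w$ solves \eqref{NeumannLap}, and the triangle inequality together with the trace bound of Theorem~\ref{thm:Tracehalfspace} yields the stated $\dot{\mathrm{H}}^{s+2,p}$ estimate and the $\mathrm{C}^0_{0,x_n}$-embedding. Uniqueness handled above identifies this $u$ with the solution.

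For the decoupled $\dot{\mathrm{H}}^{\alpha,\tilde p}$-estimate I would run the same construction inside the intersection $[\dot{\mathrm{H}}^{s,p}\cap\dot{\mathrm{H}}^{\alpha,\tilde p}](\mathbb{R}^n_+,\mathbb{C})$: arguing by density on $\eus{S}_0(\overline{\mathbb{R}^n_+})$ and using point~\textit{(ii)} of Proposition~\ref{prop:PropertiesHomSobolevSpacesRn} with Lemma~\ref{lem:IntersecHomHsp}, one gets $v=(-\Delta)^{-1}F\in[\dot{\mathrm{H}}^{s+2,p}\cap\dot{\mathrm{H}}^{\alpha+2,\tilde p}](\mathbb{R}^n,\mathbb{C})$, while Proposition~\ref{prop:TracehalfspaceIntersec2} gives $\partial_\nu v(\cdot,0)\in[\dot{\mathrm{B}}^{s+1-1/p}_{p,p}\cap\dot{\mathrm{B}}^{\alpha+1-1/\tilde p}_{\tilde p,\tilde p}](\mathbb{R}^{n-1},\mathbb{C})$; the Neumann lifting of $\tilde g$ then lands in the corresponding intersection, and summing the two homogeneous bounds (whose constants depend only on the exponents of the target space) produces the $\dot{\mathrm{H}}^{\alpha,\tilde p}$ estimate \emph{without} ever assuming $(\mathcal{C}_{\alpha,\tilde p})$. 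The Besov-space statements, including the case $q=+\infty$ where strong continuity in $x_n$ is replaced by weak${}^\ast$ continuity of $(e^{-x_n(-\Delta')^{1/2}})_{x_n\geqslant0}$, follow the same lines with Proposition~\ref{prop:IntersLpHomBesovRn+=BesovRn+}, Proposition~\ref{prop:InterpHomSpacesRn+} and the Besov part of Theorem~\ref{thm:Tracehalfspace}. I expect the only genuinely delicate point to be the uniqueness statement in the non-complete / $q=+\infty$ regime, where one must pass through the weak${}^\ast$-continuous representative and the sequential weak${}^\ast$-density results of Corollary~\ref{cor:weakstardensity} rather than through a naive strong density argument.
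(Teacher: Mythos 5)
Your proposal is correct and is essentially the proof the paper intends: the paper only remarks that the Neumann case "is very close" to Proposition~\ref{prop:DirPbRn+}, and your adaptation — splitting $u=v+w$ with $v=[(-\Delta)^{-1}F]_{|_{\mathbb{R}^n_+}}$, lifting the corrected Neumann datum by $(-\Delta')^{-1/2}e^{-x_n(-\Delta')^{1/2}}$ via Proposition~\ref{prop:HarmExtenOpSobolevBesov} and Proposition~\ref{prop:EqNormNablakHsp}, and running the decoupled estimates in intersection spaces without ever invoking $(\mathcal{C}_{\alpha,\tilde p})$ — is exactly that adaptation. The uniqueness argument via the bounded-solution representation from \cite[Theorem~3.8.3]{ArendtBattyHieberNeubranker2011} and injectivity of $(-\Delta')^{1/2}$ on the $\eus{S}'_h$-realization, with the weak${}^\ast$ caveat when $q=+\infty$, is also the intended one.
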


We notice that similar, but a little bit different, results of well-posedness and regularity are also available in \cite[Chapter~3]{DanchinMucha2015} with arguments of a different nature, moreover the case of Sobolev spaces and the resolvent problems does not seem to be treated.


\appendix

\section{Complex interpolation for intersection of homogeneous Besov spaces}\label{Append:CompInterpLem}

The next result is direct. Thanks to the fact that for all $a,b>0$, $\theta\in[0,1]$,
\begin{align*}
    a + a^{1-\theta}b^{\theta} \leqslant 2 (a+b)^{\theta}a^{1-\theta} \leqslant 2 (a + a^{1-\theta}b^{\theta})\text{ , }
\end{align*}
and since for $q\in[1,+\infty)$, $s_0,s_1\in \mathbb{R}$, and for $\theta \in (0,1)$, if $s= (1-\theta)s_0 + \theta s_1$, we have with equivalence of norms
\begin{align*}
    \ell^q_{s_0}(\mathbb{Z})\cap \ell^q_{s}(\mathbb{Z}) = \ell^q(\mathbb{Z}, (2^{ks_0 q} + 2^{ksq})\mathrm{d}k)=\ell^q(\mathbb{Z}, (2^{ks_0 q} + 2^{ks_1q})^{\theta} 2^{ks_0 q(1-\theta)}\mathrm{d}k)\text{ , }
\end{align*}
therefore, by complex interpolation of weighted $\mathrm{\ell}^q$ spaces, see \cite[Section~1.18.5]{bookTriebel1978}, we obtain
\begin{proposition}\label{prop:CompInterpolIntersectlpX} Let $q\in [1,+\infty)$, $s_0,s_1\in \mathbb{R}$, consider a complex Banach space $X$, and for $\theta \in (0,1)$ let's introduce $s:= (1-\theta)s_0 + \theta s_1$. The following equality holds with equivalence of norms
\begin{align*}
    [\ell^q_{s_0}(\mathbb{Z},X),\ell^q_{s_0}(\mathbb{Z},X)\cap \ell^q_{s_1}(\mathbb{Z},X)]_\theta = \ell^q_{s_0}(\mathbb{Z},X)\cap \ell^q_{s}(\mathbb{Z},X)\text{.}
\end{align*}
The result still holds with $\mathbb{N}$ instead of $\mathbb{Z}$.
\end{proposition}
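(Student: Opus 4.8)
The plan is to reduce the statement to the classical complex interpolation formula for weighted $\ell^q$-spaces valued in a Banach space. First I would rewrite the couple in weighted form: for $j\in\{0,1\}$ one has $\ell^q_{s_j}(\mathbb{Z},X)=\ell^q(\mathbb{Z},w_j\,\mathrm{d}k;X)$ with $w_j(k):=2^{ks_jq}$, and the second member of the couple, $\ell^q_{s_0}(\mathbb{Z},X)\cap\ell^q_{s_1}(\mathbb{Z},X)$, is nothing but $\ell^q(\mathbb{Z},(w_0+w_1)\,\mathrm{d}k;X)$, with equivalence of norms. Indeed, applying the elementary pointwise estimate $a^{1/q}+b^{1/q}\sim_q(a+b)^{1/q}$ (valid for $a,b\geqslant 0$) with $a=w_0(k)\lVert x_k\rVert_X^q$ and $b=w_1(k)\lVert x_k\rVert_X^q$, and then summing in $k$, gives the comparison of the two norms. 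In particular the couple is nested, $\ell^q(\mathbb{Z},w_0+w_1;X)\hookrightarrow\ell^q(\mathbb{Z},w_0;X)$, hence it is a genuine (complex) interpolation couple.

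Next I would invoke the complex interpolation theorem for weighted sequence spaces, \cite[Section~1.18.5]{bookTriebel1978}, which applies since $q<+\infty$ and $X$ is a complex Banach space, to obtain
\[
    [\ell^q(\mathbb{Z},w_0;X),\ell^q(\mathbb{Z},w_0+w_1;X)]_\theta=\ell^q\big(\mathbb{Z},w_0^{1-\theta}(w_0+w_1)^\theta;X\big)
\]
with equivalence of norms. It then only remains to identify the interpolated weight. Setting $a=w_0(k)=2^{ks_0q}$ and $b=w_1(k)=2^{ks_1q}$, one has $a^{1-\theta}b^\theta=2^{ksq}$, so the two-sided inequality $a+a^{1-\theta}b^\theta\leqslant 2(a+b)^\theta a^{1-\theta}\leqslant 2(a+a^{1-\theta}b^\theta)$ recorded just before the statement yields $w_0^{1-\theta}(w_0+w_1)^\theta(k)\sim 2^{ks_0q}+2^{ksq}$ uniformly in $k$. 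Therefore $\ell^q(\mathbb{Z},w_0^{1-\theta}(w_0+w_1)^\theta;X)=\ell^q_{s_0}(\mathbb{Z},X)\cap\ell^q_s(\mathbb{Z},X)$ with equivalence of norms, which is exactly the asserted identity; the same chain of arguments applies verbatim with $\mathbb{N}$ in place of $\mathbb{Z}$.

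The computation is entirely routine, and the only point that deserves a word of caution is whether the cited interpolation identity for weighted $\ell^q$-spaces remains valid for an arbitrary complex Banach target $X$ rather than for scalar-valued sequences. This is however exactly the generality in which it is stated: the standard reduction of the weighted case to the unweighted identity $[\ell^q(X),\ell^q(X)]_\theta=\ell^q(X)$ via a diagonal multiplier is insensitive to the presence of $X$, so no genuine obstacle arises.
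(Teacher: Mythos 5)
Your proposal is correct and follows essentially the same route as the paper: identify the intersection as a weighted $\ell^q$-space, apply the complex interpolation formula for weighted vector-valued $\ell^q$-spaces from \cite[Section~1.18.5]{bookTriebel1978}, and identify the interpolated weight via the elementary two-sided inequality $a+a^{1-\theta}b^{\theta}\leqslant 2(a+b)^{\theta}a^{1-\theta}\leqslant 2(a+a^{1-\theta}b^{\theta})$, which is exactly the inequality the paper records just before the statement.
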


The next corollary will have its importance in the proof of the next Proposition \ref{prop:HarmExtenOpSobolevBesov}.

\begin{corollary}\label{cor:CompInterpolIntersecBesov} Let $p\in[1,+\infty]$, $q\in[1,+\infty)$, $s_j\in\mathbb{R}$, $j\in\{0,1\}$ such that $(\mathcal{C}_{s_0,p,q})$ is satisfied. Then for $\theta\in (0,1)$, let's introduce $s:= (1-\theta)s_0 + \theta s_1$. Then the following equality holds with equivalence for norms
\begin{align*}
    [\dot{\mathrm{B}}^{s_0}_{p,q}(\mathbb{R}^{n}),\dot{\mathrm{B}}^{s_0}_{p,q}(\mathbb{R}^{n})\cap \dot{\mathrm{B}}^{s_1}_{p,q}(\mathbb{R}^{n})]_\theta = \dot{\mathrm{B}}^{s_0}_{p,q}(\mathbb{R}^{n})\cap \dot{\mathrm{B}}^{s}_{p,q}(\mathbb{R}^{n}) \text{ . }
\end{align*}
\end{corollary}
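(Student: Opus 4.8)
The plan is to transfer the abstract statement of Proposition~\ref{prop:CompInterpolIntersectlpX} to the concrete setting of homogeneous Besov spaces by exploiting the retraction/co-retraction structure already established in the proof of Theorem~\ref{thm:InterpHomSpacesRn}. Recall from \textbf{Step~4} of that proof that, whenever \eqref{AssumptionCompletenessExponents} holds, the space $\dot{\mathrm{B}}^{s}_{p,q}(\mathbb{R}^n)$ is a retract of $\ell^q_s(\mathbb{Z},\mathrm{L}^p(\mathbb{R}^n))$, with co-retraction the homogeneous Littlewood--Paley map $(\dot{\Delta}_j)_{j\in\mathbb{Z}}$ and retraction the reconstruction operator $\tilde{\Sigma}$ of \eqref{eq:retractionMapsLp(l2s)intoHsp}. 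The key point is that these two operators are \emph{the same} for every value of the regularity index: $(\dot{\Delta}_j)_{j}$ maps $\dot{\mathrm{B}}^{s_0}_{p,q}(\mathbb{R}^n)$ into $\ell^q_{s_0}(\mathbb{Z},\mathrm{L}^p(\mathbb{R}^n))$ and $\dot{\mathrm{B}}^{s_1}_{p,q}(\mathbb{R}^n)$ into $\ell^q_{s_1}(\mathbb{Z},\mathrm{L}^p(\mathbb{R}^n))$, hence it maps the intersection $\dot{\mathrm{B}}^{s_0}_{p,q}(\mathbb{R}^n)\cap\dot{\mathrm{B}}^{s_1}_{p,q}(\mathbb{R}^n)$ into $\ell^q_{s_0}(\mathbb{Z},\mathrm{L}^p(\mathbb{R}^n))\cap\ell^q_{s_1}(\mathbb{Z},\mathrm{L}^p(\mathbb{R}^n))$; symmetrically $\tilde{\Sigma}$ is bounded from $\ell^q_{s_0}(\mathbb{Z},\mathrm{L}^p(\mathbb{R}^n))$ to $\dot{\mathrm{B}}^{s_0}_{p,q}(\mathbb{R}^n)$ (by \textbf{Step~2} of Theorem~\ref{thm:InterpHomSpacesRn}, using $(\mathcal{C}_{s_0,p,q})$) and from $\ell^q_{s_1}(\mathbb{Z},\mathrm{L}^p(\mathbb{R}^n))$ to $\dot{\mathrm{B}}^{s_1}_{p,q}(\mathbb{R}^n)$, so it is bounded on the respective intersections.

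Concretely, first I would record that $\{\dot{\mathrm{B}}^{s_0}_{p,q}(\mathbb{R}^n),\,\dot{\mathrm{B}}^{s_0}_{p,q}(\mathbb{R}^n)\cap\dot{\mathrm{B}}^{s_1}_{p,q}(\mathbb{R}^n)\}$ is a retract of the couple $\{\ell^q_{s_0}(\mathbb{Z},\mathrm{L}^p(\mathbb{R}^n)),\,\ell^q_{s_0}(\mathbb{Z},\mathrm{L}^p(\mathbb{R}^n))\cap\ell^q_{s_1}(\mathbb{Z},\mathrm{L}^p(\mathbb{R}^n))\}$, with co-retraction $(\dot{\Delta}_j)_{j\in\mathbb{Z}}$ and retraction $\tilde{\Sigma}$: the defining identity $\tilde{\Sigma}\circ(\dot{\Delta}_j)_{j}=\mathrm{I}$ on $\eus{S}'_h(\mathbb{R}^n)$ was already proved, and the boundedness of both operators on the intersection members follows from the decoupled estimates mentioned above. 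Then I invoke the general retraction theorem for complex interpolation, \cite[Theorem~6.4.2]{BerghLofstrom1976}, to get
\begin{align*}
    [\dot{\mathrm{B}}^{s_0}_{p,q}(\mathbb{R}^n),\dot{\mathrm{B}}^{s_0}_{p,q}(\mathbb{R}^n)\cap\dot{\mathrm{B}}^{s_1}_{p,q}(\mathbb{R}^n)]_\theta = \tilde{\Sigma}\big([\ell^q_{s_0}(\mathbb{Z},\mathrm{L}^p(\mathbb{R}^n)),\ell^q_{s_0}(\mathbb{Z},\mathrm{L}^p(\mathbb{R}^n))\cap\ell^q_{s_1}(\mathbb{Z},\mathrm{L}^p(\mathbb{R}^n))]_\theta\big)
\end{align*}
with equivalence of norms. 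Finally I apply Proposition~\ref{prop:CompInterpolIntersectlpX} with $X=\mathrm{L}^p(\mathbb{R}^n)$ to identify the bracket on the right-hand side as $\ell^q_{s_0}(\mathbb{Z},\mathrm{L}^p(\mathbb{R}^n))\cap\ell^q_s(\mathbb{Z},\mathrm{L}^p(\mathbb{R}^n))$, and push this back through $\tilde{\Sigma}$ and $(\dot{\Delta}_j)_j$ to land on $\dot{\mathrm{B}}^{s_0}_{p,q}(\mathbb{R}^n)\cap\dot{\mathrm{B}}^{s}_{p,q}(\mathbb{R}^n)$, again as a retract — which is exactly the claimed equality.

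The only subtlety, and the point I would be most careful about, is the completeness bookkeeping: complex interpolation requires the ambient spaces to be complete Banach spaces, so I must check that all four spaces appearing ($\dot{\mathrm{B}}^{s_0}_{p,q}(\mathbb{R}^n)$, the two intersections, and $\dot{\mathrm{B}}^{s_0}_{p,q}(\mathbb{R}^n)\cap\dot{\mathrm{B}}^s_{p,q}(\mathbb{R}^n)$) are complete. This is where the hypothesis $(\mathcal{C}_{s_0,p,q})$ does all the work: $\dot{\mathrm{B}}^{s_0}_{p,q}(\mathbb{R}^n)$ is complete by point~\textit{(i)} of Proposition~\ref{prop:PropertiesHomBesovSpacesRn}, and any intersection of a complete space with an arbitrary normed space sitting inside the same Hausdorff ambient space $\eus{S}'_h(\mathbb{R}^n)$ is complete — exactly the argument used at the start of the proof of Lemma~\ref{lem:IntersecHomHsp} (a Cauchy sequence converges in $\dot{\mathrm{B}}^{s_0}_{p,q}(\mathbb{R}^n)$, and one passes to the limit in the $\dot{\mathrm{B}}^{s_1}_{p,q}$ and $\dot{\mathrm{B}}^{s}_{p,q}$ norms by Fatou applied to the Littlewood--Paley characterization). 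No condition on $s_1$ or $s$ is needed, which is why none appears in the statement. Everything else is a formal application of the retraction machinery already deployed for Theorem~\ref{thm:InterpHomSpacesRn}, so no genuinely new estimate has to be proved.
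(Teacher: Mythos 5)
Your proof is correct and follows essentially the same route as the paper's: the paper likewise observes that both spaces are complete under $(\mathcal{C}_{s_0,p,q})$ and then applies the retraction theorem \cite[Theorem~6.4.2]{BerghLofstrom1976} together with Proposition~\ref{prop:CompInterpolIntersectlpX}, using $(\dot{\Delta}_j)_{j\in\mathbb{Z}}$ and $\tilde{\Sigma}$ as the (co-)retraction pair. Your version merely spells out the decoupled boundedness on the intersection members and the completeness of the intersections, which the paper leaves implicit.
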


\begin{proof}Both function spaces $\dot{\mathrm{B}}^{s_0}_{p,q}(\mathbb{R}^{n})$, and $\dot{\mathrm{B}}^{s_0}_{p,q}(\mathbb{R}^{n})\cap \dot{\mathrm{B}}^{s_1}_{p,q}(\mathbb{R}^{n})$ are complete normed vector spaces, see \cite[Theorem~2.25]{bookBahouriCheminDanchin}.

Now, we apply \cite[Theorem~6.4.2]{BerghLofstrom1976} and Proposition \ref{prop:CompInterpolIntersectlpX}, claiming that, for all $s\in\mathbb{R}$, $\dot{\mathrm{B}}^{s}_{p,q}(\mathbb{R}^{n})$ is a retraction of $\ell^q_{s}(\mathbb{Z},\mathrm{L}^p(\mathbb{R}^n))$ through the homogeneous Littlewood-Paley decomposition $(\dot{\Delta}_j)_{j\in\mathbb{Z}}$.
\end{proof}

\section{Estimates for the Poisson semigroup}\label{Append:PoissonSemigrp}

\begin{lemma}\label{lem:DefBesovPoissonSemigrp} Let $s>0$, $\alpha \geqslant 0$ and $p,q\in[1,+\infty]^2$. For all $u\in\eus{S}'_h(\mathbb{R}^n)$,
\begin{align*}
    \lVert u \rVert_{\dot{\mathrm{B}}^{\alpha-s}_{p,q}(\mathbb{R}^n)} \sim_{p,s,\alpha,n,q} \big\lVert t\mapsto\lVert t^{s}(-\Delta)^{\frac{\alpha}{2}}e^{-t(-\Delta)^{\frac{1}{2}}}u \rVert_{\mathrm{L}^p(\mathbb{R}^n)}\big\rVert_{\mathrm{L}^q_{\ast}(\mathbb{R}_+)}\text{ .}
\end{align*}
\end{lemma}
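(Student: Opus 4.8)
The statement is the Poisson-semigroup characterization of homogeneous Besov norms, a classical fact on $\eus{S}'(\mathbb{R}^n)$ modulo polynomials that must be transported to the $\eus{S}'_h$-setting. The plan is to argue through the Littlewood--Paley decomposition and the precise heat/Poisson kernel estimates on dyadic annuli. First I would fix a smooth function $\chi\in\mathrm{C}^\infty_c(\mathbb{R}^n\setminus\{0\})$ equal to $1$ on the support of $\psi_0$ and supported in a slightly larger annulus, so that $\dot\Delta_j = \dot\Delta_j\,\tilde\Delta_j$ where $\tilde\Delta_j := \eus{F}^{-1}\chi(2^{-j}\cdot)\eus{F}$. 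The key multiplier estimate is that, for each $j\in\mathbb{Z}$ and $t>0$, the operator $t^s(-\Delta)^{\alpha/2}e^{-t(-\Delta)^{1/2}}\tilde\Delta_j$ has a convolution kernel whose $\mathrm{L}^1(\mathbb{R}^n)$ norm is bounded by $C\,(t 2^j)^s 2^{j\alpha}e^{-c\,t 2^j}$ (from Bernstein-type bounds and the decay of the Poisson kernel), uniformly in $j$ and $t$; by Young's inequality this gives
\begin{align*}
    \lVert t^s(-\Delta)^{\alpha/2}e^{-t(-\Delta)^{1/2}}\dot\Delta_j u\rVert_{\mathrm{L}^p} \lesssim (t2^j)^s 2^{j\alpha} e^{-c t 2^j}\,\lVert \dot\Delta_j u\rVert_{\mathrm{L}^p}.
\end{align*}

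Summing over $j$ and using $u=\sum_j\dot\Delta_j u$ in $\eus{S}'_h(\mathbb{R}^n)$ (legitimate here because $u\in\eus{S}'_h$, and one checks the sum converges in $\mathrm{L}^p$ after applying $t^s(-\Delta)^{\alpha/2}e^{-t(-\Delta)^{1/2}}$ since the factors $(t2^j)^s e^{-ct2^j}$ are summable against any $\ell^q_\beta$ sequence), I would take the $\mathrm{L}^q_\ast(\mathbb{R}_+)$-norm in $t$ and invoke a discrete-continuous convolution (Schur-test) argument: writing $a_j:=2^{j(\alpha-s)}\lVert\dot\Delta_j u\rVert_{\mathrm{L}^p}$ and changing variables $\tau=t2^j$, the quantity $\big\lVert t\mapsto\sum_j (t2^j)^s 2^{j\alpha}e^{-ct2^j}\lVert\dot\Delta_j u\rVert_{\mathrm{L}^p}\big\rVert_{\mathrm{L}^q_\ast}$ is controlled by $\lVert(a_j)_j\rVert_{\ell^q}\cdot\big\lVert\tau\mapsto\tau^s e^{-c\tau}\big\rVert_{\mathrm{L}^q_\ast}$ because $\mathrm{d}t/t$ and $\ell^q$ with the counting measure $2^{j\beta q}$ interact through Young's inequality for the group $(\mathbb{R}_+^\times)\rtimes$ (dyadic subgroup); the hypothesis $s>0$, $\alpha\geqslant0$ is exactly what makes $\tau\mapsto\tau^s e^{-c\tau}$ lie in $\mathrm{L}^q_\ast(\mathbb{R}_+)$ near both $0$ and $\infty$. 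This yields the bound $\lesssim \lVert u\rVert_{\dot{\mathrm{B}}^{\alpha-s}_{p,q}}$.

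For the reverse inequality I would recover $\dot\Delta_j u$ from the Poisson average by a Calderón-type reproducing formula: there is $\eta\in\mathrm{C}^\infty_c(\mathbb{R}^n\setminus\{0\})$ and a constant such that $\dot\Delta_j u = c\int_0^\infty \eta(2^{-j}\tau\,\mathfrak{D})\big[\tau^s(-\Delta)^{\alpha/2}e^{-\tau(-\Delta)^{1/2}}u\big]\,(2^{-j}\tau)^{?}\,\mathrm{d}\tau/\tau$ after suitably inverting the factor $\tau^s\lvert\xi\rvert^\alpha e^{-\tau\lvert\xi\rvert}$ on the annulus $\lvert\xi\rvert\sim 2^j$ — concretely, on that annulus $\lvert\xi\rvert^\alpha e^{-\tau\lvert\xi\rvert}$ is bounded below and smooth, so one writes $\psi_j(\xi) = \psi_j(\xi)\lvert\xi\rvert^\alpha e^{-\tau\lvert\xi\rvert}\cdot g_{j,\tau}(\xi)$ with $g_{j,\tau}$ a symbol whose inverse-Fourier-transform $\mathrm{L}^1$ norm is $\lesssim 2^{-j\alpha}e^{\tau 2^j}$-type but integrated against a fixed bump in $\tau 2^j$. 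Estimating kernel $\mathrm{L}^1$-norms as before and applying Young's inequality and the same Schur-test in reverse gives $2^{j(\alpha-s)}\lVert\dot\Delta_j u\rVert_{\mathrm{L}^p}\lesssim$ a convolution of $\tau\mapsto\lVert\tau^s(-\Delta)^{\alpha/2}e^{-\tau(-\Delta)^{1/2}}u\rVert_{\mathrm{L}^p}$, whence taking $\ell^q$ in $j$ closes the equivalence.

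\textbf{Main obstacle.} The delicate point is the convergence and well-posedness bookkeeping in the $\eus{S}'_h$-framework rather than the kernel estimates, which are routine: one must make sure that $(-\Delta)^{\alpha/2}e^{-t(-\Delta)^{1/2}}u$ genuinely makes sense for $u\in\eus{S}'_h(\mathbb{R}^n)$ (it does, acting termwise on the Littlewood--Paley pieces, with the series converging in $\mathrm{L}^p$ for each fixed $t>0$ thanks to the exponential decay of the high-frequency kernels and the $(t2^j)^s$ decay of the low-frequency ones — this is where $s>0$ is essential for the low frequencies, paralleling \cite[Lemma~3.3]{DanchinHieberMuchaTolk2020}), and that the Calderón reproducing identity holds in $\eus{S}'_h(\mathbb{R}^n)$, not merely modulo polynomials. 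I expect to handle this exactly as in the proof of Proposition~\ref{prop:PropertiesHomSobolevSpacesRn} and the cited lemmas of Danchin--Hieber--Mucha--Tolksdorf: reduce first to $u\in\eus{S}_0(\mathbb{R}^n)$, where all sums and integrals converge absolutely and the two-sided estimate is a clean application of Young plus the Schur test, then extend to general $u$ by the Fatou lemma on the $\mathrm{L}^q_\ast(\mathrm{L}^p)$ side together with the density of $\eus{S}_0$ in $\dot{\mathrm{B}}^{\alpha-s}_{p,q}$ when $p,q<+\infty$ and a separate limiting argument (or direct estimate) for the endpoint cases $p=\infty$ or $q=\infty$.
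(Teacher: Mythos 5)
Your plan is essentially the paper's proof: the paper reduces to $\alpha=0$ by lifting and then invokes the Littlewood--Paley kernel-estimate argument of \cite[Theorem~2.34]{bookBahouriCheminDanchin} for the heat semigroup, which is exactly the dyadic $\mathrm{L}^1$-kernel bound plus Schur/Young plus reproducing-formula scheme you carry out (you merely keep $\alpha$ inside the kernel estimates instead of lifting it away first, and your $\eus{S}'_h$-bookkeeping matches the paper's framework). One slip to fix when writing it up: by the scaling $\xi=2^j\eta$, $\tau=t2^j$, the correct kernel bound is $C\,t^s2^{j\alpha}e^{-ct2^j}=C\,(t2^j)^s\,2^{j(\alpha-s)}e^{-ct2^j}$ rather than $(t2^j)^s2^{j\alpha}e^{-ct2^j}$, and the discrete--continuous Young inequality requires $\tau\mapsto\tau^se^{-c\tau}\in\mathrm{L}^1_{\ast}(\mathbb{R}_+)$ (this is precisely where $s>0$ enters), not merely $\mathrm{L}^q_{\ast}$; with these corrections your convolution against $a_j=2^{j(\alpha-s)}\lVert\dot{\Delta}_ju\rVert_{\mathrm{L}^p}$ closes as intended.
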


\begin{proof} It suffices to show the result for $\alpha=0$. But in this case, the proof is straightforward the same as the one of \cite[Theorem~2.34]{bookBahouriCheminDanchin} for the heat semigroup.
\end{proof}

The following result was already proven in the case of homogeneous Besov spaces only, see \cite[Lemma~2]{DanchinMucha2009}. It is extended here to the case of homogeneous Sobolev spaces, with a new proof that also cover the case of Besov spaces.

\begin{proposition}\label{prop:HarmExtenOpSobolevBesov} Let $p\in(1,+\infty)$, $q\in[1,+\infty]$. The map
\begin{align*}
 T\,:\,f   &   \longmapsto    \left[    (x',x_n)\mapsto e^{-x_n(-\Delta')^\frac{1}{2}}f(x') \right]
\end{align*}
is such that 
\begin{enumerate}[label=($\roman*$)]
    \item Given $s\geqslant0$, for all $f\in\dot{\mathrm{B}}^{-\frac{1}{p}}_{p,p}(\mathbb{R}^{n-1})\cap \dot{\mathrm{B}}^{s-\frac{1}{p}}_{p,p}(\mathbb{R}^{n-1})$, we have
    \begin{align*}
        \lVert Tf\rVert_{\dot{\mathrm{H}}^{s,p}(\mathbb{R}^n_+)} \lesssim_{s,p,n} \lVert   f \rVert_{\dot{\mathrm{B}}^{s-\frac{1}{p}}_{p,p}(\mathbb{R}^{n-1})}\text{ . }
    \end{align*}
    In particular, $T$ extends uniquely as a bounded linear operator $T\,:\,\dot{\mathrm{B}}^{s-\frac{1}{p}}_{p,p}(\mathbb{R}^{n-1})\longrightarrow \dot{\mathrm{H}}^{s,p}(\mathbb{R}^{n}_+)$ whenever $(\mathcal{C}_{s,p})$ is satisfied.
    \item Given $s>0$, for all $f\in\dot{\mathrm{B}}^{-\frac{1}{p}}_{p,p}(\mathbb{R}^{n-1})\cap \dot{\mathrm{B}}^{s-\frac{1}{p}}_{p,q}(\mathbb{R}^{n-1})$, we have
    \begin{align*}
        \lVert Tf\rVert_{\dot{\mathrm{B}}^{s}_{p,q}(\mathbb{R}^{n}_+)} \lesssim_{s,p,n} \lVert   f \rVert_{\dot{\mathrm{B}}^{s-\frac{1}{p}}_{p,q}(\mathbb{R}^{n-1})}\text{ . }
    \end{align*}
    In particular, $T$ extends uniquely as a bounded linear operator $T\,:\,\dot{\mathrm{B}}^{s-\frac{1}{p}}_{p,q}(\mathbb{R}^{n-1})\longrightarrow \dot{\mathrm{B}}^{s}_{p,q}(\mathbb{R}^{n}_+)$ whenever \eqref{AssumptionCompletenessExponents} is satisfied.
\end{enumerate}
\end{proposition}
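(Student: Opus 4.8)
The plan is to prove the Poisson-extension estimates via the characterisation of homogeneous Besov and Sobolev spaces in terms of the Poisson semigroup given by Lemma~\ref{lem:DefBesovPoissonSemigrp}, combined with the tensor structure of $T$ in the $x_n$-variable. Throughout, write $\Lambda':=(-\Delta')^{1/2}$ acting on $\mathbb{R}^{n-1}$, so that $Tf(\cdot,x_n)=e^{-x_n\Lambda'}f$, and note the commutation of $e^{-x_n\Lambda'}$ with $\partial_{x_k}$, $k\in\llb1,n-1\rrb$, and with $\Lambda'$ itself, together with $\partial_{x_n}Tf=-\Lambda' e^{-x_n\Lambda'}f$. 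The point is that a homogeneous $\mathrm{L}^p(\mathbb{R}^n_+)$-norm of a derivative $\partial_{x'}^{\beta}\partial_{x_n}^{k}Tf$ is, by Fubini, an iterated norm $\lVert x_n\mapsto \lVert (\Lambda')^{k}\partial_{x'}^{\beta}e^{-x_n\Lambda'}f\rVert_{\mathrm{L}^p(\mathbb{R}^{n-1})}\rVert_{\mathrm{L}^p(\mathbb{R}_+)}$, i.e. exactly the quantity appearing in Lemma~\ref{lem:DefBesovPoissonSemigrp} with $q=p$ (for (i)) and — after a Littlewood–Paley layer — the mixed quantity for (ii).

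For point (i): first I would reduce to the case $s\in\mathbb{N}$ is an integer plus the case of small $s$, or more efficiently use Proposition~\ref{prop:PropertiesHomSobolevSpacesRn}(iii) to write, for $u\in\eus{S}'_h(\mathbb{R}^n_+)$ sufficiently nice, $\lVert u\rVert_{\dot{\mathrm{H}}^{s,p}(\mathbb{R}^n_+)}\sim\sum_{j=1}^{n}\lVert\partial_{x_j}^{m}u\rVert_{\dot{\mathrm{H}}^{s-m,p}}$ for $m$ with $s-m\in(-1+\tfrac1p,\tfrac1p)$, but since $\dot{\mathrm{H}}^{\sigma,p}(\mathbb{R}^n_+)$ for $\sigma\in(0,1)$ is awkward, the cleanest route is: prove the estimate first for $f\in\eus{S}_0(\mathbb{R}^{n-1})$ (so all the Poisson-integral manipulations are legitimate and $Tf\in{\mathrm{H}}^{s,p}(\mathbb{R}^n_+)\subset\dot{\mathrm{H}}^{s,p}(\mathbb{R}^n_+)$), then pass to the limit. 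For such $f$, one extends $Tf$ to $\mathbb{R}^n$ and reduces to an $\mathrm{L}^p(\mathbb{R}^n,\ell^2)$-square-function estimate via Proposition~\ref{prop:PropertiesHomSobolevSpacesRn}(v): the Fourier multiplier $\xi\mapsto |\xi|^s / |\xi'|^s\cdot e^{-x_n|\xi'|}$ type symbols are estimated by Mikhlin, reducing matters to $\lVert (\Lambda')^{s}e^{-x_n\Lambda'}f\rVert_{\mathrm{L}^p(\mathbb{R}_+ \times\mathbb{R}^{n-1})}$, which by Lemma~\ref{lem:DefBesovPoissonSemigrp} (with $\alpha=s$, the ``$s$'' there being our $s$, $q=p$) is $\sim\lVert f\rVert_{\dot{\mathrm{B}}^{s-1/p}_{p,p}(\mathbb{R}^{n-1})}$ — this is exactly where the $\dot{\mathrm{B}}^{-1/p}_{p,p}\cap\dot{\mathrm{B}}^{s-1/p}_{p,p}$ hypothesis guarantees that $f\in\eus{S}'_h(\mathbb{R}^{n-1})$ and that the Poisson-characterisation applies. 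Then density of $\eus{S}_0(\mathbb{R}^{n-1})$ in $\dot{\mathrm{B}}^{s-1/p}_{p,p}(\mathbb{R}^{n-1})$ (Proposition~\ref{prop:PropertiesHomBesovSpacesRn}(iii)) and boundedness/completeness under $(\mathcal{C}_{s,p})$ give the unique extension.

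For point (ii): the approach is the same but now with a Littlewood–Paley decomposition on $\mathbb{R}^{n-1}$. Writing $\dot\Delta_j'$ for the frequency blocks on $\mathbb{R}^{n-1}$, one has $\dot\Delta_j' Tf(\cdot,x_n)=e^{-x_n\Lambda'}\dot\Delta_j'f$, and one estimates $\lVert Tf\rVert_{\dot{\mathrm{B}}^s_{p,q}(\mathbb{R}^n_+)}$ through Corollary~\ref{cor:EqNormNablakmBspBaq}/Proposition~\ref{prop:PropertiesHomBesovSpacesRn}(ii) by $\sum_{j=1}^n\lVert\partial_{x_j}^m Tf\rVert_{\dot{\mathrm{B}}^{s-m}_{p,q}(\mathbb{R}^n_+)}$ with $s-m\in(-1+\tfrac1p,\tfrac1p)$; each such term is handled by Fubini in $x_n$, the commutation relations, and the fact (again Lemma~\ref{lem:DefBesovPoissonSemigrp}, now keeping $q$ general via an extra application along the $x_n$ variable after first summing the dyadic pieces in $\ell^q$ over $\mathbb{R}^{n-1}$, using $e^{-x_n|\xi'|}\sim e^{-x_n 2^j}$ on $\supp\psi_j'$) that $\lVert x_n\mapsto \lVert (\Lambda')^{\sigma}e^{-x_n\Lambda'}g\rVert_{\mathrm{L}^p}\rVert_{\mathrm{L}^p_{x_n}}$ reproduces the $\dot{\mathrm{B}}^{\sigma-1/p}_{p,q}$-type norm once the $q$-summation is carried out; the clean statement is obtained by applying Lemma~\ref{lem:DefBesovPoissonSemigrp} with $q$ on the ``time'' variable and then exchanging $\mathrm{L}^p_{x_n}\ell^q_j$ via Minkowski when $q\le p$, or directly when $q=p$, and handling general $q$ by real interpolation in $q$ between two values. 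Then one concludes, as in (i), by density of $\eus{S}_0$ when $q<+\infty$ and by the extension/restriction argument (pass to $\mathbb{R}^n$, use completeness of $\dot{\mathrm{B}}^s_{p,q}$ under \eqref{AssumptionCompletenessExponents}, take infimum over extensions) when $q=+\infty$.

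The main obstacle I anticipate is the bookkeeping for general $q$ in part (ii): Lemma~\ref{lem:DefBesovPoissonSemigrp} gives a single $\mathrm{L}^q_\ast(\mathbb{R}_+)$-norm in the continuous ``time'' $t$, whereas the Besov norm on $\mathbb{R}^n_+$ naturally produces a discrete $\ell^q$-sum in $j$ over $\mathrm{L}^p(\mathbb{R}^n_+)$; converting between these requires either an intermediate step splitting $Tf=\sum_j \dot\Delta_j'Tf$ and using that on the $j$-th piece the Poisson kernel acts like the spectral cutoff at scale $2^j$ (so the $x_n$-integral and the $j$-sum decouple up to constants), or equivalently interpolating the $q=p$ case (which is clean by Fubini) with the endpoints. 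One must also be careful that the ``auxiliary'' regularity $\dot{\mathrm{B}}^{-1/p}_{p,p}$ is only used to ensure membership in $\eus{S}'_h$ and plays no role in the final estimate — so the estimate must be stated and proved on the dense class $\eus{S}_0(\mathbb{R}^{n-1})$ first, where no such issue arises, and only then extended.
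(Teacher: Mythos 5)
There is a genuine gap at the crux of point \textit{(i)}: the fractional case $s\notin\mathbb{N}$. Your reduction of $\lVert Tf\rVert_{\dot{\mathrm{H}}^{s,p}(\mathbb{R}^n_+)}$ to the tangential quantity $\lVert (\Lambda')^{s}e^{-x_n\Lambda'}f\rVert_{\mathrm{L}^p(\mathbb{R}^n_+)}$ via the symbol $\xi\mapsto |\xi|^{s}|\xi'|^{-s}e^{-x_n|\xi'|}$ does not go through: $|\xi|^{s}|\xi'|^{-s}$ is homogeneous of degree $0$ but is unbounded on every neighbourhood of the hyperplane $\{\xi'=0\}$ (take $\xi'\to 0$ with $\xi_n$ fixed), so it is not a H\"ormander--Mikhlin multiplier on $\mathbb{R}^n$ and Proposition \ref{prop:PropertiesHomSobolevSpacesRn}~\textit{(v)} cannot be invoked this way. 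Moreover, $Tf$ lives on $\mathbb{R}^n_+$, so any global Fourier-multiplier manipulation presupposes choosing an extension to $\mathbb{R}^n$ and controlling it — which is precisely the estimate being proved. Your integer-order computation (Fubini in $x_n$, commutation $\partial_{x_n}^{m}T=(-1)^m(\Lambda')^{m}T$, Riesz transforms in $x'$ uniformly in $x_n$, then Lemma \ref{lem:DefBesovPoissonSemigrp} with $q=p$) is correct and is exactly what the paper does; but the passage from integer to fractional $s$ needs a mechanism you have not supplied. The paper's mechanism is: first establish the \emph{inhomogeneous} bounds $\lVert Tf\rVert_{\mathrm{H}^{m,p}(\mathbb{R}^n_+)}\lesssim \lVert f\rVert_{\dot{\mathrm{B}}^{-1/p}_{p,p}}+\lVert f\rVert_{\dot{\mathrm{B}}^{m-1/p}_{p,p}}$ for all $m\in\mathbb{N}$, interpolate these complexly in $m$ using Corollary \ref{cor:CompInterpolIntersecBesov} (complex interpolation of the intersections $\dot{\mathrm{B}}^{-1/p}_{p,p}\cap\dot{\mathrm{B}}^{m-1/p}_{p,p}$), and then remove the lower-order term by the dilation argument $f\rightsquigarrow f_\lambda$, $\lambda\to+\infty$, exactly as in Proposition \ref{prop:SobolevMultiplier}. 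Note in particular that the hypothesis $f\in\dot{\mathrm{B}}^{-1/p}_{p,p}$ is used \emph{quantitatively} at the intermediate stage (it controls $\lVert Tf\rVert_{\mathrm{L}^p(\mathbb{R}^n_+)}$, which is needed for the interpolation), not merely to place $f$ in $\eus{S}'_h$; only the scaling step makes it disappear from the final inequality.

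For point \textit{(ii)} your Littlewood--Paley bookkeeping in $x'$ (with $e^{-x_n|\xi'|}\sim e^{-x_n2^{j}}$ on $\supp\psi'_j$, Minkowski for $q\leqslant p$, interpolation in $q$ otherwise) could in principle be pushed through, but it is only sketched, and the reduction to small regularity $s-m\in(-1+\tfrac1p,\tfrac1p)$ via Corollary \ref{cor:EqNormNablakmBspBaq} leaves you with the same unproved fractional-order tangential-versus-full-gradient comparison on the half-space. The paper avoids all of this: once the family of inhomogeneous Sobolev estimates from \textit{(i)} is in hand, point \textit{(ii)} is a single real interpolation in $s$ (using \cite[Proposition~B.2.7]{bookHaase2006} for the intersections) followed by the same dilation argument. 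I recommend you adopt the interpolation-plus-scaling scheme; your direct route would require a genuinely different (and harder) proof of the fractional mixed-norm estimate.
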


\begin{proof} \textbf{Point \textit{(i)}:} For $p\in(1,+\infty)$, let's consider $f\in \dot{\mathrm{B}}^{-\frac{1}{p}}_{p,p}(\mathbb{R}^{n-1})$. We apply Lemma \ref{lem:DefBesovPoissonSemigrp} to obtain,
\begin{align*}
    \left\lVert Tf\right\rVert_{\mathrm{L}^p(\mathbb{R}^n_+)} &= \left(\int_{0}^{+\infty}\lVert e^{-x_n(-\Delta')^\frac{1}{2}}f\rVert_{\mathrm{L}^p(\mathbb{R}^{n-1})}^p\mathrm{~d}x_n\right)^\frac{1}{p}\\
    &= \left(\int_{0}^{+\infty}\left(t^{\frac{1}{p}}\lVert e^{-t(-\Delta')^\frac{1}{2}}f\rVert_{\mathrm{L}^p(\mathbb{R}^{n-1})}\right)^p\frac{\mathrm{d}t}{t}\right)^\frac{1}{p}\\
    &\lesssim_{p,n} \left\lVert f \right\rVert_{\dot{\mathrm{B}}^{-\frac{1}{p}}_{p,p}(\mathbb{R}^{n-1})}\text{ . }
\end{align*}
We continue noticing that for all $f\in\eus{S}'_h(\mathbb{R}^{n-1})$, $m\in\mathbb{N}$, $\partial_{x_n}^m Tf = (-\Delta')^\frac{m}{2} Tf= T(-\Delta')^\frac{m}{2} f$ and $Tf\in \eus{S}'_h(\mathbb{R}^{n-1})$, thus if $f\in \dot{\mathrm{B}}^{-\frac{1}{p}}_{p,p}(\mathbb{R}^{n-1})\cap \dot{\mathrm{B}}^{m-\frac{1}{p}}_{p,p}(\mathbb{R}^{n-1})$ we may apply previous inequality to obtain,
\begin{align*}
    \lVert Tf\rVert_{\dot{\mathrm{H}}^{m,p}(\mathbb{R}^n_+)} &\sim_{p,n,m}  \lVert \partial_{x_n}^m Tf\rVert_{\mathrm{L}^{p}(\mathbb{R}^n_+)} + \lVert (-\Delta')^\frac{m}{2} Tf\rVert_{\mathrm{L}^{p}(\mathbb{R}^n_+)} \\
    &\sim_{p,n,m}  \lVert T (-\Delta')^\frac{m}{2} f\rVert_{\mathrm{L}^{p}(\mathbb{R}^n_+)} \\
    &\lesssim_{p,n,m} \lVert f \rVert_{\dot{\mathrm{B}}^{m-\frac{1}{p}}_{p,p}(\mathbb{R}^{n-1})}\text{ . }
\end{align*}
So that for all $m\in \mathbb{N}$, all $f\in\dot{\mathrm{B}}^{-\frac{1}{p}}_{p,p}(\mathbb{R}^{n-1})\cap\dot{\mathrm{B}}^{m-\frac{1}{p}}_{p,p}(\mathbb{R}^{n-1})$,
\begin{align*}
    \lVert Tf\rVert_{{\mathrm{H}}^{m,p}(\mathbb{R}^n_+)} &\lesssim_{p,n,m} \lVert f \rVert_{\dot{\mathrm{B}}^{-\frac{1}{p}}_{p,p}(\mathbb{R}^{n-1})} + \lVert f \rVert_{\dot{\mathrm{B}}^{m-\frac{1}{p}}_{p,p}(\mathbb{R}^{n-1})}\text{ . }
\end{align*}
Thus, by complex interpolation and Corollary \ref{cor:CompInterpolIntersecBesov}, for all $s\geqslant 0$, all $f\in\dot{\mathrm{B}}^{-\frac{1}{p}}_{p,p}(\mathbb{R}^{n-1})\cap\dot{\mathrm{B}}^{s-\frac{1}{p}}_{p,p}(\mathbb{R}^{n-1})$,
\begin{align*}
    \lVert Tf\rVert_{{\mathrm{H}}^{s,p}(\mathbb{R}^n_+)} &\lesssim_{p,n,s} \lVert f \rVert_{\dot{\mathrm{B}}^{-\frac{1}{p}}_{p,p}(\mathbb{R}^{n-1})} + \lVert f \rVert_{\dot{\mathrm{B}}^{s-\frac{1}{p}}_{p,p}(\mathbb{R}^{n-1})}\text{ . }
\end{align*}
Hence, thanks to Proposition \ref{prop:IntersecHomHspRn+}, ${{\mathrm{H}}^{s,p}(\mathbb{R}^n_+)} = {\mathrm{L}}^{p}(\mathbb{R}^n_+)\cap\dot{\mathrm{H}}^{s,p}(\mathbb{R}^n_+)$,
\begin{align*}
    \lVert Tf\rVert_{{\mathrm{L}}^{p}(\mathbb{R}^n_+)} + \lVert Tf\rVert_{\dot{\mathrm{H}}^{s,p}(\mathbb{R}^n_+)} &\lesssim_{p,n,s} \lVert f \rVert_{\dot{\mathrm{B}}^{-\frac{1}{p}}_{p,p}(\mathbb{R}^{n-1})} + \lVert f \rVert_{\dot{\mathrm{B}}^{s-\frac{1}{p}}_{p,p}(\mathbb{R}^{n-1})}\text{ . }
\end{align*}
Therefore, if $\lambda\in 2^{\mathbb{N}}$, we can consider $f_\lambda$ the dilation by factor $\lambda$ of $f$, so that plugging $f_\lambda$ instead of $f$ in above inequality, and checking the fact that $Tf_\lambda = (Tf)_\lambda$, we obtain
\begin{align*}
    \lambda^{-\frac{n}{p}}\lVert Tf\rVert_{{\mathrm{L}}^{p}(\mathbb{R}^n_+)} + \lambda^{s-\frac{n}{p}}\lVert Tf\rVert_{\dot{\mathrm{H}}^{s,p}(\mathbb{R}^n_+)} &\lesssim_{p,n,s} \lambda^{-\frac{n}{p}}\lVert f \rVert_{\dot{\mathrm{B}}^{-\frac{1}{p}}_{p,p}(\mathbb{R}^{n-1})} + \lambda^{s-\frac{n}{p}}\lVert f \rVert_{\dot{\mathrm{B}}^{s-\frac{1}{p}}_{p,p}(\mathbb{R}^{n-1})}\text{ . }
\end{align*}
One may divide above inequality by $\lambda^{s-\frac{n}{p}}$, so as $\lambda$ tends to infinity, it yields
\begin{align*}
     \lVert Tf\rVert_{\dot{\mathrm{H}}^{s,p}(\mathbb{R}^n_+)} &\lesssim_{p,n,s} \lVert f \rVert_{\dot{\mathrm{B}}^{s-\frac{1}{p}}_{p,p}(\mathbb{R}^{n-1})}\text{ . }
\end{align*}
So that the result holds by density whenever $(\mathcal{C}_{s,p})$ is satisfied.

\textbf{Point \textit{(ii)}:} Now let $q\in[1,+\infty]$, since for all $s\geqslant 0$, all $f\in\dot{\mathrm{B}}^{-\frac{1}{p}}_{p,p}(\mathbb{R}^{n-1})\cap\dot{\mathrm{B}}^{s-\frac{1}{p}}_{p,p}(\mathbb{R}^{n-1})$,
\begin{align*}
    \lVert Tf\rVert_{{\mathrm{H}}^{s,p}(\mathbb{R}^n_+)} &\lesssim_{p,n,s} \lVert f \rVert_{\dot{\mathrm{B}}^{-\frac{1}{p}}_{p,p}(\mathbb{R}^{n-1})} + \lVert f \rVert_{\dot{\mathrm{B}}^{s-\frac{1}{p}}_{p,p}(\mathbb{R}^{n-1})}\text{ . }
\end{align*}
Hence, by real interpolation, using \cite[Proposition~B.2.7]{bookHaase2006} instead of Corollary \ref{cor:CompInterpolIntersecBesov}, we obtain that for all $s>0$
all $f\in\dot{\mathrm{B}}^{-\frac{1}{p}}_{p,p}(\mathbb{R}^{n-1})\cap\dot{\mathrm{B}}^{s-\frac{1}{p}}_{p,q}(\mathbb{R}^{n-1})$,
\begin{align*}
    \lVert Tf\rVert_{{\mathrm{B}}^{s}_{p,q}(\mathbb{R}^n_+)} &\lesssim_{p,n,s} \lVert f \rVert_{\dot{\mathrm{B}}^{-\frac{1}{p}}_{p,p}(\mathbb{R}^{n-1})} + \lVert f \rVert_{\dot{\mathrm{B}}^{s-\frac{1}{p}}_{p,q}(\mathbb{R}^{n-1})}\text{ . }
\end{align*}
Then the same dilation procedure as before, yields 
\begin{align*}
    \lVert Tf\rVert_{\dot{\mathrm{B}}^{s}_{p,q}(\mathbb{R}^n_+)} &\lesssim_{p,n,s}\lVert f \rVert_{\dot{\mathrm{B}}^{s-\frac{1}{p}}_{p,q}(\mathbb{R}^{n-1})}\text{ , }
\end{align*}
which again allows concluding via a density argument if $q<+\infty$ and \eqref{AssumptionCompletenessExponents} is satisfied. The case $q=+\infty$, when \eqref{AssumptionCompletenessExponents} is satisfied,  follows from real interpolation with the last estimate.
\end{proof}

Proposition \ref{prop:HarmExtenOpSobolevBesov} can be self-improved as

\begin{corollary}\label{cor:HarmExtenOpSobolevBesovIntersec} Let $p_j\in(1,+\infty)$, $q_j\in[1,+\infty]$, $j\in\{ 0,1\}$. The map
\begin{align*}
 T\,:\,f   &   \longmapsto    \left[    (x',x_n)\mapsto e^{-x_n(-\Delta')^\frac{1}{2}}f(x') \right]
\end{align*}
is such that 
\begin{enumerate}[label=($\roman*$)]
    \item If $s_j\geqslant0$, $j\in\{0,1\}$, such that $(\mathcal{C}_{s_0,p_0})$ is satisfied. For all $f\in[\dot{\mathrm{B}}^{s_0-\frac{1}{p_0}}_{p_0,p_0}\cap \dot{\mathrm{B}}^{s_1-\frac{1}{p_1}}_{p_1,p_1}](\mathbb{R}^{n-1})$, we have 
    \begin{align*}
        \lVert Tf\rVert_{\dot{\mathrm{H}}^{s_j,p_j}(\mathbb{R}^n_+)} \lesssim_{s_j,p_j,n} \lVert   f \rVert_{\dot{\mathrm{B}}^{s_j-\frac{1}{p_j}}_{p_j,p_j}(\mathbb{R}^{n-1})}\quad\text{ , } j\in\{0,1\}\text{ . }
    \end{align*}
    \item If $s_j>0$, $j\in\{0,1\}$, such that $(\mathcal{C}_{s_0,p_0,q_0})$ is satisfied. For all $f\in[\dot{\mathrm{B}}^{s_0-\frac{1}{p_0}}_{p_0,q_0}\cap \dot{\mathrm{B}}^{s_1-\frac{1}{p_1}}_{p_1,q_1}](\mathbb{R}^{n-1})$, we have
    \begin{align*}
        \lVert Tf\rVert_{\dot{\mathrm{B}}^{s_j}_{p_j,q_j}(\mathbb{R}^{n}_+)} \lesssim_{s_j,p_j,n} \lVert   f \rVert_{\dot{\mathrm{B}}^{s_j-\frac{1}{p_j}}_{p_j,q_j}(\mathbb{R}^{n-1})}\quad\text{ , } j\in\{0,1\}\text{ . }
    \end{align*}
\end{enumerate}
\end{corollary}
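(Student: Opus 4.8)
The plan is to reduce the whole statement to the subclass $\eus{S}_0(\mathbb{R}^{n-1})$, on which Proposition~\ref{prop:HarmExtenOpSobolevBesov} already applies verbatim and \emph{separately} to each triple $(s_j,p_j,q_j)$, and then to transport the two resulting \emph{decoupled} bounds to the full intersection space by density --- or, at the endpoints $q_j=+\infty$, by weak$^\ast$ density --- in the same spirit as the decoupled estimates obtained for the extension operator $\mathrm{E}$ in Corollary~\ref{cor:ExtOpIntersecHomHspRn+}.

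First I would note that any $f\in\eus{S}_0(\mathbb{R}^{n-1})$ satisfies $0\notin\supp\eus{F}f$, hence lies in every homogeneous Besov space on $\mathbb{R}^{n-1}$; in particular, for each $j\in\{0,1\}$, it belongs to the intersection on which the estimate of Proposition~\ref{prop:HarmExtenOpSobolevBesov}\,(i) (resp. (ii)) is stated --- an estimate that carries no completeness requirement. Applying it once with the couple $(s_0,p_0,q_0)$ and once with $(s_1,p_1,q_1)$ gives, for every $f\in\eus{S}_0(\mathbb{R}^{n-1})$ and every $j\in\{0,1\}$,
\begin{align*}
    \text{(i):}\quad \lVert Tf\rVert_{\dot{\mathrm{H}}^{s_j,p_j}(\mathbb{R}^n_+)}&\lesssim_{s_j,p_j,n}\lVert f\rVert_{\dot{\mathrm{B}}^{s_j-1/p_j}_{p_j,p_j}(\mathbb{R}^{n-1})},\\
    \text{(ii):}\quad \lVert Tf\rVert_{\dot{\mathrm{B}}^{s_j}_{p_j,q_j}(\mathbb{R}^n_+)}&\lesssim_{s_j,p_j,n}\lVert f\rVert_{\dot{\mathrm{B}}^{s_j-1/p_j}_{p_j,q_j}(\mathbb{R}^{n-1})},
\end{align*}
which is exactly the asserted pair of inequalities, restricted to $\eus{S}_0(\mathbb{R}^{n-1})$.

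Next I would upgrade these to the full intersection trace space. The key observation is that $(\mathcal{C}_{s_0,p_0})$ (resp. $(\mathcal{C}_{s_0,p_0,q_0})$) is \emph{equivalent} to the completeness condition of the first trace factor $\dot{\mathrm{B}}^{s_0-1/p_0}_{p_0,p_0}(\mathbb{R}^{n-1})$ (resp. $\dot{\mathrm{B}}^{s_0-1/p_0}_{p_0,q_0}(\mathbb{R}^{n-1})$), since $s_0-\tfrac{1}{p_0}<\tfrac{n-1}{p_0}$ if and only if $s_0<\tfrac{n}{p_0}$. Hence, by the whole-space Besov analogue of Lemma~\ref{lem:IntersecHomHsp} in dimension $n-1$, the trace intersection $\dot{\mathrm{B}}^{s_0-1/p_0}_{p_0,q_0}(\mathbb{R}^{n-1})\cap\dot{\mathrm{B}}^{s_1-1/p_1}_{p_1,q_1}(\mathbb{R}^{n-1})$ is a Banach space in which $\eus{S}_0(\mathbb{R}^{n-1})$ is dense as soon as $q_0,q_1<+\infty$ (which is automatic in case~(i), where $q_j=p_j$), while the target intersection on $\mathbb{R}^n_+$ is complete by Proposition~\ref{prop:IntersecHomHspRn+} (resp. Proposition~\ref{prop:IntersLpHomBesovRn+=BesovRn+}). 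Thus $T$ extends uniquely to a bounded map between the two intersection spaces; the extension coincides with the concrete operator $f\mapsto e^{-x_n(-\Delta')^{1/2}}f$ because an approximating sequence $f_k\to f$ in the trace intersection also converges in $\dot{\mathrm{B}}^{s_0-1/p_0}_{p_0,p_0}(\mathbb{R}^{n-1})$ (resp. $\dot{\mathrm{B}}^{s_0-1/p_0}_{p_0,q_0}(\mathbb{R}^{n-1})$), on which $T$ already maps boundedly into the complete space $\dot{\mathrm{H}}^{s_0,p_0}(\mathbb{R}^n_+)\hookrightarrow\eus{D}'(\mathbb{R}^n_+)$ (resp. $\dot{\mathrm{B}}^{s_0}_{p_0,q_0}(\mathbb{R}^n_+)\hookrightarrow\eus{D}'(\mathbb{R}^n_+)$) by Proposition~\ref{prop:HarmExtenOpSobolevBesov}, so the two candidate limits coincide in $\eus{D}'(\mathbb{R}^n_+)$. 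Passing each of the two bounds of the previous paragraph through the limit uses only Fatou's lemma together with the $\mathrm{L}^p(\ell^2)$-, resp. $\ell^q(\mathrm{L}^p)$-, description of the homogeneous norm --- point~(v) of Proposition~\ref{prop:PropertiesHomSobolevSpacesRn}, resp. the definition of the Besov norm.

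The only remaining case is (ii) with $q_0=+\infty$ or $q_1=+\infty$, where $\eus{S}_0(\mathbb{R}^{n-1})$ is merely weak$^\ast$ dense. There I would combine: the equivalence of norms by $\nabla^{m}$ on intersection Besov spaces (Corollary~\ref{cor:EqNormNablakmBspBaq}) to replace, when needed, a non-complete high-regularity target $\dot{\mathrm{B}}^{s_j}_{p_j,\infty}(\mathbb{R}^n_+)$ by the complete lower-regularity space carrying the intersection; the real interpolation identities of Theorem~\ref{thm:InterpHomSpacesRn} on $\mathbb{R}^{n-1}$ and of Proposition~\ref{prop:InterpHomBesSpacesRn+} on $\mathbb{R}^n_+$ to produce the $\infty$-endpoint from finite-$q$ estimates; and weak$^\ast$ density of $\eus{S}_0(\mathbb{R}^{n-1})$ exactly as in Step~2.1 of Theorem~\ref{thm:Tracehalfspace} and the $q=+\infty$ part of Corollary~\ref{cor:ExtProj0HomBspq}. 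I expect this endpoint bookkeeping --- keeping track of which intermediate spaces are complete --- to be the only genuinely delicate part of the argument; everything else is the decoupled-estimate mechanism already in place.
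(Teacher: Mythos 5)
Your proposal is correct and is essentially the argument the paper intends: the corollary is stated without proof as a ``self-improvement'' of Proposition~\ref{prop:HarmExtenOpSobolevBesov}, and the intended mechanism is exactly yours --- apply that proposition separately for each index on the dense subclass $\eus{S}_0(\mathbb{R}^{n-1})$ (which lies in every required intersection), then pass to the limit using that $(\mathcal{C}_{s_0,p_0})$, resp.\ $(\mathcal{C}_{s_0,p_0,q_0})$, is equivalent to completeness of the first trace factor on $\mathbb{R}^{n-1}$ and that the target intersections on $\mathbb{R}^n_+$ are complete by Propositions~\ref{prop:IntersecHomHspRn+} and~\ref{prop:IntersLpHomBesovRn+=BesovRn+}. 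Your treatment of the $q_j=+\infty$ endpoint is the only sketchy part, but the tools you invoke (weak${}^\ast$ density and real interpolation as in Corollary~\ref{cor:ExtProj0HomBspq}) are the same ones the paper uses for the analogous endpoints elsewhere.
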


\section{Weak* continuity of translations for endpoint Besov spaces }\label{Append:ContinTranslation}

\begin{lemma}\label{lem:weak*continuityBspinfty}Let $p\in(1,+\infty)$, $s\in\mathbb{R}$. Then for all $t\in\mathbb{R}$, the map
\begin{align*}
    \tau_t \,:\, u \longmapsto \left[ (x',x_n)\longmapsto u(x',x_n+t)\right]
\end{align*}
is uniformly bounded on ${\mathrm{B}}^{s}_{p,\infty}(\mathbb{R}^n)$, that is for all $u\in{\mathrm{B}}^{s}_{p,\infty}(\mathbb{R}^n)$
    \begin{align*}
        \lVert \tau_tu\rVert_{{\mathrm{B}}^{s}_{p,\infty}(\mathbb{R}^n)} \leqslant \lVert   u \rVert_{{\mathrm{B}}^{s}_{p,\infty}(\mathbb{R}^n)}\quad\text{, } \forall t\in\mathbb{R}\text{,}
    \end{align*}
    and has a limit
    \begin{align*}
        \tau_t u \xrightarrow[t\rightarrow 0]{} u \text{, weakly* in }{\mathrm{B}}^{s}_{p,\infty}(\mathbb{R}^n).
    \end{align*}

$\bullet$ Assuming $s<n/p$, the results still hold with $\dot{\mathrm{B}}$ instead of $\mathrm{B}$.

$\bullet$ The whole result still holds with $\mathbb{R}^n_+$ instead of $\mathbb{R}^n$, with $t\geqslant 0$, assuming $s\in(-1+1/p,n/p)$ in the case of homogeneous Besov spaces.
\end{lemma}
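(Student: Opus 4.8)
The plan is to prove the three claims (uniform boundedness and weak* convergence on $\mathrm{B}^s_{p,\infty}(\mathbb{R}^n)$, then on $\dot{\mathrm{B}}^s_{p,\infty}(\mathbb{R}^n)$, then on the half-space) in that order, since each reduces essentially to the previous via the structural results already established. For the uniform boundedness on $\mathrm{B}^s_{p,\infty}(\mathbb{R}^n)$, I would observe that $\tau_t$ commutes with every Fourier multiplier, in particular with $\Delta_k$, so $\Delta_k(\tau_t u)=\tau_t(\Delta_k u)$; since $\tau_t$ is an isometry of $\mathrm{L}^p(\mathbb{R}^n)$, we get $\lVert\Delta_k(\tau_t u)\rVert_{\mathrm{L}^p}=\lVert\Delta_k u\rVert_{\mathrm{L}^p}$ for every $k$, hence $\lVert\tau_t u\rVert_{\mathrm{B}^s_{p,\infty}}=\lVert u\rVert_{\mathrm{B}^s_{p,\infty}}$, which is in fact an equality. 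The same multiplier computation gives the homogeneous case verbatim, using $\dot\Delta_j$ instead of $\Delta_k$; note here one must know $\tau_t$ maps $\eus{S}'_h(\mathbb{R}^n)$ to itself, which follows because $\Theta(\lambda\mathfrak{D})(\tau_t u)=\tau_t(\Theta(\lambda\mathfrak{D})u)$ and $\tau_t$ is an $\mathrm{L}^\infty$-isometry.

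For the weak* convergence on $\mathrm{B}^s_{p,\infty}(\mathbb{R}^n)$, I would use that $\mathrm{B}^s_{p,\infty}(\mathbb{R}^n)$ is (isometrically) a dual space: by \cite[Theorem~3.7.1]{BerghLofstrom1976} together with the real-interpolation description $\mathrm{B}^s_{p,\infty}=(\mathrm{B}^{s_0}_{p,1},\mathrm{B}^{s_1}_{p,1})_{\theta,\infty}$ (cf. the discussion in Remark \ref{rem:TracesInhom}), one has $\mathrm{B}^s_{p,\infty}(\mathbb{R}^n)=(\mathcal{B}^{-s}_{p',1}(\mathbb{R}^n))'$, where the predual contains $\eus{S}(\mathbb{R}^n)$ as a dense subspace. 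Weak* convergence of a bounded net is then tested only against a dense subset of the predual, so it suffices to check $\langle\tau_t u - u,\varphi\rangle\to 0$ for $\varphi\in\eus{S}(\mathbb{R}^n)$; but $\langle\tau_t u,\varphi\rangle=\langle u,\tau_{-t}\varphi\rangle$ and $\tau_{-t}\varphi\to\varphi$ in $\eus{S}(\mathbb{R}^n)$ as $t\to 0$ (strong continuity of translations on the Schwartz class), which closes this step. The homogeneous case uses instead the duality $\dot{\mathrm{B}}^s_{p,\infty}(\mathbb{R}^n)=(\dot{\mathcal B}^{-s}_{p',1}(\mathbb{R}^n))'$ from Proposition \ref{prop:dualityHomBesov}, valid precisely when $-n/p'<-s$, i.e. $s<n/p$ — this is where the hypothesis $s<n/p$ enters — with predual built from $\eus{S}_0(\mathbb{R}^n)$, which is again stable and continuous under translation.

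For the half-space, I would pass through the definition of $\dot{\mathrm{B}}^s_{p,\infty}(\mathbb{R}^n_+)$ by restriction. Uniform boundedness with $t\geq 0$ follows because, given $u=U_{|\mathbb{R}^n_+}$ with $U\in\dot{\mathrm{B}}^s_{p,\infty}(\mathbb{R}^n)$, we have $\tau_t u=(\tau_t U)_{|\mathbb{R}^n_+}$ (here $t\geq0$ is used so the translate of a function supported information on $\mathbb{R}^n_+$ still sees $\mathbb{R}^n_+$), so $\lVert\tau_t u\rVert_{\dot{\mathrm{B}}^s_{p,\infty}(\mathbb{R}^n_+)}\leq\lVert\tau_t U\rVert_{\dot{\mathrm{B}}^s_{p,\infty}(\mathbb{R}^n)}=\lVert U\rVert_{\dot{\mathrm{B}}^s_{p,\infty}(\mathbb{R}^n)}$, and taking the infimum over $U$ gives the bound. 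For the weak* convergence on the half-space, the cleanest route is the duality $(\dot{\mathrm{B}}^{-s}_{p',1}(\mathbb{R}^n_+))'=\dot{\mathrm{B}}^s_{p,\infty,0}(\mathbb{R}^n_+)$ (and the analogous identity without the "$0$") supplied by Proposition \ref{prop:dualityBesovRn+}, whose hypotheses require $-s>-1+1/p'$, i.e. $s<1/p$... — here I should instead invoke the range $s\in(-1+1/p,n/p)$ as stated by testing against $\mathrm{C}_c^\infty(\mathbb{R}^n_+)$, which is sequentially weak* dense by Corollary \ref{cor:weakstardensity}; then $\langle\tau_t u,\varphi\rangle_{\mathbb{R}^n_+}=\langle u,\tau_{-t}\varphi\rangle_{\mathbb{R}^n_+}$ for $\varphi\in\mathrm{C}_c^\infty(\mathbb{R}^n_+)$ (valid for small $t\geq0$ since then $\supp\tau_{-t}\varphi$ still lies in $\mathbb{R}^n_+$), and $\tau_{-t}\varphi\to\varphi$ in the predual.

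\textbf{The main obstacle} I anticipate is bookkeeping in the half-space weak* statement: one must be careful that the pairing $\langle\tau_t u,\varphi\rangle$ makes sense and that the adjoint identity $\langle\tau_t u,\varphi\rangle=\langle u,\tau_{-t}\varphi\rangle$ holds on $\mathbb{R}^n_+$ — this requires the test function's support, after translation by $-t$ with $t\geq0$, to remain inside $\overline{\mathbb{R}^n_+}$, which is automatic for $\varphi\in\mathrm{C}_c^\infty(\mathbb{R}^n_+)$ and $t$ small, and then one lets $t\to0$. One must also make sure the relevant predual is genuinely a space on which $\eus{S}_0(\overline{\mathbb{R}^n_+})$ or $\mathrm{C}_c^\infty(\mathbb{R}^n_+)$ is (strongly) dense, so that testing against it suffices for weak* convergence of the uniformly bounded net $\{\tau_t u\}$; this is exactly what Corollary \ref{cor:weakstardensity} and the density results of Section \ref{sec:FunctionSpacesRn+} provide, and the range $s\in(-1+1/p,n/p)$ is dictated by the need for $\eqref{AssumptionCompletenessExponents}$ and the duality hypotheses to be simultaneously available.
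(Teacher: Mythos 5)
Your whole-space arguments (both inhomogeneous and homogeneous) are correct and essentially the paper's: the norm identity comes from commuting $\tau_t$ with the Littlewood--Paley blocks, and the weak${}^\ast$ convergence from duality against $\dot{\mathrm{B}}^{-s}_{p',1}(\mathbb{R}^n)$. The only cosmetic difference is that you reduce to Schwartz test functions via uniform boundedness of the net, whereas the paper invokes strong continuity of translations on the whole predual $\dot{\mathrm{B}}^{-s}_{p',1}(\mathbb{R}^n)$ directly; both are fine. The half-space norm estimate via restriction is also exactly the paper's.

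The gap is in the half-space weak${}^\ast$ step. First, you invoke Corollary \ref{cor:weakstardensity}, which gives weak${}^\ast$ density of $\mathrm{C}_c^\infty(\mathbb{R}^n_+)$ in the \emph{dual} space $\dot{\mathrm{B}}^{s}_{p,\infty,0}(\mathbb{R}^n_+)$; that is not what testing weak${}^\ast$ convergence requires. To conclude from a bounded net that $\langle\tau_t u-u,\phi\rangle\to0$ for all $\phi$ in the predual $\dot{\mathrm{B}}^{-s}_{p',1,0}(\mathbb{R}^n_+)$, you need \emph{norm} density of your test class in that predual. Second, even granting the correct notion, norm density of $\mathrm{C}_c^\infty(\mathbb{R}^n_+)$ in $\dot{\mathrm{B}}^{-s}_{p',1,0}(\mathbb{R}^n_+)$ is only available from the paper's results (Lemma \ref{lem:densityCcinftyBesov0s+} for positive regularity, Corollary \ref{cor:Bspq=Bspq0Rn+} for $-s\in(-1+1/p',1/p')$) when $s<1/p$ or $s<0$; for $s\in[1/p,n/p)$ the regularity index $-s\leqslant-1+1/p'$ falls outside both statements, so your density input is unproved in exactly part of the claimed range. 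The paper's proof sidesteps both issues: for $\phi\in\dot{\mathrm{B}}^{-s}_{p',1,0}(\mathbb{R}^n_+)$, viewed as an element of $\dot{\mathrm{B}}^{-s}_{p',1}(\mathbb{R}^n)$ supported in $\overline{\mathbb{R}^n_+}$, and any extension $U$ of $u$, one has $(\tau_tU)_{|_{\mathbb{R}^n_+}}=\tau_t u$ for $t\geqslant0$ and hence $\langle\tau_t u,\phi\rangle_{\mathbb{R}^n_+}=\langle\tau_t U,\phi\rangle_{\mathbb{R}^n}$, so the half-space statement reduces to the already-proved whole-space convergence with no density argument on $\mathbb{R}^n_+$ at all. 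You should replace your final step by this extension argument (or else supply the missing predual density for $-s\leqslant-1+1/p'$).
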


\begin{proof} We only treat the case of homogeneous function spaces, the case of inhomogeneous function spaces can be treated with the same arguments.

\textbf{Step 1:} The case of the whole space $\mathbb{R}^n$.  The proof relies on the fact that, by Proposition \ref{prop:dualityHomBesov}, $\dot{\mathrm{B}}^{s}_{p,\infty}(\mathbb{R}^n)=(\dot{\mathrm{B}}^{-s}_{p',1}(\mathbb{R}^n))'= (\dot{\mathcal{B}}^{s}_{p,\infty}(\mathbb{R}^n))''$.

Let $t\in\mathbb{R}$, $\phi\in\dot{\mathrm{B}}^{-s}_{p',1}(\mathbb{R}^n)$, for $u\in \dot{\mathrm{B}}^{s}_{p,\infty}(\mathbb{R}^n)$, one obtains by strong continuity of translation on $\dot{\mathrm{B}}^{-s}_{p',1}(\mathbb{R}^n)$, see \cite[Proposition~1.9]{Guidetti1991Interp} (one may use instead an interpolation and density argument),
\begin{align*}
    \langle \tau_tu, \phi\rangle_{\mathbb{R}^n} = \langle u, \tau_{-t}\phi\rangle_{\mathbb{R}^n} \xrightarrow[t\rightarrow 0]{} \langle u, \phi\rangle_{\mathbb{R}^n}.
\end{align*}
The norm estimate follows from the fact that for all $t\in\mathbb{R}$, $j\in\mathbb{Z}$, $\dot{\Delta}_j\tau_t u=\tau_t\dot{\Delta}_j u$ in $\eus{S}'(\mathbb{R}^n)$, so that by translation invariance of Lebesgue norms,
\begin{align*}
    \lVert \tau_tu\rVert_{\dot{\mathrm{B}}^{s}_{p,\infty}(\mathbb{R}^n)} = \sup_{j\in\mathbb{Z}}\, 2^{js}\lVert \tau_t\dot{\Delta}_j u \rVert_{\mathrm{L}^p(\mathbb{R}^n)} = \sup_{j\in\mathbb{Z}}\, 2^{js}\lVert \dot{\Delta}_j u \rVert_{\mathrm{L}^p(\mathbb{R}^n)} =\lVert u\rVert_{\dot{\mathrm{B}}^{s}_{p,\infty}(\mathbb{R}^n)}.
\end{align*}

\textbf{Step 2:} Now, the case of the half-space $\mathbb{R}^n_+$. We show the estimate first. Let $t\geqslant 0$ and $u\in \dot{\mathrm{B}}^{s}_{p,\infty}(\mathbb{R}^n_+)$, we choose an arbitrary extension $U\in \dot{\mathrm{B}}^{s}_{p,\infty}(\mathbb{R}^n)$ of $u$, \textit{i.e.} such that $U_{|_{\mathbb{R}^n_+}}=u$, then $\tau_tU$ is an extension of $\tau_t u$. From this point, we deduce the following estimate by the definition of function spaces by restriction
\begin{align*}
    \lVert \tau_tu\rVert_{\dot{\mathrm{B}}^{s}_{p,\infty}(\mathbb{R}^n_+)}\leqslant \lVert \tau_tU\rVert_{\dot{\mathrm{B}}^{s}_{p,\infty}(\mathbb{R}^n)} = \lVert U\rVert_{\dot{\mathrm{B}}^{s}_{p,\infty}(\mathbb{R}^n)}.
\end{align*}
One takes the infimum over all such $U$ to obtain
\begin{align*}
    \lVert \tau_tu\rVert_{\dot{\mathrm{B}}^{s}_{p,\infty}(\mathbb{R}^n_+)}\leqslant \lVert u\rVert_{\dot{\mathrm{B}}^{s}_{p,\infty}(\mathbb{R}^n_+)}.
\end{align*}
Now, we proceed as in \textbf{Step 1} using the fact that $\dot{\mathrm{B}}^{s}_{p,\infty}(\mathbb{R}^n_+) = (\dot{\mathrm{B}}^{-s}_{p',1,0}(\mathbb{R}^n_+))'$, see Proposition \ref{prop:dualityBesovRn+}. Let $\phi\in \dot{\mathrm{B}}^{-s}_{p',1,0}(\mathbb{R}^n_+)$, in particular, we have $\phi\in \dot{\mathrm{B}}^{-s}_{p',1}(\mathbb{R}^n)$ with $\supp \phi \subset \overline{\mathbb{R}^n_+}$. Then for an arbitrary extension $U\in \dot{\mathrm{B}}^{s}_{p,\infty}(\mathbb{R}^n)$ of $u$, we have $(\tau_tU)_{|_{\mathbb{R}^n_+}}=\tau_{t} u$ for all $t\geqslant 0$. Therefore, by the previous \textbf{Step 1}, using the same arguments one can find in the proof of Proposition \ref{prop:dualityBesovRn+}, it follows that
\begin{align*}
    \langle \tau_tu, \phi\rangle_{\mathbb{R}^n_+} = \langle \tau_t U, \phi\rangle_{\mathbb{R}^n} \xrightarrow[t\rightarrow 0]{} \langle U, \phi\rangle_{\mathbb{R}^n} = \langle u, \phi\rangle_{\mathbb{R}^n_+},
\end{align*}
which is the desired result.
\end{proof}

\typeout{}                                
\bibliographystyle{alpha}
{\footnotesize
\bibliography{Biblio}}

\end{document}